\documentclass[12pt,reqno]{amsart}

\usepackage{indentfirst}

\usepackage{amsmath,amssymb,amsthm}

\usepackage{newtxtext,newtxmath}

\usepackage[final]{microtype}

\UseMicrotypeSet[protrusion]{basicmath}

\usepackage[a4paper,margin=25mm]{geometry}

\usepackage{mathtools}

\usepackage{mathrsfs}

\usepackage{tikz-cd}

\usetikzlibrary{calc}

\usepackage[all]{xy}

\usepackage{float}

\usepackage{placeins}

\usepackage{graphicx}

\graphicspath{{figures/}{./}{../figures/}}

\DeclareGraphicsExtensions{.pdf,.png,.jpg,.jpeg}

\usepackage{caption}

\usepackage{subcaption}

\usepackage{array}

\usepackage{booktabs}

\usepackage{multicol,multirow}

\usepackage[dvipsnames]{xcolor}

\usepackage[toc,page]{appendix}

\usepackage{enumitem}

\newcolumntype{L}[1]{>{\raggedright\arraybackslash}p{#1}}

\newcolumntype{C}[1]{>{\centering\arraybackslash}m{#1}}

\newlength{\tblimgH}

\definecolor{tocblue}{RGB}{0,0,204}

\usepackage{hyperref}

\hypersetup{
colorlinks=true,
linktoc=all,
linkcolor=tocblue,
citecolor=tocblue,
urlcolor=tocblue
}

\usepackage{bookmark}

\numberwithin{equation}{section}

\makeatletter

\renewcommand{\section}{%
\@startsection{section}{1}{\z@}%
{-3.5ex \@plus -1ex \@minus -.2ex}%
{2.3ex \@plus .2ex}%
{\normalfont\Large\bfseries}%
}

\renewcommand{\subsection}{%
\@startsection{subsection}{2}{\z@}%
{1.2\baselineskip}%
{-0.6em}%
{\normalfont\bfseries}%
}

\makeatother

\newtheoremstyle{plainrunin}
{6pt}
{6pt}
{\itshape}
{}
{\bfseries}
{.}
{0.5em}
{}

\newtheoremstyle{uprunin}
{6pt}
{6pt}
{\normalfont}
{}
{\bfseries}
{.}
{0.5em}
{}

\theoremstyle{plainrunin}

\newtheorem{thm}{Theorem}[section]

\newtheorem{lemma}[thm]{Lemma}

\newtheorem{prop}[thm]{Proposition}

\newtheorem{corollary}[thm]{Corollary}

\newtheorem{claim}[thm]{Claim}

\theoremstyle{uprunin}

\newtheorem{defi}[thm]{Definition}

\newtheorem{remark}[thm]{Remark}

\newtheorem{notation}[thm]{Notation}

\newtheorem{maintheorem}{Theorem}

\setlist[enumerate]{label=\textup{(\roman*)},topsep=0pt,partopsep=0pt}

\newlist{steplist}{enumerate}{1}

\setlist[steplist]{
label=\textbf{Step (\roman*).},
ref=\roman*,
leftmargin=*,
itemsep=0.4ex,
topsep=0.6ex
}

\renewcommand{\abstractname}{Abstract}

\makeatletter

\def\l@section{%
\@tocline{1}
{0.45em plus 0.2em minus 0.1em}
{0pt}
{1.5em}
{\bfseries}%
}

\makeatother

\makeatletter

\renewenvironment{abstract}{%
\if@twocolumn
\section*{\abstractname}%
\else
\small
\vspace{0.4\baselineskip}
\begin{center}\bfseries\abstractname\end{center}%
\quotation
\fi
}{%
\if@twocolumn\else\endquotation\fi
}

\makeatother

\newcommand{\pr}{\mathrm{pr}}

\DeclareMathOperator{\Pic}{Pic}

\DeclareMathOperator{\Aff}{Aff}

\DeclareMathOperator{\cyc}{cyc}

\DeclareMathOperator{\Div}{Div}

\DeclareMathOperator{\Jac}{Jac}

\DeclareMathOperator{\rank}{rank}

\DeclareMathOperator{\id}{id}

\DeclareMathOperator{\Hom}{Hom_{\Z}}

\DeclareMathOperator{\End}{End_{\Z}}

\DeclareMathOperator{\Prin}{Prin}

\DeclareMathOperator{\im}{im}

\newcommand{\Z}{\mathbb{Z}}

\newcommand{\C}{\mathbb{C}}

\newcommand{\MO}{\mathcal{O}}

\newcommand{\Q}{\mathbb{Q}}

\newcommand{\R}{\mathbb{R}}

\newcommand{\sat}{{\mathrm{sat}}}

\newcommand{\relint}{\mathrm{relint}}

\makeatletter

\def\thm@space@setup{%
\thm@preskip=7pt plus 2pt minus 2pt%
\thm@postskip=7pt plus 2pt minus 2pt%
}

\makeatother

\title{Tropical Matsusaka and Matsusaka--Ran Criteria}

\author{Zhijie Ji}

\address{Department of Mathematics, Graduate School of Science, Kyoto University, Kyoto 606-8502, Japan}

\email{scfssb8@gmail.com, ji.zhijie.78n@st.kyoto-u.ac.jp}

\date{}

\begin{document}
\maketitle
\vspace{-2.2em}

\begin{abstract}
The tropical Schottky problem asks which principally polarized tropical abelian varieties are isomorphic to tropical Jacobian varieties equipped with their canonical principal polarizations. We prove the tropical Matsusaka criterion and the tropical Matsusaka--Ran criterion, which are the main results of this paper, and show that their hypotheses are equivalent. Together with the tropical Poincar\'e formula proved by Gross--Shokrieh, these results give a geometric solution to the tropical Schottky problem. In particular, we give an affirmative answer to a question posed by Brannetti--Melo--Viviani.

Several tools are developed for the proofs of the two criteria. In particular, we establish Hodge-type inequalities for ample tropical Cartier divisors on tropical abelian varieties and prove a splitting criterion for principally polarized tropical abelian varieties.
\end{abstract}

\tableofcontents

\section{Introduction}\label{sec:introduction}

\subsection{Background}
\begingroup
\setlength{\parindent}{1.5em}
\setlength{\parskip}{0.5\baselineskip}

\indent The classical Schottky problem asks for a characterization of principally polarized abelian varieties that are Jacobians of smooth projective curves equipped with their canonical principal polarizations; see \cite{Gru12} for a survey. We begin by recalling the following theorem. Throughout the theorem below and the following discussion, all varieties are defined over \(\C\).

\smallskip
\noindent\textbf{Theorem.}
Let \((A,\Theta_A)\) be a \(g\)-dimensional polarized abelian variety with \(g\ge 2\), and let \(i\colon Z\hookrightarrow A\) be an embedding of an irreducible projective curve \(Z\). Then the following are equivalent.

\begin{enumerate}[label=\textup{(\roman*)}, itemsep=2pt]
\item The curve \(Z\) generates \(A\), and \((\Theta_A\cdot Z)=g\).

\item The polarization \(\Theta_A\) is principal, and \((g-1)![Z]\equiv_{\hom}([\Theta_A])^{g-1}\).

\item The curve \(Z\) is smooth, and the homomorphism \(f\colon\Jac(Z)\to A\) induced by \(i\) is an isomorphism of principally polarized abelian varieties
\[
(\Jac(Z),\Theta_Z)\cong (A,\Theta_A),
\]
where \(\Theta_Z\) denotes the canonical principal polarization on \(\Jac(Z)\).
\end{enumerate}
\smallskip

For \(0\le p\le g\), let \(\operatorname{CH}^{p}(A)\) denote the Chow group of codimension-\(p\) algebraic cycles on \(A\) modulo rational equivalence. Here and below, for a divisor \(D\) and a curve \(Y\) on \(A\), the symbols \([D]\) and \([Y]\) denote their rational equivalence classes in \(\operatorname{CH}^{1}(A)\) and \(\operatorname{CH}^{g-1}(A)\), respectively. If \(Y\) is smooth, \(\Theta_Y\) denotes the canonical principal polarization on \(\Jac(Y)\). The symbol \(\equiv_{\hom}\) denotes homological equivalence on these Chow groups. The condition that \(Z\) generates \(A\) means that \(Z\) is not contained in any translate of a proper abelian subvariety of \(A\).

The implication \textup{(ii)}\(\Longrightarrow\)\textup{(iii)} is the Matsusaka criterion, proved by Matsusaka in \cite{Mat59}. The implication \textup{(i)}\(\Longrightarrow\)\textup{(iii)} is a special case of the Matsusaka--Ran criterion, proved by Ran in \cite{Ran81}; we use the formulation in \cite[Thm.~11.8.1]{BL04}. Conversely, the Poincar\'e formula \cite[Thm.~11.2.1]{BL04} gives both \textup{(iii)}\(\Longrightarrow\)\textup{(i)} and \textup{(iii)}\(\Longrightarrow\)\textup{(ii)}. Thus the three conditions are equivalent.

The Matsusaka criterion is commonly regarded as a geometric solution to the classical Schottky problem.

In tropical geometry, one can similarly consider principally polarized tropical abelian varieties \((X,\Theta)\). On \(X\), the tropical intersection theory developed in \cite{AR10} and the tropical homology theory, introduced in \cite{IKMZ19} and developed for integral tori in \cite{GS19}, allow one to study tropical Cartier divisors, tropical cycles, intersection numbers, and homological equivalence classes.

The study of tropical abelian varieties and tropical Jacobian varieties goes back to \cite{MZ08}; see also \cite{BF11}. The tropical Schottky problem asks for a characterization of principally polarized tropical abelian varieties that are tropical Jacobian varieties equipped with their canonical principal polarizations. Brannetti--Melo--Viviani formulated in \cite[Sec.~7]{BMV11} two expected tropical analogues of the classical results recalled above. The first asks for a tropical Poincar\'e formula. This was proved by Gross--Shokrieh in \cite[Thm.~A]{GS23}; see Theorem~\ref{Thm:Poincare} below. The second asks for the tropical Matsusaka criterion, namely whether the existence of an effective tropical \(1\)-cycle \(C\) whose support is connected on a principally polarized tropical abelian variety \((X,\Theta)\) satisfying \((g-1)! C\equiv_{\hom}([\Theta])^{g-1}\) is sufficient to imply \((\Jac(\Gamma),\Theta_{\Gamma})\cong(X,\Theta)\)
for some smooth tropical curve \(\Gamma\), where \(\Theta_{\Gamma}\) denotes the canonical principal polarization on \(\Jac(\Gamma)\). Thus, as in the classical case, the tropical Matsusaka criterion together with the tropical Poincar\'e formula will give a geometric solution to the tropical Schottky problem.

\endgroup

\subsection{Our contribution}

\begingroup
\setlength{\parindent}{1.5em}
\setlength{\parskip}{0.5\baselineskip}

The two main results of this paper are the tropical Matsusaka criterion and the tropical Matsusaka--Ran criterion. Combining these results with the tropical Poincar\'e formula proved by Gross--Shokrieh and the equivalence between the hypotheses of the two criteria established below, we obtain the following result.

\begin{maintheorem}[=Theorem~\ref{thm:main-equivalence}]\label{thm:main-equivalent-criteria}
Let \((X,\Theta)\) be a \(g\)-dimensional polarized tropical abelian variety with \(g\ge 2\), and let \(C\) be an effective tropical \(1\)-cycle on \(X\) such that \(|C|\) is connected. Then the following are equivalent.
\begin{enumerate}[label=\textup{(\roman*)}]
\item The tropical \(1\)-cycle \(C\) is a spanning curve on \(X\), and \((\Theta\cdot C)_X=g\).
\item The polarization \(\Theta\) is principal, and \((g-1)!C\equiv_{\hom}([\Theta])^{g-1}\).
\item There exist a smooth tropical curve \(\Gamma\) of genus \(g\) and a morphism of rational polyhedral spaces \(\chi\colon\Gamma\to X\) such that \(\chi_*[\Gamma]=C\), and the homomorphism
\(
f\colon\Jac(\Gamma)\to X
\)
induced by \(\chi\) is an isomorphism of principally polarized tropical abelian varieties
\[
\bigl(\Jac(\Gamma),\Theta_\Gamma\bigr)\cong(X,\Theta),
\]
where \(\Theta_\Gamma\) denotes the canonical principal polarization on \(\Jac(\Gamma)\).
\end{enumerate}
\end{maintheorem}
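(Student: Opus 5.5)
The plan is to prove the cycle of implications \textup{(iii)}\(\Rightarrow\)\textup{(i)}, \textup{(iii)}\(\Rightarrow\)\textup{(ii)}, \textup{(ii)}\(\Rightarrow\)\textup{(i)}, and \textup{(i)}\(\Rightarrow\)\textup{(iii)}. The last of these is the tropical Matsusaka--Ran criterion.

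\textbf{(iii) implies (i) and (ii).} Both follow from the tropical Poincar\'e formula of Gross--Shokrieh, Theorem~\ref{Thm:Poincare}. Transport everything along the isomorphism \(f\). The cycle \(C=\chi_*[\Gamma]\) becomes the image of the Abel--Jacobi curve, and the formula gives \((g-1)!\,\chi_*[\Gamma]\equiv_{\hom}[\Theta_\Gamma]^{g-1}\). Principality holds because \(\Theta_\Gamma\) is canonical, so this yields \textup{(ii)}. Intersecting with \(\Theta\) gives
\[
(\Theta\cdot C)_X=\frac{1}{(g-1)!}([\Theta]^g)_X=\frac{g!}{(g-1)!}=g,
\]
using \(([\Theta]^g)=g!\) for a principal polarization. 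For the spanning property: the Abel--Jacobi map induces a surjection \(H_1(\Gamma,\Z)\to H_1(\Jac(\Gamma),\Z)\), so the image of \(\Gamma\) spans the lattice. Hence \(C\) is spanning.

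\textbf{(ii) implies (i).} Intersecting the homological identity with \([\Theta]\) gives \((g-1)!\,(\Theta\cdot C)=([\Theta]^g)=g!\), hence \((\Theta\cdot C)=g\). The spanning property is less formal. Suppose \(C\) lies in a translate of a proper subtorus \(Y\). Then the class of \(C\) lies in the image of \(H_{1,1}\) of \(Y\). I would then need to show that \([\Theta]^{g-1}\) cannot pair trivially against the complementary directions. The tool is the Hodge-type inequality for ample divisors: pairing with a suitable nonzero class supported on the quotient \(X/Y\) would force a strict inequality, and that inequality contradicts \(C\) lying in a translate of \(Y\).

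\textbf{(i) implies (iii), the Matsusaka--Ran direction.} This is the main obstacle. I would proceed as follows.
\begin{enumerate}
\item Normalize \(|C|\) with its multiplicities into a tropical curve \(\Gamma'\), possibly with multiplicities and non-smooth points, together with a map \(\chi\colon\Gamma'\to X\) such that \(\chi_*[\Gamma']=C\).
\item Using that \(C\) spans, get a surjection from \(\Jac(\Gamma')\), or at least from its \(H_1\)-lattice, onto \(X\).
\item Write \(\Theta\cdot C\) as a sum over the edges of \(C\) of the quadratic form of \(\Theta\) evaluated on the edge directions, weighted by length and multiplicity.
\item Use the spanning condition and a Hodge/rank bound to show that this sum is at least \(g\). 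In the equality case, all multiplicities are \(1\), the first Betti number of \(\Gamma'\) is \(g\), the map on \(H_1\) is an isomorphism of lattices, and the quadratic form pulls back to the cycle pairing of \(\Gamma'\).
\item Apply the splitting criterion for principally polarized tropical abelian varieties to exclude degenerations in which \(\Theta\) is a product. This is also where principality of \(\Theta\) gets established.
\end{enumerate}
The equality analysis in steps (iv) and (v) is the step I expect to be hardest. From it one gets \(\Gamma=\Gamma'\), made smooth by contracting trees, so that \(f\) is an isomorphism carrying \(\Theta_\Gamma\) to \(\Theta\).
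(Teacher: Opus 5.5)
\textbf{Main gap: the equality analysis in (i)\(\Rightarrow\)(iii).} You assert this step rather than prove it.

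Its numerical half does go through along your lines. Your edge formula gives \((\Theta\cdot C)_X=\sum_\tau\omega_C(\tau)\ell(\tau)Q_\Theta(\boldsymbol u_\tau,\boldsymbol u_\tau)=\mathrm{tr}(\kappa_\Theta\circ T_C)\). Spanning makes \(T_C\) positive definite, and \(\det(\kappa_\Theta\circ T_C)=\det E^\Theta\cdot\det M^C\) is a positive integer. AM--GM on the eigenvalues then gives \((\Theta\cdot C)_X\ge g\), with equality iff \(\kappa_\Theta\circ T_C=\id\) and \(\lvert\det E^\Theta\rvert=1\). That is exactly (ii). It is a more direct route than the paper's, which builds an ample divisor \(D_{C'}\) with \([D_{C'}]=f_*[\widetilde W_{g-1}]\) and uses mixed discriminants.

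The problem is what equality gives you. Since \(T_C=f_\sharp\circ\kappa_{\Theta_\Gamma}^{-1}\circ f^\sharp\), equality only yields \(f\circ i=\id_X\) for \(i=\varphi_{\Theta_\Gamma}^{-1}\circ f^\vee\circ\varphi_\Theta\). Hence \(H_1(\Gamma,\Z)=\im(i_\sharp)\oplus\ker(f_\sharp)\), \(Q_{\Theta_\Gamma}\)-orthogonally. Nothing numerical forces \(\ker(f_\sharp)=0\). For instance, map a wedge \(\Gamma_1\vee\Gamma_2\) to \(\Jac(\Gamma_1)\) by the Abel--Jacobi map on \(\Gamma_1\) and a constant map on \(\Gamma_2\). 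This satisfies every one of these identities, yet has genus \(>g\). So ``first Betti number \(g\)'' must come from the fact that your \(\chi\) contracts no edge, and your plan has no mechanism for that.

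The paper's mechanism runs as follows.
\begin{enumerate}
\item Apply the splitting criterion (Proposition~\ref{prop:split-off-subppav}) to \(i\colon X\to\Jac(\Gamma)\). This gives \((\Jac(\Gamma),\Theta_\Gamma)\cong(X,\Theta)\times(Y,\Theta_Y)\); it is \(\Theta_\Gamma\), not \(\Theta\), that splits.
\item Show that every simple cycle of \(\Gamma\) lies in one summand, and that cycles in different summands share no edge (Lemma~\ref{lem:Gamma-prime-Gamma-double-prime-disjoint}).
\item Conclude that every \(\chi^*\omega\in\kappa_{\Theta_\Gamma}(\im i_\sharp)\) has slope zero along the edges of cycles in \(\ker(f_\sharp)\). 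This contradicts \(d_v\chi(\widetilde{\boldsymbol u}_{\tau,v})\neq0\) unless \(Y=0\).
\end{enumerate}
Principality is an input to the splitting criterion, not an output: it needs \(i^*\Theta_\Gamma\), which has the same \(c_1\) as \(\Theta\), to be principal. Also, nothing should be contracted. Lemma~\ref{lem:cycle-from-smooth-curve} already gives a smooth \(\Gamma\) with \(\chi_*[\Gamma]=C\) by dividing edge lengths by the weights, and the argument needs exactly this non-contraction of edges.

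\textbf{Smaller gap: the spanning part of (ii)\(\Rightarrow\)(i).} Failure of spanning means the tangent directions \(\boldsymbol u_{\tau,v}\) lie in a proper real subspace of \((\Lambda_X)^*_{\R}\). This does not automatically put \(|C|\) in a translate of a proper subtorus; Remark~\ref{rem:spanning-no-proper-real-torus} proves only the opposite implication. No Hodge-type inequality is involved either; the right argument is a rank count. Balancing and Lemma~\ref{lem:tensor-cycle-decomposition} give \(\cyc_X(C)=\sum_\tau\omega_C(\tau)\gamma_\tau\otimes\boldsymbol u_{\tau,v(\tau)}\) with \(\gamma_\tau\in\Lambda_X'\). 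The associated linear map \((\Lambda_X)_{\R}\to(\Lambda_X')_{\R}\) therefore kills every \(v_0\neq0\) annihilated by all \(\boldsymbol u_{\tau,v(\tau)}\). But (ii) forces its matrix to be \(I_g\) in bases with \(\eta_i=\kappa_\Theta(\lambda_i')\), a contradiction.
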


Here \(|C|\) denotes the support of \(C\), as defined in Definition~\ref{def:tropical-cycle}, and a spanning curve is defined in Definition~\ref{def:spanning-curve}. The implication \textup{(ii)}\(\Longrightarrow\)\textup{(iii)} is the tropical Matsusaka criterion, Theorem~\ref{thm:tropical-matsusaka-criterion}, while the implication \textup{(i)}\(\Longrightarrow\)\textup{(iii)} is the tropical Matsusaka--Ran criterion, Theorem~\ref{thm:tropical-matsusaka-ran-criterion}. In particular, the tropical Matsusaka criterion gives an affirmative answer to the question posed in \cite[Sec.~7]{BMV11}.

We briefly describe the proof of Theorem~\ref{thm:main-equivalent-criteria} and record the main results used in it.

\smallskip
\noindent\textbf{The implication \textup{(ii)}\(\Longrightarrow\)\textup{(iii)}.}
This is the tropical Matsusaka criterion, Theorem~\ref{thm:tropical-matsusaka-criterion}. Its proof uses the following splitting criterion.

\begin{maintheorem}[=Proposition~\ref{prop:split-off-subppav}]\label{thm:main-splitting-criterion}
Let \((X,\Theta)\) be a principally polarized tropical abelian variety, and let \(i\colon Y\to X\) be an injective homomorphism of tropical abelian varieties. Set \(\Theta_Y\coloneqq i^{*}\Theta\), and assume that \(\Theta_Y\) is a principal polarization on \(Y\). Then there exists a principally polarized tropical abelian variety \((Z,\Theta_Z)\) such that
\[
(Y,\Theta_Y)\times(Z,\Theta_Z)\cong(X,\Theta)
\]
as principally polarized tropical abelian varieties.
\end{maintheorem}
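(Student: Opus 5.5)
Throughout, I model a tropical abelian variety as \(X=N_\R/\Lambda\) with lattices \(N\) and \(\Lambda\) (integral structure \(N\), period lattice \(\Lambda\)). In this model a polarization is a symmetric bilinear form
\[
Q\colon \Lambda\times N\to\R,
\]
or equivalently an injective homomorphism \(\lambda\colon\Lambda\to M=\Hom(N,\Z)\) such that the induced form on \(\Lambda_\R\cong N_\R\) is symmetric positive definite. The polarization is principal exactly when \(\lambda\) is an isomorphism.

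An injective homomorphism \(i\colon Y=N'_\R/\Lambda'\to X\) is given by an injective map \(N'\to N\) carrying \(\Lambda'\) into \(\Lambda\). Pulling back, \(\Theta_Y=i^*\Theta\) corresponds to the composite
\[
\Lambda'\to\Lambda\xrightarrow{\lambda}M\to M'=\Hom(N',\Z),
\]
and the hypothesis says this composite is an isomorphism. The plan is to carry out the classical complement construction lattice-theoretically, using the orthogonal complement with respect to \(Q\).

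\textbf{Step 1: a splitting of the lattices.} Define
\[
\Lambda''=\{\ell\in\Lambda : \lambda(\ell)|_{N'}=0\},\qquad N''=\{n\in N: \lambda(\ell')(n)=0 \text{ for all } \ell'\in\Lambda'\}.
\]
I claim \(\Lambda=\Lambda'\oplus\Lambda''\). Given \(\ell\in\Lambda\), the element \(\lambda(\ell)|_{N'}\) lies in \(M'\), so by principality of \(\Theta_Y\) it equals \((\lambda(\ell')|_{N'})\) for a unique \(\ell'\in\Lambda'\). Then \(\ell-\ell'\in\Lambda''\). The intersection \(\Lambda'\cap\Lambda''\) is zero, again by injectivity of the composite.

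Symmetrically, I claim \(N=N'\oplus N''\). Here one must view \(N'\) as a sublattice of \(N\). For \(n\in N\), the functional \(\ell'\mapsto\lambda(\ell')(n)\) on \(\Lambda'\) must be matched by a unique \(n'\in N'\). This uses that \(\Lambda'\to M'\) is an isomorphism, so the dual map \(N'\to\Hom(\Lambda',\Z)\) is an isomorphism too. Symmetry of \(Q\) then identifies the two constructions.

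I need to check two further points. First, \(N'\) is saturated in \(N\), which follows from the splitting itself. Second, the real spans match, that is, \(\Lambda''\otimes\R=N''\otimes\R\) under the identification \(\Lambda_\R= N_\R\). The second point should follow from \(Q\)-orthogonality: both spaces are the \(Q\)-orthogonal complement of \(N'_\R\).

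\textbf{Step 2: the complement.} Set \(Z=N''_\R/\Lambda''\), with \(\lambda''\colon\Lambda''\to\Hom(N'',\Z)\) obtained by restricting \(\lambda\).

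Under the decompositions of Step 1, \(\lambda\) is block diagonal. The off-diagonal blocks vanish by the definitions of \(\Lambda''\) and \(N''\) together with symmetry of \(Q\). Since \(\lambda\) is an isomorphism, each diagonal block is an isomorphism, so \(\lambda''\) is an isomorphism. Positive definiteness of \(\lambda''\) is inherited from \(Q\) by restriction. Hence \(\Theta_Z\) is a principal polarization.

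The addition map \(Y\times Z\to X\) is then an isomorphism of tropical abelian varieties. The reason is that it is induced by \(N'\oplus N''\cong N\) and \(\Lambda'\oplus\Lambda''\cong\Lambda\). It identifies \(\Theta\) with the product polarization, because the form is block diagonal.

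\textbf{Step 3: translating back.} The paper's polarizations are presumably tropical Cartier divisors, or classes thereof. The last task is to match the product divisor \(\pr_1^*\Theta_Y+\pr_2^*\Theta_Z\) with \(\Theta\) up to the relevant equivalence, via whatever dictionary earlier sections set up between polarizations and symmetric forms \(Q\).

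\textbf{Main obstacle.} I expect the main difficulty to be the bookkeeping in Step 1. The integral structure \(N\) and the period lattice \(\Lambda\) play asymmetric roles, and the hypothesis controls only the composite \(\Lambda'\to M'\). Showing that the complement on the \(N\)-side is a genuine direct summand with the correct real span requires using symmetry of \(Q\) carefully. My first attempt will be to write \(Q\) as a matrix in bases adapted to \(N'\subset N\) and \(\Lambda'\subset\Lambda\) and perform integral block Gaussian elimination. This works because the upper-left block is unimodular.
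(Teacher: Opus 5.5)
Your proof is correct. It builds the same complement as the paper, but by a more direct route. In the paper's notation:
\begin{itemize}
\item your $\Lambda''=\ker(i^\sharp\circ\kappa_\Theta)$ is $\im((1-e)_\sharp)$ for the idempotent $e=i\circ i^\dagger$;
\item your projection $\ell\mapsto\ell'$ is $(i^\dagger)_\sharp=\kappa_{\Theta_Y}^{-1}\circ i^\sharp\circ\kappa_\Theta$;
\item your $N''$ is the annihilator of $\im(e^\sharp)=\im(\kappa_\Theta\circ i_\sharp)$;
\item your $Z$ is canonically the paper's $Y_{1-e}$.
\end{itemize}

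The paper packages this construction as follows. It forms the tropical Rosati adjoint $i^\dagger=\varphi_{\Theta_Y}^{-1}\circ i^\vee\circ\varphi_\Theta$ and shows $i^\dagger\circ i=\id_Y$. It then shows that $e$ is a Rosati self-adjoint idempotent and splits $X\cong Y_e\times Y_{1-e}$ from $e$. It gets $Q_\Theta$-orthogonality from self-adjointness. Finally it needs a separate identification $Y\cong Y_e$ via image factorizations (Lemma~\ref{lem:injective-iso-to-image-iff-saturated}).

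You avoid the dual torus, $\varphi_\Theta$, and the image factorization by keeping $Y$ itself as the first factor. This also shows which hypothesis does what:
\begin{itemize}
\item Both splittings $\Lambda=\Lambda'\oplus\Lambda''$ and $N=N'\oplus N''$ use only unimodularity of $\kappa_{\Theta_Y}$. No block Gaussian elimination is needed, so your anticipated obstacle is not one.
\item Symmetry of $Q$ is needed only for $\Lambda''_\R=N''_\R$. The off-diagonal vanishing follows directly from the definitions of $\Lambda''$ and $N''$.
\item Principality of $\Theta$ enters only to make the $\Lambda''$-block unimodular.
\end{itemize}
The outputs used later in the proof of Theorem~\ref{thm:tropical-matsusaka-criterion} hold by construction in your version: $\beta\circ\iota_1=i$ and $Q_\Theta(\Lambda',\Lambda'')=0$. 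What the paper's formalism adds is a reusable framework (tropical Rosati involution, $Q_\Theta$-orthogonal decompositions) that mirrors the classical theory.

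One point to tighten: do not defer Step~3. In the paper a polarization is a tropical Cartier divisor, so a bilinear form alone does not define $\Theta_Z$. Set $\Theta_Z\coloneqq j^*\Theta$ for the inclusion $j\colon Z\to X$.
\begin{itemize}
\item By \eqref{eq:kappa-pullback-divisor}, $\kappa_{\Theta_Z}=j^\sharp\circ\kappa_\Theta\circ j_\sharp$, which is your $\lambda''$.
\item By \eqref{eq:chern-class-homological-equivalence}, for the addition map $\beta$ the ppav condition $[\beta^*\Theta]\equiv_{\hom}[\pr_1^*\Theta_Y+\pr_2^*\Theta_Z]$ is equivalent to $\beta^\sharp\circ\kappa_\Theta\circ\beta_\sharp=\kappa_{\Theta_Y}\oplus\kappa_{\Theta_Z}$.
\end{itemize}
This last identity is exactly your block diagonality, so the argument closes.
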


\smallskip
\noindent\textbf{The equivalence \textup{(i)}\(\Longleftrightarrow\)\textup{(ii)}.}
This is Proposition~\ref{prop:equivalence-conditions-two-criteria}. Its proof uses the following Hodge-type inequalities. The term ``Hodge-type'' follows the classical terminology in \cite[Sec.~1.6]{Laz04}; this class of inequalities includes, for example, the Khovanskii--Teissier inequalities \cite[Ex.~1.6.4]{Laz04}.

\begin{maintheorem}[=Proposition~\ref{prop:tropical-KT} and Corollary~\ref{cor:TK-inequality}]\label{thm:main-hodge-type-inequalities}
Let \(X\) be a \(g\)-dimensional tropical abelian variety with \(g\ge 2\), and let \(D_1,\dots,D_g\) be ample tropical Cartier divisors on \(X\). Then the following hold.
\begin{enumerate}[label=\textup{(\roman*)}]
\item One has
\[
(D_1\cdot D_2\cdots D_g)_X\ge\prod_{i=1}^g (D_i^{g})_X^{1/g}.
\]
Equality holds if and only if there exist \(m_1,\dots,m_g\in\Z_{>0}\) such that
\[
m_1[D_1]\equiv_{\hom}m_2[D_2]\equiv_{\hom}\cdots\equiv_{\hom}m_g[D_g].
\]

\item For any two distinct integers \(i,j\in\{1,\ldots,g\}\),
\[
(D_1\cdots D_g)_X^2\ge(D_1\cdots D_{j-1}\cdot D_i\cdot D_{j+1}\cdots D_g)_X\cdot(D_1\cdots D_{i-1}\cdot D_j\cdot D_{i+1}\cdots D_g)_X.
\]
Equality holds if and only if there exist \(m_i,m_j\in\Z_{>0}\) such that \(m_i[D_i]\equiv_{\hom}m_j[D_j]\).
\end{enumerate}
\end{maintheorem}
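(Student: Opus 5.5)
The plan is to reduce everything to linear algebra of real symmetric bilinear forms. A tropical abelian variety is \(X=V/\Lambda\) with \(V\cong\R^g\). Under this identification, the homological class of a tropical Cartier divisor \(D\) corresponds to a symmetric bilinear form \(Q_D\) on \(V\), obtained from the quadratic part of its defining theta-type function; this is the Chern class in the sense of Gross--Shokrieh. The divisor \(D\) is ample precisely when \(Q_D\) is positive definite. The first step is to record, from the tropical intersection theory on integral tori developed earlier, the formula
\[
(D_1\cdots D_g)_X=\frac{g!}{|\det\Lambda|}\,\mathrm{MV}(Q_{D_1},\dots,Q_{D_g}),
\]
with the determinant taken relative to a lattice basis. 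Here \(\mathrm{MV}\) denotes the mixed discriminant, that is, the polarization of \(\det\) on symmetric matrices, normalized so that \(\mathrm{MV}(Q,\dots,Q)=\det Q\). In particular \((D^g)_X=g!\det Q_D\) up to the same covolume factor. This identification should follow from multilinearity and symmetry of the intersection product together with the computation \((D^g)_X=g!\,\sqrt{\det}\) of the polarization form, which for principal polarizations is the statement \((\Theta^g)=g!\). Two divisors are homologically equivalent, up to positive integer multiples, exactly when their forms are positively proportional.

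Once this dictionary is in place, both parts become classical inequalities for mixed discriminants of positive definite matrices. For (i), I would use the Alexandrov inequality \(\mathrm{MV}(Q_1,\dots,Q_g)\ge\prod\det(Q_i)^{1/g}\), with equality iff all \(Q_i\) are proportional. The quickest route is to simultaneously diagonalize: replace \(Q_1\) by the identity, and observe that the mixed discriminant is an average of products over permutations of the diagonal entries. I would then proceed by induction using (ii). For (ii), I would use the Alexandrov--Fenchel inequality for mixed discriminants,
\[
\mathrm{MV}(Q_i,Q_j,R)^2\ge \mathrm{MV}(Q_i,Q_i,R)\,\mathrm{MV}(Q_j,Q_j,R),
\]
where \(R\) denotes the remaining \(g-2\) positive definite forms. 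Equality holds iff \(Q_i\) and \(Q_j\) are proportional, and positive definiteness of the \(R\)-forms is what makes the equality case this rigid. This inequality is a Lorentzian-signature statement: the form \((A,B)\mapsto \mathrm{MV}(A,B,R)\) on symmetric matrices has exactly one positive eigenvalue. From (ii), part (i) follows by the standard argument: iterate to obtain \(\mathrm{MV}(Q_1,\dots,Q_g)^{g}\ge\prod_i\det Q_i\), tracking equality at each step.

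The final step is translating equality back. Proportionality \(Q_i=\lambda Q_j\) with \(\lambda>0\) forces \(\lambda\in\Q\), because both forms take rational values on \(\Lambda\times\Lambda'\) for the relevant integral structures; the Chern class lies in \(\Hom(\Lambda,\Lambda'^\vee)\), which is a lattice. We therefore get \(m_iQ_i=m_jQ_j\) with \(m_i,m_j\in\Z_{>0}\), i.e. \(m_i[D_i]\equiv_{\hom}m_j[D_j]\). Conversely, proportional classes give equality, since intersection numbers depend only on homology classes.

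I expect the main obstacle to be the first step: rigorously identifying the tropical intersection number \((D_1\cdots D_g)_X\) with the mixed discriminant of the Chern forms. I would attack it by reducing to the self-intersection case via polarization of \(D\mapsto (D^g)_X\), which is legitimate since both sides are symmetric multilinear forms in the divisor classes modulo homology. I would then prove \((D^g)_X=g!\det Q_D/|\det\Lambda|\) by first computing it for a principal polarization and afterwards using pullback along isogenies, or the degree of \(\phi_D\colon X\to X^\vee\), in analogy with Riemann--Roch.
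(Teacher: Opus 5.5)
Your plan is sound, and its skeleton is the paper's. You express $(D_1\cdots D_g)_X$ as a positive constant times the mixed discriminant of Gram matrices of the $Q_{D_i}$. You apply Alexandrov--Fenchel for mixed discriminants, with its equality case for positive definite matrices, to get (ii), and an Alexandrov/Bapat-type inequality to get (i). Finally, you turn real proportionality into integer multiples because the Chern classes are nonzero points of the lattice $\Hom(\Lambda_X^{\prime},\Lambda_X)$.

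You differ from the paper in how the two inputs are obtained.

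\emph{The dictionary.} The paper does not go through isogenies. The Gross--Shokrieh exterior-algebra formulas for cup and cap products on integral tori give $(D_1\cdots D_g)_X=\varepsilon\,g!\,\Delta(E^{D_1},\dots,E^{D_g})$ for all divisors at once. The substitution $E^D=S^{\top}G^D$ turns this into $\varepsilon\,g!\det(S)\,\Delta(G^{D_1},\dots,G^{D_g})$ with each $G^D$ symmetric. The sign $\varepsilon\det S>0$ then comes from Sumi's formula $(D^g)_X=g!\,\lvert\det E^D\rvert$ for ample $D$. Your route is to coarsen $\Lambda_X$ to $\kappa_D(\Lambda_X^{\prime})$, principally polarize the coarser torus $X_1$, and use the projection formula with $\pi_*[X]=[\Lambda_X:\kappa_D(\Lambda_X^{\prime})]\,[X_1]$. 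This can be carried out, but it needs a divisor with prescribed Chern class on $X_1$ and a projection formula. Its base case $(\Theta^g)_X=g!$ carries exactly the sign information that Sumi's theorem supplies, since the cup/cap calculus already gives $(D^g)_X=\pm g!\det E^D$. So it saves nothing. Note also that tropically there is no square root: $(D^g)_X=g!\,\lvert\det\kappa_D\rvert$.

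\emph{Part (i).} The paper cites Bapat's inequality. It obtains the equality case by the sandwich
\[
\Delta\ge\bigl[\Delta^{j\leftarrow i}\Delta^{i\leftarrow j}\bigr]^{1/2}\ge\prod_k(\det G^{D_k})^{1/g},
\]
where the second step is Bapat applied to the two swapped tuples, so equality forces equality in AF. Your derivation of the inequality from (ii) by the standard iteration also works. Once the inequality is known for all tuples, the same sandwich gives the equality case without tracking each step of the iteration.

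Drop the simultaneous-diagonalization shortcut, though. For $g\ge3$ the forms $Q_2,\dots,Q_g$ cannot in general be diagonalized together with $Q_1=I$. The description of the mixed discriminant as an average over permutations of products of diagonal entries holds only for a simultaneously diagonal family, so that argument covers only $g=2$.
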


Together with the tropical Matsusaka criterion, Theorem~\ref{thm:tropical-matsusaka-criterion}, the implication \textup{(i)}\(\Longrightarrow\)\textup{(ii)} gives the tropical Matsusaka--Ran criterion, Theorem~\ref{thm:tropical-matsusaka-ran-criterion}.

\smallskip
\noindent\textbf{The implication \textup{(iii)}\(\Longrightarrow\)\textup{(ii)}.}
This follows from the special case of the tropical Poincar\'e formula stated in Theorem~\ref{Thm:Poincare}, which was proved by Gross--Shokrieh. Together with the equivalence of \textup{(i)} and \textup{(ii)} and the implication \textup{(ii)}\(\Longrightarrow\)\textup{(iii)} above, this proves the equivalence of the three conditions in Theorem~\ref{thm:main-equivalent-criteria}.

\endgroup

\subsection{Structure of this paper}
After the introduction in Section~\ref{sec:introduction}, Section~\ref{sec:preliminaries} reviews background material on rational polyhedral spaces and integral tori.
Section~\ref{sec:hodge-type-inequalities} establishes the Hodge-type inequalities.
Section~\ref{sec:fin-dec} collects auxiliary results on homomorphisms of integral tori and their induced maps.
Section~\ref{sec:tropical-curves-morphisms} establishes Lemma~\ref{lem:cycle-from-smooth-curve}.
The first half of Section~\ref{sec:idempotent-rosati} establishes Proposition~\ref{prop:split-off-subppav} by studying the relation between tropical Rosati involutions and orthogonal decompositions, and the second half treats orthogonal decompositions of tropical Jacobian varieties.
Section~\ref{sec:tropical-matsusaka} proves the tropical Matsusaka criterion, Theorem~\ref{thm:tropical-matsusaka-criterion}.
Finally, Section~\ref{sec:relation-two-criteria} proves the tropical Matsusaka--Ran criterion, Theorem~\ref{thm:tropical-matsusaka-ran-criterion}, and the main theorem, Theorem~\ref{thm:main-equivalence}.

\medskip
\subsection{Notations and conventions}\label{subsec:notations-conventions}
For any lattice \(\Lambda\), we write \((\Lambda)^{*}\coloneqq \Hom(\Lambda,\Z)\), \((\Lambda)_{\R}\coloneqq \Lambda\otimes_{\Z}\R\), and \((\Lambda)_{\R}^{*}\coloneqq \Hom(\Lambda,\R)\). If \(f\colon\Lambda_1\to\Lambda_2\) is a homomorphism of lattices, we also write \(f\colon(\Lambda_1)_{\R}\to(\Lambda_2)_{\R}\) for its \(\R\)-linear extension. For \(s\in\Z_{>0}\), we write \(M_s(\Z)\) and \(M_s(\R)\) for the sets of \(s\times s\) matrices with entries in \(\Z\) and \(\R\), respectively. We use these conventions throughout this paper.

\section{Preliminaries}\label{sec:preliminaries}

Following \cite{GS19,GS23,RZ25}, this section fixes notation and recalls the basic notions used throughout the paper. We collect standard definitions of rational polyhedral spaces, tropical cycles, tropical Cartier divisors, and integral tori.

\subsection{Rational polyhedral spaces and tropical curves}\label{subsec:rational-polyhedral-spaces-curves}
Write \(\langle\cdot,\cdot\rangle\) for the standard Euclidean inner product on any Euclidean space.
Let \(n\in \Z_{>0}\). A \emph{rational polyhedron} in \(\R^n\) is a finite intersection of half-spaces \(\{\,\boldsymbol{x}\in\R^n\mid \langle \boldsymbol{a},\boldsymbol{x}\rangle\le c\,\}\) with \(\boldsymbol{a}\in\Z^n\) and \(c\in\R\). A \emph{rational polyhedral set} is a finite union of rational polyhedra. 
\begin{defi}\label{def:rational-polyhedral-space}
A \emph{rational polyhedral space} is a second-countable Hausdorff space \(P\) with an atlas \(\{(U_\alpha,V_\alpha,\varphi_\alpha)\}_{\alpha\in I}\) such that:
\begin{enumerate}[label=\textup{(\roman*)}, itemsep=2pt]
\item for each \(\alpha\in I\), \(\varphi_\alpha\colon U_\alpha\to V_\alpha\) is a homeomorphism onto an open subset \(V_\alpha\) of a rational polyhedral set \(P_\alpha\subseteq \R^{n_\alpha}\) for some \(n_\alpha\in\Z_{>0}\);
\item for any \(\alpha,\beta\in I\) and every connected component \(W\) of \(U_\alpha\cap U_\beta\), there exist \(A_W\in M_{n_\alpha\times n_\beta}(\Z)\) and \(\boldsymbol{b}_W\in\R^{n_\alpha}\) such that \((\varphi_\alpha\circ\varphi_\beta^{-1})(\boldsymbol{x})=A_W\cdot \boldsymbol{x}+\boldsymbol{b}_W\) on \(\varphi_\beta(W)\).
\end{enumerate}
In this paper, we assume that there exists \(M\in\Z_{>0}\) such that \(n_\alpha\le M\) for all \(\alpha\in I\).
A triple \((U_\alpha,V_\alpha,\varphi_\alpha)\) is called a \emph{chart} of \(P\).
\end{defi}

\begin{defi}\label{def:affine-rational-functions}
Let \(P\) be a rational polyhedral space.

\begin{enumerate}
\item For an open set \(U\subseteq P\), let \(\Aff_P(U)\) be the abelian group of continuous functions \(\phi\colon U\to\R\) satisfying the following condition. For every chart \((U_\beta,V_\beta,\varphi_\beta)\) with \(U\cap U_\beta\neq\varnothing\) and \(V_\beta\subseteq\R^{n_\beta}\) for some \(n_\beta\in\Z_{>0}\), and for every connected component \(W\) of \(U\cap U_\beta\), there exist \(\boldsymbol{a}_{\beta,W}\in\Z^{n_\beta}\) and \(b_{\beta,W}\in\R\) such that
\[
(\phi\circ\varphi_\beta^{-1})(\boldsymbol{x})
=
\langle \boldsymbol{a}_{\beta,W},\boldsymbol{x}\rangle+b_{\beta,W},
\]
for every \(\boldsymbol{x}\in\varphi_\beta(W)\). Elements of \(\Aff_P(U)\) are called \emph{integral affine functions} on \(U\). As \(U\) varies, these abelian groups form a sheaf of abelian groups, denoted by \(\Aff_P\).

\item For an open set \(U\subseteq P\), let \(\mathcal M_P(U)\) be the abelian group of continuous functions \(\phi\colon U\to\R\) satisfying the following condition. For every \(p\in U\), there are a chart \((U_\gamma,V_\gamma,\varphi_\gamma)\) with \(p\in U_\gamma\) and \(V_\gamma\subseteq\R^{n_\gamma}\) for some \(n_\gamma\in\Z_{>0}\), an open neighbourhood \(W\subseteq U\cap U_\gamma\) of \(p\), an open set \(O\subseteq\R^{n_\gamma}\), and rational polyhedra \(\Delta_1,\ldots,\Delta_k\subseteq\R^{n_\gamma}\) such that
\[
\varphi_\gamma(W)
=
O\cap\bigcup_{i=1}^k\Delta_i.
\]
Moreover, for each \(i\), there exist \(\boldsymbol{a}_{\gamma,i}\in\Z^{n_\gamma}\) and \(b_{\gamma,i}\in\R\) such that
\[
(\phi\circ\varphi_\gamma^{-1})(\boldsymbol{x})
=
\langle\boldsymbol{a}_{\gamma,i},\boldsymbol{x}\rangle+b_{\gamma,i},
\]
for every \(\boldsymbol{x}\in\varphi_\gamma(W)\cap\Delta_i\). Elements of \(\mathcal M_P(U)\) are called \emph{rational functions} on \(U\). As \(U\) varies, these abelian groups form a sheaf of abelian groups, denoted by \(\mathcal M_P\).
\end{enumerate}
\end{defi}

\begin{defi}
Let \(P\) be a rational polyhedral space. We say that \(P\) is a \emph{tropical manifold} if \(P\) admits an atlas \(\{(U_\alpha,V_\alpha,\varphi_\alpha)\}_{\alpha\in I}\) such that each \(V_\alpha\) is an open subset of a tropical linear space \(L_\alpha\) (cf.~\cite[Sec.~2D]{GS23}).
\end{defi}

\begin{defi}\label{def:rational-polyhedral-morphism}
Let \(P\) and \(Q\) be rational polyhedral spaces. A \emph{morphism of rational polyhedral spaces} \(f\colon P\to Q\) is a continuous map whose pullback induces a morphism of sheaves \(f^{-1}\Aff_Q\to \Aff_P\). Such a morphism \(f\) is called an \emph{isomorphism} if there exists a morphism of rational polyhedral spaces \(g\colon Q\to P\) such that \(g\circ f=\id_P\) and \(f\circ g=\id_Q\). Such a morphism \(f\) is called \emph{proper} if \(f^{-1}(K)\subseteq P\) is compact for every compact subset \(K\subseteq Q\).
\end{defi}

\begin{defi}
Let \(P\) be a rational polyhedral space.
A point \(x\in P\) is called \emph{regular} if there exist an integer \(r\in\Z_{\ge0}\), an open neighbourhood \(U\subseteq P\) of \(x\), and an isomorphism of rational polyhedral spaces from \(U\) onto an open subset of \(\R^r\). In this case, the integer \(r\) is called the \emph{local dimension} of \(P\) at \(x\). We denote by \(P_{\mathrm{reg}}\) the set of regular points and call \(P_{\mathrm{reg}}\) the \emph{regular locus} of \(P\).
For \(d\in\Z_{\ge0}\), the rational polyhedral space \(P\) is called \emph{\(d\)-dimensional} if the maximum of the local dimensions of points of \(P_{\mathrm{reg}}\) is \(d\). The \(d\)-dimensional rational polyhedral space \(P\) is called \emph{purely \(d\)-dimensional} if every point of \(P_{\mathrm{reg}}\) has local dimension \(d\).
\end{defi}

\begin{defi}\label{def:tropical-curve}
A \emph{tropical curve} \(\Gamma\) is a connected compact purely \(1\)-dimensional rational polyhedral space. A \emph{global face structure} on \(\Gamma\) is given by choosing a finite subset \(S\subseteq \Gamma\), containing \(\Gamma\setminus\Gamma_{\mathrm{reg}}\), such that the closure of every connected component of \(\Gamma\setminus S\) is contained in a chart and is mapped onto a closed line segment. The points of \(S\) are called \emph{vertices}, and these closures are called \emph{edges}. Assume that \(\Gamma\) has a fixed global face structure.
\begin{enumerate}[label=\textup{(\roman*)}]
\item We write \(E(\Gamma)\) for the set of edges of \(\Gamma\) and \(V(\Gamma)\) for the set of vertices of \(\Gamma\). For an edge \(\tau\in E(\Gamma)\) with endpoints \(v\) and \(w\), define its \emph{relative interior} by
\(
\relint(\tau)\coloneqq \tau\setminus\{v,w\}.
\)
\item Throughout this paper, every global face structure on a tropical curve is understood to have been replaced by a refinement such that no edge has the same endpoint twice and any two vertices are joined by at most one edge. We then regard \(\Gamma\) as a simplicial complex, with vertices as \(0\)-simplices and edges as \(1\)-simplices. Hence the simplicial chain groups \(C_1(\Gamma,\Z)\), \(C_0(\Gamma,\Z)\), the boundary map \(\partial: C_1(\Gamma,\Z) \to C_0(\Gamma,\Z)\), and the homology group \(H_1(\Gamma,\Z)\) are all well-defined.
\item Now take an edge \(\tau \in E(\Gamma)\) with endpoints \(v\) and \(w\). We define an oriented \(1\)-simplex \(c_{\tau,v}\), whose underlying edge is \(\tau\), by requiring that \(\partial c_{\tau,v} = w - v\). This condition means that \(c_{\tau,v}\) is oriented from \(v\) to \(w\). We call \(\tau\) the \emph{support} of \(c_{\tau,v}\). We then define \(c_{\tau,w} \coloneqq -c_{\tau,v}\), and \(c_{\tau,w}\) is the oriented \(1\)-simplex pointing from \(w\) to \(v\), i.e., it has the \emph{opposite orientation} to \(c_{\tau,v}\). Indeed, \(\partial c_{\tau,w}= -\partial c_{\tau,v} = -(w-v) = v - w\).
In this case, we also say that \(c_{\tau,v}\) is directed out of \(v\), and \(c_{\tau,w}\) is directed out of \(w\).
\item We define \(g(\Gamma)\coloneqq \rank_{\Z}H_1(\Gamma,\Z)\), the genus of \(\Gamma\). This integer is independent of the global face structure on \(\Gamma\).
\end{enumerate}
\end{defi}

\begin{defi}\label{def:smooth-tropical-curve}
A tropical curve \(\Gamma\) with a fixed global face structure is \emph{smooth} if, at every vertex \(x\) of \(\Gamma\), there exist an integer \(d\in\Z_{>0}\) and a chart \((U,V,\varphi)\) with \(x\in U\) and \(\varphi\colon U\to V\subseteq \R^{d+1}/\R(1,\dots,1)\) such that \(\varphi(x)\) is the class of \(\boldsymbol{0}\), and \(V\) is an open subset of
\[
\bigcup_{i=1}^{d+1}\R_{\ge0}\,\overline{\boldsymbol{e}}_i
\subseteq
\R^{d+1}/\R(1,\dots,1)\cong \R^d.
\]
Here \(\boldsymbol{e}_1,\dots,\boldsymbol{e}_{d+1}\) are the standard basis of \(\R^{d+1}\). For each \(i\), the symbol \(\overline{\boldsymbol{e}}_i\) denotes the image of \(\boldsymbol{e}_i\) in \(\R^{d+1}/\R(1,\dots,1)\).
\end{defi}

\begin{remark}\label{rem:global-face-structures-on-supports}
Let \(\Gamma\) be a tropical curve. Since \(\Gamma\) is compact, choose finitely many charts \((U_i,V_i,\psi_i)\), \(1\le i\le N\), such that \(\Gamma=\bigcup_{i=1}^{N}U_i\). Choose a finite subset \(S\subseteq \Gamma\), containing \(\Gamma\setminus \Gamma_{\mathrm{reg}}\), such that the closure \(e\) of every connected component of \(\Gamma\setminus S\) is contained in some \(U_i\). Enlarge \(S\) by finitely many points of \(\Gamma_{\mathrm{reg}}\) so that, whenever \(e\subseteq U_i\), the image \(\psi_i(e)\subseteq V_i\) is a closed line segment. Taking all points of \(S\) as vertices and the closures of all connected components of \(\Gamma\setminus S\) as edges gives a global face structure on \(\Gamma\).
\end{remark}

\subsection{Integral tangent spaces}\label{subsec:integral-tangent-spaces}
\begin{defi}\label{def:integral-tangent-space}
Let \(P\) be a rational polyhedral space, and let \(\underline{\R}\) denote the constant sheaf with value \(\R\) on \(P\). Let \(\Aff_P\) be the sheaf of integral affine functions on \(P\). Via the natural inclusion of \(\underline{\R}\) into \(\Aff_P\), define \(\Omega_P\coloneqq \Aff_P/\underline{\R}\), the sheaf of tropical \(1\)-forms on \(P\). Fix \(x\in P\).
\begin{enumerate}[label=\textup{(\roman*)}]
\item For every \(\alpha\in\Omega_{P,x}\), there exist an open neighbourhood \(W\subseteq P\) of \(x\) and \(h\in\Aff_P(W)\) such that \((d_Ph)_x=\alpha\), where \(d_P\colon \Aff_P\to\Omega_P\) is the natural quotient morphism of sheaves and \((d_Ph)_x\) denotes the image of the germ \(h_x\in(\Aff_P)_x\) under the induced map \((d_P)_x\colon(\Aff_P)_x\to(\Omega_P)_x\). We call such an \(h\) a local representative of \(\alpha\) near \(x\).
\item Define the \emph{integral tangent space} at \(x\) by
\[
T_x^{\Z}P\coloneqq \Hom(\Omega_{P,x},\Z).
\]
The tangent space at \(x\) is its \(\R\)-extension
\(
T_xP\coloneqq T_x^{\Z}P\otimes_{\Z}\R.
\)
\item Let \(Q\) be another rational polyhedral space, and let \(f\colon P\to Q\) be a morphism of rational polyhedral spaces. The morphism \(f\) induces a morphism of sheaves \(f^{-1}\Omega_Q\to\Omega_P\). Taking the stalk at \(x\), we obtain the pullback on stalks
\(
f_x^*\colon \Omega_{Q,f(x)}\longrightarrow \Omega_{P,x}.
\)
We define 
\[d_x f\colon T_x^{\Z}P\to T_{f(x)}^{\Z}Q\]
 to be the dual map of \(f_x^*\). Hence, for every \(\xi\in T_x^{\Z}P\) and every \(\alpha\in\Omega_{Q,f(x)}\), one has
\begin{equation}\label{eq:differential-local-representative}
d_xf(\xi)(\alpha)
=
(\xi\circ f_x^*)(\alpha)
=
\xi\bigl(f_x^*(\alpha)\bigr).
\end{equation}

\end{enumerate}
\end{defi}

\begin{defi}\label{def:edge-tangent-length}
Let \(\Gamma\) be a tropical curve with a fixed global face structure. Let \(\Aff_{\Gamma}\) be the sheaf of integral affine functions on \(\Gamma\), and let \(\Omega_{\Gamma}\) be the sheaf of tropical \(1\)-forms on \(\Gamma\). Let \(d_{\Gamma}\colon \Aff_{\Gamma}\to\Omega_{\Gamma}\) be the natural quotient map. Let \(\tau\) be an edge of \(\Gamma\), and let \(v\) and \(w\) be the two endpoints of \(\tau\). Choose a chart \((U,V,\psi)\) with \(\tau\subseteq U\) and \(V\subseteq\R^n\) for some \(n\in\Z_{>0}\), and write \(x_v\coloneqq\psi(v)\) and \(x_w\coloneqq\psi(w)\). Write \(\langle\cdot,\cdot\rangle\) for the standard Euclidean inner product on \(\R^n\).
\begin{enumerate}[label=\textup{(\roman*)}]
\item There exists a unique primitive integral vector \(\boldsymbol{u}_{\tau,v}\in\Z^n\) along \(\psi(\tau)\) out of \(x_v\). Once the chart \((U,V,\psi)\) is fixed, we call \(\boldsymbol{u}_{\tau,v}\) the tangent direction of \(\tau\) out of \(v\) (in this chart).

For \(\alpha\in\Omega_{\Gamma,v}\), choose an open neighbourhood \(W\subseteq U\) of \(v\) and \(h_{\alpha}\in\Aff_{\Gamma}(W)\) such that \((d_{\Gamma}h_{\alpha})_v=\alpha\). After shrinking \(W\) if necessary, there exist \(\boldsymbol{m}_{\alpha}\in\Z^n\) and \(b_{\alpha}\in\R\) such that
\[
(h_{\alpha}\circ\psi^{-1})(\boldsymbol{y})
=
\langle\boldsymbol{m}_{\alpha},\boldsymbol{y}\rangle+b_{\alpha},
\qquad
\boldsymbol{y}\in\psi(W).
\]
Define \(\widetilde{\boldsymbol{u}}_{\tau,v}\in T_v^{\Z}\Gamma\) by
\[
\widetilde{\boldsymbol{u}}_{\tau,v}(\alpha)
\coloneqq
\langle\boldsymbol{m}_{\alpha},\boldsymbol{u}_{\tau,v}\rangle.
\]
This definition is independent of the chosen chart above and the chosen \(h_{\alpha}\). We call \(\widetilde{\boldsymbol{u}}_{\tau,v}\) the tangent functional of \(\tau\) at \(v\).

\item With \(\boldsymbol{u}_{\tau,v}\) as in \textup{(i)}, there exists a unique \(\ell(\tau)\in\R_{>0}\) such that
\(
x_w-x_v=\ell(\tau)\boldsymbol{u}_{\tau,v}.
\)
The number \(\ell(\tau)\) is independent of the chosen chart above and is called the length of \(\tau\).

\end{enumerate}
\end{defi}

\subsection{Tropical cycles}
Following \cite[Sec.~3.1]{GS19}, we recall the definitions of tropical cycles used in this paper. Let \(P\) be a nonempty rational polyhedral space and fix \(k\in\Z_{\ge0}\).
\begin{defi}\label{def:tropical-cycle}
A \emph{tropical \(k\)-cycle} on \(P\) is a \(\Z\)-valued function \(A\colon P\to \Z\) whose support
\(
|A|\coloneqq \overline{\{x\in P\mid A(x)\neq 0\}}
\)
is either empty or a purely \(k\)-dimensional rational polyhedral subspace of \(P\). If \(|A|\neq\varnothing\), then \(A\) is nonzero precisely on \(|A|_{\mathrm{reg}}\), is locally constant on \(|A|_{\mathrm{reg}}\), and satisfies the balancing condition: for every \(x\in P\), the local cone \(\mathrm{LC}_x(A)\), defined as in \cite[Sec.~3.1]{GS19}, is a balanced tropical fan \(k\)-cycle on \(T_xP\) in the sense of \cite[Def.~3.1]{GS19}. A tropical \(k\)-cycle \(A\) is \emph{effective} if \(A(x)\ge0\) for every \(x\in P\).

We write \(Z_k(P)\) for the abelian group of tropical \(k\)-cycles on \(P\). By convention, the zero function \(0\colon P\to\Z\) is a tropical \(k\)-cycle and is the zero element of \(Z_k(P)\). Let \(B,C\in Z_k(P)\), and define \(s\colon P\to\Z\) by \(s(x)=B(x)+C(x)\). In general, \(s\) need not be a tropical \(k\)-cycle. However, there exists a unique tropical \(k\)-cycle on \(P\) which agrees with \(s\) away from a locally polyhedral subset of \(P\) of dimension at most \(k-1\), where locally polyhedral subsets are understood in the sense of \cite[Def.~2.4\textup{(d)}]{GS19}. This tropical \(k\)-cycle is denoted by \(B+C\), and defines the sum of \(B\) and \(C\) in \(Z_k(P)\).
\end{defi}

\begin{defi}\label{def:tropical-1-cycle-weight}
Let \(C\) be a nonzero tropical \(1\)-cycle on a rational polyhedral space \(P\) such that \(|C|\) is connected and compact. Fix a global face structure on \(|C|\), refined if necessary, such that \(C\) is constant on \(\relint(\tau)\) for every edge \(\tau\) of \(|C|\). For each edge \(\tau\) of \(|C|\), define the \emph{weight} of \(\tau\) by
\[
\omega_C(\tau)\coloneqq C(x),
\qquad x\in\relint(\tau).
\]
In this paper, every nonzero tropical \(1\)-cycle with connected support is equipped with a fixed global face structure on its support satisfying the condition above.
\end{defi}

\begin{defi}\label{def:fundamental}
If \(P\) is purely \(n\)-dimensional, we say that \(P\) has a \emph{fundamental cycle} \([P]\) if the function which is \(1\) on \(P_{\mathrm{reg}}\) and \(0\) on \(P\setminus P_{\mathrm{reg}}\) defines a tropical \(n\)-cycle.
\end{defi}

\begin{defi}\label{def:proper-pushforward}
Let \(Q\) be a rational polyhedral space, and let \(f\colon P\to Q\) be a proper morphism of rational polyhedral spaces. Let \(A\in Z_k(P)\). If \(A=0\), we define \(f_*A\coloneqq 0\). Assume that \(A\neq0\).

Since \(|A|\) is a purely \(k\)-dimensional rational polyhedral subspace of \(P\), regard \(A\) as a tropical \(k\)-cycle on \(|A|\). Since \(f(|A|)\) is a rational polyhedral subspace of \(Q\), let
\[
\widetilde f\colon |A|\to f(|A|),
\]
be the restriction of \(f\). Then \(\widetilde f\) is proper and surjective. If \(\dim f(|A|)<k\), define \(f_*A\coloneqq0\). Assume that \(\dim f(|A|)=k\).
Write \(|A|_{\mathrm{reg}}\) and \(f(|A|)_{\mathrm{reg}}\) for the regular loci of \(|A|\) and \(f(|A|)\), respectively. For \(x\in |A|_{\mathrm{reg}}\), set
\begin{equation}\label{eq:local-index-mf}
m_{\widetilde f}(x)\coloneqq
\bigl[\,T^{\Z}_{\widetilde f(x)}f(|A|):d_x\widetilde f(T^{\Z}_x|A|)\bigr],
\end{equation}
where the index is understood to be \(0\) if infinite. For
\[
y\in f(|A|)\setminus
\bigl(\widetilde f(|A|\setminus |A|_{\mathrm{reg}})
\cup (f(|A|)\setminus f(|A|)_{\mathrm{reg}})\bigr),
\]
define
\begin{equation}\label{eq:proper-pushforward-value}
(\widetilde f_*A)(y)\coloneqq
\sum_{x\in \widetilde f^{-1}(\{y\})\cap |A|_{\mathrm{reg}}}
m_{\widetilde f}(x)\,A(x).
\end{equation}
This sum is finite. Indeed, nonzero contributions come only from isolated points of \(\widetilde f^{-1}(\{y\})\), and properness of \(\widetilde f\) gives only finitely many such points. There is a tropical \(k\)-cycle on \(f(|A|)\) extending the function \eqref{eq:proper-pushforward-value}. We denote this tropical \(k\)-cycle by \(\widetilde f_*A\).
Extending \(\widetilde f_*A\) by \(0\) outside \(f(|A|)\) gives a \(\Z\)-valued function on \(Q\). We define \(f_*A\) to be the unique tropical \(k\)-cycle on \(Q\) that agrees with this function outside a locally polyhedral subset of \(Q\) of dimension at most \(k-1\), where locally polyhedral subsets are understood in the sense of \cite[Def.~2.4\textup{(d)}]{GS19}.
\end{defi}

\subsection{Tropical Cartier divisors, tropical line bundles and the first Chern class maps}\label{subsec:tropical-cartier-line-bundles}
Following \cite[Sec.~3.4]{GS19}, we recall tropical Cartier divisors and tropical line bundles. Let \(P\) and \(Q\) be rational polyhedral spaces, and let \(f\colon P\to Q\) be a morphism of rational polyhedral spaces.
For \(T\in\{P,Q\}\), let \(\Aff_T\) be the sheaf of integral affine functions on \(T\), and let \(\mathcal M_T\) be the sheaf of tropical rational functions on \(T\). Set \(\Div_T\coloneqq \mathcal M_T/\Aff_T\). Then there is a short exact sequence
\begin{equation}\label{eq:aff-M-div-short-exact}
0\longrightarrow \Aff_T\longrightarrow \mathcal M_T\longrightarrow \Div_T\longrightarrow 0.
\end{equation}
Set \(\Div(T)\coloneqq H^0(T,\Div_T)\). Elements of \(\Div(T)\) are called \emph{tropical Cartier divisors} on \(T\). Let \(q_T\colon \mathcal M_T\to \Div_T\) be the quotient map. For every open subset \(U\subseteq T\), let
\(
q_{T,\scriptscriptstyle U}\colon H^0(U,\mathcal M_T)\to H^0(U,\Div_T)
\)
be the homomorphism induced by \(q_T\). We define \(\Prin(T)\coloneqq \im(q_{T,\scriptscriptstyle T})\subseteq \Div(T)\). Elements of \(\Prin(T)\) are called tropical principal divisors on \(T\).

The pullback of tropical rational functions and integral affine functions along \(f\) induces a homomorphism \(f^*\colon \Div(Q)\to \Div(P)\). We follow \cite[Sec.~4.A]{JRS18} and call elements of \(H^1(T,\Aff_T)\) \emph{tropical line bundles} on \(T\). We define \(\Pic(T)\coloneqq H^1(T,\Aff_T)\). The natural map \(d_T\colon \Aff_T\to \Omega_T\) induces the first Chern class map
\[
c_1\colon H^1(T,\Aff_T)\to H^1(T,\Omega_T),
\]
where \(H^1(T,\Omega_T)\) denotes the first sheaf cohomology of the abelian sheaf \(\Omega_T\). The pullback of integral affine functions along \(f\) induces a homomorphism
\begin{equation*}
f^*\colon \Pic(Q)=H^1(Q,\Aff_Q)\longrightarrow \Pic(P)=H^1(P,\Aff_P).
\end{equation*}
This homomorphism is called the pullback of tropical line bundles; see also \cite[Sec.~3.1]{Sum21}.

For \(T\in\{P,Q\}\), \eqref{eq:aff-M-div-short-exact} induces the connecting homomorphism
\[
\delta_T\colon H^0(T,\Div_T)\to H^1(T,\Aff_T).
\]
Let \(D\in \Div(T)=H^0(T,\Div_T)\). A tropical rational function \(\psi\in H^0(U,\mathcal M_T)\) on an open subset \(U\subseteq T\) is called a local representative of \(D\) on \(U\) if
\[
q_{T,\scriptscriptstyle U}(\psi)=D|_U
\quad\text{in }H^0(U,\Div_T).
\]
Set
\[
\MO_T(D)\coloneqq \delta_T(D).
\]

Let \(L\in\Pic(T)\). Choose an open covering \(\mathcal U_T=\{U_i\}_{i\in I}\) of \(T\) and a Čech \(1\)-cocycle \(a=\{a_{ij}\}\in Z^1(\mathcal U_T,\Aff_T)\) corresponding to \(L\). A global section of \(L\) with respect to this open covering is a family \(s=\{s_i\}_{i\in I}\), where \(s_i\in H^0(U_i,\mathcal M_T)\), such that \(s_j-s_i=a_{ij}\) on \(U_i\cap U_j\), and one has
\(
q_{T,\scriptscriptstyle U_i\cap U_j}(s_j|_{U_i\cap U_j})
=
q_{T,\scriptscriptstyle U_i\cap U_j}(s_i|_{U_i\cap U_j}).
\)
Hence the local sections \(q_{T,\scriptscriptstyle U_i}(s_i)\) glue to a tropical Cartier divisor on \(T\).

\begin{defi}\label{def:pic-zero}
We define the group \(\Pic^{0}(P)\coloneqq \ker\bigl(c_1\colon \Pic(P)\to H^1(P,\Omega_P)\bigr)\).
\end{defi}

\subsection{Intersection numbers}\label{subsec:intersection-numbers}
Let \(P\) be a compact purely \(n\)-dimensional tropical manifold.

\begin{defi}\label{def:intersection-number}
By \cite[Def.~6.5]{AR10}, for each integer \(k\) satisfying \(1\le k\le n\), there is a bilinear intersection product map
\begin{equation*}
\Div(P)\times Z_k(P)\longrightarrow Z_{k-1}(P),
\qquad
(D,Y)\longmapsto D\cdot Y.
\end{equation*}
Let \(D_1,\dots,D_n\in \Div(P)\). For \(A\in Z_k(P)\) and \(1\le r\le k\), define \(D_1\cdots D_r\cdot A\in Z_{k-r}(P)\) inductively as follows. For \(r=1\), let \(D_1\cdot A\) be the intersection product above. For \(r\ge 2\), set \(D_1\cdots D_r\cdot A\coloneqq D_1\cdot(D_2\cdots D_r\cdot A)\).
Let \(\deg\colon Z_0(P)\to \Z\) be defined by \(\deg(\sum_i a_i p_i)=\sum_i a_i\), where \(a_i\in\Z\) and \(p_i\in P\). In the case \(A\in Z_k(P)\), define
\begin{equation*}
(D_1\cdots D_k\cdot A)_P\coloneqq \deg(D_1\cdots D_k\cdot A).
\end{equation*}
Let \([P]\) denote the fundamental cycle of the compact tropical manifold \(P\). We write
\begin{equation*}
(D_1\cdots D_n)_P\coloneqq (D_1\cdots D_n\cdot [P])_P.
\end{equation*}
This is called the intersection number of \(D_1,\dots,D_n\) on \(P\). For \(D'\in \Div(P)\) and \(1\le r\le n\), define
\[
([D'])^r\coloneqq \underbrace{D'\cdots D'}_{r\ \mathrm{times}}\cdot [P],
\]
in \(Z_{n-r}(P)\). In particular, set \([D']\coloneqq [D']^1\coloneqq D'\cdot [P]\in Z_{n-1}(P)\). By convention, set \(([D'])^0\coloneqq[P]\).
\end{defi}

\subsection{Integral tori and polarizations}\label{subsec:integral-tori-polarizations}
Following \cite{RZ25}, this subsection recalls the notation and basic definitions for integral tori. The notation and several definitions used later in this section also follow \cite{RZ25}.

An \emph{integral torus} is a triple \(X=(\Lambda_X,\Lambda_X^{\prime},[\,\cdot,\cdot\,]_X)\), where \(\Lambda_X\) and \(\Lambda_X^{\prime}\) are lattices of the same rank and \([\,\cdot,\cdot\,]_X\colon \Lambda_X\times \Lambda_X^{\prime}\to \R\) is a nondegenerate bilinear pairing. Via \(\lambda^{\prime}\mapsto [\,\cdot,\lambda^{\prime}\,]_X\), we regard \(\Lambda_X^{\prime}\) as a lattice in \((\Lambda_X)_{\R}^{*}\), and we also write
\(
X= (\Lambda_X)_{\R}^{*}/\Lambda_X^{\prime}
\)
for the underlying real torus. Throughout this paper, we tacitly use these identifications. The \emph{dual integral torus} of \(X\) is \(X^{\vee}\coloneqq (\Lambda_X^{\prime},\Lambda_X,[\,\cdot,\cdot\,]_{X^{\vee}})\), with \([\lambda^{\prime},\lambda]_{X^{\vee}}\coloneqq [\lambda,\lambda^{\prime}]_X\) for \(\lambda^{\prime}\in \Lambda_X^{\prime}\) and \(\lambda\in \Lambda_X\).

Let \(\Theta\) be a tropical Cartier divisor on \(X\). By \cite[Sec.~5.1]{MZ08}, one has a canonical isomorphism \(H^1(X,\Omega_X)\cong \Hom(\Lambda_X^{\prime},\Lambda_X)\). Let \(\kappa_{\Theta}\colon \Lambda_X^{\prime}\to \Lambda_X\) be the homomorphism corresponding to \(c_1\bigl(\MO_X(\Theta)\bigr)\in H^1(X,\Omega_X)\). Define
\[
Q_{\Theta}\colon (\Lambda_X^{\prime})_{\R}\times (\Lambda_X^{\prime})_{\R}\to \R,\qquad
Q_{\Theta}(u,v)\coloneqq [\kappa_{\Theta}(u),v]_X=\kappa_{\Theta}(u)(v).
\]
By the tropical Appell--Humbert theorem \cite[Thm.~7.2]{GS23}, the bilinear form corresponding to \(c_1\bigl(\MO_X(\Theta)\bigr)\) is symmetric, i.e., \(Q_{\Theta}(u,v)=Q_{\Theta}(v,u)\) for all \(u,v\in(\Lambda_X^{\prime})_{\R}\). We say that \(\Theta\) is \emph{ample} if \(Q_{\Theta}\) is positive definite. We call \(\Theta\) a \emph{polarization} if \(\Theta\) is ample, and a \emph{principal polarization} if \(\Theta\) is a polarization and \(\kappa_{\Theta}\colon \Lambda_X^{\prime}\to \Lambda_X\) is an isomorphism. An integral torus \(X\) is called a \emph{tropical abelian variety} if it admits a polarization, and a pair \((X,\Theta)\) is called a \emph{polarized tropical abelian variety} (respectively, a \emph{principally polarized tropical abelian variety}) if \(\Theta\) is a polarization (respectively, a principal polarization).
\begin{remark}\label{rem:integral-torus-cohomology-description}
Let \(X=(\Lambda_X,\Lambda_X^{\prime},[\,\cdot,\cdot\,]_X)\) be an integral torus. Under the canonical identifications \(H^0(X,\Omega_X)\cong \Lambda_X\) and \(H_1(X,\Z)\cong \Lambda_X^{\prime}\), we also write \(X=\bigl(H^0(X,\Omega_X),H_1(X,\Z),[\,\cdot,\cdot\,]_X\bigr)\).
\end{remark}

\begin{defi}\label{def:translation-chart}

Let \(n=\dim X\), and let \(\pi_X\colon(\Lambda_X)_{\R}^{*}\to X\) be the quotient map. Choose a \(\Z\)-basis \(\{\eta_i^*\}_{i=1}^{n}\) of \((\Lambda_X)^{*}\). Let
\[
\iota\colon(\Lambda_X)_{\R}^{*}\to\R^n,\qquad
\sum_{i=1}^{n}z_i\eta_i^*\longmapsto (z_1,\ldots,z_n),
\quad z_i\in\R.
\]
Choose an open neighborhood \(\widetilde U_0\) of the origin in \((\Lambda_X)_{\R}^{*}\) such that \(\pi_X|_{\widetilde U_0}\) is injective. Set
\[
U_0\coloneqq\pi_X(\widetilde U_0),
\qquad
V_0\coloneqq\iota(\widetilde U_0),
\]
and define
\[
\psi_0\colon U_0\to V_0,\qquad
\psi_0\coloneqq
\iota|_{\widetilde U_0}\circ
\bigl(\pi_X|_{\widetilde U_0}\bigr)^{-1}.
\]
Then \((U_0,V_0,\psi_0)\) is a chart at the origin \(0_X\). For \(x\in X\), the \emph{translation by \(x\)} is the morphism \(t_x\colon X\to X\) defined by \(t_x(y)\coloneqq y+x\) for \(y\in X\). Set \(U_x\coloneqq t_x(U_0)\) and define \(\psi_x\colon U_x\to V_0\) by \(\psi_x(y)\coloneqq \psi_0(y-x)\). In this paper, a chart of \(X\) means one of the charts \((U_x,V_0,\psi_x)\) just defined.

In this paper, for any tropical curve \(\Gamma\) in \(X\), by Remark~\ref{rem:global-face-structures-on-supports}, we choose a global face structure on \(\Gamma\) and finitely many charts
\((U_{x_0},V_0,\psi_{x_0}),\ldots,(U_{x_N},V_0,\psi_{x_N})\)
of \(X\), with \(x_0=0_X\), such that every edge of \(\Gamma\) is contained in one of the \(U_{x_j}\). In particular, via the restriction
\(\iota|_{(\Lambda_X)^*}\colon(\Lambda_X)^*\cong\Z^n\), we regard tangent directions of edges of \(\Gamma\) as elements of \((\Lambda_X)^*\).

\end{defi}

\subsection{Homomorphisms of integral tori}
Let \(X_i=(\Lambda_{X_i},\Lambda_{X_i}^{\prime},[\,\cdot,\cdot\,]_{X_i})\) be integral tori for \(i=1,2\). A \emph{homomorphism of integral tori} \(f=(f^\sharp,f_\sharp)\colon X_1\to X_2\) consists of lattice homomorphisms
\(
f^\sharp\colon \Lambda_{X_2}\to \Lambda_{X_1}
\) and \(
f_\sharp\colon \Lambda_{X_1}^{\prime}\to \Lambda_{X_2}^{\prime},
\)
such that
\begin{equation}\label{eq:dual-hom-pairing-compat}
[f^\sharp(\lambda_2),\lambda_1^{\prime}]_{X_1}=[\lambda_2,f_\sharp(\lambda_1^{\prime})]_{X_2},
\end{equation}
for all \(\lambda_2\in \Lambda_{X_2}\) and \(\lambda_1^{\prime}\in \Lambda_{X_1}^{\prime}\). When \(X_1\) and \(X_2\) are regarded as real tori, we also write \(f\colon X_1\to X_2\) for the homomorphism induced by \(f_\sharp\). We write \(\Hom(X_1,X_2)\) for the set of homomorphisms from \(X_1\) to \(X_2\). If \(X_1=X_2=X\), we write
\[
\End(X)\coloneqq \Hom(X,X).
\]
The \emph{dual homomorphism} of \(f\) is
\(
f^{\vee}\coloneqq (f_\sharp,f^\sharp)\colon X_2^{\vee}\to X_1^{\vee}.
\)

We call \(f\) \emph{surjective} if for every \(x_2\in X_2\), there exists \(x_1\in X_1\) such that \(f(x_1)=x_2\). We call \(f\) \emph{finite} if \(f^{-1}(\{0_{X_2}\})\) is finite, and \emph{injective} if \(f^{-1}(\{0_{X_2}\})=\{0_{X_1}\}\), where \(0_{X_i}\) denotes the origin of \(X_i\). We call \(f\) an \emph{isomorphism} if there exists a homomorphism of integral tori \(h\colon X_2\to X_1\) such that \(h\circ f=\id_{X_1}\) and \(f\circ h=\id_{X_2}\). See also \cite[Def.~4.8]{RZ25} for equivalent definitions of these notions.

\begin{defi}\label{def:image-integral-torus}
Let \(L\subseteq M\) be lattices. The \emph{saturation} of \(L\) in \(M\) is
\[
L_{\sat}\coloneqq
\left\{\,m\in M\mathrel{}\middle|\mathrel{}nm\in L
\text{ for some }n\in\Z_{>0}\,\right\}.
\]
We say that \(L\subseteq M\) is \emph{saturated} if \(L=L_{\sat}\).

For a homomorphism \(f\colon X_1\to X_2\), the \emph{image} of \(f\) is an integral torus
\[
\im(f)\coloneqq
\bigl(
\Lambda_{X_2}/\ker(f^\sharp),
(\im(f_\sharp))_{\sat},
[\,\cdot,\cdot\,]_{\im(f)}
\bigr),
\]
where \((\im(f_\sharp))_{\sat}\) denotes the saturation of
\(\im(f_\sharp)\) in \(\Lambda_{X_2}^{\prime}\), and the nondegenerate pairing
\([\,\cdot,\cdot\,]_{\im(f)}\) is defined by
\[
[\overline{\lambda},\lambda']_{\im(f)}
\coloneqq
[\lambda,\lambda']_{X_2},
\qquad
\lambda\in\Lambda_{X_2},\quad
\lambda'\in(\im(f_\sharp))_{\sat},
\]
where \(\overline{\lambda}\) denotes the class of \(\lambda\) in
\(\Lambda_{X_2}/\ker(f^\sharp)\).
\end{defi}

\begin{lemma}[{\cite[Sec.~4]{RZ25}}]\label{lem:homomorphism-basic}
Follow the notation above.
\begin{enumerate}[label=\textup{(\roman*)}]
\item \(f\) is surjective if and only if
\(f^\sharp\colon \Lambda_{X_2}\to \Lambda_{X_1}\)
is injective.
\item Let \(X_3\) be an integral torus and let
\(g\colon X_2\to X_3\) be a homomorphism of integral tori. Then
\[
(g\circ f)^{\vee}=f^{\vee}\circ g^{\vee}\colon X_3^{\vee}\to X_1^{\vee}.
\]
\end{enumerate}
\end{lemma}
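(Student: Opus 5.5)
Both parts come down to translating the definitions of homomorphisms of integral tori into statements about the lattice maps \(f^\sharp\) and \(f_\sharp\). Throughout I use the compatibility \eqref{eq:dual-hom-pairing-compat} and the identification \(X_i=(\Lambda_{X_i})^*_{\R}/\Lambda'_{X_i}\), under which the map of real tori induced by \(f\) is the \(\R\)-linear extension \(f_\sharp\colon(\Lambda'_{X_1})_{\R}\to(\Lambda'_{X_2})_{\R}\). Here \((\Lambda'_{X_i})_{\R}\) is identified with \((\Lambda_{X_i})^*_{\R}\) via the nondegenerate pairing, and \(f_\sharp\) is compatible with the transpose of \(f^\sharp\) by \eqref{eq:dual-hom-pairing-compat}.

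\emph{Part (i).} The key observation is that, under the pairing identifications, the real-linear map \(F\colon(\Lambda_{X_1})^*_{\R}\to(\Lambda_{X_2})^*_{\R}\) inducing \(f\) is the transpose \((f^\sharp)^*_{\R}\). Indeed, for \(\xi\in(\Lambda_{X_1})^*_{\R}\) and \(\lambda_2\in\Lambda_{X_2}\) we have \(F(\xi)(\lambda_2)=\xi(f^\sharp(\lambda_2))\), and \eqref{eq:dual-hom-pairing-compat} says exactly that this agrees with \(f_\sharp\) on \(\Lambda'_{X_1}\). The torus map \(X_1\to X_2\) is surjective if and only if \(F\) is surjective. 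For the forward direction, the image of \(F\) is a linear subspace \(W\); its image in the compact torus \(X_2\) is \(\pi_{X_2}(W)\), which equals \(X_2\) only if \(W+\Lambda'_{X_2}=(\Lambda_{X_2})^*_{\R}\). Since \(\Lambda'_{X_2}\) is countable, Baire category then forces \(W\) to be the whole space. The converse is clear. Finally, \(F=(f^\sharp)^*_{\R}\) is surjective if and only if \(f^\sharp\otimes\R\) is injective, which for a homomorphism of lattices (torsion-free source) is equivalent to injectivity of \(f^\sharp\).

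\emph{Part (ii).} Composition of homomorphisms of integral tori is \(g\circ f=(f^\sharp\circ g^\sharp,\;g_\sharp\circ f_\sharp)\); one checks this satisfies \eqref{eq:dual-hom-pairing-compat} by applying the identity twice. Then by definition \((g\circ f)^\vee=(g_\sharp\circ f_\sharp,\;f^\sharp\circ g^\sharp)\). On the other hand, \(f^\vee=(f_\sharp,f^\sharp)\) and \(g^\vee=(g_\sharp,g^\sharp)\), so the composition rule for \(X_3^\vee\to X_2^\vee\to X_1^\vee\) gives \(f^\vee\circ g^\vee=\bigl((g^\vee)^\sharp\circ(f^\vee)^\sharp,\;(f^\vee)_\sharp\circ(g^\vee)_\sharp\bigr)=(g_\sharp\circ f_\sharp,\;f^\sharp\circ g^\sharp)\). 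The two expressions agree, which proves (ii).

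The only nonformal point is the Baire/measure step in (i), which shows that surjectivity of the torus map forces surjectivity of the linear lift. I expect this to be the main obstacle, though it is standard.
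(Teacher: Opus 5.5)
Your argument is correct. The paper gives no proof of this lemma; it quotes it from \cite[Sec.~4]{RZ25} and points to \cite[Def.~4.8]{RZ25} for equivalent formulations of surjectivity, injectivity and finiteness. So your proposal is not a different route so much as a self-contained check of the link the paper takes for granted. That link is the equivalence between set-theoretic surjectivity of the real torus map induced by \(f_\sharp\), which is the definition used here, and injectivity of \(f^\sharp\).

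The individual steps are sound:
\begin{itemize}
\item You identify the linear lift \(F\) with the transpose of \(f^\sharp\) via \eqref{eq:dual-hom-pairing-compat}. This uses that \(\Lambda'_{X_1}\) spans \((\Lambda_{X_1})^*_{\R}\).
\item Your Baire (or measure-zero) argument correctly shows that \(W+\Lambda'_{X_2}\) cannot be the whole space when \(W\) is a proper subspace.
\item Part (ii) is correct bookkeeping with the natural composition rule \(g\circ f=(f^\sharp\circ g^\sharp,\,g_\sharp\circ f_\sharp)\).
\end{itemize}

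One justification needs correcting. In the equivalence ``\(f^\sharp\) injective \(\iff\) \(f^\sharp\otimes\R\) injective'', torsion-freeness of the source only gives the direction \(\Leftarrow\). The direction \(\Rightarrow\), which you also need because (i) is an equivalence, comes from flatness of \(\R\) over \(\Z\). Equivalently, an integer matrix has the same rank over \(\Q\) as over \(\R\).
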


\subsection{Products of integral tori}\label{subsec:products-integral-tori}
Let \(X_i=(\Lambda_{X_i},\Lambda_{X_i}^{\prime},[\,\cdot\,,\,\cdot\,]_{X_i})\) be integral tori for \(i=1,2\). The \emph{product} of \(X_1\) and \(X_2\) is
\[
X_1\times X_2\coloneqq \bigl(\Lambda_{X_1}\oplus\Lambda_{X_2},\Lambda_{X_1}^{\prime}\oplus\Lambda_{X_2}^{\prime},[\,\cdot,\cdot\,]_{X_1\times X_2}\bigr),
\]
where, for \(\lambda_i\in \Lambda_{X_i}\) and \(\lambda_i^{\prime}\in \Lambda_{X_i}^{\prime}\) with \(i=1,2\),
\[
[(\lambda_1,\lambda_2),(\lambda_1^{\prime},\lambda_2^{\prime})]_{X_1\times X_2}
\coloneqq
[\lambda_1,\lambda_1^{\prime}]_{X_1}+[\lambda_2,\lambda_2^{\prime}]_{X_2}.
\]
We define the canonical projection of integral tori
\(\pr_1\colon X_1\times X_2\to X_1\) by
\[
(\pr_1)^{\sharp}(\lambda_1)=(\lambda_1,0),
\qquad
(\pr_1)_{\sharp}(\lambda_1^{\prime},\lambda_2^{\prime})
=\lambda_1^{\prime},
\]
for \(\lambda_1\in\Lambda_{X_1}\) and
\(\lambda_i^{\prime}\in\Lambda_{X_i}^{\prime}\) with \(i=1,2\).
We also define the canonical inclusion of integral tori
\(\iota_1\colon X_1\to X_1\times X_2\) by
\[
(\iota_1)^{\sharp}(\lambda_1,\lambda_2)=\lambda_1,
\qquad
(\iota_1)_{\sharp}(\lambda_1^{\prime})
=(\lambda_1^{\prime},0),
\]
for \(\lambda_i\in\Lambda_{X_i}\) with \(i=1,2\), and
\(\lambda_1^{\prime}\in\Lambda_{X_1}^{\prime}\).
The homomorphisms
\(\pr_2\colon X_1\times X_2\to X_2\) and
\(\iota_2\colon X_2\to X_1\times X_2\)
are defined similarly.

\subsection{Tropical \textup{(co)}homology, cycle class maps, and intersection numbers on integral tori}\label{subsec:tropical-cohomology}
The general definitions of tropical homology groups and tropical cohomology groups are not
recalled here; see \cite[Sec.~2]{IKMZ19}. We use only the notation and basic
properties for these groups in the case of integral tori. Let
\(X=(\Lambda_X,\Lambda_X^{\prime},[\,\cdot\,,\,\cdot\,]_X)\) be a
\(g\)-dimensional integral torus, and fix \(p,q,p',q'\in\Z_{\ge0}\). Let
\(H^{p,q}(X)\) and \(H_{p,q}(X)\) denote the tropical cohomology and
tropical homology groups of bidegree \((p,q)\), respectively. One has
\(H^{p,q}(X)=H_{p,q}(X)=0\) if \(p\in\Z_{>g}\) or \(q\in\Z_{>g}\).
Let \(\Omega_X\) denote the sheaf of tropical \(1\)-forms on \(X\), and set
\[
\Omega_X^0\coloneqq \underline{\Z},
\qquad
\Omega_X^p\coloneqq \bigwedge^p\Omega_X
\quad (p\in\Z_{\ge1}),
\]
where \(\underline{\Z}\) denotes the constant sheaf on \(X\) with value
\(\Z\). There exists a natural isomorphism
\(
H^{p,q}(X)\cong H^q(X,\Omega_X^p),
\)
where the group on the right is the \(q\)-th sheaf cohomology group of the
abelian sheaf \(\Omega_X^p\). The natural cycle class map in dimension \(p\)
is denoted by
\[
\cyc_X\colon Z_p(X)\to H_{p,p}(X).
\]
For the construction of this map, see \cite[Sec.~5.2]{GS19}. In particular, since \(X\) is
connected, \(H_{0,0}(X)\cong\Z\), and for every point \(x\in X\), the class
\(\cyc_X(x)\) corresponds to \(1\in\Z\).

\begin{defi}\label{def:homological-equivalence}
For \(A,B\in Z_p(X)\), we write \(A\equiv_{\hom}B\) if
\(\cyc_X(A)=\cyc_X(B)\) in \(H_{p,p}(X)\).
\end{defi}

\begin{prop}[{\cite[Prop.~5.12]{GS19}}]\label{prop:cyc-compat}
Let \(p\in\Z_{\ge1}\), \(D\in\Div(X)\), and \(A\in Z_p(X)\). One has
\(
\cyc_X(D\cdot A)=c_1(\MO_X(D))\cap\cyc_X(A)
\)
in \(H_{p-1,p-1}(X)\).
\end{prop}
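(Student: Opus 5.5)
The plan is to compare the two classes at the level of explicit cellular chains on a fine polyhedral subdivision of \(|A|\). We would show that they differ by the boundary of an explicit \((p-1,p)\)-chain built from the slopes of the local equations of \(D\). Since both sides are additive in \(A\), and \(\cyc_X\) and the intersection product are defined through such local data, we may assume \(A\neq0\) and work with a fixed face structure on \(|A|\).

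\textbf{Step 1: data.} Choose a finite open cover \(\{U_i\}\) of \(X\) by charts together with local representatives \(\varphi_i\in H^0(U_i,\mathcal M_X)\) of \(D\). Then \(a_{ij}\coloneqq\varphi_j-\varphi_i\in\Aff_X(U_i\cap U_j)\) is a Čech cocycle representing \(\MO_X(D)\), and \(\{d_Xa_{ij}\}\) represents \(c_1(\MO_X(D))\in H^1(X,\Omega_X)\). Refine the face structure of \(|A|\) to a polyhedral complex such that:
\begin{enumerate}
\item each cell lies in some \(U_i\);
\item each \(\varphi_i\) is affine on every cell it meets;
\item the cover-change loci are unions of faces.
\end{enumerate}
This requires a choice of a subordinate "partition" of the cells into the \(U_i\), i.e.\ a map \(\sigma\mapsto i(\sigma)\).

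\textbf{Step 2: explicit representatives.} Recall from \cite[Sec.~5.2]{GS19} that \(\cyc_X(A)\) is represented by the cellular \((p,p)\)-chain \(\sum_\sigma A(\sigma)\,\omega_\sigma\otimes\sigma\). Here \(\omega_\sigma\in\bigwedge^pT^{\Z}\) is the integral frame of the tangent lattice of \(\sigma\), compatible with the orientation of \(\sigma\). The cap product with the Čech class \(\{d_Xa_{ij}\}\) is then computed on codimension-one faces \(\tau\) separating cells with different labels \(i(\sigma)\neq j(\sigma')\): one contracts \(\omega_\sigma\) with \(d_Xa_{i(\sigma)j(\sigma')}\). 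On the other side, by \cite[Def.~6.5]{AR10}, \(D\cdot A\) assigns to each codimension-one face \(\tau\) the weight
\[
\sum_{\sigma\supset\tau}A(\sigma)\,\bigl\langle d\varphi_{i}|_\sigma,u_{\sigma/\tau}\bigr\rangle
-\Bigl\langle d\varphi_i|_\tau,\sum_{\sigma\supset\tau}A(\sigma)u_{\sigma/\tau}\Bigr\rangle ,
\]
with primitive normal vectors \(u_{\sigma/\tau}\). The balancing condition on \(A\) makes this independent of the lifts.

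\textbf{Step 3: the homotopy.} Define the \((p-1,p)\)-chain
\[
\beta\coloneqq\sum_\sigma A(\sigma)\,\bigl(\iota_{d\varphi_{i(\sigma)}|_\sigma}\omega_\sigma\bigr)\otimes\sigma .
\]
We compute \(\partial\beta\) face by face. At a face \(\tau\) where all adjacent cells carry the same label \(i\), the contributions combine via balancing to exactly \((D\cdot A)(\tau)\,\omega_\tau\otimes\tau\). At a face where labels change, the contribution additionally contains \(\iota_{d a_{ij}}\omega_\sigma\otimes\tau\), which is exactly the cellular representative of the cap product. Hence
\[
\partial\beta=\pm\bigl(\text{rep.\ of }\cyc_X(D\cdot A)-\text{rep.\ of }c_1(\MO_X(D))\cap\cyc_X(A)\bigr),
\]
and the two classes agree in \(H_{p-1,p-1}(X)\). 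Independence of all choices follows from the well-definedness of both sides.

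\textbf{Main obstacle.} The delicate point is Step 2: matching the abstract cap product \(H^1(X,\Omega_X)\times H_{p,p}(X)\to H_{p-1,p-1}(X)\) with the concrete Čech-to-cellular formula above, including signs and orientation conventions. I would handle this by passing through the standard comparison of Čech and cellular (co)chains for a cover subordinate to the subdivision, i.e.\ a cellular Čech double complex. Once this is done, the rest is a local bookkeeping check at each codimension-one face.
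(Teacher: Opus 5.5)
Your route differs from the paper's. The paper gives no argument of its own: it imports the statement from \cite[Prop.~5.12]{GS19}, where cycle classes, first Chern classes and cap products are built sheaf-theoretically, in much greater generality than integral tori.

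You propose instead a direct chain-level proof, and its core identity is correct. With the usual boundary orientation one has \([\sigma:\tau]\,\omega_\sigma=-u_{\sigma/\tau}\wedge\omega_\tau\). Three facts combine here: \(\iota_\alpha(u\wedge\omega_\tau)=\alpha(u)\,\omega_\tau-u\wedge\iota_\alpha\omega_\tau\); all \(d\varphi_{i(\tau)}|_\sigma\) with \(\sigma\supset\tau\) restrict to the same form on \(T_\tau\); and, by balancing, \(w=\sum_\sigma A(\sigma)u_{\sigma/\tau}\in T_\tau\), so \(w\wedge\iota_\alpha\omega_\tau=\alpha(w)\,\omega_\tau\). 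With these, the splitting \(d\varphi_{i(\sigma)}|_\sigma=d\varphi_{i(\tau)}|_\sigma+c_{i(\tau)i(\sigma)}\), where \(c_{ij}=d_Xa_{ij}\), shows that the \(\tau\)-component of \(\partial\beta\) is
\[
(D\cdot A)(\tau)\,\omega_\tau+\sum_{\sigma\supset\tau}[\sigma:\tau]\,A(\sigma)\,\iota_{c_{i(\tau)i(\sigma)}}\omega_\sigma .
\]
Here \((D\cdot A)(\tau)\) is the weight in the paper's min-convention. Your approach gives an explicit, elementary proof suited to tori, where \(\Omega_X\) is constant with stalk \(\Lambda_X\). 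The price is exactly the \v{C}ech--singular comparison and sign bookkeeping that the sheaf-theoretic proof avoids.

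As written, three points need repair.

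\textbf{Labels and cover.} You must label the \((p-1)\)-faces too. Step~2's recipe of pairing two top cells \(\sigma,\sigma'\) across \(\tau\) is ill-defined once three or more cells meet at \(\tau\), for example at a trivalent vertex of a curve. Use the front-face cap product on the barycentric subdivision \(\mathrm{sd}\,\sigma=\sum_\tau[\sigma:\tau]\,b_\sigma*\mathrm{sd}\,\tau\), with the cocycle sent to the cochain assigning \(c_{i(x)i(y)}\) to a short edge from \(x\) to \(y\). This gives \(\sum_\tau\sum_{\sigma\supset\tau}[\sigma:\tau]\,A(\sigma)\,\iota_{c_{i(\sigma)i(\tau)}}\omega_\sigma\otimes\tau\). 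It requires the closed star of each face \(\tau\) to lie in \(U_{i(\tau)}\), not merely each cell to lie in some \(U_i\).

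\textbf{Relative sign.} The relative sign in Step~3 is left open. If the extra term were ``exactly'' the cap representative and the rest ``exactly'' \(D\cdot A\), you would get a sum. That is the statement with the opposite sign, and an overall \(\pm\) cannot absorb a relative sign. In fact, since \(c_{i(\tau)i(\sigma)}=-c_{i(\sigma)i(\tau)}\), the extra term is minus the cap representative. The first term is minus the Allermann--Rau max-convention expression you quote. So \(\partial\beta=\mathrm{rep}(\cyc_X(D\cdot A))-\mathrm{rep}(c_1(\MO_X(D))\cap\cyc_X(A))\) does hold, but only once these conventions are fixed.

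\textbf{Matching the paper's cap product.} You still have to identify this \v{C}ech--cellular formula, with these signs, with the cap product \(H^{1,1}(X)\times H_{p,p}(X)\to H_{p-1,p-1}(X)\) of \cite{GS19} used in the paper. The normalizations must be compatible with \(\delta_X\) and with \(H^1(X,\Omega_X)\cong\Hom(\Lambda_X',\Lambda_X)\). This is the real content of what you call the main obstacle, and the proposal only announces it. Until it is done, the argument reduces the statement to this comparison rather than proving it.
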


The following canonical isomorphisms and descriptions of the cup and cap
products are from \cite[Sec.~6]{GS23}. One has the canonical isomorphisms
\[
H_{p,q}(X)
\cong
\bigwedge^{q}\Lambda_X^{\prime}
\otimes_{\Z}
\bigwedge^{p}(\Lambda_X)^{*}
\]
and
\[
H^{p,q}(X)
\cong
\bigwedge^{q}(\Lambda_X^{\prime})^{*}
\otimes_{\Z}
\bigwedge^{p}\Lambda_X.
\]
Under these isomorphisms, the cup product
\[
H^{p,q}(X)\times H^{p',q'}(X)
\longrightarrow
H^{p+p',q+q'}(X)
\]
is given by
\begin{equation}\label{eq:cup-product-integral-tori}
(\alpha\otimes\omega)\cup(\beta\otimes\xi)
=
(\alpha\wedge\beta)\otimes(\omega\wedge\xi),
\end{equation}
for
\(\alpha\in\bigwedge^{q}(\Lambda_X^{\prime})^{*}\),
\(\beta\in\bigwedge^{q'}(\Lambda_X^{\prime})^{*}\),
\(\omega\in\bigwedge^{p}\Lambda_X\), and
\(\xi\in\bigwedge^{p'}\Lambda_X\).
If \(p'\le p\) and \(q'\le q\), the cap product
\[
H^{p',q'}(X)\times H_{p,q}(X)
\longrightarrow
H_{p-p',q-q'}(X)
\]
is given by
\begin{equation}\label{eq:cap-product-integral-tori}
(\varphi\otimes u)\cap(\lambda\otimes\eta)
=
(\varphi\mathbin{\lrcorner}\lambda)
\otimes
(u\mathbin{\lrcorner}\eta),
\end{equation}
for
\(\varphi\in\bigwedge^{q'}(\Lambda_X^{\prime})^{*}\),
\(u\in\bigwedge^{p'}\Lambda_X\),
\(\lambda\in\bigwedge^{q}\Lambda_X^{\prime}\), and
\(\eta\in\bigwedge^{p}(\Lambda_X)^{*}\), where \(\lrcorner\) denotes the
interior product on exterior algebras. In particular,
\[
H^{1,1}(X)
\cong
(\Lambda_X^{\prime})^{*}\otimes_{\Z}\Lambda_X
\cong
\Hom(\Lambda_X^{\prime},\Lambda_X).
\]

Fix \(\Z\)-bases
\(\{\lambda_i^{\prime}\}_{i=1}^{g}\) of \(\Lambda_X^{\prime}\) and
\(\{\eta_j\}_{j=1}^{g}\) of \(\Lambda_X\), with dual bases
\(\{(\lambda_i^{\prime})^{*}\}_{i=1}^{g}\) and
\(\{\eta_j^{*}\}_{j=1}^{g}\), respectively. The proof of
\cite[Lem.~6.7]{GS25} gives
\begin{equation}\label{eq:fundamental-cycle-class}
\cyc_X([X])
=
\varepsilon\,
(\lambda_1^{\prime}\wedge\cdots\wedge\lambda_g^{\prime})
\otimes
(\eta_1^{*}\wedge\cdots\wedge\eta_g^{*}),
\end{equation}
for some \(\varepsilon\in\{\pm1\}\), where \([X]\) denotes the fundamental
cycle. It follows from \eqref{eq:cap-product-integral-tori} and \eqref{eq:fundamental-cycle-class}
that the map
\[
H^{1,1}(X)
\longrightarrow
H_{g-1,g-1}(X),
\qquad
\alpha\longmapsto\alpha\cap\cyc_X([X]),
\]
is an isomorphism. Hence, by Proposition~\ref{prop:cyc-compat}, for
\(D_1,D_2\in\Div(X)\), one has
\begin{equation}\label{eq:chern-class-homological-equivalence}
[D_1]\equiv_{\hom}[D_2]
\quad\Longleftrightarrow\quad
c_1(\MO_X(D_1))=c_1(\MO_X(D_2)).
\end{equation}
For \(D\in\Div(X)\), write
\begin{equation}\label{eq:chern-class-matrix}
c_1(\MO_X(D))
=
\sum_{i,j=1}^{g}
E^{D}_{i,j}\,
(\lambda_i^{\prime})^{*}\otimes\eta_j,
\qquad
E^{D}=(E^{D}_{i,j})\in M_g(\Z).
\end{equation}
Under
\(H^{1,1}(X)\cong\Hom(\Lambda_X^{\prime},\Lambda_X)\), the class
\(c_1(\MO_X(D))\) corresponds to the lattice homomorphism
\begin{equation}\label{eq:kappaD-matrix}
\kappa_D\colon\Lambda_X^{\prime}\longrightarrow\Lambda_X,
\qquad
\lambda_i^{\prime}
\longmapsto
\sum_{j=1}^{g}E^{D}_{i,j}\eta_j,
\quad 1\le i\le g.
\end{equation}
Thus one has
\(
E^{D}
=
\bigl(Q_D(\lambda_i^{\prime},\eta_j^{*})\bigr)_{1\le i,j\le g},
\)
where \(Q_D\) is defined as in Subsection~\ref{subsec:integral-tori-polarizations}.

For \(D_1,\dots,D_g\in\Div(X)\), choose a point \(x\in X\). Since \(X\) is
connected, \(\cyc_X(x)\) is a generator of \(H_{0,0}(X)\cong\Z\). Hence
there exists a unique \(n\in\Z\) such that
\begin{equation}\label{eq:chern-cup-cap-intersection}
c_1(\MO_X(D_1))
\cup\cdots\cup
c_1(\MO_X(D_g))
\cap
\cyc_X([X])
=
n\cyc_X(x).
\end{equation}
By Definition~\ref{def:intersection-number} and
Proposition~\ref{prop:cyc-compat}, we have
\((D_1\cdot\cdots\cdot D_g)_X=n\).
Let \(S_g\) denote the symmetric group on \(\{1,\ldots,g\}\). Using
\eqref{eq:cup-product-integral-tori} and \eqref{eq:cap-product-integral-tori} to compute the left-hand side of
\eqref{eq:chern-cup-cap-intersection}, we obtain
\begin{equation}\label{eq:intersection-number-matrix}
(D_1\cdot\cdots\cdot D_g)_X
=
\varepsilon
\sum_{\sigma,\tau\in S_g}
\operatorname{sgn}(\sigma)
\operatorname{sgn}(\tau)
\prod_{\ell=1}^{g}
E^{D_\ell}_{\sigma(\ell),\tau(\ell)}.
\end{equation}

\begin{lemma}\label{lem:top-self-intersection-det}
Let \(D\in\Div(X)\). Let \(\varepsilon\in\{\pm1\}\) be the sign fixed in
\eqref{eq:fundamental-cycle-class}. Then the following hold.
\begin{enumerate}[label=\textup{(\roman*)}, itemsep=2pt]
\item One has
\(
(D^{g})_X=\varepsilon\,g!\,\det(E^{D}).
\)
\item If \(D\) is ample, then
\(
(D^{g})_X=g!\,\lvert\det(E^{D})\rvert.
\)
In particular, \((D^{g})_X\ge g!\). Moreover, if \(\det(E^D)=1\), then
\(\varepsilon=1\).
\item If \(D\) is ample, then \(D\) defines a principal polarization on
\(X\) if and only if \((D^{g})_X=g!\).
\end{enumerate}
\end{lemma}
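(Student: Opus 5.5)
The plan is to specialize the intersection formula \eqref{eq:intersection-number-matrix} to \(D_1=\cdots=D_g=D\) and recognize the double sum as a multiple of a determinant. With all \(E^{D_\ell}=E^D\), the sum becomes
\[
\varepsilon\sum_{\sigma,\tau\in S_g}\operatorname{sgn}(\sigma)\operatorname{sgn}(\tau)\prod_{\ell=1}^g E^D_{\sigma(\ell),\tau(\ell)}.
\]
For fixed \(\sigma\), substitute \(\ell=\sigma^{-1}(k)\) and set \(\rho=\tau\circ\sigma^{-1}\). Then the product equals \(\prod_k E^D_{k,\rho(k)}\), and \(\operatorname{sgn}(\sigma)\operatorname{sgn}(\tau)=\operatorname{sgn}(\rho)\). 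Summing over \(\rho\) gives \(\det(E^D)\) for each of the \(g!\) choices of \(\sigma\). This proves (i): \((D^g)_X=\varepsilon\,g!\det(E^D)\).

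For (ii), the key observation is that the sign of \((D^g)_X\) does not depend on \(D\), while \(\varepsilon\) is fixed by the choice of bases. Since \(E^D=(Q_D(\lambda_i',\eta_j^*))\), we can write \(\eta_j^*=\sum_k P_{jk}\lambda_k'\) in \((\Lambda_X')_{\R}\). Here \(P\) is the real invertible matrix expressing the pairing, namely \((P^{-1})_{kj}=[\eta_j,\lambda_k']_X\) up to transpose. Then \(E^D=G_D P^{T}\), where \(G_D=(Q_D(\lambda_i',\lambda_k'))\) is the Gram matrix. When \(D\) is ample, \(G_D\) is positive definite, so \(\det E^D=\det G_D\cdot\det P\) has sign \(\operatorname{sgn}\det P\), which is independent of \(D\).

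The main obstacle is showing that \(\varepsilon\det E^D>0\) for ample \(D\). We need the extra input that \((D^g)_X\) is positive for ample \(D\) (for example, by producing an ample divisor whose top intersection is visibly positive), or else that \(\varepsilon\) is tied to the orientation \(\operatorname{sgn}\det P\) via the construction of \(\cyc_X([X])\) in \cite[Lem.~6.7]{GS25}. I would first try the latter: compute \(\cyc_X([X])\) with compatible orientations and check that \(\varepsilon=\operatorname{sgn}\det P\). Granting this, \((D^g)_X=g!\,|\det E^D|\). Since \(E^D\) is an integer matrix with nonzero determinant (because \(G_D\) is definite), we get \(|\det E^D|\ge1\), hence \((D^g)_X\ge g!\). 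If \(\det E^D=1\), then \(g!=(D^g)_X=\varepsilon g!\), which forces \(\varepsilon=1\).

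For (iii), note that \(E^D\) is the matrix of \(\kappa_D\) with respect to the chosen \(\Z\)-bases. So \(\kappa_D\) is an isomorphism if and only if \(|\det E^D|=1\), and by (ii) this holds if and only if \((D^g)_X=g!\).
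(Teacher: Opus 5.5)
Your computation for (i), reindexing by $\rho=\tau\circ\sigma^{-1}$, is correct. It is exactly what the paper means by saying (i) follows from \eqref{eq:intersection-number-matrix}. Likewise, once the equality in (ii) is known, your deductions of $(D^g)_X\ge g!$, of $\varepsilon=1$ when $\det(E^D)=1$, and of (iii) coincide with the paper's.

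The gap is the equality $(D^g)_X=g!\,\lvert\det(E^D)\rvert$ itself, i.e.\ the sign statement $\varepsilon\det(E^D)>0$ for ample $D$.
\begin{itemize}
\item Your factorization $E^D=G_DP^{\mathsf T}$, with $G_D$ positive definite, only shows that $(D^g)_X$ has the same sign for all ample $D$. It does not determine that sign.
\item You then write ``Granting this'' for precisely the missing fact. So everything after that point is conditional: the bound $\ge g!$, the conclusion $\varepsilon=1$, and (iii).
\item Your orientation route ($\varepsilon=\operatorname{sgn}\det P$) cannot be read off from the setup. The sign $\varepsilon$ depends on the conventions in the identification $H_{g,g}(X)\cong\bigwedge^g\Lambda_X'\otimes\bigwedge^g(\Lambda_X)^*$ and on the construction of $\cyc_X([X])$. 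This is why the paper only records $\varepsilon\in\{\pm1\}$.
\end{itemize}

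The paper closes the gap by citing \cite[Thm.~47]{Sum21}, which gives $(D^g)_X=g!\,\lvert\det(E^D)\rvert$ for ample $D$ directly. The positivity $\varepsilon\det(S)>0$ used later in Proposition~\ref{prop:tropical-KT} is derived from this lemma, not the other way round.

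If you prefer your other route (an ample divisor with positive top intersection), you must actually prove the positivity. One way to do so:
\begin{itemize}
\item By Proposition~\ref{prop:sumi-global-sections}\textup{(ii)}, choose an effective $D'$ with $\MO_X(D')=\MO_X(D)$. Then $(D^g)_X=(D'^g)_X$ by \eqref{eq:chern-cup-cap-intersection}.
\item $X$ is locally $\R^g$ and $D'$ is locally a minimum of integral affine functions, so the local weight formula for intersecting an effective cycle with $D'$ gives nonnegative weights.
\item Hence every $D'^k\cdot[X]$ is effective, so $(D'^g)_X\ge 0$.
\item Part (i) together with $\det(E^D)\neq 0$ upgrades this to $(D^g)_X>0$.
\end{itemize}
As written, however, your proposal does not prove (ii), and hence does not prove (iii).
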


\begin{proof}
The assertion \textup{(i)} follows from
\eqref{eq:intersection-number-matrix}. The equality in \textup{(ii)}
follows from \cite[Thm.~47]{Sum21}. If \(\det(E^D)=1\), then \textup{(i)}
gives \((D^g)_X=\varepsilon g!\), while the equality in \textup{(ii)}
gives \((D^g)_X=g!\). Hence \(\varepsilon=1\).

We prove \textup{(iii)}. Assume that \(D\) is ample. With respect to the
chosen \(\Z\)-bases, the homomorphism
\(\kappa_D\colon\Lambda_X^{\prime}\to\Lambda_X\) has matrix \(E^D\).
Hence \(\kappa_D\) is an isomorphism if and only if
\(E^D\in\mathrm{GL}_g(\Z)\). Equivalently,
\(\lvert\det(E^D)\rvert=1\). By \textup{(ii)}, this condition is equivalent
to \((D^g)_X=g!\). Thus \(D\) defines a principal polarization on \(X\) if
and only if \((D^g)_X=g!\).
\end{proof}

Let
\(
X_i=
(\Lambda_{X_i},\Lambda_{X_i}^{\prime},
[\,\cdot\,,\,\cdot\,]_{X_i})
\)
be an integral torus for \(i=1,2\), and let
\(
f=(f^{\sharp},f_{\sharp})\colon X_1\to X_2
\)
be a homomorphism of integral tori. For \(p,q\in\Z_{\ge0}\), we use the
induced maps
\(
f^{*}\colon H^{p,q}(X_2)\to H^{p,q}(X_1)
\)
and
\(
f_{*}\colon H_{p,q}(X_1)\to H_{p,q}(X_2);
\)
for details, see \cite[Sec.~4]{GS19}.

\begin{prop}[{\cite[Secs.~4 and 5]{GS19}}]\label{prop:GS19-toolkit}
Follow the notation above. The following hold.
\begin{enumerate}[label=\textup{(\roman*)}, itemsep=2pt]
\item Let \(c\in H^{p,q}(X_2)\) and
\(c'\in H^{p',q'}(X_2)\). One has
\(
f^{*}c\cup f^{*}c'
=
f^{*}(c\cup c')
\).
\item Assume that \(p'\le p\) and \(q'\le q\). Let
\(\alpha\in H_{p,q}(X_1)\) and
\(c^{\prime}\in H^{p',q'}(X_2)\). One has
\(
f_{*}(f^{*}c^{\prime}\cap\alpha)
=
c^{\prime}\cap f_{*}\alpha
\).
\item Let \(A\in Z_p(X_1)\). One has
\(
\cyc_{X_2}(f_*A)
=
f_*\cyc_{X_1}(A)
\).
\end{enumerate}
\end{prop}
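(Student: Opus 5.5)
The natural approach is to work throughout with the explicit descriptions of tropical (co)homology of integral tori recalled from \cite[Sec.~6]{GS23}, so that everything becomes multilinear algebra on the lattices \(\Lambda_{X_i}\), \(\Lambda'_{X_i}\). The first step is to identify the induced maps concretely. Under \(H^{p,q}(X)\cong\bigwedge^{q}(\Lambda_X')^{*}\otimes\bigwedge^{p}\Lambda_X\), the pullback \(f^{*}\) should be \(\bigwedge^{q}(f_\sharp)^{*}\otimes\bigwedge^{p}f^{\sharp}\). Under \(H_{p,q}(X)\cong\bigwedge^{q}\Lambda_X'\otimes\bigwedge^{p}(\Lambda_X)^{*}\), the pushforward \(f_{*}\) should be \(\bigwedge^{q}f_\sharp\otimes\bigwedge^{p}(f^{\sharp})^{*}\). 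I would verify this on the sheaf side: \(f^{-1}\Omega_{X_2}\to\Omega_{X_1}\) is the constant map \(f^\sharp\), and \(H_1(X_1,\Z)\to H_1(X_2,\Z)\) is \(f_\sharp\). Naturality of the Künneth-type identification of the cohomology of a real torus with constant coefficients then gives the \(q\)-part.

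With these formulas, \textup{(i)} is immediate from \eqref{eq:cup-product-integral-tori}, because \(\bigwedge(f_\sharp)^{*}\) and \(\bigwedge f^{\sharp}\) are algebra homomorphisms of exterior algebras.

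For \textup{(ii)}, substitute \eqref{eq:cap-product-integral-tori} and reduce to the standard identity for a linear map \(T\colon V\to W\), \(\varphi\in\bigwedge W^{*}\), \(\lambda\in\bigwedge V\):
\[
\textstyle\bigwedge T\bigl((\bigwedge T^{*}\varphi)\mathbin{\lrcorner}\lambda\bigr)=\varphi\mathbin{\lrcorner}\bigl(\bigwedge T\,\lambda\bigr).
\]
This identity is applied twice: once with \(T=f_\sharp\) on the \(q\)-factors, and once with the dual map \(T=(f^\sharp)^{*}\colon(\Lambda_{X_1})^{*}\to(\Lambda_{X_2})^{*}\) on the \(p\)-factors. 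On the \(p\)-factors the covector is \(u\in\bigwedge\Lambda_{X_2}\), and its pullback is \(\bigwedge f^\sharp u=\bigwedge T^{*}u\), so the identity has exactly the same shape. The identity itself is checked on decomposable elements, by induction on degree, using that contraction is a derivation.

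Part \textup{(iii)} is the real content and the step I expect to be the main obstacle. I would use the description of \(\cyc\) from \cite[Sec.~5.2]{GS19}. After fixing a fine enough polyhedral structure, a cycle \(A\) is represented by the chain \(\sum_\sigma A(\sigma)\,\sigma\otimes\omega_\sigma\). Here \(\sigma\) runs over oriented \(p\)-simplices and \(\omega_\sigma\in\bigwedge^{p}T^{\Z}\sigma\) is the primitive generator compatible with the orientation; balancing is what makes this a cycle. The key steps are:
\begin{enumerate}
\item Refine the structure so that \(f\) is affine and injective-or-degenerate on each simplex.
\item For a simplex \(\sigma\) on which \(f\) collapses dimension, \(\bigwedge^{p}d f(\omega_\sigma)=0\). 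Its contribution vanishes, matching \(f_*A=0\) in low dimension.
\item For a simplex on which \(f\) is injective, \(\bigwedge^{p}df(\omega_\sigma)=m_f\cdot\omega_{f(\sigma)}\), where \(m_f\) is exactly the lattice index \eqref{eq:local-index-mf}, since the index of a full-rank sublattice equals the ratio of top wedges.
\item Summing over the simplices above each image simplex then reproduces the weights \eqref{eq:proper-pushforward-value}.
\end{enumerate}
The remaining point is that \(f\) maps singular tropical chains to tropical chains compatibly with the coefficient maps \(\bigwedge^{p}(f^\sharp)^{*}\). This is functoriality of the chain-level construction, which I would take from \cite[Sec.~4]{GS19}.
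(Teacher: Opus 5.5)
Your proposal is correct in substance, but it follows a different route from the paper. The paper gives no argument of its own: it imports all three statements from \cite[Secs.~4 and 5]{GS19}, where they hold for general rational polyhedral spaces as consequences of the functoriality of sheaf-theoretic tropical (co)homology and of the cycle class map. You instead specialize to integral tori. You first identify $f^{*}=\bigwedge^{q}(f_\sharp)^{*}\otimes\bigwedge^{p}f^{\sharp}$ and $f_{*}=\bigwedge^{q}f_\sharp\otimes\bigwedge^{p}(f^{\sharp})^{*}$ in every bidegree. After that, (i) is multiplicativity of exterior powers, and (ii) is naturality of contraction, applied once with $T=f_\sharp$ and once with $T=(f^\sharp)^{*}$; your check of the second application is right.

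This reverses the paper's logic. The paper derives exactly these lattice formulas in bidegree $(1,1)$ \emph{from} the proposition: \eqref{eq:pullback-one-one-lattice} comes from (i), and Lemma~\ref{lem:push-PPTAV-en} comes from (ii) via the pairing. In your version those formulas become immediate, but you must then obtain them independently. They follow from the naturality, under homomorphisms of integral tori, of the identifications $H^{p,q}(X)\cong H^{q}(X,\Omega^{p}_X)\cong\bigwedge^{q}(\Lambda_X')^{*}\otimes\bigwedge^{p}\Lambda_X$ and of their homological analogue. That naturality is itself supplied by \cite{GS19,GS23}, so your route repackages the dependence on those references rather than removing it.

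Your chain-level argument for (iii) is sound. It uses the representative $\sum_\sigma A(\sigma)\,\sigma\otimes\omega_\sigma$, which the paper also uses via \cite[Sec.~3E]{GS23}, and it identifies $\bigwedge^{p}df(\omega_\sigma)=m_f\,\omega_{f(\sigma)}$ with the index in \eqref{eq:local-index-mf}. To make ``summing over the simplices above each image simplex'' literally correct, you need two further steps. First, choose the triangulations of $|A|$ and $f(|A|)$ compatibly, so that every non-collapsed $f(\sigma)$ is a simplex of one fixed triangulation of $f(|A|)$; this is possible since $f$ is linear and $|A|$ is compact. Second, invoke subdivision invariance of the chain representative.

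Your approach gives an explicit, self-contained verification for tori that makes the later matrix computations transparent. The citation gives generality and avoids relying on explicit models.
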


\begin{remark}\label{rem:pullback-one-one}
Follow the notation above. For \(i=1,2\), the standard identifications in
Subsection~\ref{subsec:tropical-cohomology} give the following commutative diagram:
\begin{equation}\label{eq:cup-one-one-identifications}
\begin{tikzcd}[column sep=1.4em]
H^{0,1}(X_i)
\arrow[d, "\cong"']
\arrow[r, phantom, "\times" description]
&
H^{1,0}(X_i)
\arrow[d, "\cong"']
\arrow[r, "\cup"]
&
H^{1,1}(X_i)
\arrow[d, "\cong"]
\\
(\Lambda_{X_i}^{\prime})^{*}
\arrow[r, phantom, "\times" description]
&
\Lambda_{X_i}
\arrow[r]
&
(\Lambda_{X_i}^{\prime})^{*}\otimes\Lambda_{X_i}.
\end{tikzcd}
\end{equation}
For \(i=1,2\), the vertical arrows in
\eqref{eq:cup-one-one-identifications} are induced by the standard
identifications
\[
H^{0,1}(X_i)\cong(\Lambda_{X_i}^{\prime})^{*},
\qquad
H^{1,0}(X_i)\cong\Lambda_{X_i},
\qquad
H^{1,1}(X_i)
\cong
(\Lambda_{X_i}^{\prime})^{*}\otimes\Lambda_{X_i}.
\]
The lower horizontal map in \eqref{eq:cup-one-one-identifications} is
induced by the cup product in \eqref{eq:cup-product-integral-tori}. The maps
\(
f^{*}\colon H^{0,1}(X_2)\to H^{0,1}(X_1)
\)
and
\(
f^{*}\colon H^{1,0}(X_2)\to H^{1,0}(X_1)
\)
fit into the following commutative diagrams:
\begin{equation}\label{eq:pullback-one-zero-zero-one-lattice}
\begin{tikzcd}[column sep=large]
H^{0,1}(X_2)
\arrow[r, "f^{*}"]
\arrow[d, "\cong"']
&
H^{0,1}(X_1)
\arrow[d, "\cong"]
\\
(\Lambda_{X_2}^{\prime})^{*}
\arrow[r, "{(f_{\sharp})^{*}}"']
&
(\Lambda_{X_1}^{\prime})^{*}
\end{tikzcd}
\qquad
\begin{tikzcd}[column sep=large]
H^{1,0}(X_2)
\arrow[r, "f^{*}"]
\arrow[d, "\cong"']
&
H^{1,0}(X_1)
\arrow[d, "\cong"]
\\
\Lambda_{X_2}
\arrow[r, "f^{\sharp}"']
&
\Lambda_{X_1}.
\end{tikzcd}
\end{equation}
By Proposition~\ref{prop:GS19-toolkit}\textup{(i)},
\eqref{eq:cup-one-one-identifications}, and
\eqref{eq:pullback-one-zero-zero-one-lattice}, we obtain the following
commutative diagram:
\begin{equation}\label{eq:pullback-one-one-lattice}
\begin{tikzcd}[column sep=large]
H^{1,1}(X_2)
\arrow[r, "f^{*}"]
\arrow[d, "\cong"']
&
H^{1,1}(X_1)
\arrow[d, "\cong"]
\\
(\Lambda_{X_2}^{\prime})^{*}\otimes\Lambda_{X_2}
\arrow[r, "{(f_{\sharp})^{*}\otimes f^{\sharp}}"']
&
(\Lambda_{X_1}^{\prime})^{*}\otimes\Lambda_{X_1}.
\end{tikzcd}
\end{equation}
The homomorphism
\(
(f_{\sharp})^{*}\colon
(\Lambda_{X_2}^{\prime})^{*}
\longrightarrow
(\Lambda_{X_1}^{\prime})^{*}
\)
is induced by \(f_{\sharp}\). Together with \eqref{eq:chern-class-matrix} and
\eqref{eq:kappaD-matrix}, the commutative diagram
\eqref{eq:pullback-one-one-lattice} gives, for every tropical Cartier
divisor \(D\) on \(X_2\),
\begin{equation}\label{eq:kappa-pullback-divisor}
\kappa_{f^*D}
=
f^\sharp\circ\kappa_D\circ f_\sharp
\colon
\Lambda_{X_1}^{\prime}\to\Lambda_{X_1}.
\end{equation}
\end{remark}

\subsection{Products, isomorphisms, and decompositions of principally polarized tropical abelian varieties}

Let \((X_i,\Theta_i)\) be principally polarized tropical abelian varieties for \(i=1,2\). We first define their product and the notion of an isomorphism between them, and then use these notions to define decompositions.

\begin{defi}\label{def:product-ppav}
The \emph{product} of \((X_1,\Theta_1)\) and \((X_2,\Theta_2)\) is
\[
(X_1,\Theta_1)\times (X_2,\Theta_2)\coloneqq
\bigl(X_1\times X_2,\pr_1^{*}\Theta_1+\pr_2^{*}\Theta_2\bigr).
\]

For \(\lambda_i^{\prime}\in \Lambda_{X_i}^{\prime}\) with \(i=1,2\), one has
\(
\kappa_{\pr_1^{*}\Theta_1+\pr_2^{*}\Theta_2}
(\lambda_1^{\prime},\lambda_2^{\prime})
=
\bigl(
\kappa_{\Theta_1}(\lambda_1^{\prime}),
\kappa_{\Theta_2}(\lambda_2^{\prime})
\bigr).
\)
Since \(\kappa_{\Theta_1}\) and \(\kappa_{\Theta_2}\) are lattice
isomorphisms, so is
\(\kappa_{\pr_1^{*}\Theta_1+\pr_2^{*}\Theta_2}\).
Following the notation in Subsection~\ref{subsec:integral-tori-polarizations}, for
\(u_i,v_i\in(\Lambda_{X_i}^{\prime})_{\R}\) with \(i=1,2\), one has
\begin{equation}\label{eq:Q-product-polarization}
Q_{\pr_1^{*}\Theta_1+\pr_2^{*}\Theta_2}
\bigl((u_1,u_2),(v_1,v_2)\bigr)
=
Q_{\Theta_1}(u_1,v_1)+Q_{\Theta_2}(u_2,v_2).
\end{equation}
Since \(Q_{\Theta_1}\) and \(Q_{\Theta_2}\) are symmetric positive
definite, so is
\(Q_{\pr_1^{*}\Theta_1+\pr_2^{*}\Theta_2}\).
Hence \(\pr_1^{*}\Theta_1+\pr_2^{*}\Theta_2\) is a principal polarization
on \(X_1\times X_2\).
\end{defi}
\begin{defi}\label{def:isomorphism-ppav}
An isomorphism \(f\colon X_1\to X_2\) is called an \emph{isomorphism of principally polarized tropical abelian varieties}
\[
f\colon (X_1,\Theta_1)\longrightarrow (X_2,\Theta_2)
\]
if
\[
[f^{*}\Theta_2]\equiv_{\hom}[\Theta_1].
\]
Here \([\,\cdot\,]\) denotes the associated tropical cycle of a tropical Cartier divisor; see Definition~\ref{def:intersection-number}.
\end{defi}

\begin{defi}\label{def:decomposition-ppav}
Let \((X,\Theta)\) be a principally polarized tropical abelian variety. A \emph{decomposition} of \((X,\Theta)\) is an isomorphism of principally polarized tropical abelian varieties
\(
\beta\colon (X_1,\Theta_1)\times (X_2,\Theta_2)\to (X,\Theta)
\)
such that \(X_1\) and \(X_2\) are positive-dimensional. We say that \((X,\Theta)\) \emph{admits a decomposition} if such a decomposition exists.
\end{defi}

\subsection{Spanning curves and integral tori}\label{subsec:spanning-curves-integral-tori}
Let \(X=(\Lambda_X,\Lambda_X^{\prime},[\,\cdot\,,\,\cdot\,]_X)\) be an \(n\)-dimensional integral torus.

\begin{defi}\label{def:spanning-curve}
Let \(C\) be an effective tropical \(1\)-cycle on \(X\) such that \(|C|\) is connected. Following the convention in Definition~\ref{def:translation-chart}, we fix a suitable global face structure on \(|C|\). We call \(C\) a \emph{spanning curve} if
\begin{equation*}
\mathrm{span}_{\R}
\{\boldsymbol{u}_{\tau,v}\mid \tau \text{ is an edge of } |C|,\ v \text{ is an endpoint of } \tau\}
=
\R^n,
\end{equation*}
where \(\boldsymbol{u}_{\tau,v}\in\Z^n\) denotes the tangent direction of \(\tau\) out of \(v\). This definition is independent of the chosen global face structure with respect to the charts of \(X\).
\end{defi}

\begin{remark}\label{rem:spanning-no-proper-real-torus}
For such a spanning curve \(C\), there is no proper real subtorus \(Y\subsetneq X\) such that \(|C|\) is contained in a translate of \(Y\). Indeed, suppose that such a real subtorus
\(
Y=V/\Lambda_Y
\)
exists, where \(V\subseteq(\Lambda_X)_{\R}^{*}\) is a real vector subspace and
\(\Lambda_Y=V\cap\Lambda_X^{\prime}\). Let
\(\iota\colon(\Lambda_X)_{\R}^{*}\to\R^n\) be the isomorphism induced by the
\(\Z\)-basis fixed in Definition~\ref{def:translation-chart}. If \(|C|\) is contained in a translate of \(Y\), then the tangent direction
\(\boldsymbol{u}_{\tau,v}\) lies in \(\iota(V)\) for every edge \(\tau\) of \(|C|\) and every endpoint \(v\) of \(\tau\). Since \(C\) is a spanning curve, these tangent directions span \(\R^n\). Hence \(\iota(V)=\R^n\), so \(V=(\Lambda_X)_{\R}^{*}\). Therefore
\(\Lambda_Y=V\cap\Lambda_X^{\prime}=\Lambda_X^{\prime}\), and hence \(Y=X\) as a real torus.
\end{remark}
\begin{lemma}\label{lem:spanning-implies-surjective}
Let \(f\colon X_1\to X_2\) be a homomorphism of integral tori, and let \(C\) be a spanning curve on \(X_2\). If \(|C|\subseteq f(X_1)\), then \(f\) is surjective.
\end{lemma}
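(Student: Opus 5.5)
The plan is to argue by contradiction using Remark~\ref{rem:spanning-no-proper-real-torus}. The image \(f(X_1)\) is the image of the real linear map
\[
f_\sharp\otimes\R\colon (\Lambda_{X_1})^*_{\R}\to(\Lambda_{X_2})^*_{\R}
\]
under the quotient map \(\pi_{X_2}\). Here we identify \((\Lambda_{X_i})^*_{\R}\) with \((\Lambda'_{X_i})_{\R}\) via the nondegenerate pairings, and \eqref{eq:dual-hom-pairing-compat} shows that the real extension of \(f_\sharp\) is the dual of \(f^\sharp\otimes\R\). Set \(V\coloneqq \im(f_\sharp\otimes\R)\subseteq(\Lambda_{X_2})^*_{\R}\). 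Since \(f_\sharp\) maps the lattice \(\Lambda'_{X_1}\) into \(\Lambda'_{X_2}\), the subspace \(V\) is spanned by \(f_\sharp(\Lambda'_{X_1})\), so \(V\cap\Lambda'_{X_2}\) is a full-rank lattice in \(V\). Hence \(f(X_1)=\pi_{X_2}(V)=V/(V\cap\Lambda'_{X_2})\) is a closed real subtorus of \(X_2\) passing through \(0_{X_2}\).

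Suppose \(f\) is not surjective. Then \(V\neq(\Lambda_{X_2})^*_{\R}\), since otherwise \(\pi_{X_2}(V)=X_2\). So \(Y\coloneqq f(X_1)\) is a proper real subtorus of \(X_2\) in the sense of Remark~\ref{rem:spanning-no-proper-real-torus}. The hypothesis \(|C|\subseteq f(X_1)=Y\) says that \(|C|\) lies in the trivial translate of \(Y\), which contradicts Remark~\ref{rem:spanning-no-proper-real-torus}. If one prefers to avoid the remark, the tangent-direction argument can be run directly. Every edge \(\tau\) of \(|C|\) lies in a chart \(U_{x_j}\) and in \(Y\). After shrinking, the preimage of \(\tau\) under \(\psi_{x_j}\) is a segment contained in a translate of \(\iota(V)\), because locally \(Y\) is a translate of \(\pi_{X_2}(V)\) and the chart \(\psi_{x_j}\) is linear up to translation. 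Hence \(\boldsymbol u_{\tau,v}\in\iota(V)\) for every edge \(\tau\) and endpoint \(v\), so the span of the tangent directions is contained in \(\iota(V)\subsetneq\R^n\). This contradicts Definition~\ref{def:spanning-curve}.

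Finally, the conclusion in the paper's sense needs to match. Surjectivity as a map of sets is exactly what we proved: \(V=(\Lambda_{X_2})^*_{\R}\) gives \(f(X_1)=X_2\). Equivalently, by Lemma~\ref{lem:homomorphism-basic}\textup{(i)}, \(f^\sharp\) is injective, since the dual of a surjective real linear map is injective.

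The main obstacle is the local statement that an edge of \(|C|\) contained in \(Y\) has tangent direction in \(\iota(V)\). A small chart around a point of \(Y\) may meet several sheets of the subtorus \(Y\) if \(Y\) winds densely, but this cannot happen here. Because \(V\cap\Lambda'_{X_2}\) is a full-rank lattice in \(V\), the torus \(Y\) is closed and embedded, so near each point it is a single translate of a neighbourhood in \(V\). With that in hand, the connected segment \(\tau\) stays in one sheet and the argument goes through.
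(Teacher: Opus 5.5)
Your proof is correct and follows the paper's argument. The paper also observes that \(f(X_1)\) is a real subtorus, notes that non-surjectivity makes it a proper one, and derives a contradiction with Remark~\ref{rem:spanning-no-proper-real-torus}. Your extra check that \(f(X_1)=\pi_{X_2}(V)\), with \(V\cap\Lambda'_{X_2}\) of full rank in \(V\), only fills in a step the paper takes for granted: it shows \(f(X_1)\) is a closed subtorus of exactly the form used in that remark.
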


\begin{proof}
The set \(f(X_1)\) is a real subtorus of \(X_2\). If \(f\) is not surjective, then \(f(X_1)\ne X_2\), so \(f(X_1)\) is a proper real subtorus of \(X_2\). Since \(|C|\subseteq f(X_1)\), this contradicts Remark~\ref{rem:spanning-no-proper-real-torus}. Hence \(f(X_1)=X_2\), and therefore \(f\) is surjective.
\end{proof}

\subsection{Basic concepts and properties of tropical Jacobian varieties}\label{subsec:tropical-jacobians}
Fix a smooth tropical curve \(\Gamma\) of genus \(g\) and a basepoint \(q\in\Gamma\).
We use the notation of simplicial homology from Definition~\ref{def:tropical-curve}. Throughout this subsection, whenever a global face structure on \(\Gamma\) is chosen, and whenever \(\tau\) is an edge of this global face structure with endpoint \(v\), let \(c_{\tau,v}\) denote the \(1\)-simplex supported on \(\tau\) and directed out of \(v\). Following Definition~\ref{def:edge-tangent-length}, let
\(
\widetilde{\boldsymbol{u}}_{\tau,v}\in T_v^{\Z}\Gamma
\)
be the tangent functional of \(\tau\) at \(v\). These notations are always understood with respect to the chosen global face structure.

Let \(H^0(\Gamma,\Omega_{\Gamma})\) be the lattice of global tropical \(1\)-forms on \(\Gamma\). By a nontrivial path \(\eta\colon[0,1]\to\Gamma\), we mean a continuous map for which there exist a global face structure on \(\Gamma\) and a subdivision
\(
0=t_0<t_1<\cdots<t_m=1
\)
such that, for every \(1\le \nu\le m\), the restriction \(\eta|_{[t_{\nu-1},t_\nu]}\) is injective and the image
\[
\tau_\nu\coloneqq\eta([t_{\nu-1},t_\nu])
\]
is an edge of this global face structure. Fix such a global face structure and subdivision. Set \(p_\nu\coloneqq\eta(t_\nu)\) for \(0\le \nu\le m\). We regard
\(\eta\) as the simplicial \(1\)-chain
\[
\sum_{\nu=1}^m c_{\tau_\nu,p_{\nu-1}}.
\]
For \(\omega\in H^0(\Gamma,\Omega_{\Gamma})\), define
\begin{equation}\label{eq:integral-along-edge}
\int_{\eta}\omega
\coloneqq
\sum_{\nu=1}^m
\ell(\tau_\nu)\,
\widetilde{\boldsymbol{u}}_{\tau_\nu,p_{\nu-1}}(\omega_{p_{\nu-1}}),
\end{equation}
where \(\omega_{p_{\nu-1}}\in\Omega_{\Gamma,p_{\nu-1}}\) denotes the germ of
\(\omega\) at \(p_{\nu-1}\), and \(\ell(\tau_\nu)\) denotes the length of
\(\tau_\nu\). The value \(\int_{\eta}\omega\) is independent of the chosen
global face structure and subdivision. If \(\eta\) is a trivial path, we set
\(
\int_{\eta}\omega\coloneqq 0.
\)
Regard \(H_1(\Gamma,\Z)\) as the sublattice of \(C_1(\Gamma,\Z)\). Then one obtains a well-defined pairing
\begin{equation*}
[\,\cdot\,,\,\cdot\,]_{\Gamma}\colon H^0(\Gamma,\Omega_{\Gamma})\times H_1(\Gamma,\Z)\to\R,
\qquad
[\omega,\eta]_{\Gamma}\coloneqq \int_{\eta}\omega.
\end{equation*}
By \(\R\)-linearity, this pairing extends to a bilinear pairing
\(
[\,\cdot\,,\,\cdot\,]_{\Gamma}\colon (H^0(\Gamma,\Omega_{\Gamma}))_{\R}\times H_1(\Gamma,\R)\to\R.
\)
\begin{defi}[Tropical Jacobian varieties]\label{def:tropical-jacobian-pair}
The \emph{tropical Jacobian variety} of \(\Gamma\) is the integral torus
\begin{equation*}
\Jac(\Gamma)\coloneqq \bigl(H^0(\Gamma,\Omega_{\Gamma}),\,H_1(\Gamma,\Z),\,[\,\cdot\,,\,\cdot\,]_{\Gamma}\bigr),
\end{equation*}
which carries the canonical principal polarization \(\Theta_{\Gamma}\). The associated map
\(\kappa_{\Theta_{\Gamma}}\colon H_1(\Gamma,\Z)\to H^0(\Gamma,\Omega_{\Gamma})\)
is the canonical isomorphism; see \cite[Sec.~4]{GS23}. Choose a global
face structure on \(\Gamma\), and choose one endpoint \(v(\tau)\) of each edge
\(\tau\in E(\Gamma)\). Regard \(H_1(\Gamma,\R)\) as a subspace of
\(C_1(\Gamma,\R)\). For \(\gamma\in H_1(\Gamma,\R)\) written as
\[
\gamma=\sum_{\tau\in E(\Gamma)}a_{\tau}c_{\tau,v(\tau)},
\]
with \(a_{\tau}\in\R\) for all \(\tau\in E(\Gamma)\), the element
\(\kappa_{\Theta_{\Gamma}}(\gamma)\in (H^0(\Gamma,\Omega_{\Gamma}))_{\R}\) is characterized by
\begin{equation}\label{eq:kappa-theta-gamma-characterization}
\widetilde{\boldsymbol{u}}_{\tau,v(\tau)}
\bigl((\kappa_{\Theta_{\Gamma}}(\gamma))_{v(\tau)}\bigr)
=a_{\tau},
\qquad
\tau\in E(\Gamma),
\end{equation}
where \((\kappa_{\Theta_{\Gamma}}(\gamma))_{v(\tau)}\) denotes the germ of
\(\kappa_{\Theta_{\Gamma}}(\gamma)\) at \(v(\tau)\).
The canonical symmetric positive definite bilinear form
\(Q_{\Theta_{\Gamma}}\colon H_1(\Gamma,\R)\times H_1(\Gamma,\R)\to\R\)
is defined by
\begin{equation*}
Q_{\Theta_{\Gamma}}(\gamma,\gamma')\coloneqq
[\kappa_{\Theta_{\Gamma}}(\gamma),\gamma']_{\Gamma}.
\end{equation*}
Write \(\gamma'\in H_1(\Gamma,\R)\) as
\[
\gamma'=\sum_{\tau\in E(\Gamma)}b_{\tau}c_{\tau,v(\tau)},
\]
with \(b_{\tau}\in\R\) for all \(\tau\in E(\Gamma)\). Then
\begin{equation}\label{eq:Q-Theta-Gamma-edge-expansion}
\begin{aligned}
Q_{\Theta_{\Gamma}}(\gamma,\gamma')
&=[\kappa_{\Theta_{\Gamma}}(\gamma),\gamma']_{\Gamma}
=\sum_{\tau\in E(\Gamma)}b_{\tau}
\int_{c_{\tau,v(\tau)}}\kappa_{\Theta_{\Gamma}}(\gamma) \\
&=\sum_{\tau\in E(\Gamma)}\ell(\tau)b_{\tau}\,
\widetilde{\boldsymbol{u}}_{\tau,v(\tau)}
\bigl((\kappa_{\Theta_{\Gamma}}(\gamma))_{v(\tau)}\bigr)
=\sum_{\tau\in E(\Gamma)}\ell(\tau)a_{\tau}b_{\tau}.
\end{aligned}
\end{equation}
The \emph{tropical Abel--Jacobi map} based at \(q\) is defined by
\begin{equation*}
\phi_q\colon \Gamma\to \Jac(\Gamma),\qquad
p\mapsto \bigl(\omega\mapsto \int_{\eta_p}\omega\bigr),
\end{equation*}
where \(\eta_p\) is any path in \(\Gamma\) from \(q\) to \(p\). The expression on the right is understood as the class in
\((H^0(\Gamma,\Omega_{\Gamma}))_{\R}^{*}/H_1(\Gamma,\Z)\)
represented by the \(\R\)-linear extension of the map
\[
\lambda_{\eta_p}\colon H^0(\Gamma,\Omega_{\Gamma})\longrightarrow\R,
\qquad
\omega\longmapsto \int_{\eta_p}\omega.
\]
\end{defi}

\begin{prop}[{\cite[Prop.~4.14]{RZ25}}]\label{prop:universal-property}
Let \(X\) be an integral torus. Let \(\chi\colon\Gamma\to X\) be a morphism of rational polyhedral spaces. Then, for the basepoint \(q\in\Gamma\), there exists a unique homomorphism of integral tori \(f\colon \Jac(\Gamma)\to X\) making the following diagram commute:
\begin{equation}\label{eq:AbelJacobiCommute}
\begin{tikzcd}[row sep=huge, column sep=huge]
\Gamma \arrow[r, "\chi"] \arrow[d, "\phi_q"'] &
X \arrow[d, "t_{-\chi(q)}"'] \\
\Jac(\Gamma) \arrow[r, "f"'] &
X
\end{tikzcd}
\end{equation}
Here \(t_{-\chi(q)}\colon X\to X\) is the translation defined by \(-\chi(q)\in X\).
\end{prop}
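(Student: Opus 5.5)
The plan is to construct \(f=(f^\sharp,f_\sharp)\) directly by pulling back \(1\)-forms and pushing forward loops along \(\chi\), then check the pairing compatibility \eqref{eq:dual-hom-pairing-compat} and the commutativity of \eqref{eq:AbelJacobiCommute} by computing edge by edge.

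\textbf{Defining \(f^\sharp\) and \(f_\sharp\).} By Remark~\ref{rem:integral-torus-cohomology-description}, \(\Lambda_X=H^0(X,\Omega_X)\); its elements are the translation-invariant \(1\)-forms, represented in each chart \((U_x,V_0,\psi_x)\) by a single integral linear function. Since \(\chi\) is a morphism, it induces \(\chi^{-1}\Omega_X\to\Omega_\Gamma\). Taking global sections gives a lattice homomorphism
\[
f^\sharp\coloneqq\chi^*\colon\Lambda_X\to H^0(\Gamma,\Omega_\Gamma).
\]
For \(f_\sharp\), send a loop class \(\gamma\in H_1(\Gamma,\Z)\) to \(\chi_*\gamma\in H_1(X,\Z)=\Lambda_X^{\prime}\), its image in the singular homology of the real torus.

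To make this concrete, fix a global face structure on \(\Gamma\) such that each edge \(\tau\) maps into one chart of \(X\). On \(\tau\), the map \(\chi\) is integral affine. Hence \(\psi(\chi(w))-\psi(\chi(v))=\ell(\tau)\,d\chi(\boldsymbol{u}_{\tau,v})\) for an integral vector \(d\chi(\boldsymbol{u}_{\tau,v})\in(\Lambda_X)^*\). Lifting a closed edge path to \((\Lambda_X)^*_{\R}\), the sum of these displacements is exactly the element \(\chi_*\gamma\in\Lambda_X^{\prime}\subseteq(\Lambda_X)^*_{\R}\).

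\textbf{Key identity.} For \(\lambda\in\Lambda_X\) and an edge \(\tau\) with endpoint \(v\), the local representative of \(\chi^*\lambda\) is \(\lambda\circ\chi\). Using \eqref{eq:differential-local-representative} and Definition~\ref{def:edge-tangent-length},
\[
\ell(\tau)\,\widetilde{\boldsymbol{u}}_{\tau,v}\bigl((\chi^*\lambda)_v\bigr)=\lambda\bigl(\ell(\tau)\,d\chi(\boldsymbol{u}_{\tau,v})\bigr),
\]
i.e.\ \(\lambda\) evaluated on the displacement of \(\chi(\tau)\). Summing over the edges of any path \(\eta\) via \eqref{eq:integral-along-edge} gives
\[
\int_\eta\chi^*\lambda=\lambda\bigl(\widetilde\chi(\eta(1))-\widetilde\chi(\eta(0))\bigr),
\]
where \(\widetilde\chi\) is a lift of \(\chi\circ\eta\) to the universal cover \((\Lambda_X)^*_{\R}\).

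Two consequences follow.
\begin{enumerate}
\item For a loop \(\gamma\), this reads \([f^\sharp\lambda,\gamma]_\Gamma=[\lambda,f_\sharp\gamma]_X\), which is \eqref{eq:dual-hom-pairing-compat}.
\item For a path \(\eta_p\) from \(q\) to \(p\), the real-torus map of \(f\) is induced by \(\ell\mapsto\ell\circ f^\sharp\) on \((H^0(\Gamma,\Omega_\Gamma))^*_{\R}\), which descends to the quotients by the compatibility in (i). It sends \(\lambda_{\eta_p}\) to the class of \(\widetilde\chi(p)-\widetilde\chi(q)\), that is, to \(\chi(p)-\chi(q)\). This is the commutativity of \eqref{eq:AbelJacobiCommute}.
\end{enumerate}
I expect this edge-by-edge integration identity, including its independence of the charts and the lift, to be the main technical step. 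It should reduce to the chart-independence already asserted in Definition~\ref{def:edge-tangent-length}.

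\textbf{Uniqueness.} Suppose \(f\) makes the diagram commute. Its real linear lift \(F\colon(H^0(\Gamma,\Omega_\Gamma))^*_{\R}\to(\Lambda_X)^*_{\R}\) is then determined on the elements \(\lambda_{\eta}\) for all paths \(\eta\) starting at \(q\). Continuity along paths removes the lattice ambiguity, since the lift of a path starting at \(0\) is unique. I claim these \(\lambda_\eta\) span \((H^0(\Gamma,\Omega_\Gamma))^*_{\R}\). If \(\omega\) satisfies \(\int_\eta\omega=0\) for all \(\eta\), then applying this to single edges gives \(\widetilde{\boldsymbol{u}}_{\tau,v}(\omega_v)=0\) for every edge \(\tau\) at every vertex \(v\). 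Smoothness of \(\Gamma\) (Definition~\ref{def:smooth-tropical-curve}) implies that the tangent functionals of the edges at \(v\) separate \(\Omega_{\Gamma,v}\), so \(\omega=0\). Hence \(F\) is unique. Its transpose is \(f^\sharp\), and \(f_\sharp\) is the restriction of \(F\) to \(H_1(\Gamma,\Z)\), so both are unique.
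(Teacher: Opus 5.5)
Your proposal is correct and follows the construction the paper relies on: the paper gives no proof of its own beyond citing \cite[Prop.~4.14]{RZ25}, and Remark~\ref{rem:universal-property-sharp} records that the homomorphism obtained there is exactly \(f^{\sharp}=\chi^{*}\), \(f_{\sharp}=\chi_{*}\), as you define it. Your edge-by-edge identity \(\int_\eta\chi^*\lambda=\lambda\bigl(\widetilde\chi(\eta(1))-\widetilde\chi(\eta(0))\bigr)\) gives both the pairing compatibility and the commutativity of the diagram, and your path-lifting argument, together with the fact that the edge tangent functionals separate \(\Omega_{\Gamma,v}\), gives uniqueness.
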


\begin{remark}\label{rem:universal-property-sharp}
Following the notation in Proposition~\ref{prop:universal-property}, the homomorphism \(f=(f^{\sharp},f_{\sharp})\) satisfies
\begin{equation*}
f^{\sharp}=\chi^{*}\colon H^0(X,\Omega_X)\to H^0(\Gamma,\Omega_{\Gamma}),
\qquad
f_{\sharp}=\chi_{*}\colon H_1(\Gamma,\Z)\to H_1(X,\Z).
\end{equation*}
This follows from the construction in the proof of \cite[Prop.~4.14]{RZ25}.
\end{remark}

\section{Hodge-type inequalities on tropical abelian varieties}\label{sec:hodge-type-inequalities}
In analogy with the classical theory of abelian varieties, we establish Hodge-type inequalities for ample tropical Cartier divisors on tropical abelian varieties. These inequalities, in particular their equality conditions, will be used to prove Proposition~\ref{prop:equivalence-conditions-two-criteria}. Let \(g\in\Z_{>1}\).
\subsection{Mixed discriminants}\label{subsec:mixed-discriminants}
We begin by recalling mixed discriminants following \cite{Li23}, together with their basic properties, and then record the Alexandrov--Fenchel inequalities for mixed discriminants.

\begin{defi}[Mixed discriminants]\label{def:mixed-discriminant}
Let \(A_1,\dots,A_g\in M_g(\R)\). Define
\[
\Delta(A_1,\dots,A_g)\coloneqq \frac{1}{g!}\frac{\partial^g}{\partial t_1\cdots\partial t_g}\det \bigl(t_1A_1+\cdots+t_gA_g\bigr)\Big|_{t_1=\cdots=t_g=0}.
\]
Equivalently, \(g!\,\Delta(A_1,\dots,A_g)\) is the coefficient of \(t_1\cdots t_g\) in \(\det(t_1A_1+\cdots+t_gA_g)\).
\end{defi}

\begin{prop}\label{prop:mixed-discriminant-properties}
Fix real matrices \(A_1,\dots,A_g\in M_g(\R)\).
\begin{enumerate}[label=\textup{(\roman*)}, itemsep=2pt]
\item \(\Delta(A_1,\dots,A_g)\) is symmetric and multilinear in \(A_1,\dots,A_g\).
\item For any \(A\in M_g(\R)\), one has \(\Delta(A,\dots,A)=\det A\).
\item If \(A_1,\dots,A_g\) are symmetric positive semidefinite matrices, then \[\Delta(A_1,\dots,A_g) \ge \prod_{k=1}^g (\det A_k)^{1/g}.\]
\end{enumerate}
\end{prop}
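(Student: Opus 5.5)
Parts (i) and (ii) are formal consequences of Definition~\ref{def:mixed-discriminant}, and part (iii) is the classical Alexandrov--Fenchel inequality for mixed discriminants, which I would reduce to the case of positive definite matrices and prove by induction on the number of distinct matrices.

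For (i), expand \(\det(t_1A_1+\cdots+t_gA_g)\) by the Leibniz formula. Each term is a product of \(g\) entries, one taken from each row; the entry in row \(r\) is \(\sum_k t_k (A_k)_{r,\sigma(r)}\). The coefficient of \(t_1\cdots t_g\) is therefore
\[
\sum_{\sigma\in S_g}\operatorname{sgn}(\sigma)\sum_{\pi\in S_g}\prod_{r=1}^{g}(A_{\pi(r)})_{r,\sigma(r)}.
\]
This expression is visibly multilinear in \(A_1,\dots,A_g\). It is symmetric because the polynomial \(\det(\sum_k t_kA_k)\) is invariant under simultaneously permuting the \(A_k\) and the \(t_k\), and the monomial \(t_1\cdots t_g\) is fixed by that permutation.

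For (ii), set all \(A_k=A\). Then \(\det((t_1+\cdots+t_g)A)=(t_1+\cdots+t_g)^g\det A\), and the coefficient of \(t_1\cdots t_g\) in \((t_1+\cdots+t_g)^g\) is \(g!\). Dividing by \(g!\) gives \(\Delta(A,\dots,A)=\det A\).

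For (iii), I would first reduce to positive definite matrices. Replacing each \(A_k\) by \(A_k+\epsilon I\) and letting \(\epsilon\to0\) is justified because both sides are continuous in the \(A_k\). When \(\det A_k=0\) for some \(k\), the right-hand side is \(0\). In that case it suffices to show \(\Delta\ge0\). This holds because each positive semidefinite \(A_k\) is a sum of rank-one matrices \(v v^{T}\), and for rank-one matrices \(\Delta(v_1v_1^T,\dots,v_gv_g^T)=\frac1{g!}\det(v_1,\dots,v_g)^2\ge0\); multilinearity then gives \(\Delta\ge0\) in general.

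For positive definite \(A_k\), I would prove the Alexandrov--Fenchel inequality
\[
\Delta(A_1,A_2,A_3,\dots,A_g)^2\ge\Delta(A_1,A_1,A_3,\dots,A_g)\,\Delta(A_2,A_2,A_3,\dots,A_g).
\]
The first step is to normalize \(A_g=I\) by the congruence \(A\mapsto P^{T}AP\), which scales every mixed discriminant by \(\det P^2\). The next step is an induction on \(g\) that shows the quadratic form \(X\mapsto \Delta(X,X,A_3,\dots,A_g)\) on symmetric matrices has Lorentzian signature, i.e.\ exactly one positive direction. The base case \(g=2\) is the identity \(2\Delta(X,Y)=\operatorname{tr}X\operatorname{tr}Y-\operatorname{tr}(XY)\) after normalizing one matrix to \(I\). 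The inequality above then follows by the reverse Cauchy--Schwarz inequality for Lorentzian forms.

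From the Alexandrov--Fenchel inequality, the standard iteration yields \(\Delta(A_1,\dots,A_g)^g\ge\prod_k\Delta(A_k,\dots,A_k)=\prod_k\det A_k\), using (ii) for the last equality. This iteration is the same one that derives the Khovanskii--Teissier-type product bound from the mixed volume inequality. Alternatively, I may simply cite \cite{Li23} for the inequality.

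The main obstacle is the signature statement in the induction; if it becomes unwieldy I would instead invoke the Alexandrov--Fenchel inequality for mixed discriminants directly from the cited literature.
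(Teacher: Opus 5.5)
Your treatment of (i) and (ii) is what the paper means by ``follow directly from the definition,'' and both computations are correct. For (iii) you take a different route. The paper simply cites the inequality as \cite[Thm.~8]{Bap89}, whereas you derive it from the Alexandrov--Fenchel inequality for mixed discriminants.

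Your derivation is sound. The reduction $A_k\mapsto A_k+\epsilon I$ with continuity handles the semidefinite case. The passage to $\Delta(A_1,\dots,A_g)^g\ge\prod_k\det A_k$ is the usual induction on $m$ of $\Delta(A_1,\dots,A_m,\mathcal{C})^m\ge\prod_{i=1}^m\Delta(A_i[m],\mathcal{C})$, where $A_i[m]$ means $m$ copies of $A_i$. It is fed by log-concavity of $i\mapsto\Delta(A_1[i],A_2[m-i],\mathcal{C})$. Once you use continuity, your rank-one argument is not needed for the case $\det A_k=0$. It is, however, exactly what gives strict positivity $\Delta(A_1,\dots,A_g)\ge\lambda^g>0$ for positive definite tuples, with $\lambda$ the least eigenvalue of all the $A_k$. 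The iteration needs this positivity in order to take roots, so state it explicitly.

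What your route buys is economy. The paper already invokes \cite[Thm.~2.4]{Li23} as Theorem~\ref{thm:AF-mixed-disc} for the equality case in Lemma~\ref{lem:bapat-equality-positive-definite}. Your argument shows that (iii) is a formal consequence of that theorem, so the separate Bapat citation is dispensable.

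What it does not buy is self-containedness, because your sketch of Alexandrov--Fenchel omits the only hard step. After normalizing $A_g=I$, the form $\Delta(X,X,A_3,\dots,A_{g-1},I)$ equals $\frac1g\sum_{i}\Delta(X^{(i)},X^{(i)},A_3^{(i)},\dots,A_{g-1}^{(i)})$, where $X^{(i)}$ deletes the $i$-th row and column. A sum of forms each with one positive direction need not have only one positive direction. The standard argument therefore needs two further ingredients:
\begin{itemize}
\item nondegeneracy of the form for every positive definite tuple, which is where the inductive hypothesis is genuinely used;
\item a deformation through the connected positive definite cone to $(I,\dots,I)$, where the form is a positive multiple of $(\operatorname{tr}X)^2-\operatorname{tr}(X^2)$.
\end{itemize}
So it is your fallback citation that actually carries (iii), which leaves you, like the paper, resting on one external theorem. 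A minor point: for $g=2$ the identity $2\Delta(X,Y)=\operatorname{tr}X\operatorname{tr}Y-\operatorname{tr}(XY)$ holds for all $2\times 2$ matrices, with no normalization needed.
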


\begin{proof}
Properties \textup{(i)} and \textup{(ii)} follow directly from Definition~\ref{def:mixed-discriminant}. The inequality in \textup{(iii)} is \cite[Thm.~8]{Bap89}.
\end{proof}

\begin{thm}[Alexandrov--Fenchel inequalities for mixed discriminants; see {\cite[Thm.~2.4]{Li23}}]\label{thm:AF-mixed-disc}
Let \(E^{(1)},\dots,E^{(g)}\) be symmetric positive definite \(g\times g\) matrices. For any distinct integers \(i,j\in\{1,\ldots,g\}\),
\[
\begin{aligned}
\Delta\bigl(E^{(1)},\dots,E^{(g)}\bigr)\ge{}& \Bigl[\Delta\bigl(E^{(1)},\dots,E^{(j-1)},E^{(i)},E^{(j+1)},\dots,E^{(g)}\bigr) \\
& \cdot \Delta\bigl(E^{(1)},\dots,E^{(i-1)},E^{(j)},E^{(i+1)},\dots,E^{(g)}\bigr)\Bigr]^{1/2}.
\end{aligned}
\]
Moreover, for any such pair \(i\ne j\), equality holds if and only if \(E^{(j)}=\alpha\,E^{(i)}\) for some \(\alpha\in \R_{>0}\).
\end{thm}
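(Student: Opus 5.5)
By the symmetry in Proposition~\ref{prop:mixed-discriminant-properties}\textup{(i)}, we may permute the arguments so that \(i=1\) and \(j=2\). Fix the positive definite matrices \(E^{(3)},\dots,E^{(g)}\), and consider the symmetric bilinear form
\[
\Phi(A,B)\coloneqq\Delta\bigl(A,B,E^{(3)},\dots,E^{(g)}\bigr)
\]
on the real vector space \(\mathrm{Sym}_g(\R)\) of symmetric \(g\times g\) matrices. Then the claimed inequality reads \(\Phi(E^{(1)},E^{(2)})^2\ge\Phi(E^{(1)},E^{(1)})\,\Phi(E^{(2)},E^{(2)})\), which is a reverse Cauchy--Schwarz inequality. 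The plan is to prove that \(\Phi\) is \emph{Lorentzian}: nondegenerate with exactly one positive eigenvalue.

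Granting this, the conclusion is linear algebra. By Proposition~\ref{prop:mixed-discriminant-properties}\textup{(iii)}, \(\Phi(A,A)>0\) for positive definite \(A\). Given such \(A=E^{(1)}\) and any \(B\), write \(B=tA+B_0\) with \(\Phi(A,B_0)=0\). Since \(\Phi\) has exactly one positive eigenvalue and \(\Phi(A,A)>0\), the restriction of \(\Phi\) to \(A^{\perp}\) is negative definite. Therefore
\[
\Phi(A,B)^2-\Phi(A,A)\Phi(B,B)=-\Phi(A,A)\Phi(B_0,B_0)\ge0,
\]
with equality iff \(B_0=0\), that is, iff \(B=tA\). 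When \(B=E^{(2)}\) is positive definite, \(t=\Phi(A,B)/\Phi(A,A)>0\), which gives \(\alpha>0\). Conversely, \(B=\alpha A\) clearly gives equality by multilinearity. Taking square roots is legitimate because all the mixed discriminants involved are positive.

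The Lorentzian property is proved by induction on \(g\), in the style of Alexandrov.
\begin{itemize}
\item \textbf{Base case \(g=2\).} Here \(\Phi(A,B)=\tfrac12\bigl(\operatorname{tr}A\operatorname{tr}B-\operatorname{tr}(AB)\bigr)\). This form is positive on \(\R I\) and equals \(-\tfrac12\operatorname{tr}(A^2)<0\) on traceless matrices, so it has signature \((1,2)\).
\item \textbf{Reduction to the identity.} Simultaneous congruence \(E\mapsto P^{\mathsf T}EP\) rescales \(\Delta\) by \(\det(P)^2>0\), so signature questions are unchanged. Hence we may assume that \(E^{(g)}=I\) and that \(E^{(3)}\) is diagonal.
\item \textbf{Continuity argument.} Connect the tuple \((E^{(3)},\dots,E^{(g)})\) to \((I,\dots,I)\) through positive definite matrices. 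The signature of \(\Phi\) is locally constant along this path as long as \(\Phi\) stays nondegenerate. For \((I,\dots,I)\), one computes \(\Phi(A,B)\propto\operatorname{tr}A\operatorname{tr}B-\operatorname{tr}(AB)\), which again has signature \((1,\dim-1)\).
\end{itemize}

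The main obstacle is showing that \(\Phi\) is nondegenerate for every positive definite tuple. I would first try Alexandrov's expansion. With \(E^{(3)}=\mathrm{diag}(d_1,\dots,d_g)\) and \(d_k>0\), expand along the diagonal:
\[
\Phi(A,B)=\frac1g\sum_k d_k\,\Delta_{g-1}\bigl(A_{\hat k},B_{\hat k},E^{(4)}_{\hat k},\dots\bigr),
\]
where \(\hat k\) denotes deleting row and column \(k\), and \(\Delta_{g-1}\) is the \((g-1)\)-dimensional mixed discriminant. Each summand is Lorentzian by the induction hypothesis. Suppose \(\Phi(A,\cdot)=0\). Testing against suitable \(B\), and then applying the reverse Cauchy--Schwarz inequality on each minor form together with the equality case from the induction, should force \(A_{\hat k}\) to be proportional to \(E^{(4)}_{\hat k}\) in each summand. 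Compatibility across the overlapping minors then forces \(A=cE^{(4)}\), and \(\Phi(A,E^{(4)})=0\) gives \(c=0\). This step, namely gluing the proportionality constants across minors and handling the case where some summand vanishes, is where I expect the real work. The fallback is to invoke Gårding's theory of hyperbolic polynomials, applied to \(\det\) on \(\mathrm{Sym}_g(\R)\), for the inequality, and to treat the equality case separately.
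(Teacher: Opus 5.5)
The paper does not prove this statement; it quotes it from \cite[Thm.~2.4]{Li23}. Your proposal is therefore an independent reconstruction of Alexandrov's classical argument, and most of its architecture is right.
\begin{itemize}
\item If \(\Phi\) is nondegenerate with exactly one positive eigenvalue, your orthogonal decomposition \(B=tA+B_0\) gives both the inequality and the equality case.
\item The base case \(g=2\) is correct, as is the signature computation at \((I,\dots,I)\).
\item Congruence and the continuity argument transport the signature correctly.
\end{itemize}

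However, the step that carries all the content is not proved: nondegeneracy of \(\Phi\) for \emph{every} positive definite tuple \(E^{(3)},\dots,E^{(g)}\). The continuity argument depends on it. So does the equality statement. Suppose \(\Phi\) merely has one positive eigenvalue, which is all that G\aa rding's theory gives, and suppose \(0\neq X\in\ker\Phi\). Then \(E^{(2)}\coloneqq E^{(1)}+\epsilon X\) is positive definite for small \(\epsilon>0\) and is not a multiple of \(E^{(1)}\). It also satisfies \(\Phi(E^{(1)},E^{(2)})^2=\Phi(E^{(1)},E^{(1)})^2=\Phi(E^{(1)},E^{(1)})\,\Phi(E^{(2)},E^{(2)})\). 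So your fallback cannot ``treat the equality case separately''; that is the same problem.

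Your sketch via Alexandrov's expansion also lacks a mechanism for getting information about each minor separately. Relations such as \(\Phi(A,A)=0\) or \(\Phi(A,E^{(4)})=0\) each expand to a single scalar identity \(\sum_k d_k(\cdots)=0\), which cannot be split minor by minor. Moreover, there is no \(E^{(4)}\) at all when \(g=3\).

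The missing idea is to test \(\Phi(A,\cdot)=0\) against the diagonal rank-one matrices \(e_ke_k^{\mathsf T}\) and expand with respect to that test argument. Write \(E^{(3)}=\mathrm{diag}(d_1,\dots,d_g)\) and \(\Phi_k(U,V)\coloneqq\Delta_{g-1}(U,V,E^{(4)}_{\widehat{k}},\dots,E^{(g)}_{\widehat{k}})\); for \(g=3\) this is just \(\Delta_2\). Then for each \(k\),
\[
0=\Phi(A,e_ke_k^{\mathsf T})=\tfrac1g\,\Phi_k\bigl(A_{\widehat{k}},E^{(3)}_{\widehat{k}}\bigr).
\]
By the induction hypothesis, \(\Phi_k\) is Lorentzian, and \(\Phi_k(E^{(3)}_{\widehat{k}},E^{(3)}_{\widehat{k}})>0\). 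Hence \(\Phi_k\) is negative definite on the \(\Phi_k\)-orthogonal complement of \(E^{(3)}_{\widehat{k}}\). It follows that \(\Phi_k(A_{\widehat{k}},A_{\widehat{k}})\le0\), with equality only if \(A_{\widehat{k}}=0\).

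Combine this with \(0=\Phi(A,A)=\tfrac1g\sum_k d_k\,\Phi_k(A_{\widehat{k}},A_{\widehat{k}})\), where all \(d_k>0\). This forces \(A_{\widehat{k}}=0\) for every \(k\). Since \(g\ge3\), every entry of \(A\) lies in some principal \((g-1)\)-minor, so \(A=0\).

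The reference matrix on each minor is the diagonal \(E^{(3)}_{\widehat{k}}\), not \(E^{(4)}_{\widehat{k}}\). The per-minor conclusion is vanishing, not proportionality, so no gluing of constants is needed. With this step inserted, your induction plus continuity argument becomes a complete proof.
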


\begin{lemma}\label{lem:bapat-equality-positive-definite}
In Proposition~\ref{prop:mixed-discriminant-properties}\textup{(iii)}, assume moreover that
\(A_1,\dots,A_g\) are symmetric positive definite matrices. Then equality holds if and only if
there exist \(\lambda_2,\dots,\lambda_g\in\R_{>0}\) such that
\(A_i=\lambda_iA_1\) for all \(2\le i\le g\).
\end{lemma}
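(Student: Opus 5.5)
Simultaneous diagonalization reduces the equality case to the classical equality case of the arithmetic--geometric mean inequality; the reverse implication is a direct computation.

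\emph{Easy direction.} If \(A_i=\lambda_iA_1\) with \(\lambda_i>0\), then multilinearity and Proposition~\ref{prop:mixed-discriminant-properties}\textup{(i)},\textup{(ii)} give
\[
\Delta(A_1,\dots,A_g)=\Bigl(\prod_{i\ge2}\lambda_i\Bigr)\det A_1 .
\]
Homogeneity of the determinant gives \((\det A_i)^{1/g}=\lambda_i(\det A_1)^{1/g}\), so \(\prod_k(\det A_k)^{1/g}\) equals the same quantity. Hence equality holds.

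\emph{Hard direction: normalize.} Assume equality holds.

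1. Since \(A_1\) is positive definite, write \(A_1=P^{\mathsf T}P\) with \(P\) invertible. Set \(B_k\coloneqq (P^{-1})^{\mathsf T}A_kP^{-1}\).
2. The transformation \(A\mapsto (P^{-1})^{\mathsf T}AP^{-1}\) multiplies every determinant, and hence (from Definition~\ref{def:mixed-discriminant}) every mixed discriminant, by \(\det(P)^{-2}\). So both sides of the inequality scale by the same factor \(\det(P)^{-2}\).
3. We may therefore assume \(A_1=I\), with \(A_2,\dots,A_g\) symmetric positive definite.

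\emph{Hard direction: the key estimate.} Apply Proposition~\ref{prop:mixed-discriminant-properties}\textup{(iii)} in the refined form below.

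1. Fix \(A_1=I\) and set \(F(A_2,\dots,A_g)\coloneqq\Delta(I,A_2,\dots,A_g)\).
2. Apply the inequality to the \(g\)-tuple \((I, A_2,\dots,A_g)\) and also to tuples in which some \(A_k\) is repeated. The aim is to show that equality forces each \(A_k\) to have a single eigenvalue.
3. The most direct route uses the Alexandrov--Fenchel inequality, Theorem~\ref{thm:AF-mixed-disc}, which has an equality characterization. Let \(D\coloneqq\Delta(A_1,\dots,A_g)\) and \(D_{ij}\) denote the mixed discriminant with \(A_j\) replaced by \(A_i\). Iterating AF yields the standard chain giving \(D^{g}\ge\prod_k\det A_k\), in the style of the usual derivation of the Minkowski-type inequality from AF.
4. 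If equality holds at the end of this chain, then equality holds in the first AF step used, namely \(D^2=D_{12}D_{21}\) (after reordering). The equality clause of Theorem~\ref{thm:AF-mixed-disc} then gives \(A_2=\alpha A_1\).
5. By symmetry of \(\Delta\), the same argument with \(A_j\) in place of \(A_2\) gives \(A_j=\lambda_jA_1\) for every \(j\).

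\emph{Main obstacle.} Step 3 must give a clean derivation of \(D^g\ge\prod\det A_k\) from AF by a chain of inequalities, each of whose equality cases is controlled. The standard derivation (cf.\ Schneider's treatment of mixed volumes) proceeds in two stages:
\begin{itemize}
\item first prove \(\Delta(A,B,\dots,B)^g\ge\det A\,(\det B)^{g-1}\) via log-concavity of \(k\mapsto\Delta(A[k],B[g-k])\), a consequence of AF;
\item then induct on the number of distinct matrices.
\end{itemize}
Tracking equality through this induction requires care: equality in the final product forces equality in a log-concavity chain, hence at some AF step involving two of the matrices. I would carry out the case \(g=2\) explicitly first, where \(2\Delta(A,B)=\operatorname{tr}(\mathrm{adj}(A)B)\) and the statement reduces to AM--GM on eigenvalues. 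I would then handle general \(g\) by this induction.

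\emph{Fallback.} An alternative, which may be simpler, is to diagonalize \(A_2\) orthogonally, keeping \(A_1=I\), and expand the mixed discriminant as a mixed permanent-like sum of nonnegative terms. One would then compare with AM--GM, whose equality case is classical. This needs care, however, because \(A_3,\dots,A_g\) cannot be diagonalized simultaneously with \(A_2\). For that reason I prefer the AF route.
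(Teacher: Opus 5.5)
Your key mechanism is the same as the paper's. Equality in Proposition~\ref{prop:mixed-discriminant-properties}\textup{(iii)} must pass through an Alexandrov--Fenchel step, and the equality clause of Theorem~\ref{thm:AF-mixed-disc} then gives proportionality. The step you flag as the ``main obstacle'' is rederiving \(\Delta(A_1,\dots,A_g)^g\ge\prod_k\det A_k\) from AF by a Schneider-style induction while tracking equality. You do not need it, because Bapat's inequality is already available and can serve as the lower bound after a single AF step.

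Fix \(i\ne j\), and write \((A_1,\dots,A_g)^{j\leftarrow i}\) for the tuple with \(A_j\) replaced by \(A_i\). Then
\[
\Delta(A_1,\dots,A_g)\ge\Bigl[\Delta\bigl((A_1,\dots,A_g)^{j\leftarrow i}\bigr)\,\Delta\bigl((A_1,\dots,A_g)^{i\leftarrow j}\bigr)\Bigr]^{1/2}\ge\Bigl(\prod_{k=1}^g\det A_k\Bigr)^{1/g}.
\]
The first inequality is AF. The second is Bapat applied to the two swapped tuples; the extra factors \((\det A_i/\det A_j)^{\pm1/g}\) cancel in the product. All terms are positive, so equality of the outer terms forces equality in the AF step. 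Hence \(A_j=\alpha A_i\), and taking \(i=1\) finishes the proof. Your step~2, applying the inequality to tuples in which some \(A_k\) is repeated, was already pointing at exactly this.

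Your own route is not wrong. Take the standard induction proving \(D^m\ge\prod_{i\le m}\Delta(A_i[m],A_{m+1},\dots,A_g)\). Its base case is the AF step \(D^2\ge D_{12}D_{21}\) on the original tuple, and every later step bounds positive quantities from below. So overall equality does force \(D^2=D_{12}D_{21}\), as you claim in step~4. As written, however, that derivation is only sketched, and it is where the whole proof would live. Your hedged version (``equality at some AF step involving two of the matrices'') would also have to be sharpened to an AF step exchanging \(A_1\) with \(A_j\). The normalization \(A_1=I\), the \(g=2\) trace formula, and the diagonalization fallback can all be dropped.
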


\begin{proof}
If \(A_i=\lambda_iA_1\) for all \(2\le i\le g\), then by Proposition~\ref{prop:mixed-discriminant-properties}\textup{(i), (ii)}, one has
\[
\Delta(A_1,\dots,A_g)
=
\lambda_2\cdots\lambda_g\det A_1
=
\prod_{k=1}^g(\det A_k)^{1/g}.
\]

Conversely, assume equality holds in Proposition~\ref{prop:mixed-discriminant-properties}\textup{(iii)}.
For integers \(1\le a,b\le g\), set
\[
(A_1,\dots,A_g)^{a\leftarrow b}
\coloneqq
(A_1,\dots,A_{a-1},A_b,A_{a+1},\dots,A_g).
\]
Fix two distinct integers \(i,j\in\{1,\ldots,g\}\).
Then Theorem~\ref{thm:AF-mixed-disc} gives
\begin{equation}\label{eq:bapat-equality-AF}
\Delta(A_1,\dots,A_g)
\ge
\Bigl[
\Delta\bigl((A_1,\dots,A_g)^{j\leftarrow i}\bigr)\cdot
\Delta\bigl((A_1,\dots,A_g)^{i\leftarrow j}\bigr)
\Bigr]^{1/2}.
\end{equation}
Applying Proposition~\ref{prop:mixed-discriminant-properties}\textup{(iii)} to the two \(g\)-tuples
\((A_1,\dots,A_g)^{j\leftarrow i}\) and
\((A_1,\dots,A_g)^{i\leftarrow j}\), we obtain
\begin{equation}\label{eq:bapat-equality-replace-j}
\Delta\bigl((A_1,\dots,A_g)^{j\leftarrow i}\bigr)
\ge
\Bigl(\prod_{k=1}^g\det A_k\Bigr)^{1/g}
\frac{(\det A_i)^{1/g}}{(\det A_j)^{1/g}},
\end{equation}
\begin{equation}\label{eq:bapat-equality-replace-i}
\Delta\bigl((A_1,\dots,A_g)^{i\leftarrow j}\bigr)
\ge
\Bigl(\prod_{k=1}^g\det A_k\Bigr)^{1/g}
\frac{(\det A_j)^{1/g}}{(\det A_i)^{1/g}}.
\end{equation}
Multiplying \eqref{eq:bapat-equality-replace-j} and
\eqref{eq:bapat-equality-replace-i} and taking square roots gives
\[
\Bigl[
\Delta\bigl((A_1,\dots,A_g)^{j\leftarrow i}\bigr)\cdot
\Delta\bigl((A_1,\dots,A_g)^{i\leftarrow j}\bigr)
\Bigr]^{1/2}
\ge
\Bigl(\prod_{k=1}^g\det A_k\Bigr)^{1/g}.
\]
Combining this inequality with \eqref{eq:bapat-equality-AF} yields the inequality in Proposition~\ref{prop:mixed-discriminant-properties}\textup{(iii)}.
Since all quantities in \eqref{eq:bapat-equality-AF},
\eqref{eq:bapat-equality-replace-j}, and
\eqref{eq:bapat-equality-replace-i} are positive, equality in
Proposition~\ref{prop:mixed-discriminant-properties}\textup{(iii)} forces equality in
\eqref{eq:bapat-equality-AF}. By Theorem~\ref{thm:AF-mixed-disc}, for the fixed
pair \(i\neq j\), equality in \eqref{eq:bapat-equality-AF} holds if and only if
\[
A_j=\alpha A_i,
\]
for some \(\alpha\in\R_{>0}\). Taking \(i=1\), we obtain
\(\lambda_2,\dots,\lambda_g\in\R_{>0}\) such that \(A_i=\lambda_iA_1\) for all
\(2\le i\le g\).
\end{proof}

\subsection{Mixed discriminants for intersection numbers on integral tori}\label{subsec:mixed-disc-intersection}
For this subsection and the next one, let \(X=(\Lambda_X,\Lambda_X^{\prime},[\,\cdot\,,\,\cdot\,]_X)\) be a \(g\)-dimensional integral torus, and retain the \(\Z\)-bases \(\{\lambda_i^{\prime}\}_{i=1}^g\) of \(\Lambda_X^{\prime}\) and \(\{\eta_i\}_{i=1}^g\) of \(\Lambda_X\), together with their dual bases, fixed in Subsection~\ref{subsec:tropical-cohomology}. We also retain the sign \(\varepsilon\in\{\pm1\}\) fixed in \eqref{eq:fundamental-cycle-class}. Recall from Subsection~\ref{subsec:integral-tori-polarizations} that for any \(D\in \Div(X)\), \(Q_D\) denotes the symmetric bilinear form associated to \(D\). For \(A_1,\ldots,A_g\in M_g(\R)\), we write \(\Delta(A_1,\ldots,A_g)\) for the mixed discriminant introduced in the previous subsection.
In this subsection, we relate intersection numbers of tropical Cartier divisors on \(X\) to mixed discriminants of their associated matrices.

\begin{prop}\label{prop:matrix-relations-SGD}
For any \(D\in \Div(X)\), let
\[
E^{D}=\bigl(Q_D(\lambda_i^{\prime},\eta_j^{*})\bigr)_{1\le i,j\le g},
\qquad
G^{D}=\bigl(Q_D(\eta_i^{*},\eta_j^{*})\bigr)_{1\le i,j\le g}.
\]
Then there exists \(S\in \mathrm{GL}_g(\R)\), independent of \(D\), such that the following hold.
\begin{enumerate}[label=\textup{(\roman*)}, itemsep=2pt]
\item One has
\[
E^{D}=S^{\top}G^{D}.
\]
\item Let \(D_1,\dots,D_g\in \Div(X)\). Then
\[
\Delta\bigl(E^{D_1},\dots,E^{D_g}\bigr)
=
\det(S)\,\Delta\bigl(G^{D_1},\dots,G^{D_g}\bigr).
\]
\item With \(D_1,\dots,D_g\) as in \textup{(ii)}, one has
\[
(D_1\cdots D_g)_X
=
\varepsilon\, g!\,\det(S)\,
\Delta\bigl(G^{D_1},\dots,G^{D_g}\bigr),
\]
where \(\varepsilon\in\{\pm1\}\) is the sign fixed in
\eqref{eq:fundamental-cycle-class}.
\end{enumerate}
\end{prop}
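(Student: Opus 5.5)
The plan is to express each basis vector \(\lambda_i^{\prime}\in\Lambda_X^{\prime}\subseteq(\Lambda_X)^{*}_{\R}\) in terms of the basis \(\{\eta_k^{*}\}_{k=1}^g\) of \((\Lambda_X)^{*}_{\R}\), and then use bilinearity of \(Q_D\). Under the identification \((\Lambda_X^{\prime})_{\R}=(\Lambda_X)^{*}_{\R}\) via \(\lambda^{\prime}\mapsto[\,\cdot,\lambda^{\prime}]_X\), we have \(\lambda_i^{\prime}=\sum_{k}S_{k,i}\,\eta_k^{*}\) with \(S_{k,i}\coloneqq[\eta_k,\lambda_i^{\prime}]_X\). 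The matrix \(S=(S_{k,i})\) is a change-of-basis matrix between two \(\R\)-bases, so \(S\in\mathrm{GL}_g(\R)\). Invertibility follows from nondegeneracy of \([\,\cdot,\cdot\,]_X\) and the fact that \(\Lambda_X^{\prime}\) has full rank. The matrix \(S\) depends only on \(X\) and the fixed bases, not on \(D\).

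For (i), bilinearity of \(Q_D\) gives
\[
E^D_{i,j}=Q_D(\lambda_i^{\prime},\eta_j^{*})=\sum_k S_{k,i}\,Q_D(\eta_k^{*},\eta_j^{*})=(S^{\top}G^D)_{i,j}.
\]
Here \(Q_D\) is understood as extended \(\R\)-bilinearly to \((\Lambda_X^{\prime})_{\R}=(\Lambda_X)^{*}_{\R}\). This extension is exactly the sense in which \(Q_D(\lambda_i^{\prime},\eta_j^{*})\) already appears in the paper's formula for \(E^D\).

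For (ii), I use the multiplicativity of the determinant:
\[
\det\Bigl(\sum_\ell t_\ell E^{D_\ell}\Bigr)=\det\Bigl(S^{\top}\sum_\ell t_\ell G^{D_\ell}\Bigr)=\det(S)\det\Bigl(\sum_\ell t_\ell G^{D_\ell}\Bigr).
\]
Comparing coefficients of \(t_1\cdots t_g\), as in Definition~\ref{def:mixed-discriminant}, gives the claimed identity.

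For (iii), it suffices to show \((D_1\cdots D_g)_X=\varepsilon\,g!\,\Delta(E^{D_1},\dots,E^{D_g})\) and then apply (ii). The main step is to recognize the double sum in \eqref{eq:intersection-number-matrix} as \(g!\) times the mixed discriminant. Expanding \(\det(\sum_\ell t_\ell A_\ell)\) by the Leibniz formula gives \(\sum_{\tau}\operatorname{sgn}(\tau)\prod_{r}\sum_\ell t_\ell (A_\ell)_{r,\tau(r)}\). The coefficient of \(t_1\cdots t_g\) is obtained by assigning matrices bijectively to rows. Writing this assignment as \(r=\sigma(\ell)\) and reindexing, the coefficient equals
\[
\sum_{\sigma,\tau}\operatorname{sgn}(\tau\circ\sigma)\prod_\ell(A_\ell)_{\sigma(\ell),\tau\sigma(\ell)}
=\sum_{\sigma,\tau'}\operatorname{sgn}(\sigma)\operatorname{sgn}(\tau')\prod_\ell(A_\ell)_{\sigma(\ell),\tau'(\ell)},
\]
with \(\tau'=\tau\sigma\). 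This is exactly the sum in \eqref{eq:intersection-number-matrix}, so the sum equals \(g!\,\Delta(E^{D_1},\dots,E^{D_g})\).

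I expect the main obstacle to be only this sign and reindexing bookkeeping in (iii). Everything else is linear algebra once \(S\) is identified via the pairing.
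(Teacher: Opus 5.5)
Your proposal is correct. Parts (i) and (ii) match the paper's proof. The paper defines \(S\) by \(\lambda_i'=\sum_k S_{ki}\eta_k^*\), which is your \(S_{k,i}=[\eta_k,\lambda_i']_X\), and then uses the same determinant-multiplicativity argument. Your explicit justification that \(S\in\mathrm{GL}_g(\R)\) fills in a point the paper leaves implicit.

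For (iii) you take a different route.
\begin{itemize}
\item \emph{Your route.} You identify the double sum in \eqref{eq:intersection-number-matrix} directly with \(g!\,\Delta(E^{D_1},\dots,E^{D_g})\) via the Leibniz expansion.
\item \emph{The paper's route.} It applies Lemma~\ref{lem:top-self-intersection-det}\textup{(i)} to \(D(t)=\sum_i t_iD_i\) with \(t\in(\Z_{>0})^g\). It then compares the coefficients of \(t_1\cdots t_g\) in \((D(t)^g)_X=\varepsilon\,g!\det\bigl(\sum_i t_iE^{D_i}\bigr)\).
\end{itemize}
The paper's polarization argument is shorter on the page, but it tacitly uses three further facts:
\begin{itemize}
\item multilinearity and symmetry of the intersection number in the divisors. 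The coefficient of \(t_1\cdots t_g\) on the left is \(\sum_{\pi\in S_g}(D_{\pi(1)}\cdots D_{\pi(g)})_X\), which equals \(g!\,(D_1\cdots D_g)_X\) only by symmetry;
\item additivity of \(D\mapsto E^D\);
\item the step from agreement on \((\Z_{>0})^g\) to an identity of polynomials.
\end{itemize}
Your route needs none of these. It is a purely combinatorial identity applied to a formula that is already stated for arbitrary \(D_1,\dots,D_g\).

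One bookkeeping slip: in your intermediate expression the sign should be \(\operatorname{sgn}(\tau)\), not \(\operatorname{sgn}(\tau\circ\sigma)\). With \(\tau'=\tau\sigma\) one has \(\operatorname{sgn}(\tau)=\operatorname{sgn}(\sigma)\operatorname{sgn}(\tau')\), which yields your final expression, and that final expression is correct. As written, \(\operatorname{sgn}(\tau\circ\sigma)\) would become \(\operatorname{sgn}(\tau')\) alone and lose the factor \(\operatorname{sgn}(\sigma)\).
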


\begin{proof}
Let \(S=\bigl(S_{i,j}\bigr)_{1\le i,j\le g}\in M_g(\R)\) be defined by
\(\lambda_i^{\prime}=\sum_{k=1}^{g} S_{ki}\eta_k^{*}\quad\text{for all }1\le i\le g\).

\begin{enumerate}[label=\textup{(\roman*)}, itemsep=2pt]
\item By the definition of \(E^{D}\), one has
\[
E^{D}_{ij}=Q_D(\lambda_i^{\prime},\eta_j^{*})=\sum_{k=1}^g S_{ki}\,Q_D(\eta_k^{*},\eta_j^{*})=\sum_{k=1}^g (S^{\top})_{ik}(G^{D})_{kj},
\]
hence \(E^{D}=S^{\top}G^{D}\).

\item By \textup{(i)}, for all \(t_1,\dots,t_g\in\Z_{>0}\), one has
\[
\sum_{i=1}^g t_iE^{D_i}=S^{\top}\Bigl(\sum_{i=1}^g t_iG^{D_i}\Bigr).
\]
Taking determinants on both sides, one has
\begin{equation}\label{eq:mixed-discriminant-EG-polynomial}
\det\Bigl(\sum_{i=1}^g t_iE^{D_i}\Bigr)=\det(S)\,\det\Bigl(\sum_{i=1}^g t_iG^{D_i}\Bigr),
\end{equation}
for all \(t_1,\dots,t_g\in\Z_{>0}\). Since both sides are polynomials in \(t_1,\dots,t_g\), they are equal as polynomials. Comparing the coefficient of \(t_1\cdots t_g\) on both sides of \eqref{eq:mixed-discriminant-EG-polynomial}, we obtain
\[
\Delta\bigl(E^{D_1},\dots,E^{D_g}\bigr)=\det(S)\,\Delta\bigl(G^{D_1},\dots,G^{D_g}\bigr).
\]

\item For \(t=(t_1,\dots,t_g)\in (\Z_{>0})^g\), set \(D(t)\coloneqq \sum_{i=1}^g t_iD_i\). Then \(D(t)\in \Div(X)\), and one has
\[
Q_{D(t)}=\sum_{i=1}^g t_iQ_{D_i}
\qquad\text{and}\qquad
E^{D(t)}=\sum_{i=1}^g t_iE^{D_i}.
\]
By Lemma~\ref{lem:top-self-intersection-det}\textup{(i)}, one has
\(
\bigl(D(t)^g\bigr)_X=\varepsilon\, g!\,\det\bigl(E^{D(t)}\bigr)=\varepsilon\, g!\,\det\Bigl(\sum_{i=1}^g t_iE^{D_i}\Bigr).
\)
Comparing the coefficient of \(t_1\cdots t_g\) on both sides of the equality above, we get
\[
(D_1\cdots D_g)_X=\varepsilon\, g!\,\Delta\bigl(E^{D_1},\dots,E^{D_g}\bigr).
\]
Combining this with \textup{(ii)}, we obtain
\(
(D_1\cdots D_g)_X=\varepsilon\, g!\,\det(S)\,\Delta\bigl(G^{D_1},\dots,G^{D_g}\bigr).
\)\qedhere \end{enumerate}
\end{proof}

\subsection{Hodge-type inequalities on tropical abelian varieties}
In this subsection, we establish Hodge-type inequalities for ample tropical Cartier divisors on tropical abelian varieties.

\begin{prop}\label{prop:tropical-KT}
Let \(X\) be a \(g\)-dimensional tropical abelian variety, and let \(D_1,\dots,D_g\) be ample tropical Cartier divisors on \(X\). Then
\begin{equation}\label{eq:hodge-type-product-inequality}
(D_1\cdots D_g)_X\ge \prod_{i=1}^g (D_i^g)_X^{1/g}.
\end{equation}
Equality holds if and only if there exist \(m_1,\dots,m_g\in \Z_{>0}\) such that
\(
m_1[D_1]\equiv_{\hom}m_2[D_2]\equiv_{\hom}\cdots\equiv_{\hom}m_g[D_g].
\)
\end{prop}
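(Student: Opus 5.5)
Via Proposition~\ref{prop:matrix-relations-SGD}, we will translate the inequality into the Bapat inequality for mixed discriminants of the symmetric matrices \(G^{D_i}\). Since each \(D_i\) is ample, \(Q_{D_i}\) is symmetric positive definite on \((\Lambda_X^{\prime})_{\R}=(\Lambda_X)_{\R}^{*}\). Hence each Gram matrix \(G^{D_i}=\bigl(Q_{D_i}(\eta_k^*,\eta_l^*)\bigr)\) is symmetric positive definite, and \(\Delta(G^{D_1},\dots,G^{D_g})>0\) and \(\det G^{D_i}>0\).

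\textbf{Step 1: fixing the sign.} By Lemma~\ref{lem:top-self-intersection-det}\textup{(ii)}, \((D_1^g)_X=g!\lvert\det E^{D_1}\rvert>0\). Combining Proposition~\ref{prop:matrix-relations-SGD}\textup{(iii)} (with all \(D_i=D_1\)) and Proposition~\ref{prop:mixed-discriminant-properties}\textup{(ii)} gives \((D_1^g)_X=\varepsilon g!\det(S)\det G^{D_1}\). Since \(\det G^{D_1}>0\), this forces \(c\coloneqq\varepsilon\det(S)>0\). Proposition~\ref{prop:matrix-relations-SGD}\textup{(iii)} then gives
\[
(D_1\cdots D_g)_X=c\,g!\,\Delta(G^{D_1},\dots,G^{D_g})
\qquad\text{and}\qquad
(D_i^g)_X=c\,g!\det G^{D_i}.
\]
So \(\prod_i (D_i^g)_X^{1/g}=c\,g!\prod_i(\det G^{D_i})^{1/g}\). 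The inequality \eqref{eq:hodge-type-product-inequality} is therefore equivalent to Proposition~\ref{prop:mixed-discriminant-properties}\textup{(iii)} for the \(G^{D_i}\), after multiplying through by the positive constant \(c\,g!\).

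\textbf{Step 2: the equality case.} By Lemma~\ref{lem:bapat-equality-positive-definite}, equality holds iff \(G^{D_i}=\lambda_iG^{D_1}\) with \(\lambda_i>0\). By Proposition~\ref{prop:matrix-relations-SGD}\textup{(i)} this is equivalent to \(E^{D_i}=\lambda_iE^{D_1}\), since \(S\) is invertible.

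The remaining point, and the step needing care, is rationality. The matrices \(E^{D_i}\) are integral, and \(E^{D_1}\ne0\) because \(\det E^{D_1}\ne0\). Picking an entry \((k,l)\) with \(E^{D_1}_{kl}\ne0\) gives \(\lambda_i=E^{D_i}_{kl}/E^{D_1}_{kl}\in\Q_{>0}\). Clearing denominators yields positive integers \(m_1,\dots,m_g\) with \(m_1E^{D_1}=\cdots=m_gE^{D_g}\).

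By \eqref{eq:chern-class-matrix}, linearity of \(c_1\), and \(E^{mD}=mE^{D}\), this says \(c_1(\MO_X(m_iD_i))\) are all equal. By \eqref{eq:chern-class-homological-equivalence} this is equivalent to \(m_1[D_1]\equiv_{\hom}\cdots\equiv_{\hom}m_g[D_g]\); here we use \([mD]=m[D]\), which holds by bilinearity of the intersection product. Conversely, the homological relation gives \(m_iE^{D_i}=m_1E^{D_1}\), hence \(G^{D_i}=(m_1/m_i)G^{D_1}\), and Lemma~\ref{lem:bapat-equality-positive-definite} yields equality. The only subtle points are the sign \(c>0\) and the rationality of the \(\lambda_i\); both are settled above.
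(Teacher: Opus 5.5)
Your proposal is correct and follows the paper's proof essentially step for step. The sign $\varepsilon\det(S)>0$ is extracted from $(D_1^g)_X>0$ and $\det G^{D_1}>0$, and the inequality is Bapat's inequality for the Gram matrices $G^{D_i}$ rescaled by $\varepsilon\,g!\det(S)$. The equality case comes from Lemma~\ref{lem:bapat-equality-positive-definite}, with $\lambda_i\in\Q_{>0}$ forced by integrality of the $E^{D_i}$. Your remark that $[mD]=m[D]$ and $c_1(\MO_X(mD))=m\,c_1(\MO_X(D))$ are needed to pass from $m_iE^{D_i}$ to $m_i[D_i]$ fills in a small step the paper leaves implicit.
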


\begin{proof}
\smallskip
\noindent\textbf{Step \textup{(i)}.}
Follow the notation in Proposition~\ref{prop:matrix-relations-SGD} for \(D_1,\dots,D_g\).
By Proposition~\ref{prop:matrix-relations-SGD}\textup{(iii)},
\begin{equation}\label{eq:intersection-via-G}
(D_1\cdots D_g)_X=\varepsilon\, g!\,\det(S)\,\Delta\bigl(G^{D_1},\dots,G^{D_g}\bigr).
\end{equation}
For each \(1\le i\le g\), Proposition~\ref{prop:matrix-relations-SGD}\textup{(i)} gives
\[
E^{D_i}=S^{\top}G^{D_i},
\qquad
\det(E^{D_i})=\det(S)\,\det(G^{D_i}).
\]
Therefore, by Proposition~\ref{prop:mixed-discriminant-properties}\textup{(ii)},
\begin{equation}\label{eq:self-intersection-via-G}
(D_i^g)_X=\varepsilon g!\Delta(E^{D_i},\dots,E^{D_i})=\varepsilon g!\det(E^{D_i})=\varepsilon g!\det(S)\det(G^{D_i}).
\end{equation}
Since each \(D_i\) is ample, each \(G^{D_i}\) is symmetric positive definite. Hence Proposition~\ref{prop:mixed-discriminant-properties}\textup{(iii)} gives
\(
\Delta\bigl(G^{D_1},\dots,G^{D_g}\bigr)>0.
\)
Since \(D_1\) is ample,
Lemma~\ref{lem:top-self-intersection-det}\textup{(ii)} gives
\(
(D_1^g)_X>0.
\)
Moreover, \(\det(G^{D_1})>0\) because \(G^{D_1}\) is symmetric positive
definite. Hence \eqref{eq:self-intersection-via-G} implies
\(
\varepsilon\,\det(S)>0.
\)
Together with \eqref{eq:intersection-via-G} and
\(
\Delta\bigl(G^{D_1},\dots,G^{D_g}\bigr)>0,
\)
this also yields
\(
(D_1\cdots D_g)_X>0.
\)

\smallskip
\noindent\textbf{Step \textup{(ii)}.}
Applying Proposition~\ref{prop:mixed-discriminant-properties}\textup{(iii)} to the symmetric positive definite matrices \(G^{D_1},\dots,G^{D_g}\), we obtain
\begin{equation}\label{eq:mixed-discriminant-AMGM-GD}
\Delta\bigl(G^{D_1},\dots,G^{D_g}\bigr)
\ge
\prod_{i=1}^g \det(G^{D_i})^{1/g}.
\end{equation}
Multiplying by \(\varepsilon\,g! \,\det(S)\) on both sides of \eqref{eq:mixed-discriminant-AMGM-GD}, and using \eqref{eq:intersection-via-G} and \eqref{eq:self-intersection-via-G}, we get
\[
\begin{aligned}
(D_1\cdots D_g)_X
&=\varepsilon\, g!\,\det(S)\,\Delta\bigl(G^{D_1},\dots,G^{D_g}\bigr)\\
&\ge \prod_{i=1}^g \bigl(\varepsilon\, g!\,\det(S)\,\det(G^{D_i})\bigr)^{1/g}
=\prod_{i=1}^g (D_i^g)_X^{1/g}.
\end{aligned}
\]

\smallskip
\noindent\textbf{Step \textup{(iii)}.}
We analyze the equality case. Assume equality holds in \eqref{eq:hodge-type-product-inequality}. By \eqref{eq:intersection-via-G}
and \eqref{eq:self-intersection-via-G}, after canceling the common positive factor
\(\varepsilon\, g!\,\det(S)\), we obtain
\begin{equation}\label{eq:mixed-AMGM}
\Delta\bigl(G^{D_1},\dots,G^{D_g}\bigr)
=
\prod_{k=1}^g \det(G^{D_k})^{1/g}.
\end{equation}
Since \(G^{D_1},\dots,G^{D_g}\) are symmetric positive definite, by Lemma~\ref{lem:bapat-equality-positive-definite}, there exist \(\lambda_2,\dots,\lambda_g\in \R_{>0}\) such that
\(G^{D_i}=\lambda_iG^{D_1}\) for all \(2\le i\le g\).
Since \(E^{D_i}=S^{\top}G^{D_i}\), it follows that
\(
E^{D_i}=\lambda_iE^{D_1}
\) for all \(2\le i\le g\).
Since \(E^{D_1},\dots,E^{D_g}\in M_g(\Z)\) and \(E^{D_1}\neq 0\), the equality
\(
E^{D_i}=\lambda_iE^{D_1}
\)
implies that \(\lambda_i\in\Q_{>0}\) for all \(2\le i\le g\). Clearing denominators, there exist \(m_1,\dots,m_g\in\Z_{>0}\) such that
\[
m_1E^{D_1}=m_2E^{D_2}=\cdots=m_gE^{D_g}.
\]
By \eqref{eq:chern-class-matrix} and \eqref{eq:chern-class-homological-equivalence}, it follows that
\begin{equation}\label{eq:hom-proportional-divisor-classes}
m_1[D_1]\equiv_{\hom}m_2[D_2]\equiv_{\hom}\cdots\equiv_{\hom}m_g[D_g].
\end{equation}
Conversely, assume that \eqref{eq:hom-proportional-divisor-classes} holds for some \(m_1,\dots,m_g\in \Z_{>0}\). Then
\[
E^{D_i}=\frac{m_1}{m_i}E^{D_1},
\qquad 2\le i\le g.
\]
Since \(E^{D_i}=S^{\top}G^{D_i}\), we have
\[
G^{D_i}=\frac{m_1}{m_i}G^{D_1},
\qquad 2\le i\le g.
\]
Hence equality holds in \eqref{eq:mixed-AMGM} by Lemma~\ref{lem:bapat-equality-positive-definite}. By \eqref{eq:intersection-via-G}
and \eqref{eq:self-intersection-via-G}, one obtains equality in \eqref{eq:hodge-type-product-inequality}.
\end{proof}

\begin{corollary}[Teissier--Khovanskii inequalities on tropical abelian varieties]\label{cor:TK-inequality}
Let \(X\) be a \(g\)-dimensional tropical abelian variety, and let \(D_1,\dots,D_g\) be ample tropical Cartier divisors on \(X\). Then for any two distinct integers \(i,j\in\{1,\ldots,g\}\),
\[
(D_1\cdots D_g)_X^2
\ge
(D_1\cdots D_{j-1}\cdot D_i\cdot D_{j+1}\cdots D_g)_X\cdot
\,
(D_1\cdots D_{i-1}\cdot D_j\cdot D_{i+1}\cdots D_g)_X.
\]
Equality holds if and only if there exist \(m_i,m_j\in \Z_{>0}\) such that
\(
m_i[D_i]\equiv_{\hom}m_j[D_j].
\)
\end{corollary}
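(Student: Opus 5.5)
The plan is to reduce to Theorem~\ref{thm:AF-mixed-disc} through the matrix dictionary of Proposition~\ref{prop:matrix-relations-SGD}, exactly as in the proof of Proposition~\ref{prop:tropical-KT}. Fix the bases and the matrix \(S\) from Proposition~\ref{prop:matrix-relations-SGD}. For any tuple \((D'_1,\dots,D'_g)\) of divisors on \(X\), Proposition~\ref{prop:matrix-relations-SGD}\textup{(iii)} gives
\[
(D'_1\cdots D'_g)_X=\varepsilon\,g!\,\det(S)\,\Delta\bigl(G^{D'_1},\dots,G^{D'_g}\bigr).
\]
Step~(i) of the proof of Proposition~\ref{prop:tropical-KT} already shows \(c\coloneqq\varepsilon\,g!\,\det(S)>0\). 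That argument only used that \(D_1\) is ample, and \(S\) is independent of the divisors.

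Next I apply this identity to three tuples: \((D_1,\dots,D_g)\) itself, the tuple with \(D_j\) replaced by \(D_i\), and the tuple with \(D_i\) replaced by \(D_j\). All three intersection numbers then equal \(c\) times the corresponding mixed discriminants of the matrices \(G^{D_k}\). Since each \(D_k\) is ample, each \(G^{D_k}\) is symmetric positive definite. Hence all three mixed discriminants are positive, by Proposition~\ref{prop:mixed-discriminant-properties}\textup{(iii)} or directly from positivity of the coefficients of \(\det\) on positive definite sums. Theorem~\ref{thm:AF-mixed-disc} then gives
\[
\Delta(G^{D_1},\dots,G^{D_g})^2\ge\Delta\bigl((G^{D_\bullet})^{j\leftarrow i}\bigr)\,\Delta\bigl((G^{D_\bullet})^{i\leftarrow j}\bigr).
\]
Multiplying by \(c^2>0\) yields the stated inequality. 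Positivity is what makes squaring the square-root form legitimate.

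For the equality case, equality in the corollary is equivalent, after dividing by \(c^2\), to equality in Theorem~\ref{thm:AF-mixed-disc}. That holds iff \(G^{D_j}=\alpha G^{D_i}\) for some \(\alpha>0\). Via \(E^{D}=S^{\top}G^{D}\) with \(S\) invertible, this is equivalent to \(E^{D_j}=\alpha E^{D_i}\). Both matrices are integral and nonzero, since \(E^{D_i}\) is invertible by ampleness, so \(\alpha\in\Q_{>0}\). Writing \(\alpha=m_i/m_j\) then gives \(m_jE^{D_j}=m_iE^{D_i}\). By \eqref{eq:chern-class-matrix} and \eqref{eq:chern-class-homological-equivalence} this is exactly \(m_i[D_i]\equiv_{\hom}m_j[D_j]\). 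The converse runs the same chain backwards: the \(E\)-matrices are additive in \(D\), and \(m_iE^{D_i}=m_jE^{D_j}\) gives proportionality of the \(G\)-matrices.

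There is no real obstacle here. The only points needing care are the sign \(\varepsilon\det(S)>0\), which is inherited from the previous proof, and the rationality of \(\alpha\).
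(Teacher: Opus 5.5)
Your proposal is correct and follows essentially the same route as the paper. The paper likewise rewrites all three intersection numbers through \eqref{eq:intersection-via-G}, applies Theorem~\ref{thm:AF-mixed-disc} to the positive definite matrices \(G^{D_k}\), and transports the equality condition \(G^{D_j}=\alpha G^{D_i}\) to \(E^{D_j}=\alpha E^{D_i}\) with \(\alpha\in\Q_{>0}\), concluding via \eqref{eq:chern-class-homological-equivalence}. Your explicit check that \(\varepsilon\,g!\,\det(S)>0\) and that all three mixed discriminants are positive, so that the square-root form and the squared form are equivalent, fills in a step the paper leaves implicit.
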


\begin{proof}
By \eqref{eq:intersection-via-G}, the asserted inequality is exactly the
Alexandrov--Fenchel inequality in Theorem~\ref{thm:AF-mixed-disc} applied to
\(G^{D_1},\dots,G^{D_g}\).

Assume equality holds. Since \(G^{D_i}\) and \(G^{D_j}\) are symmetric positive definite, by the equality condition in Theorem~\ref{thm:AF-mixed-disc}, there exists \(\alpha\in\R_{>0}\) such that
\(
G^{D_j}=\alpha G^{D_i}.
\)
Since \(E^{D_k}=S^{\top}G^{D_k}\) for \(k=i,j\), it follows that
\(
E^{D_j}=\alpha E^{D_i}.
\)
Since \(E^{D_i},E^{D_j}\in M_g(\Z)\) and \(E^{D_i}\neq 0\), one has \(\alpha\in\Q_{>0}\). Hence there exist \(m_i,m_j\in\Z_{>0}\) such that
\[
m_iE^{D_i}=m_jE^{D_j}.
\]
By \eqref{eq:chern-class-matrix} and \eqref{eq:chern-class-homological-equivalence}, it follows that
\[
m_i[D_i]\equiv_{\hom}m_j[D_j].
\]

Conversely, assume that
\(
m_i[D_i]\equiv_{\hom}m_j[D_j]
\)
for some \(m_i,m_j\in\Z_{>0}\). By \eqref{eq:chern-class-homological-equivalence} and \eqref{eq:chern-class-matrix}, it follows that
\[
m_iE^{D_i}=m_jE^{D_j}.
\]
Since \(E^{D_k}=S^{\top}G^{D_k}\) for \(k=i,j\), it follows that
\[
G^{D_j}=\frac{m_i}{m_j}G^{D_i}.
\]
Since \(G^{D_i}\) and \(G^{D_j}\) are symmetric positive definite, by the equality condition in Theorem~\ref{thm:AF-mixed-disc}, equality holds in the asserted inequality.
\end{proof}

\begin{corollary}\label{cor:principal-intersection-equality}
Let \((X,\Theta)\) be a \(g\)-dimensional polarized tropical abelian variety, let \(D\) be an ample tropical Cartier divisor, and let \(n\) be an integer with \(1\le n\le g-1\). Then
\[
(D^n\cdot \Theta^{g-n})_X\ge g!,
\]
with equality if and only if \(\Theta\) is a principal polarization and
\([D]\equiv_{\hom}[\Theta]\).
\end{corollary}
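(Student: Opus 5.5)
We apply Proposition~\ref{prop:tropical-KT} to the \(g\)-tuple of ample divisors \((D,\dots,D,\Theta,\dots,\Theta)\), with \(D\) repeated \(n\) times and \(\Theta\) repeated \(g-n\) times. This gives
\[
(D^n\cdot\Theta^{g-n})_X\ge (D^g)_X^{n/g}\,(\Theta^g)_X^{(g-n)/g}.
\]
By Lemma~\ref{lem:top-self-intersection-det}\textup{(ii)}, both \((D^g)_X\) and \((\Theta^g)_X\) are at least \(g!\). Hence the right-hand side is at least \((g!)^{n/g}(g!)^{(g-n)/g}=g!\), which proves the inequality.

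Now suppose equality \((D^n\cdot\Theta^{g-n})_X=g!\) holds. Then every inequality in the chain above is an equality. Since both exponents \(n/g\) and \((g-n)/g\) are positive (here we use \(1\le n\le g-1\)), this forces \((D^g)_X=g!\) and \((\Theta^g)_X=g!\). It also forces equality in Proposition~\ref{prop:tropical-KT}. From \((\Theta^g)_X=g!\), Lemma~\ref{lem:top-self-intersection-det}\textup{(iii)} shows that \(\Theta\) is principal. Similarly, \((D^g)_X=g!\) gives \(|\det E^D|=1\).

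The equality case of Proposition~\ref{prop:tropical-KT} gives positive integers \(m,m'\) with \(m[D]\equiv_{\hom}m'[\Theta]\). By \eqref{eq:chern-class-homological-equivalence} and \eqref{eq:chern-class-matrix}, this means \(mE^D=m'E^\Theta\). Taking determinants, and using \(|\det E^D|=|\det E^\Theta|=1\), we get \(m^g=m'^g\), so \(m=m'\) because both are positive. Therefore \(E^D=E^\Theta\), i.e.\ \(c_1(\MO_X(D))=c_1(\MO_X(\Theta))\), and \eqref{eq:chern-class-homological-equivalence} gives \([D]\equiv_{\hom}[\Theta]\).

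For the converse, assume \(\Theta\) is principal and \([D]\equiv_{\hom}[\Theta]\). Then \(E^D=E^\Theta\), so the multilinear formula \eqref{eq:intersection-number-matrix} gives \((D^n\cdot\Theta^{g-n})_X=(\Theta^g)_X\). This equals \(g!\) by Lemma~\ref{lem:top-self-intersection-det}\textup{(iii)}.

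The only delicate point is pinning down \(m=m'\) in the equality case. The homological proportionality from Proposition~\ref{prop:tropical-KT} has to be combined with the normalization \(|\det E^D|=|\det E^\Theta|=1\), and the positivity of \(m,m'\) rules out the sign ambiguity \(m=-m'\) that could otherwise arise for even \(g\).
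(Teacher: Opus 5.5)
Your proposal is correct and follows essentially the same route as the paper. Both arguments apply Proposition~\ref{prop:tropical-KT} together with the bounds \((D^g)_X,(\Theta^g)_X\ge g!\), and in the equality case force both top self-intersections to equal \(g!\), get principality of \(\Theta\) from Lemma~\ref{lem:top-self-intersection-det}\textup{(iii)}, and turn the homological proportionality into \([D]\equiv_{\hom}[\Theta]\) using torsion-freeness. The only cosmetic difference is that you pin down \(m=m'\) via \(|\det E^D|=|\det E^\Theta|=1\), whereas the paper cites \((D^g)_X=(\Theta^g)_X=g!\) directly; since \((D^g)_X=g!\,|\det E^D|\) for ample \(D\), these amount to the same step.
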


\begin{proof}
Proposition~\ref{prop:tropical-KT} gives
\begin{equation}\label{eq:khovanskii-teissier-D-Theta}
(D^n\cdot \Theta^{g-n})_X
\ge
(D^g)_X^{n/g}(\Theta^g)_X^{(g-n)/g}.
\end{equation}
Since \(D\) and \(\Theta\) are ample,
Lemma~\ref{lem:top-self-intersection-det}\textup{(ii)} gives
\[
(D^g)_X\ge g!,
\qquad
(\Theta^g)_X\ge g!.
\]
Together with \eqref{eq:khovanskii-teissier-D-Theta}, this yields
\[
(D^n\cdot \Theta^{g-n})_X\ge g!.
\]

Assume that
\(
(D^n\cdot \Theta^{g-n})_X=g!.
\)
Then \eqref{eq:khovanskii-teissier-D-Theta} forces
\begin{equation}\label{eq:self-intersection-D-Theta}
(D^g)_X=(\Theta^g)_X=g!.
\end{equation}
Hence Lemma~\ref{lem:top-self-intersection-det}\textup{(iii)} shows that
\(\Theta\) is a principal polarization. Moreover, equality holds in
Proposition~\ref{prop:tropical-KT}, so there exist \(a,b\in\Z_{>0}\) such that
\[
a[D]\equiv_{\hom}b[\Theta].
\]
By \eqref{eq:self-intersection-D-Theta}, one has \(a=b\). Since
\(H_{g-1,g-1}(X)\) is a free \(\Z\)-module, it follows that
\[
[D]\equiv_{\hom}[\Theta].
\]

Conversely, assume that \(\Theta\) is a principal polarization and
\([D]\equiv_{\hom}[\Theta]\). Then
\[
(D^n\cdot \Theta^{g-n})_X
=
(\Theta^g)_X
=
g!. \qedhere
\]
\end{proof}

\section{Polarization homomorphisms and pushforwards on tropical homology}\label{sec:fin-dec}

Following the classical theory for abelian varieties as in \cite{BL04}, we collect auxiliary results on homomorphisms of integral tori that will be used later. These results are tropical analogues of standard facts concerning homomorphisms induced by polarizations and induced maps on tropical (co)homology.

\subsection{Homomorphisms of integral tori induced by polarizations}\label{subsec:polarization-induced-homomorphisms}
Let \(X=(\Lambda_X,\Lambda_X^{\prime},[\,\cdot\,,\,\cdot\,]_X)\) be an integral torus. For the definition of \(\Pic^{0}(\cdot)\), see Definition~\ref{def:pic-zero}. In this subsection, we record several properties of the homomorphisms induced by polarizations on integral tori. 

\begin{defi}\label{def:phi-Theta}
Let \(D\in\Div(X)\). For \(x\in X\), let \(t_x\colon X\to X\) denote translation by \(x\). By \cite[Prop.~7.5]{GS23}, one has \(c_1(t_x^*\MO_X(D))=c_1(\MO_X(D))\), and hence \(t_x^*\MO_X(D)\otimes\MO_X(D)^{-1}\in\Pic^0(X)\). Define
\[
\varphi_D\colon X\to \Pic^{0}(X),
\qquad
x\mapsto t_x^*\MO_X(D)\otimes \MO_X(D)^{-1}.
\]
\end{defi}

\begin{prop}\label{prop:phi-Theta-dual-kappa}
The group \(\Pic^{0}(X)\) has a natural integral torus structure, canonically identified with \(X^\vee\). Let $\Theta$ be a polarization on $X$. The homomorphism \(\varphi_{\Theta}\colon X\to \Pic^{0}(X)\) is a homomorphism of integral tori such that
\begin{equation}\label{eq:phi-Theta-sharp-kappa}
\varphi_{\Theta}^{\sharp}=(\varphi_{\Theta})_{\sharp}=-\kappa_{\Theta}\colon \Lambda_X^{\prime}\to \Lambda_X.
\end{equation}
Moreover, one has
\(
\varphi_{\Theta}^{\vee}=\varphi_{\Theta}
\)
and \(\varphi_{\Theta}\) is finite surjective.
\end{prop}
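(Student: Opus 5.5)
The plan is to compute everything through the tropical Appell--Humbert theorem \cite[Thm.~7.2]{GS23} and the description of \(\Pic^0(X)\) there. First, I would recall that \(\Pic^0(X)\) is identified with \(\Hom(\Lambda_X^{\prime},\R)/\Lambda_X = (\Lambda_X^{\prime})^{*}_{\R}/\Lambda_X\) via the factors of automorphy / the exact sequence
\[
0\to H^0(X,\Omega_X)\otimes\R/\Lambda_X\cdots,
\]
more precisely via \(H^1(X,\underline{\R})\cong\Hom(\Lambda_X^{\prime},\R)\) modulo the image of \(H^1(X,\underline{\Z}\otimes\Lambda_X)\). Concretely, this is the real torus underlying \(X^{\vee}=(\Lambda_X^{\prime},\Lambda_X,[\cdot,\cdot]_{X^\vee})\). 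The integral structure is \(\Lambda_{X^\vee}=\Lambda_X^{\prime}\) (integral affine functions on \(\Pic^0\) have linear parts in \(\Lambda_X^{\prime}\)). I would state this identification as the natural integral torus structure, citing \cite[Sec.~7]{GS23} (and \cite{MZ08}).

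Second, I would compute \(\varphi_\Theta\) explicitly. Writing \(\Theta\) (up to a principal divisor and \(\Pic^0\)-twist, which does not affect \(\varphi_\Theta\) since translation acts trivially on \(\Pic^0\)) by a theta-type function with quadratic part \(\tfrac12 Q_\Theta\), the pullback \(t_x^*\) changes the cocycle by the linear form \(\lambda'\mapsto -Q_\Theta(\lambda',\tilde x)\) for a lift \(\tilde x\in(\Lambda_X)^{*}_{\R}\); the sign depends on the convention for the factor of automorphy and I would fix it to match \eqref{eq:phi-Theta-sharp-kappa}. Hence on universal covers \(\varphi_\Theta\) is the \(\R\)-linear map \((\Lambda_X)^{*}_{\R}\supset \Lambda_X^{\prime}\ni\lambda'\mapsto -\kappa_\Theta(\lambda')\in\Lambda_X\), i.e. \((\varphi_\Theta)_\sharp=-\kappa_\Theta\). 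For \(\varphi_\Theta^\sharp\colon\Lambda_{X^\vee}=\Lambda_X^{\prime}\to\Lambda_X\), the compatibility \eqref{eq:dual-hom-pairing-compat} forces \(\varphi_\Theta^\sharp\) to be the transpose of \((\varphi_\Theta)_\sharp\) with respect to \([\cdot,\cdot]\); by the symmetry \(Q_\Theta(u,v)=Q_\Theta(v,u)\) this transpose is again \(-\kappa_\Theta\). This gives both equalities in \eqref{eq:phi-Theta-sharp-kappa}, and \(\varphi_\Theta^\vee=((\varphi_\Theta)_\sharp,\varphi_\Theta^\sharp)=\varphi_\Theta\) follows immediately. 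That \(\varphi_\Theta\) is a homomorphism of integral tori (not merely of real tori) is the statement that the pair \((-\kappa_\Theta,-\kappa_\Theta)\) satisfies \eqref{eq:dual-hom-pairing-compat}, which is exactly symmetry of \(Q_\Theta\).

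Finally, since \(\Theta\) is ample, \(Q_\Theta\) is positive definite, so \(\kappa_\Theta\) is injective with finite cokernel. By Lemma~\ref{lem:homomorphism-basic}(i), injectivity of \(\varphi_\Theta^\sharp=-\kappa_\Theta\) gives surjectivity, and \(\ker\varphi_\Theta\cong \kappa_\Theta^{-1}(\Lambda_X)/\Lambda_X^{\prime}\) (over \(\R\)) is finite of order \(|\det E^\Theta|\), giving finiteness. The main obstacle is the second step: making the identification \(\Pic^0(X)\cong X^\vee\) and the sign of the translation formula precise at the level of Čech cocycles of \(\Aff_X\), where I would work with \(\Lambda_X^{\prime}\)-equivariant functions on the universal cover rather than a chosen cover.
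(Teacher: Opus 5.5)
Your argument is correct in substance, but it follows a different route from the paper. The paper proves almost nothing here by hand. It identifies \(\Pic^{0}(X)\) with \(X^\vee\) via \cite[Prop.~7.1, Thm.~7.2]{GS23}, and then imports three things from the proof of \cite[Lem.~3.5]{GS25} and \cite[Rem.~3.6(ii)]{GS25}: the formula \(\varphi_\Theta^\sharp=(\varphi_\Theta)_\sharp=-\kappa_\Theta\), the self-duality \(\varphi_\Theta^\vee=\varphi_\Theta\), and finiteness. It adds only that surjectivity follows from surjectivity of \(\kappa_\Theta\) over \(\R\). You instead compute \(\varphi_\Theta\) from factors of automorphy and show that everything else is formal. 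The compatibility \eqref{eq:dual-hom-pairing-compat} for the pair \((-\kappa_\Theta,-\kappa_\Theta)\) and the equality \(\varphi_\Theta^\vee=\varphi_\Theta\) are both just the symmetry of \(Q_\Theta\). Finiteness (a kernel of order \(\lvert\det E^\Theta\rvert\)) and surjectivity (via Lemma~\ref{lem:homomorphism-basic}\textup{(i)}) are just nondegeneracy of \(Q_\Theta\). Your route gives a self-contained explanation and isolates exactly which inputs are used: symmetry from Appell--Humbert, and positivity. The paper's route is shorter and inherits its sign conventions directly from a source where they are verified.

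There are two things to tighten. First, the sign cannot be ``fixed to match.'' The identification \(\Pic^{0}(X)\cong X^\vee\) is the canonical one from \cite{GS23}, so the sign must be read off from its conventions (the min-convention of \cite[Rem.~3.4]{GS23}). For instance, the min-convention theta function \(\theta(v)=\min_{\lambda'\in\Lambda_X^{\prime}}\bigl(\tfrac12 Q_\Theta(\lambda',\lambda')+Q_\Theta(\lambda',v)\bigr)\) satisfies \(\theta(v+\mu')=\theta(v)-Q_\Theta(\mu',v)-\tfrac12 Q_\Theta(\mu',\mu')\). So, with factors of automorphy normalized by \(s(v+\mu')=s(v)+a_{\mu'}(v)\), pulling back by \(t_x\) shifts the factor by \(\mu'\mapsto -Q_\Theta(\mu',\tilde x)\); this is where the minus sign comes from. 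The sign is harmless downstream, because every later use of \(\varphi_\Theta\) pairs it with some \(\varphi_{\Theta'}^{-1}\) and the signs cancel. Still, the proposition as stated asserts it.

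Second, your description of \(\Pic^{0}(X)\) is garbled. \(\Lambda_X\) does not enter as an image of \(H^1(X,\underline{\Z}\otimes\Lambda_X)\); that group is \(\Hom(\Lambda_X^{\prime},\Lambda_X)\). It enters as \(H^0(X,\Omega_X)\) under the connecting map of \(0\to\underline{\R}\to\Aff_X\to\Omega_X\to 0\). Since \(H^0(X,\Aff_X)=\R\), this gives \(0\to\Lambda_X\to H^1(X,\underline{\R})\cong(\Lambda_X^{\prime})_{\R}^{*}\to\Pic^{0}(X)\to 0\), which is the identification you want.
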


\begin{proof}
By the tropical Appell--Humbert theorem \cite[Prop.~7.1, Thm.~7.2]{GS23}, we identify \(\Pic^{0}(X)\) with \(X^\vee\), and regard \(\varphi_{\Theta}\) as a homomorphism from \(X\) to \(X^\vee\). By the definition of the dual integral torus, one has \((X^\vee)^\vee=X\). Under these identifications, \eqref{eq:phi-Theta-sharp-kappa}, the equality \(\varphi_{\Theta}^{\vee}=\varphi_{\Theta}\), and the finiteness of \(\varphi_{\Theta}\) follow from the proof of \cite[Lem.~3.5]{GS25} and \cite[Rem.~3.6(ii)]{GS25}. Since \(\kappa_{\Theta}\colon(\Lambda_X^{\prime})_{\R}\to(\Lambda_X)_{\R}\) is surjective, \(\varphi_{\Theta}\) is surjective.
\end{proof}

\begin{lemma}\label{lem:phi-pullback-dual}
Let \(X_1\) and \(X_2\) be integral tori, let \(f\colon X_1\to X_2\) be a homomorphism of integral tori, and let \(\Theta\) be a polarization on \(X_2\). Then the following hold.
\begin{enumerate}[label=\textup{(\roman*)}]
\item One has
\begin{equation}\label{eq:phi-pullback-pic}
\varphi_{f^*\Theta}
=
\bigl(f^{*}\big|_{\Pic^{0}(X_2)}\bigr)\circ\varphi_{\Theta}\circ f.
\end{equation}
\item Under the canonical identifications \(\Pic^{0}(X_i)\cong X_i^\vee\) for \(i=1,2\), one has
\begin{equation}\label{eq:pullback-pic-zero-dual}
f^{*}\big|_{\Pic^{0}(X_2)}=f^\vee.
\end{equation}
\end{enumerate}
\end{lemma}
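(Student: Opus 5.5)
We will prove (ii) first and then deduce (i) from it together with functoriality of translations.

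For (ii), we identify \(\Pic^0(X_i)\) with \(X_i^\vee\) via the tropical Appell--Humbert theorem \cite[Prop.~7.1, Thm.~7.2]{GS23}. In this description, a class in \(\Pic^0(X_2)\) is represented by a character, that is, an element of \((\Lambda_{X_2})^*_\R/\Lambda'_{X_2}\)-type data, so it corresponds to a point of \(X_2^\vee=(\Lambda'_{X_2})^*_\R/\Lambda_{X_2}\). Concretely, it is the factor of automorphy \(\lambda'\mapsto\) a constant integral affine function on the universal cover \((\Lambda_{X_2})^*_\R\).

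Pulling back along \(f\) has two effects:
\begin{itemize}
\item it composes the cocycle with \(f_\sharp\colon\Lambda'_{X_1}\to\Lambda'_{X_2}\);
\item it pulls back linear parts along the lift \((f^\sharp)^*\colon(\Lambda_{X_1})^*_\R\to(\Lambda_{X_2})^*_\R\).
\end{itemize}
So on the lattice level, \(f^*|_{\Pic^0}\) acts on \(\Lambda'_{X_2}\)-indexed data by precomposition with \(f_\sharp\), and on slopes \(\Lambda_{X_2}\to\Lambda_{X_1}\) by \(f^\sharp\). This is exactly the pair \((f_\sharp,f^\sharp)=f^\vee\).

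To make (ii) rigorous, we check that both maps agree as maps of real tori. For this it suffices to compare them on the level of the \(\R\)-linear parts \((\Lambda'_{X_2})^*_\R\to(\Lambda'_{X_1})^*_\R\), namely \((f_\sharp)^*\), since both are group homomorphisms induced by lattice maps. Compatibility with the integral structures then follows from \eqref{eq:pullback-one-zero-zero-one-lattice}. I expect the main obstacle to be exactly this bookkeeping of the Appell--Humbert identification, in particular the sign conventions.

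For (i), we take \(x\in X_1\) and use \(f\circ t_x=t_{f(x)}\circ f\), which holds since \(f\) is a homomorphism. Together with functoriality of pullback of line bundles, this gives
\[
t_x^*\MO_{X_1}(f^*\Theta)\otimes\MO_{X_1}(f^*\Theta)^{-1}
=f^*\bigl(t_{f(x)}^*\MO_{X_2}(\Theta)\otimes\MO_{X_2}(\Theta)^{-1}\bigr).
\]
We also use \(\MO_{X_1}(f^*\Theta)=f^*\MO_{X_2}(\Theta)\), since \(\delta\) commutes with pullback, as it is a connecting map of the morphism of short exact sequences \eqref{eq:aff-M-div-short-exact}. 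The left side is \(\varphi_{f^*\Theta}(x)\) and the right side is \(f^*\varphi_\Theta(f(x))\), which proves \eqref{eq:phi-pullback-pic} pointwise.

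Here \(f^*\Theta\) may fail to be ample, so \(\varphi_{f^*\Theta}\) is read via Definition~\ref{def:phi-Theta}, which only needs a divisor. As a sanity check, combining (i) and (ii) recovers \(\kappa_{f^*\Theta}=f^\sharp\kappa_\Theta f_\sharp\) from \eqref{eq:kappa-pullback-divisor}, consistent with Proposition~\ref{prop:phi-Theta-dual-kappa}.
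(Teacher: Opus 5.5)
Your proposal is correct and follows the paper's proof. Part (i) is the same pointwise computation using \(f\circ t_x=t_{f(x)}\circ f\) and functoriality of pullback; you also make explicit, usefully, that \(\MO_{X_1}(f^*\Theta)=f^*\MO_{X_2}(\Theta)\) by naturality of \(\delta\). Part (ii) is the same Appell--Humbert computation \(f^*L(0,l)=L(0,l\circ f_\sharp)\), which shows the map is induced by \((f_\sharp)^*\).

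Two minor slips. First, (i) does not actually depend on (ii), so you do not need to prove (ii) first. Second, the character data should be described as \((\Lambda'_{X_2})^*_\R/\Lambda_{X_2}\), not \((\Lambda_{X_2})^*_\R/\Lambda'_{X_2}\).
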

\begin{proof}
\begin{enumerate}[label=\textup{(\roman*)}]
\item For \(i=1,2\) and \(x_i\in X_i\), let \(t_{x_i}\colon X_i\to X_i\) denote translation by \(x_i\). By definition,
\[
\varphi_{\Theta}(x_2)
=
\MO_{X_2}(t_{x_2}^*\Theta-\Theta),
\qquad
\varphi_{f^*\Theta}(x_1)
=
\MO_{X_1}(t_{x_1}^*(f^*\Theta)-f^*\Theta).
\]
Since \(f\circ t_{x_1}=t_{f(x_1)}\circ f\), one has
\[
t_{x_1}^*(f^*\Theta)-f^*\Theta
=
f^*(t_{f(x_1)}^*\Theta-\Theta).
\]
Hence \(\varphi_{f^*\Theta}(x_1)=f^*(\varphi_{\Theta}(f(x_1)))\), which proves \eqref{eq:phi-pullback-pic}.
\item The argument is analogous to the proof of \cite[Prop.~2.4.2]{BL04}. Since \(f^\vee=(f_{\sharp},f^{\sharp})\), the homomorphism \(f^\vee\colon X_2^\vee\to X_1^\vee\) is induced by the \(\R\)-linear map
\[
(f_{\sharp})^{*}\colon(\Lambda_{X_2}^{\prime})_{\R}^{*}\to(\Lambda_{X_1}^{\prime})_{\R}^{*},
\qquad
u\mapsto u\circ f_{\sharp}.
\]
By \cite[Prop.~7.1, Thm.~7.2]{GS23}, every element of
\(\Pic^{0}(X_2)\) is represented by \(L(0,l)\) for some
\(l\in(\Lambda_{X_2}^{\prime})_{\R}^{*}\). Since \(l\circ f_{\sharp}=(f_{\sharp})^{*}(l)\), one obtains \eqref{eq:pullback-pic-zero-dual}. \qedhere
\end{enumerate}
\end{proof}
\subsection{Pushforwards on tropical homology}
In this subsection, let \(X_i=(\Lambda_{X_i},\Lambda_{X_i}^{\prime},[\,\cdot\,,\,\cdot\,]_{X_i})\) be integral tori for \(i=1,2\), and let \(f=(f^{\sharp},f_\sharp)\colon X_1\to X_2\) be a homomorphism of integral tori. We write \((f^{\sharp})^{*}\colon (\Lambda_{X_1})^{*}\to(\Lambda_{X_2})^{*}\) and \((f_{\sharp})^{*}\colon (\Lambda_{X_2}^{\prime})^{*}\to(\Lambda_{X_1}^{\prime})^{*}\) for the dual maps of \(f^{\sharp}\) and \(f_{\sharp}\), respectively.
The symbols \(f^*\) and \(f_*\) below denote the pullback on tropical cohomology and the pushforward on tropical homology recalled in Subsection~\ref{subsec:tropical-cohomology}, respectively. We also write \(f_*\) for the pushforward of tropical cycles.

\begin{lemma}\label{lem:push-PPTAV-en}
Under the canonical isomorphisms
\(
H_{1,1}(X_i)\cong \Lambda_{X_i}^{\prime}\otimes(\Lambda_{X_i})^{*}
\)
for \(i=1,2\), the pushforward on tropical homology is given by
\[
f_{*}\colon
\Lambda_{X_1}^{\prime}\otimes(\Lambda_{X_1})^{*}
\longrightarrow
\Lambda_{X_2}^{\prime}\otimes(\Lambda_{X_2})^{*},
\qquad
v\otimes\phi
\longmapsto
f_{\sharp}(v)\otimes(f^{\sharp})^{*}\phi.
\]
\end{lemma}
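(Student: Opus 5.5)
The plan is to compute the pushforward by reducing it to pullback via the projection formula, Proposition~\ref{prop:GS19-toolkit}\textup{(ii)}, together with the explicit descriptions of cup and cap products in \eqref{eq:cup-product-integral-tori}, \eqref{eq:cap-product-integral-tori} and of pullbacks in Remark~\ref{rem:pullback-one-one}. The underlying principle is that \(H_{1,1}(X_i)\) is dual to \(H^{1,1}(X_i)\) through the cap product into \(H_{0,0}(X_i)\cong\Z\). Since \(f_*\) commutes with this pairing via pullback, \(f_*\) is forced to be the transpose of \(f^*\), and that transpose can be written down explicitly.

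\textbf{Step 1.} For \(c=\alpha\otimes u\in (\Lambda_{X_2}^{\prime})^{*}\otimes\Lambda_{X_2}\cong H^{1,1}(X_2)\) and \(v\otimes\phi\in H_{1,1}(X_1)\), formula \eqref{eq:cap-product-integral-tori} gives
\[
c\cap (v\otimes\phi)=\alpha(v)\,\phi(u)\in H_{0,0}\cong\Z,
\]
with the evident identifications. The pairing \(H^{1,1}(X_i)\times H_{1,1}(X_i)\to\Z\) obtained in this way is perfect, because it is the tensor product of the two perfect evaluation pairings on \(\Lambda'\) and \(\Lambda\).

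\textbf{Step 2.} Apply Proposition~\ref{prop:GS19-toolkit}\textup{(ii)} with \(p=q=p'=q'=1\):
\[
c\cap f_*(v\otimes\phi)=f_*\bigl(f^*c\cap(v\otimes\phi)\bigr).
\]
By \eqref{eq:pullback-one-one-lattice}, \(f^*c=(\alpha\circ f_\sharp)\otimes f^\sharp(u)\). Hence
\[
f^*c\cap(v\otimes\phi)=\alpha(f_\sharp v)\,\phi(f^\sharp u).
\]
We also need \(f_*\) on \(H_{0,0}\). It is the identity \(\Z\to\Z\), since the pushforward of a point class is a point class (Proposition~\ref{prop:GS19-toolkit}\textup{(iii)} applied to a point, with \(\cyc(x)\mapsto1\)).

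\textbf{Step 3.} Compute the pairing of the candidate element with \(c\):
\[
\alpha(f_\sharp v)\,\bigl((f^\sharp)^*\phi\bigr)(u)=\alpha(f_\sharp v)\,\phi(f^\sharp u).
\]
This agrees with the value from Step 2 for all \(\alpha\otimes u\). Since such elements generate \(H^{1,1}(X_2)\), perfectness of the pairing gives
\[
f_*(v\otimes\phi)=f_\sharp(v)\otimes(f^\sharp)^*\phi.
\]

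\textbf{Main obstacle.} The difficulty is bookkeeping of conventions rather than any substantive argument. Specifically, we must check that the identification \(H_{0,0}\cong\Z\) used in \eqref{eq:cap-product-integral-tori} (the interior product of degree-zero parts) matches the point-class normalization and is compatible with \(f_*\). Any sign introduced by the interior product appears identically on both sides, so it cancels.
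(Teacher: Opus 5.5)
Your proposal is correct and takes essentially the same route as the paper. Both identify $f_*$ on $H_{1,1}$ as the transpose of the explicit pullback \eqref{eq:pullback-one-one-lattice}, using the projection formula in Proposition~\ref{prop:GS19-toolkit}\textup{(ii)} and the nondegeneracy of the pairing $H^{1,1}\times H_{1,1}\to\Z$. You are slightly more explicit than the paper on two points it leaves implicit: you realize this pairing as the cap product into $H_{0,0}\cong\Z$, and you check that $f_*$ is the identity on $H_{0,0}$.
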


\begin{proof}
Under the identifications in the statement and
\(
H^{1,1}(X_i)\cong(\Lambda_{X_i}^{\prime})^{*}\otimes\Lambda_{X_i}
\), write
\(\langle\cdot,\cdot\rangle_{X_i}\) for the pairing
\(
H^{1,1}(X_i)\times H_{1,1}(X_i)\to\Z.
\)
One has
\begin{equation}\label{eq:pairing-one-one}
\langle u\otimes\eta,\,v\otimes\phi\rangle_{X_i}
=
u(v)\,\phi(\eta),
\end{equation}
for \(i=1,2\), where \(u\in(\Lambda_{X_i}^{\prime})^{*}\), \(\eta\in\Lambda_{X_i}\),
\(v\in\Lambda_{X_i}^{\prime}\), and \(\phi\in(\Lambda_{X_i})^{*}\).
Define \(\widetilde f_{*}\) by
\[
\widetilde f_{*}\colon
\Lambda_{X_1}^{\prime}\otimes(\Lambda_{X_1})^{*}
\longrightarrow
\Lambda_{X_2}^{\prime}\otimes(\Lambda_{X_2})^{*},
\qquad
v\otimes\phi\longmapsto f_{\sharp}(v)\otimes(f^{\sharp})^{*}\phi.
\]
Let \(u\in(\Lambda_{X_2}^{\prime})^{*}\), \(\eta\in\Lambda_{X_2}\),
\(v\in\Lambda_{X_1}^{\prime}\), and \(\phi\in(\Lambda_{X_1})^{*}\). One obtains
\begin{equation}\label{eq:pairing-pullback-pushforward-simple-tensors}
\begin{aligned}
\langle f^{*}(u\otimes\eta),\,v\otimes\phi\rangle_{X_1}
&=
\left\langle (f_{\sharp})^{*}u\otimes f^{\sharp}(\eta),\,v\otimes\phi\right\rangle_{X_1}
=
((f_{\sharp})^{*}u)(v)\,\phi(f^{\sharp}(\eta)) \\
&=
u(f_{\sharp}(v))\,((f^{\sharp})^{*}\phi)(\eta)
=
\left\langle u\otimes\eta,\,f_{\sharp}(v)\otimes(f^{\sharp})^{*}\phi\right\rangle_{X_2} \\
&=
\langle u\otimes\eta,\,\widetilde f_{*}(v\otimes\phi)\rangle_{X_2},
\end{aligned}
\end{equation}
where the first equality follows from \eqref{eq:pullback-one-one-lattice}, and the second and fourth equalities follow from \eqref{eq:pairing-one-one}. By bilinearity and \eqref{eq:pairing-pullback-pushforward-simple-tensors}, for every
\(\alpha\in H^{1,1}(X_2)\) and every \(\beta\in H_{1,1}(X_1)\), one has
\[
\langle \alpha,\,\widetilde f_{*}\beta\rangle_{X_2}
=
\langle f^{*}\alpha,\,\beta\rangle_{X_1}
=
\langle \alpha,\,f_{*}\beta\rangle_{X_2},
\]
where the first equality follows from bilinearity and \eqref{eq:pairing-pullback-pushforward-simple-tensors}, while the second equality follows from Proposition~\ref{prop:GS19-toolkit}\textup{(ii)}. Hence
\[
\langle \alpha,\,\widetilde f_{*}\beta-f_{*}\beta\rangle_{X_2}=0,
\]
for every \(\alpha\in H^{1,1}(X_2)\). Since
\(\langle\cdot,\cdot\rangle_{X_2}\) is nondegenerate,
\(\widetilde f_{*}\beta=f_{*}\beta\). Since
\(\beta\in H_{1,1}(X_1)\) is arbitrary,
\(\widetilde f_{*}=f_{*}\).
\end{proof}

\begin{defi}\label{def:T-cycle-class}
Let \(X\) be a \(g\)-dimensional integral torus. Fix \(\Z\)-bases
\(\{\lambda_i'\}_{i=1}^{g}\) of \(\Lambda_X'\) and
\(\{\eta_j\}_{j=1}^{g}\) of \(\Lambda_X\), and let
\(\{\eta_j^*\}_{j=1}^{g}\) be the dual basis of
\(\{\eta_j\}_{j=1}^{g}\).
For a tropical \(1\)-cycle \(A\) on \(X\), write
\[
\cyc_X(A)
=
\sum_{i,j=1}^{g}
M^A_{i,j}\lambda_i'\otimes\eta_j^*,
\qquad
(M^A_{i,j})_{1\le i,j\le g}\in M_g(\Z).
\]
The \(R\)-linear map associated with \(\cyc_X(A)\) is defined by
\[
T_A\colon(\Lambda_X)_{\R}\longrightarrow(\Lambda_X')_{\R},
\qquad
v\longmapsto
\sum_{i,j=1}^{g}
M^A_{i,j}\eta_j^*(v)\lambda_i'.
\]
\end{defi}

\begin{lemma}\label{lem:T-pushforward}
For \(i=1,2\), put \(g_i=\dim X_i\), fix \(\Z\)-bases
\(\{\lambda_{i,k}'\}_{k=1}^{g_i}\) of \(\Lambda_{X_i}'\) and
\(\{\eta_{i,k}\}_{k=1}^{g_i}\) of \(\Lambda_{X_i}\), and let
\(\{\eta_{i,k}^*\}_{k=1}^{g_i}\) be the dual basis of
\(\{\eta_{i,k}\}_{k=1}^{g_i}\).
Let \(C\) be a tropical \(1\)-cycle on \(X_1\), and let \(T_C\) and
\(T_{f_*C}\) be the \(R\)-linear maps defined with respect to the fixed bases
of \(X_1\) and \(X_2\), respectively, as in
Definition~\ref{def:T-cycle-class}. Then
\[
T_{f_*C}
=
f_\sharp\circ T_C\circ f^\sharp.
\]
\end{lemma}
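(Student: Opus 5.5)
The plan is to reduce the statement to the explicit description of \(f_*\) on \(H_{1,1}\) in Lemma~\ref{lem:push-PPTAV-en}, together with the compatibility of the cycle class map with pushforward, Proposition~\ref{prop:GS19-toolkit}\textup{(iii)}. First, \(\cyc_{X_2}(f_*C)=f_*\cyc_{X_1}(C)\). Second, the assignment \(A\mapsto T_A\) factors through the basis-free isomorphism
\[
\Phi_i\colon \Lambda_{X_i}^{\prime}\otimes(\Lambda_{X_i})^{*}\longrightarrow \Hom\bigl((\Lambda_{X_i})_{\R},(\Lambda_{X_i}^{\prime})_{\R}\bigr),\qquad v\otimes\phi\longmapsto\bigl(w\mapsto\phi(w)\,v\bigr),
\]
extended \(\R\)-linearly. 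Indeed, by Definition~\ref{def:T-cycle-class}, \(T_A=\Phi\bigl(\sum M^A_{i,j}\lambda_i'\otimes\eta_j^*\bigr)=\Phi(\cyc_X(A))\), so \(T_A\) is independent of the chosen bases. It therefore suffices to prove the identity
\[
\Phi_2\bigl(f_*\beta\bigr)=f_\sharp\circ\Phi_1(\beta)\circ f^\sharp
\]
for every \(\beta\in H_{1,1}(X_1)\), where \(f^\sharp\colon(\Lambda_{X_2})_{\R}\to(\Lambda_{X_1})_{\R}\) and \(f_\sharp\colon(\Lambda_{X_1}^{\prime})_{\R}\to(\Lambda_{X_2}^{\prime})_{\R}\) are the \(\R\)-linear extensions.

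Both sides are additive in \(\beta\), so it is enough to check the identity on simple tensors \(\beta=v\otimes\phi\). By Lemma~\ref{lem:push-PPTAV-en}, \(f_*(v\otimes\phi)=f_\sharp(v)\otimes(f^\sharp)^*\phi\). Applying \(\Phi_2\) and evaluating at \(w\in(\Lambda_{X_2})_{\R}\) gives
\[
\bigl((f^\sharp)^*\phi\bigr)(w)\,f_\sharp(v)=\phi\bigl(f^\sharp(w)\bigr)\,f_\sharp(v).
\]
On the other side, \(f_\sharp\bigl(\Phi_1(v\otimes\phi)(f^\sharp w)\bigr)=f_\sharp\bigl(\phi(f^\sharp w)\,v\bigr)\), which is the same vector by linearity of \(f_\sharp\). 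Combining this with the first step yields \(T_{f_*C}=\Phi_2(f_*\cyc_{X_1}(C))=f_\sharp\circ T_C\circ f^\sharp\).

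The only potential obstacle is bookkeeping. One must confirm that the identification \(H_{1,1}(X)\cong\Lambda_X'\otimes(\Lambda_X)^*\) used in Definition~\ref{def:T-cycle-class} is the same one used in Lemma~\ref{lem:push-PPTAV-en}. Both are the canonical isomorphism from Subsection~\ref{subsec:tropical-cohomology}, so this holds. One must also confirm that \((f^\sharp)^*\) denotes the transpose \(\phi\mapsto\phi\circ f^\sharp\), which is how it is defined in this section. With these two points settled, the argument is routine.
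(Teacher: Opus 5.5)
Your proposal is correct and follows essentially the same route as the paper: combine \(\cyc_{X_2}(f_*C)=f_*\cyc_{X_1}(C)\) from Proposition~\ref{prop:GS19-toolkit}\textup{(iii)} with the formula for \(f_*\) on \(H_{1,1}\) from Lemma~\ref{lem:push-PPTAV-en}, then evaluate at \(v\in(\Lambda_{X_2})_{\R}\). Your basis-free map \(\Phi_i\) makes explicit something the paper uses tacitly. The paper reads off \(T_{f_*C}\) from an expansion in the vectors \(f_\sharp(\lambda_{1,i}')\otimes(f^\sharp)^*(\eta_{1,j}^*)\), which are not the fixed basis of \(X_2\), and this step requires that \(T_A\) not depend on the chosen bases.
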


\begin{proof}
Write
\[
\cyc_{X_1}(C)
=
\sum_{i,j=1}^{g_1}
M^C_{i,j}\lambda_{1,i}'\otimes\eta_{1,j}^*,
\qquad
(M^C_{i,j})_{1\le i,j\le g_1}\in M_{g_1}(\Z).
\]
By Proposition~\ref{prop:GS19-toolkit}\textup{(iii)} and
Lemma~\ref{lem:push-PPTAV-en}, one has
\[
\cyc_{X_2}(f_*C)
=
f_*\cyc_{X_1}(C)
=
\sum_{i,j=1}^{g_1}
M^C_{i,j}
f_\sharp(\lambda_{1,i}')
\otimes
(f^\sharp)^*(\eta_{1,j}^*).
\]
Hence, for every \(v\in(\Lambda_{X_2})_{\R}\), one has
\[
T_{f_*C}(v)
=
\sum_{i,j=1}^{g_1}
M^C_{i,j}
\eta_{1,j}^*\bigl(f^\sharp(v)\bigr)
f_\sharp(\lambda_{1,i}')
=
f_\sharp\bigl(T_C(f^\sharp(v))\bigr).
\]
This proves the assertion.
\end{proof}

\section{Tropical curves and morphisms}\label{sec:tropical-curves-morphisms}
This section establishes Lemma~\ref{lem:cycle-from-smooth-curve}, which will be used in the proofs of the Tropical Matsusaka criterion, Theorem~\ref{thm:tropical-matsusaka-criterion} and Proposition~\ref{prop:equivalence-conditions-two-criteria}.

\subsection{Morphisms between tropical curves}\label{app:rational-morphisms-tropical-curves}

This subsection supplements Subsection~\ref{subsec:integral-tangent-spaces}.

Let \(\Gamma_1\) and \(\Gamma_2\) be tropical curves with fixed global face structures. For \(i=1,2\), let \(\Aff_{\Gamma_i}\) be the sheaf of integral affine functions on \(\Gamma_i\), let \(\underline{\R}\) be the constant sheaf on \(\Gamma_i\) with value \(\R\), set \(\Omega_{\Gamma_i}\coloneqq\Aff_{\Gamma_i}/\underline{\R}\), and write \(d_{\Gamma_i}\colon\Aff_{\Gamma_i}\to\Omega_{\Gamma_i}\) for the quotient morphism.

For \(i=1,2\), fix \(p_i\in\Gamma_i\). One has the exact sequence of stalks
\[
0
\longrightarrow
(\underline{\R})_{p_i}
\longrightarrow
(\Aff_{\Gamma_i})_{p_i}
\xrightarrow{(d_{\Gamma_i})_{p_i}}
(\Omega_{\Gamma_i})_{p_i}
\longrightarrow
0.
\]

Let \(f\colon\Gamma_1\to\Gamma_2,\ p\mapsto f(p)\) be a morphism of rational polyhedral spaces such that each vertex of \(\Gamma_1\) is mapped to a vertex of \(\Gamma_2\), and each edge of \(\Gamma_1\) is mapped linearly onto an edge of \(\Gamma_2\). For \(p_1\in\Gamma_1\), pullback of integral affine functions induces the pullback on stalks
\[
f_{p_1}^*\colon
\Omega_{\Gamma_2,f(p_1)}
\to
\Omega_{\Gamma_1,p_1}.
\]
The induced map on integral tangent spaces is the dual map
\[
d_{p_1}f\colon
T_{p_1}^{\Z}\Gamma_1
\to
T_{f(p_1)}^{\Z}\Gamma_2.
\]

Let \(e_1\) be an edge of \(\Gamma_1\), let \(v_1\) be an endpoint of \(e_1\), and let \(e_2\coloneqq f(e_1)\). Set \(v_2\coloneqq f(v_1)\), and let \(x\in\relint(e_1)\). Choose charts \((U_i,V_i,\psi_i)\) with \(e_i\subseteq U_i\) and \(V_i\subseteq\R^{n_i}\) for some \(n_i\in\Z_{>0}\), \(i=1,2\). After shrinking the charts if necessary, assume that \(f(U_1)\subseteq U_2\) and that
\[
\psi_2\circ f\circ\psi_1^{-1}\colon V_1\to V_2,\quad
\boldsymbol{y}\mapsto F\boldsymbol{y}+\boldsymbol{b},
\]
where \(F\in M_{n_2\times n_1}(\Z)\) and \(\boldsymbol{b}\in\R^{n_2}\). For \(i=1,2\), let \(\boldsymbol{u}_{e_i,v_i}\in\Z^{n_i}\) be the tangent direction along \(e_i\) out of \(v_i\) in the chart \((U_i,V_i,\psi_i)\). Since \(f(e_1)=e_2\) and \(f(v_1)=v_2\), there is a unique positive integer \(m_f(x)\) such that
\begin{equation}\label{eq:local-index-mf2}
F\boldsymbol{u}_{e_1,v_1}
=
m_f(x)\boldsymbol{u}_{e_2,v_2}.
\end{equation}

\begin{lemma}\label{lem:tangent-direction-local-index}
Follow the notation above. Then
\(
d_{v_1}f\bigl(\widetilde{\boldsymbol{u}}_{e_1,v_1}\bigr)
=
m_f(x)\widetilde{\boldsymbol{u}}_{e_2,v_2}
\)
in \(T_{v_2}^{\Z}\Gamma_2\).
\end{lemma}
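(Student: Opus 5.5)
The plan is to compare both sides as functionals on \(\Omega_{\Gamma_2,v_2}\), using the defining formula \eqref{eq:differential-local-representative} and the chart description of tangent functionals in Definition~\ref{def:edge-tangent-length}\textup{(i)}. Since \(T_{v_2}^{\Z}\Gamma_2=\Hom(\Omega_{\Gamma_2,v_2},\Z)\), it suffices to check
\[
d_{v_1}f\bigl(\widetilde{\boldsymbol{u}}_{e_1,v_1}\bigr)(\alpha)=m_f(x)\,\widetilde{\boldsymbol{u}}_{e_2,v_2}(\alpha)
\qquad\text{for every }\alpha\in\Omega_{\Gamma_2,v_2}.
\]

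\textbf{Step 1.} Fix \(\alpha\in\Omega_{\Gamma_2,v_2}\). Choose an open neighbourhood \(W_2\subseteq U_2\) of \(v_2\) and a local representative \(h_\alpha\in\Aff_{\Gamma_2}(W_2)\) with \((d_{\Gamma_2}h_\alpha)_{v_2}=\alpha\). After shrinking \(W_2\), write \((h_\alpha\circ\psi_2^{-1})(\boldsymbol{y})=\langle\boldsymbol{m}_\alpha,\boldsymbol{y}\rangle+b_\alpha\) on \(\psi_2(W_2)\). By definition, \(\widetilde{\boldsymbol{u}}_{e_2,v_2}(\alpha)=\langle\boldsymbol{m}_\alpha,\boldsymbol{u}_{e_2,v_2}\rangle\).

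\textbf{Step 2.} Identify \(f_{v_1}^*\alpha\). Its representative is \(h_\alpha\circ f\) on \(W_1\coloneqq f^{-1}(W_2)\cap U_1\), which is an integral affine function because \(f\) is a morphism of rational polyhedral spaces. Hence \(f^*_{v_1}\alpha=(d_{\Gamma_1}(h_\alpha\circ f))_{v_1}\). In the chart \(\psi_1\) this function reads
\[
\boldsymbol{y}\mapsto\langle\boldsymbol{m}_\alpha,F\boldsymbol{y}+\boldsymbol{b}\rangle+b_\alpha=\langle F^{\top}\boldsymbol{m}_\alpha,\boldsymbol{y}\rangle+\text{const}.
\]
Applying the definition of the tangent functional on \(\Gamma_1\) then gives
\[
\widetilde{\boldsymbol{u}}_{e_1,v_1}\bigl(f^*_{v_1}\alpha\bigr)=\langle F^{\top}\boldsymbol{m}_\alpha,\boldsymbol{u}_{e_1,v_1}\rangle=\langle\boldsymbol{m}_\alpha,F\boldsymbol{u}_{e_1,v_1}\rangle.
\]

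\textbf{Step 3.} By \eqref{eq:differential-local-representative}, the left-hand side of the identity to be proved equals \(\widetilde{\boldsymbol{u}}_{e_1,v_1}(f^*_{v_1}\alpha)\). Substituting \eqref{eq:local-index-mf2} into the formula from Step~2 gives \(m_f(x)\langle\boldsymbol{m}_\alpha,\boldsymbol{u}_{e_2,v_2}\rangle=m_f(x)\,\widetilde{\boldsymbol{u}}_{e_2,v_2}(\alpha)\). Since \(\alpha\) was arbitrary, the claim follows.

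\textbf{Main obstacle.} The delicate point is Step~2: the chart description of \(h_\alpha\circ f\) must hold on a neighbourhood of \(v_1\). The charts were only arranged so that \(e_i\subseteq U_i\), and a vertex \(v_1\) may have several incident edges. I would handle this using the given affine form \(\psi_2\circ f\circ\psi_1^{-1}=F\boldsymbol{y}+\boldsymbol{b}\), which holds on all of \(V_1\) after the stated shrinking, so it holds near \(v_1\). The tangent functional only uses the linear part \(\boldsymbol{m}\) of the representative, and the chart-independence asserted in Definition~\ref{def:edge-tangent-length} lets us compute in these particular charts. If the affine form were only valid near \(e_1\), I would instead evaluate along \(e_1\) alone: \(\widetilde{\boldsymbol{u}}_{e_1,v_1}\) depends only on the restriction of the representative to \(e_1\) near \(v_1\), and there \(f\) is affine by hypothesis.
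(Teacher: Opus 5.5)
Your proposal is correct and follows essentially the same route as the paper. You test both sides on an arbitrary \(\alpha\in\Omega_{\Gamma_2,v_2}\), pull back a chart representative \(h_\alpha\) along \(f\), and read off \(\langle\boldsymbol{m}_\alpha,F\boldsymbol{u}_{e_1,v_1}\rangle\) via \eqref{eq:differential-local-representative}, then substitute \eqref{eq:local-index-mf2}. Your ``main obstacle'' is resolved exactly as you suggest: the setup arranges \(f(U_1)\subseteq U_2\) with \(\psi_2\circ f\circ\psi_1^{-1}\) affine on all of \(V_1\), and \(V_1\) contains \(\psi_1(v_1)\).
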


\begin{proof}
Fix \(i\in\{1,2\}\) and \(\beta_i\in\Omega_{\Gamma_i,v_i}\). Choose an open neighbourhood \(W_i\subseteq U_i\) of \(v_i\) and \(h_{\beta_i}\in\Aff_{\Gamma_i}(W_i)\) such that
\(
(d_{\Gamma_i}h_{\beta_i})_{v_i}=\beta_i.
\)
After shrinking \(W_i\) if necessary, there exist \(\boldsymbol{m}_{\beta_i}\in\Z^{n_i}\) and \(b_{\beta_i}\in\R\) such that
\[
h_{\beta_i}\circ\psi_i^{-1}(\boldsymbol{y})
=
\langle\boldsymbol{m}_{\beta_i},\boldsymbol{y}\rangle+b_{\beta_i},
\qquad
\boldsymbol{y}\in\psi_i(W_i),
\]
where \(\langle\cdot,\cdot\rangle\) denotes the standard Euclidean inner product on \(\R^{n_i}\). The image of the germ \((h_{\beta_i})_{v_i}\in(\Aff_{\Gamma_i})_{v_i}\) under the induced map
\[
(d_{\Gamma_i})_{v_i}\colon
(\Aff_{\Gamma_i})_{v_i}
\to
(\Omega_{\Gamma_i})_{v_i}
\]
is determined by
\[
\psi_i(W_i)\to\R,\quad
\boldsymbol{y}\mapsto
\langle\boldsymbol{m}_{\beta_i},\boldsymbol{y}\rangle.
\]
By Definition~\ref{def:edge-tangent-length}\textup{(i)}, the tangent functional in \(T_{v_i}^{\Z}\Gamma_i\) corresponding to the tangent direction \(\boldsymbol{u}_{e_i,v_i}\) is given by
\[
\widetilde{\boldsymbol{u}}_{e_i,v_i}\colon \Omega_{\Gamma_i,v_i}\to\Z,\quad
\beta_i\mapsto \langle\boldsymbol{m}_{\beta_i},\boldsymbol{u}_{e_i,v_i}\rangle.
\]

Let \(\alpha\in\Omega_{\Gamma_2,v_2}\). Choose an open neighbourhood \(W_2\subseteq U_2\) of \(v_2\) and \(h_\alpha\in\Aff_{\Gamma_2}(W_2)\) such that
\(
(d_{\Gamma_2}h_\alpha)_{v_2}=\alpha,
\)
and write
\[
h_\alpha\circ\psi_2^{-1}(\boldsymbol{y}')
=
\langle\boldsymbol{m}_{\alpha},\boldsymbol{y}'\rangle+b_\alpha,
\qquad
\boldsymbol{y}'\in\psi_2(W_2),
\]
where \(\boldsymbol{m}_\alpha\in\Z^{n_2}\) and \(b_\alpha\in\R\).

By the definition of the pullback on stalks,
\(
f_{v_1}^*(\alpha)
=
\bigl(d_{\Gamma_1}(h_\alpha\circ f)\bigr)_{v_1}.
\)
In the chart \((U_1,V_1,\psi_1)\), one has
\[
h_\alpha\circ f\circ\psi_1^{-1}(\boldsymbol{y})
=
\langle\boldsymbol{m}_{\alpha},F\boldsymbol{y}+\boldsymbol{b}\rangle+b_\alpha
=
\langle\boldsymbol{m}_{\alpha},F\boldsymbol{y}\rangle
+
\langle\boldsymbol{m}_{\alpha},\boldsymbol{b}\rangle
+
b_\alpha,
\quad
\boldsymbol{y}\in\psi_1(U_1\cap f^{-1}(W_2))
\]
near \(\psi_1(v_1)\). Hence
\(
\widetilde{\boldsymbol{u}}_{e_1,v_1}\bigl(f_{v_1}^*(\alpha)\bigr)
=
\langle\boldsymbol{m}_{\alpha},F\boldsymbol{u}_{e_1,v_1}\rangle.
\)
By \eqref{eq:differential-local-representative}, one has
\[
d_{v_1}f\bigl(\widetilde{\boldsymbol{u}}_{e_1,v_1}\bigr)(\alpha)
=
\widetilde{\boldsymbol{u}}_{e_1,v_1}\bigl(f_{v_1}^*(\alpha)\bigr)
=
\langle\boldsymbol{m}_{\alpha},F\boldsymbol{u}_{e_1,v_1}\rangle.
\]
By \eqref{eq:local-index-mf2}, one has
\[
d_{v_1}f\bigl(\widetilde{\boldsymbol{u}}_{e_1,v_1}\bigr)(\alpha)
=
m_f(x)
\langle\boldsymbol{m}_{\alpha},\boldsymbol{u}_{e_2,v_2}\rangle
=
m_f(x)\widetilde{\boldsymbol{u}}_{e_2,v_2}(\alpha).
\]
Since this holds for every \(\alpha\in\Omega_{\Gamma_2,v_2}\), one has
\(
d_{v_1}f\bigl(\widetilde{\boldsymbol{u}}_{e_1,v_1}\bigr)
=
m_f(x)\widetilde{\boldsymbol{u}}_{e_2,v_2}
\)
in \(T_{v_2}^{\Z}\Gamma_2\).
\end{proof}

\subsection{Parametrizing effective tropical \(1\)-cycles by smooth tropical curves}\label{subsec:weight-one-universal-property}
This subsection proves Lemma~\ref{lem:cycle-from-smooth-curve}, which will be used in Subsections~\ref{subsec:proof-tropical-matsusaka} and~\ref{subsec:proof-tropical-matsusaka-ran}. 
\begin{lemma}\label{lem:cycle-from-smooth-curve}
Let \(X\) be a \(g\)-dimensional integral torus, and let \(C\) be a nonzero effective tropical \(1\)-cycle on \(X\) such that \(|C|\) is connected. Then there exist a smooth tropical curve \(\Gamma\) and a morphism of rational polyhedral spaces \(\chi\colon \Gamma\to X\) such that \(\chi_*[\Gamma]=C\).
\end{lemma}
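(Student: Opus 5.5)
We construct \(\Gamma\) directly from the weighted graph underlying \(C\), blowing up every weighted edge into parallel copies and every vertex into a "star" so that the result is a smooth tropical curve. Fix the global face structure on \(|C|\) from Definition~\ref{def:tropical-1-cycle-weight}, refined as in Remark~\ref{rem:global-face-structures-on-supports}, so that \(C\) is constant with weight \(\omega_C(\tau)\in\Z_{>0}\) on each \(\relint(\tau)\) (effectivity and \(C\neq0\) give positivity), and each edge lies in a translation chart \((U_{x_j},V_0,\psi_{x_j})\) of Definition~\ref{def:translation-chart}. Tangent directions \(\boldsymbol{u}_{\tau,v}\in(\Lambda_X)^*\cong\Z^g\) are then primitive integral vectors, and balancing at a vertex \(v\) reads \(\sum_{\tau\ni v}\omega_C(\tau)\boldsymbol{u}_{\tau,v}=0\).

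\textbf{Construction.} For each edge \(\tau\) of \(|C|\), take \(\omega_C(\tau)\) disjoint copies \(\tau^{(1)},\dots,\tau^{(\omega_C(\tau))}\) of a segment of length \(\ell(\tau)\), each mapped isometrically (with integral slope \(\boldsymbol{u}_{\tau,v}\)) onto \(\tau\). At each vertex \(v\) of \(|C|\) we glue all copies of all edges incident to \(v\) at a single point \(\tilde v\), and we define \(\chi\) on \(\tilde v\) to be \(v\). We declare the local chart of \(\Gamma\) at \(\tilde v\) to be the standard star \(\bigcup_{i=1}^{d+1}\R_{\ge0}\overline{\boldsymbol{e}}_i\subseteq\R^{d+1}/\R(1,\dots,1)\), where \(d+1=\sum_{\tau\ni v}\omega_C(\tau)\) is the number of edge-germs at \(\tilde v\). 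We define \(\chi\) near \(\tilde v\) by the linear map \(\R^{d+1}\to(\Lambda_X)^*_\R\) sending \(\boldsymbol{e}_i\) to the tangent direction \(\boldsymbol{u}_{\tau,v}\) of the edge germ labelled \(i\). This linear map is integral, and it descends to the quotient by \(\R(1,\dots,1)\) exactly because of the balancing condition. Hence \(\chi\) is integral affine in the charts, so it is a morphism of rational polyhedral spaces; its compatibility with the edge charts is immediate. If \(v\) has a single incident germ (impossible when balanced with \(d+1=1\)) or valence two, the star chart still applies, since for \(d=1\) it is just \(\R\). Connectedness of \(\Gamma\) follows from connectedness of \(|C|\), and compactness from finiteness, so \(\Gamma\) is a smooth tropical curve in the sense of Definition~\ref{def:smooth-tropical-curve}.

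\textbf{Pushforward.} \(\chi\) is proper because \(\Gamma\) is compact. For a generic point \(y\in\relint(\tau)\), the preimage \(\chi^{-1}(y)\) consists of exactly one point on each copy \(\tau^{(k)}\). At each such point, \(d\chi\) maps the primitive generator of \(T^{\Z}\Gamma\) to \(\boldsymbol{u}_{\tau,v}\), which is primitive in \(T^{\Z}_y|C|\); hence the local index in \eqref{eq:local-index-mf} is \(1\), in the spirit of Lemma~\ref{lem:tangent-direction-local-index}. Formula \eqref{eq:proper-pushforward-value} therefore gives \((\chi_*[\Gamma])(y)=\omega_C(\tau)=C(y)\). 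Since two tropical \(1\)-cycles agreeing off a finite set coincide (Definition~\ref{def:proper-pushforward}), we get \(\chi_*[\Gamma]=C\).

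\textbf{Main obstacle.} The delicate point is verifying that the star-shaped local model at \(\tilde v\), together with the edge charts, forms a genuine atlas of \(\Gamma\), and that \(\chi\) pulls back \(\Aff_X\) into \(\Aff_\Gamma\). Concretely, one must check that an integral affine function \(\langle\boldsymbol{m},\cdot\rangle\) on \(X\) pulls back to the function \(\sum_i\langle\boldsymbol{m},\boldsymbol{u}_i\rangle z_i\), which is well defined on \(\R^{d+1}/\R(1,\dots,1)\) precisely by balancing, and is integral affine there. The transition maps between the star chart and the edge charts are integral (unimodular on each ray), which finishes the verification.
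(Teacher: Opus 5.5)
Your construction is correct, but it encodes the weights differently from the paper. The paper keeps a single edge \(\widetilde\tau\) of \(\Gamma\) over each edge \(\tau\) of \(|C|\) and shortens it to length \(\ell(\tau)/\omega_C(\tau)\). It uses a star with one ray per incident edge, with local map \(\overline{\boldsymbol e}_i^v\mapsto\omega_C(\tau_{v,i})\boldsymbol u_{\tau_{v,i},v}\) as in \eqref{eq:chi-v-local-map}. Balancing enters exactly as in your argument, \(\chi\) is a homeomorphism onto \(|C|\), and the weight appears as the lattice index \(\omega_C(\tau)\) at the unique preimage point. You instead take \(\omega_C(\tau)\) parallel copies with primitive slope, so every local index is \(1\) and the weight is obtained by counting preimages; this makes the pushforward computation almost trivial. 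The price is that your \(\Gamma\) is not a model of \(|C|\). When some weight is at least \(2\), the difference of two parallel copies is a nonzero class in \(H_1(\Gamma,\Z)\) killed by \(\chi_*\), so the genus of \(\Gamma\) exceeds \(\rank_{\Z}H_1(|C|,\Z)\). It even grows under refinement of the face structure, since all copies are glued at each new \(2\)-valent vertex. None of this affects the lemma as stated. The later use in Claim~\ref{claim:nonvanishing-pullback-tangent} only needs \(d_v\chi(\widetilde{\boldsymbol u}_{\tau,v})\neq 0\), which your \(\chi\) also satisfies. One bookkeeping point: parallel copies produce multiple edges between \(\tilde v\) and \(\tilde w\). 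You should subdivide them, and the new vertices are \(2\)-valent, hence smooth, so this meets the simplicial convention of Definition~\ref{def:tropical-curve}\textup{(ii)}.
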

\begin{proof}
We divide the proof into steps. After refining the global face structure on \(|C|\) if necessary, assume that, for every vertex \(v\) of \(|C|\), the union of the edges incident to \(v\) is contained in a chart of \(X\). For each edge \(\tau\) of \(|C|\), let \(w_\tau\in\Z_{>0}\) denote its weight, and let \(\ell(\tau)\) denote its length as defined in Definition~\ref{def:edge-tangent-length}\textup{(ii)}.

\smallskip
\noindent\textbf{Step \textup{(i)}.}
We construct a smooth tropical curve \(\Gamma\). For each vertex \(v\) of \(|C|\), let \(\tau_{v,1},\dots,\tau_{v,m_v}\) be the edges incident to \(v\), where
\(
m_v\coloneqq\#\{\tau\mid v\in\tau\}\in\Z_{\ge2}.
\)
Let
\[
L_{m_v-1}^{v}\coloneqq
\bigcup_{i=1}^{m_v}\R_{\ge0}\overline{\boldsymbol e}_{i}^{v}
\subseteq
\R^{m_v}/\R(1,\dots,1),
\]
where \(\boldsymbol e_i^v\) is the \(i\)-th standard basis vector in this copy of \(\R^{m_v}\), and \(\overline{\boldsymbol e}_{i}^{v}\) denotes its image in \(\R^{m_v}/\R(1,\dots,1)\). Define
\[
W_v\coloneqq
\bigcup_{i=1}^{m_v}
\left[0,\frac{\ell(\tau_{v,i})}{w_{\tau_{v,i}}}\right)
\overline{\boldsymbol e}_{i}^{v}
\subseteq L_{m_v-1}^{v}.
\]
Let \(\tau\) be an edge of \(|C|\) with endpoints \(v\) and \(w\), and choose indices \(i_v\) and \(j_w\) such that
\(
\tau=\tau_{v,i_v}=\tau_{w,j_w}.
\)
Set
\[
I_{v,\tau}\coloneqq
\left(0,\frac{\ell(\tau)}{w_\tau}\right)
\overline{\boldsymbol e}_{i_v}^{v},
\qquad
I_{w,\tau}\coloneqq
\left(0,\frac{\ell(\tau)}{w_\tau}\right)
\overline{\boldsymbol e}_{j_w}^{w}.
\]
We identify \(I_{v,\tau}\subseteq W_v\) with \(I_{w,\tau}\subseteq W_w\) via the integral affine isomorphism
\[
\theta_{vw}\colon I_{v,\tau}\longrightarrow I_{w,\tau},
\qquad
\theta_{vw}\bigl(r\overline{\boldsymbol e}_{i_v}^{v}\bigr)
=
\left(\frac{\ell(\tau)}{w_\tau}-r\right)
\overline{\boldsymbol e}_{j_w}^{w}.
\]
These identifications give a compact connected rational polyhedral space \(\Gamma\) with the \(W_v\) as charts. Since each \(W_v\) is an open subset of \(L_{m_v-1}^{v}\), the tropical curve \(\Gamma\) is smooth. We use the same symbol \(v\) for the vertex of \(\Gamma\) represented by the origin in \(W_v\). For each edge \(\tau\) of \(|C|\), let \(\widetilde{\tau}\) denote the corresponding edge of \(\Gamma\). These vertices and edges form a global face structure on \(\Gamma\).

\smallskip
\noindent\textbf{Step \textup{(ii)}.}
\smallskip
\noindent\textbf{Step \textup{(ii)}.}
We construct a continuous map \(\chi\colon\Gamma\to X\). For each vertex \(v\) of \(|C|\), choose a chart
\[
\varphi_v\colon U_v\longrightarrow V_v\subseteq\R^g
\]
of \(X\) such that \(\varphi_v(v)=\boldsymbol 0\) and every edge of \(|C|\) incident to \(v\) is contained in \(U_v\). For \(1\le i\le m_v\), let
\(
\boldsymbol u_{\tau_{v,i},v}\in\Z^g
\)
be the tangent direction of \(\tau_{v,i}\) out of \(v\) in this chart. The balancing condition at \(v\) gives
\[
\sum_{i=1}^{m_v}
w_{\tau_{v,i}}\boldsymbol u_{\tau_{v,i},v}
=
\boldsymbol 0.
\]
Since
\(
\overline{\boldsymbol e}_{1}^{v}+\cdots+
\overline{\boldsymbol e}_{m_v}^{v}=\boldsymbol 0,
\)
the balancing equality defines an integral linear map
\[
\chi_v\colon
\R^{m_v}/\R(1,\dots,1)
\longrightarrow
\R^g
\]
such that
\begin{equation}\label{eq:chi-v-local-map}
\chi_v(\overline{\boldsymbol e}_{i}^{v})
=
w_{\tau_{v,i}}\boldsymbol u_{\tau_{v,i},v},
\qquad
1\le i\le m_v.
\end{equation}
One has \(\chi_v(W_v)\subseteq V_v\), so the map
\[
\varphi_v^{-1}\circ(\chi_v|_{W_v})\colon W_v\longrightarrow X
\]
is well-defined. It sends the origin to \(v\), and, for
\(
0\le r<\ell(\tau_{v,i})/w_{\tau_{v,i}},
\)
it sends \(r\overline{\boldsymbol e}_{i}^{v}\) to the point of \(\tau_{v,i}\) for which the segment from \(v\) to that point has length \(w_{\tau_{v,i}}r\).

Let \(\tau\) be an edge of \(|C|\) with endpoints \(v\) and \(w\), and retain the notation from Step~\textup{(i)}. For
\(
0<r<\ell(\tau)/w_\tau,
\)
one has
\[
\bigl(\varphi_v^{-1}\circ(\chi_v|_{W_v})\bigr)
\bigl(r\overline{\boldsymbol e}_{i_v}^{v}\bigr)
=
\bigl(\varphi_w^{-1}\circ(\chi_w|_{W_w})\bigr)
\left(
\left(\frac{\ell(\tau)}{w_\tau}-r\right)
\overline{\boldsymbol e}_{j_w}^{w}
\right).
\]
Hence these local maps are compatible with the identifications \(\theta_{vw}\) and glue to a continuous map
\[
\chi\colon\Gamma\longrightarrow X.
\]
For every vertex \(v\), one has
\[
\varphi_v\circ(\chi|_{W_v})
=
\chi_v|_{W_v}.
\]
By construction, \(\chi\) maps each edge \(\widetilde{\tau}\) homeomorphically onto \(\tau\), and its image is \(|C|\).

\smallskip
\noindent\textbf{Step \textup{(iii)}.}
We verify that \(\chi\) is a morphism of rational polyhedral spaces. Let \(U\subseteq X\) be open and let \(h\in\operatorname{Aff}_X(U)\). For every vertex \(v\) of \(\Gamma\), the equality
\[
\varphi_v\circ(\chi|_{W_v})
=
\chi_v|_{W_v}
\]
and the fact that \(\chi_v\) is integral linear show that \(h\circ\chi\) is integral affine on every connected component of \(W_v\cap\chi^{-1}(U)\). Hence pullback along \(\chi\) induces a morphism of sheaves
\[
\chi^*\colon
\chi^{-1}\operatorname{Aff}_X
\longrightarrow
\operatorname{Aff}_{\Gamma}.
\]
Together with the continuity of \(\chi\) proved in Step~\textup{(ii)}, Definition~\ref{def:rational-polyhedral-morphism} shows that \(\chi\) is a morphism of rational polyhedral spaces.

\smallskip
\noindent\textbf{Step \textup{(iv)}.}
We prove that \(\chi_*[\Gamma]=C\). Since \(\Gamma\) is compact, \(\chi\) is proper. The morphism \(\chi\) induces a morphism
\[
\bar\chi\colon\Gamma\longrightarrow|C|
\]
whose underlying continuous map is a homeomorphism. Let \(\tau\) be an edge of \(|C|\), let \(\widetilde{\tau}\) be the corresponding edge of \(\Gamma\), and let \(x\in\relint(\widetilde{\tau})\). By \eqref{eq:chi-v-local-map} and the construction of \(\chi\), one has
\[
d_x\bar\chi(T_x^{\Z}\Gamma)
=
w_\tau T_{\bar\chi(x)}^{\Z}|C|.
\]
Hence
\[
\bigl[
T_{\bar\chi(x)}^{\Z}|C|
:
d_x\bar\chi(T_x^{\Z}\Gamma)
\bigr]
=
w_\tau.
\]
Since \([\Gamma]\) has weight \(1\) on \(\widetilde{\tau}\), Definition~\ref{def:proper-pushforward} shows that \(\chi_*[\Gamma]\) has weight \(w_\tau\) along \(\tau\). Therefore \(\chi_*[\Gamma]=C\).
\end{proof}

\section{Tropical Rosati involutions and orthogonal decompositions}\label{sec:idempotent-rosati}
In the classical theory of polarized abelian varieties, a principal polarization induces the Rosati involution on the endomorphism ring. This involution plays a basic role in the study of decompositions of polarized abelian varieties; see \cite[Chap.~5]{BL04}. In this section, we introduce the tropical analogue of this construction to study orthogonal decompositions of tropical abelian varieties.
\begin{notation}\label{not:theta-kappa-Q}
Let \((X,\Theta)\) be a principally polarized tropical abelian variety and write
\(X=(\Lambda_X,\Lambda_X^{\prime},[\,\cdot\,,\,\cdot\,]_X)\). Let \(D\) be an ample tropical Cartier divisor on \(X\). Write
\(\varphi_{D}\colon X\to X^\vee\) for the homomorphism induced by
\(\MO_X(D)\) as in Subsection~\ref{subsec:polarization-induced-homomorphisms}. We use
\(\kappa_{D}\) and \(Q_{D}\) as in Subsection~\ref{subsec:integral-tori-polarizations}.
In particular, for \(D=\Theta\), Proposition~\ref{prop:phi-Theta-dual-kappa} gives
\(\varphi_{\Theta}^{\sharp}=(\varphi_{\Theta})_{\sharp}=-\kappa_{\Theta}\).
Since \(\Theta\) is a principal polarization,
\(\kappa_{\Theta}\colon\Lambda_X^{\prime}\to\Lambda_X\) is a lattice isomorphism,
and hence \(\varphi_{\Theta}\colon X\to X^\vee\) is an isomorphism of integral tori.
\end{notation}

\subsection{Tropical Rosati adjoints and idempotents}\label{subsec:tropical-rosati-adjoints}
In this subsection, we define tropical Rosati adjoints and tropical Rosati self-adjoint endomorphisms. We also establish their basic consequences for idempotents in \(\End(X)\).

\begin{defi}\label{def:rosati-adjoint}
Let \((Y,\Theta_Y)\) and \((Z,\Theta_Z)\) be principally polarized tropical abelian varieties, and let \(f\in\Hom(Y,Z)\). The \emph{tropical Rosati adjoint} of \(f\) with respect to \(\Theta_Y\) and \(\Theta_Z\) is
\begin{equation*}
f^{\dagger}\coloneqq \varphi_{\Theta_Y}^{-1}\circ f^{\vee}\circ \varphi_{\Theta_Z}\in\Hom(Z,Y),
\end{equation*}
where \(f^{\vee}\in\Hom(Z^{\vee},Y^{\vee})\) is the dual homomorphism. By Lemma~\ref{lem:homomorphism-basic}\textup{(ii)} and Proposition~\ref{prop:phi-Theta-dual-kappa}, one has \((f^{\dagger})^{\dagger}=f\). If \((W,\Theta_W)\) is principally polarized and \(g\in\Hom(Z,W)\), then \((g\circ f)^{\dagger}=f^{\dagger}\circ g^{\dagger}\). If \(Y=Z\) and \(\Theta_Y=\Theta_Z\), then the map
\(
\dagger_{\Theta_Y}\colon\End(Y)\to\End(Y)
\)
defined by \(e\mapsto e^{\dagger}\) is an involution, called the \emph{tropical Rosati involution} associated with \(\Theta_Y\). An endomorphism \(e\in\End(Y)\) is called \emph{tropical Rosati self-adjoint} if \(e^{\dagger}=e\).
\end{defi}

\begin{remark}\label{rem:rosati-on-lattices}
For \(e=(e^{\sharp},e_{\sharp})\in\End(X)\), write \(e^{\dagger}=((e^{\dagger})^{\sharp},(e^{\dagger})_{\sharp})\). By Definition~\ref{def:rosati-adjoint}, the definition of \(e^{\vee}\), and \eqref{eq:phi-Theta-sharp-kappa}, one has
\[
(e^{\dagger})^{\sharp}=\kappa_{\Theta}\circ e_{\sharp}\circ\kappa_{\Theta}^{-1}\colon\Lambda_X\to\Lambda_X,
\qquad
(e^{\dagger})_{\sharp}=\kappa_{\Theta}^{-1}\circ e^{\sharp}\circ\kappa_{\Theta}\colon\Lambda_X^{\prime}\to\Lambda_X^{\prime}.
\]
In particular, \(e\) is tropical Rosati self-adjoint if and only if
\(
e^{\sharp}\circ\kappa_{\Theta}=\kappa_{\Theta}\circ e_{\sharp}.
\)
If \(e\) is tropical Rosati self-adjoint, then \eqref{eq:dual-hom-pairing-compat} gives
\[
Q_{\Theta}(e_{\sharp}u,v)=Q_{\Theta}(u,e_{\sharp}v)
\qquad
\text{for all }u,v\in(\Lambda_X^{\prime})_{\R}.
\]
\end{remark}

\begin{defi}\label{def:idempotent} We say that \(e\in\End(X)\) is \emph{idempotent} if \(e^{2}=e\).\end{defi}

\begin{lemma}\label{lem:idempotent-rosati-kappa-splitting}
Let \(e=(e^{\sharp},e_{\sharp})\in \End(X)\) be an idempotent. Then the following hold.
\begin{enumerate}[label=\textup{(\roman*)}]
\item One has direct sum decompositions
\[
\Lambda_X=\im(e^{\sharp})\oplus \im((1-e)^{\sharp}),
\qquad
\Lambda_X^{\prime}=\im(e_{\sharp})\oplus \im((1-e)_{\sharp}).
\]
Moreover, \(\im(e^{\sharp})\subseteq \Lambda_X\) and \(\im(e_{\sharp})\subseteq \Lambda_X^{\prime}\) are saturated in the sense of Definition~\ref{def:image-integral-torus}.
\item Assume that \(e\) is tropical Rosati self-adjoint. Then
\[
\kappa_{\Theta}\bigl(\im(e_{\sharp})\bigr)=\im(e^{\sharp}),
\qquad
\kappa_{\Theta}\bigl(\im((1-e)_{\sharp})\bigr)=\im((1-e)^{\sharp}).
\]
\end{enumerate}
\end{lemma}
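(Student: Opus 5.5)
The plan is to work purely at the level of lattices, using that composition of homomorphisms of integral tori is contravariant on the \(\sharp\)-upper components and covariant on the \(\sharp\)-lower components. From \(e^2=e\) we get \((e^{\sharp})^2=e^{\sharp}\) on \(\Lambda_X\) and \((e_{\sharp})^2=e_{\sharp}\) on \(\Lambda_X^{\prime}\). We also have \((1-e)^{\sharp}=1-e^{\sharp}\) and \((1-e)_{\sharp}=1-e_{\sharp}\), since \(\End(X)\) is a ring with componentwise addition. So both parts reduce to elementary facts about idempotent endomorphisms of free abelian groups.

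For (i), let \(p\) be an idempotent endomorphism of a lattice \(M\) (either \(e^{\sharp}\) or \(e_{\sharp}\)). Every \(m\in M\) decomposes as \(m=p(m)+(1-p)(m)\), so \(M=\im(p)+\im(1-p)\). If \(x\) lies in both images, then \(p(x)=x\) and \(p(x)=0\), since \(p(1-p)=0\); hence \(x=0\) and the sum is direct. For saturation, note that \(\im(p)=\ker(1-p)\): if \(p(m)=m\) then \(m\in\im(p)\), and conversely \(p^2=p\). If \(nm\in\ker(1-p)\) with \(n\in\Z_{>0}\), then \(n(1-p)(m)=0\), and torsion-freeness of \(M\) gives \(m\in\ker(1-p)\). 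So \(\im(p)\) is saturated.

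For (ii), Remark~\ref{rem:rosati-on-lattices} shows that self-adjointness means \(e^{\sharp}\circ\kappa_{\Theta}=\kappa_{\Theta}\circ e_{\sharp}\). Subtracting this from \(\kappa_{\Theta}=\kappa_{\Theta}\) gives \((1-e)^{\sharp}\circ\kappa_{\Theta}=\kappa_{\Theta}\circ(1-e)_{\sharp}\). Because \(\Theta\) is principal, \(\kappa_{\Theta}\colon\Lambda_X^{\prime}\to\Lambda_X\) is a lattice isomorphism. Hence
\[
\kappa_{\Theta}(\im(e_{\sharp}))=\im(\kappa_{\Theta}\circ e_{\sharp})=\im(e^{\sharp}\circ\kappa_{\Theta})=e^{\sharp}(\kappa_{\Theta}(\Lambda_X^{\prime}))=\im(e^{\sharp}),
\]
and the same computation applies to \(1-e\).

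The only point needing care, and it is minor, is to confirm that \((e\circ e)^{\sharp}=e^{\sharp}\circ e^{\sharp}\) and that \((1-e)^{\sharp}=\id-e^{\sharp}\) under the paper's conventions for sums and composition in \(\End(X)\). These follow directly from the definition of homomorphisms as pairs \((f^{\sharp},f_{\sharp})\) with componentwise operations. No real obstacle is expected.
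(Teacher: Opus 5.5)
Your proof is correct and follows the paper's argument: the same decomposition $m=p(m)+(1-p)(m)$ for (i), and for (ii) the same computation $\kappa_{\Theta}(\im(e_{\sharp}))=\im(e^{\sharp}\circ\kappa_{\Theta})=\im(e^{\sharp})$ using surjectivity of $\kappa_{\Theta}$. The differences are cosmetic: you get trivial intersection from $p(1-p)=0$ and saturation from $\im(p)=\ker(1-p)$ plus torsion-freeness, whereas the paper uses $\lambda'=2\lambda'$ and reads saturation off the direct sum; you also write out the intertwining relation for $1-e$, which the paper leaves as ``the same argument.''
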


\begin{proof}
\begin{enumerate}[label=\textup{(\roman*)}, itemsep=2pt]
\item For every \(\lambda\in\Lambda_X\), one has \(\lambda=e^{\sharp}(\lambda)+(1-e)^{\sharp}(\lambda)\). Thus
\(
\Lambda_X=\im(e^{\sharp})+\im((1-e)^{\sharp}).
\)
If \(\lambda'\in\im(e^{\sharp})\cap\im((1-e)^{\sharp})\), then \(e^{\sharp}(\lambda')=\lambda'\) and \((1-e)^{\sharp}(\lambda')=\lambda'\), so
\[
\lambda'=(e^{\sharp}+(1-e)^{\sharp})(\lambda')=2\lambda',
\]
hence \(\lambda'=0\). Therefore
\(
\Lambda_X=\im(e^{\sharp})\oplus\im((1-e)^{\sharp}).
\)
To see that \(\im(e^{\sharp})\subseteq\Lambda_X\) is saturated, let \(\lambda_0\in\Lambda_X\) and assume that \(n\lambda_0\in\im(e^{\sharp})\) for some \(n\in\Z_{>0}\). Write \(\lambda_0=\lambda_1+\lambda_2\) with \(\lambda_1\in\im(e^{\sharp})\) and \(\lambda_2\in\im((1-e)^{\sharp})\). Then \(n\lambda_0=n\lambda_1+n\lambda_2\in\im(e^{\sharp})\), so \[n\lambda_2\in\im(e^{\sharp})\cap\im((1-e)^{\sharp})=\{0\}.\] Hence \(\lambda_2=0\), and therefore \(\lambda_0\in\im(e^{\sharp})\). Thus \(\im(e^{\sharp})\subseteq\Lambda_X\) is saturated. By the same argument applied to the idempotent endomorphism \(e_{\sharp}\in\End(\Lambda_X^{\prime})\), one has
\(
\Lambda_X^{\prime}=\im(e_{\sharp})\oplus\im((1-e)_{\sharp}),
\)
and \(\im(e_{\sharp})\subseteq\Lambda_X^{\prime}\) is saturated.
\item If \(e\) is tropical Rosati self-adjoint, then Remark~\ref{rem:rosati-on-lattices} gives
\(
e^{\sharp}\circ\kappa_{\Theta}=\kappa_{\Theta}\circ e_{\sharp}.
\)
Hence
\[
\kappa_{\Theta}\bigl(\im(e_{\sharp})\bigr)
=
\im(\kappa_{\Theta}\circ e_{\sharp})
=
\im(e^{\sharp}\circ\kappa_{\Theta})
=
\im(e^{\sharp}),
\]
where the last equality follows from the surjectivity of \(\kappa_{\Theta}\). Applying the same argument to \(1-e\), we obtain
\[
\kappa_{\Theta}\bigl(\im((1-e)_{\sharp})\bigr)
=
\im((1-e)^{\sharp}). \qedhere
\] 
\end{enumerate}
\end{proof}

\begin{lemma}\label{lem:idempotent-splitting-constructions}
Let \(e=(e^{\sharp},e_{\sharp})\in \End(X)\) be an idempotent. Set
\[
Y_e\coloneqq\bigl(\im(e^{\sharp}),\im(e_{\sharp}),[\,\cdot\,,\,\cdot\,]_{Y_e}\bigr),
\qquad
[\,\cdot\,,\,\cdot\,]_{Y_e}\coloneqq [\,\cdot\,,\,\cdot\,]_X\big|_{\im(e^{\sharp})\times \im(e_{\sharp})}.
\]
Then the following hold.
\begin{enumerate}[label=\textup{(\roman*)}, itemsep=2pt]
\item The triple \(Y_e\) is an integral torus.
\item Since \(1-e\) is idempotent, we may define \(Y_{1-e}\) in the same way. Define \(\alpha\coloneqq(\alpha^{\sharp},\alpha_{\sharp})\colon Y_e\times Y_{1-e}\to X\) by
\[
\alpha^{\sharp}(\lambda)\coloneqq \bigl(e^{\sharp}(\lambda),(1-e)^{\sharp}(\lambda)\bigr),
\qquad
\alpha_{\sharp}(\lambda_1^{\prime},\lambda_2^{\prime})\coloneqq \lambda_1^{\prime}+\lambda_2^{\prime},
\]
for \(\lambda\in \Lambda_X\) and \((\lambda_1^{\prime},\lambda_2^{\prime})\in \im(e_{\sharp})\oplus \im((1-e)_{\sharp})\). Then \(\alpha\) is an isomorphism of integral tori.
\item There exists an injective homomorphism \(j_e\colon Y_e\rightarrow X\) such that \(\im(j_e)=\im(e)\). Moreover, \(j_e\) induces an isomorphism of integral tori
\(\bar j_e\colon Y_e\cong\im(j_e)\).
\end{enumerate}
\end{lemma}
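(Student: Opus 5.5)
The plan is to verify the three assertions directly on the lattice data, using Lemma~\ref{lem:idempotent-rosati-kappa-splitting}\textup{(i)} throughout.

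\textbf{Step (i): \(Y_e\) is an integral torus.} By Lemma~\ref{lem:idempotent-rosati-kappa-splitting}\textup{(i)}, \(\im(e^{\sharp})\) and \(\im(e_{\sharp})\) are saturated direct summands. It remains to show that they have the same rank and that the restricted pairing is nondegenerate.

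For \(\lambda\in\Lambda_X\) and \(\lambda'\in\Lambda_X'\), relation \eqref{eq:dual-hom-pairing-compat} gives
\[
[e^{\sharp}\lambda,\lambda']_X=[\lambda,e_{\sharp}\lambda']_X .
\]
Hence
\[
[e^{\sharp}\lambda,(1-e)_{\sharp}\mu']_X=[\lambda,e_{\sharp}(1-e)_{\sharp}\mu']_X=0,
\]
because \(e_{\sharp}(1-e)_{\sharp}=(e-e^2)_{\sharp}=0\). Symmetrically, \([(1-e)^{\sharp}\lambda,e_{\sharp}\mu']_X=0\).

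So after tensoring with \(\R\), the nondegenerate pairing \([\cdot,\cdot]_X\) is block diagonal with respect to the two decompositions. Each diagonal block must then be nondegenerate. A nondegenerate block forces \(\rank\im(e^{\sharp})\le\rank\im(e_{\sharp})\) and the reverse inequality, so the ranks agree. Nondegeneracy of the \(e\)-block is exactly that \([\cdot,\cdot]_{Y_e}\) is nondegenerate.

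\textbf{Step (ii): \(\alpha\) is an isomorphism.} First check that \(\alpha\) is a homomorphism. Using the orthogonality above,
\[
[\alpha^{\sharp}\lambda,(\lambda_1',\lambda_2')]=[e^{\sharp}\lambda,\lambda_1']_X+[(1-e)^{\sharp}\lambda,\lambda_2']_X .
\]
Each cross term that would appear in \([\lambda,\lambda_1'+\lambda_2']_X\) vanishes. For example,
\[
[e^{\sharp}\lambda,\lambda_1']_X=[\lambda,e_{\sharp}\lambda_1']_X=[\lambda,\lambda_1']_X,
\]
since \(e_{\sharp}\) is the identity on its image. The expression therefore equals \([\lambda,\lambda_1'+\lambda_2']_X\).

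Both \(\alpha^{\sharp}\) and \(\alpha_{\sharp}\) are lattice isomorphisms by the direct sum decompositions. Their inverses are \((\mu_1,\mu_2)\mapsto\mu_1+\mu_2\) and \(\lambda'\mapsto(e_{\sharp}\lambda',(1-e)_{\sharp}\lambda')\). These inverse maps satisfy \eqref{eq:dual-hom-pairing-compat} by the same computation, so they define a homomorphism \(h\) with \(h\circ\alpha=\id\) and \(\alpha\circ h=\id\).

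\textbf{Step (iii): the injective homomorphism \(j_e\).} Set \(j_e\coloneqq\alpha\circ\iota_1\). Then
\[
j_e^{\sharp}(\lambda)=e^{\sharp}\lambda,\qquad (j_e)_{\sharp}=\text{inclusion }\im(e_{\sharp})\hookrightarrow\Lambda_X'.
\]
Injectivity holds because the inclusion on \(\Lambda'\) is a saturated embedding. On real tori, the kernel is \(\bigl(\im(e_{\sharp})_{\R}\cap\Lambda_X'\bigr)/\im(e_{\sharp})\), which is trivial by saturation.

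For the image, recall \(\im(j_e)=\bigl(\Lambda_X/\ker e^{\sharp},(\im(e_{\sharp}))_{\sat},\cdot\bigr)=\bigl(\Lambda_X/\ker e^{\sharp},\im(e_{\sharp}),\cdot\bigr)\). Since \(\ker(j_e^{\sharp})=\ker e^{\sharp}\), this is \(\im(e)\).

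The map \(\bar j_e\) is given by
\[
\bar j_e^{\sharp}\colon\Lambda_X/\ker e^{\sharp}\xrightarrow{\ \sim\ }\im(e^{\sharp}),\qquad (\bar j_e)_{\sharp}=\id_{\im(e_{\sharp})}.
\]
Both are isomorphisms compatible with the pairings, so \(\bar j_e\) is an isomorphism.

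\textbf{Main obstacle.} The only genuinely non-formal point is Step~(i): the equality of ranks and the nondegeneracy. Both rest on the block-orthogonality argument, and I would handle that first.
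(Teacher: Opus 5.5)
Your proposal is correct and follows essentially the same route as the paper. Parts (ii) and (iii) use the same lattice-level verifications: the same inverses for $\alpha^{\sharp}$ and $\alpha_{\sharp}$, and $j_e=\alpha\circ\iota_e$ with $(j_e)^{\sharp}=e^{\sharp}$ and $(j_e)_{\sharp}$ the saturated inclusion. For (i), your block-orthogonality identity $[e^{\sharp}\lambda,(1-e)_{\sharp}\mu']_X=0$ is the same idempotence computation the paper writes as $[e^{\sharp}\lambda,\lambda']_X=[e^{\sharp}\lambda,e_{\sharp}\lambda']_X$; your explicit passage to $\R$ makes the equality of ranks slightly more transparent than the paper's lattice-level phrasing.
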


\begin{proof}
\begin{enumerate}[label=\textup{(\roman*)}, itemsep=2pt]
\item By Lemma~\ref{lem:idempotent-rosati-kappa-splitting}\textup{(i)}, one has direct sum decompositions \(\Lambda_X=\im(e^{\sharp})\oplus \im((1-e)^{\sharp})\) and \(\Lambda_X^{\prime}=\im(e_{\sharp})\oplus \im((1-e)_{\sharp})\). Let \(\lambda_0\in \im(e^{\sharp})\), and assume that \([\lambda_0,\lambda_0^{\prime}]_{Y_e}=0\) for all \(\lambda_0^{\prime}\in \im(e_{\sharp})\). Write \(\lambda_0=e^{\sharp}(\lambda)\) for some \(\lambda\in\Lambda_X\). For any \(\lambda^{\prime}\in\Lambda_X^{\prime}\), one has
\begin{equation*}
[e^{\sharp}(\lambda),e_{\sharp}(\lambda^{\prime})]_X
=
[\lambda_0,e_{\sharp}(\lambda^{\prime})]_{Y_e}
=
0.
\end{equation*}
Since \(e\) is a homomorphism of integral tori and \(e_{\sharp}^{2}=e_{\sharp}\), one has
\begin{equation*}
[e^{\sharp}(\lambda),\lambda^{\prime}]_X
=
[\lambda,e_{\sharp}(\lambda^{\prime})]_X
=
[e^{\sharp}(\lambda),e_{\sharp}(\lambda^{\prime})]_X
=
0.
\end{equation*}
As \([\,\cdot\,,\,\cdot\,]_X\colon \Lambda_X\times\Lambda_X^{\prime}\to\R\) is nondegenerate, it follows that \(e^{\sharp}(\lambda)=\lambda_0=0\).

The same argument shows that if \(\lambda_0^{\prime}\in \im(e_{\sharp})\) satisfies \([\lambda_1,\lambda_0^{\prime}]_{Y_e}=0\) for all \(\lambda_1\in\im(e^{\sharp})\), then \(\lambda_0^{\prime}=0\). Thus the restricted pairing \([\,\cdot\,,\,\cdot\,]_{Y_e}\) is nondegenerate, and
\(\rank_{\Z}\im(e^{\sharp})=\rank_{\Z}\im(e_{\sharp})\).
Hence \(Y_e\) is an integral torus.

\item Since \(1-e\) is idempotent, we may define \(Y_{1-e}\) in the same way. By Lemma~\ref{lem:idempotent-rosati-kappa-splitting}\textup{(i)}, one has \(\Lambda_X=\im(e^{\sharp})\oplus \im((1-e)^{\sharp})\) and \(\Lambda_X^{\prime}=\im(e_{\sharp})\oplus \im((1-e)_{\sharp})\). The product integral torus \(Y_e\times Y_{1-e}\) is
\[
Y_e\times Y_{1-e}=\bigl(\im(e^{\sharp})\oplus \im((1-e)^{\sharp}),\ \im(e_{\sharp})\oplus \im((1-e)_{\sharp}),\ [\,\cdot\,,\,\cdot\,]_{Y_e}\oplus [\,\cdot\,,\,\cdot\,]_{Y_{1-e}}\bigr),
\]
where
\(
([\,\cdot\,,\,\cdot\,]_{Y_e}\oplus [\,\cdot\,,\,\cdot\,]_{Y_{1-e}})\bigl((\lambda_1,\lambda_2),(\lambda_1^{\prime},\lambda_2^{\prime})\bigr)\coloneqq [\lambda_1,\lambda_1^{\prime}]_{Y_e}+[\lambda_2,\lambda_2^{\prime}]_{Y_{1-e}}.
\)
Then \(\alpha^{\sharp}\colon \Lambda_X\to \im(e^{\sharp})\oplus \im((1-e)^{\sharp})\) is an isomorphism with inverse \((\lambda_1,\lambda_2)\mapsto \lambda_1+\lambda_2\), and \(\alpha_{\sharp}\colon \im(e_{\sharp})\oplus \im((1-e)_{\sharp})\to \Lambda_X^{\prime}\) is an isomorphism with inverse \(\lambda^{\prime}\mapsto \bigl(e_{\sharp}(\lambda^{\prime}),(1-e)_{\sharp}(\lambda^{\prime})\bigr)\). Moreover, for \(\lambda\in\Lambda_X\), \(\lambda_1^{\prime}\in\im(e_{\sharp})\), and \(\lambda_2^{\prime}\in\im((1-e)_{\sharp})\), one has
\[
[\alpha^{\sharp}(\lambda),(\lambda_1^{\prime},\lambda_2^{\prime})]_{Y_e\times Y_{1-e}}
=[e^{\sharp}(\lambda),\lambda_1^{\prime}]_{Y_e}+[(1-e)^{\sharp}(\lambda),\lambda_2^{\prime}]_{Y_{1-e}}
=[\lambda,\alpha_{\sharp}(\lambda_1^{\prime},\lambda_2^{\prime})]_X.
\]
Therefore \(\alpha\) is an isomorphism of integral tori.
\item Let \(\iota_e\colon Y_e\to Y_e\times Y_{1-e}\) be the homomorphism of integral tori induced by the homomorphisms
\[
\begin{aligned}
&(\iota_e)^{\sharp}\colon \im(e^{\sharp})\oplus \im((1-e)^{\sharp})\longrightarrow \im(e^{\sharp}),
\qquad
(\lambda_1,\lambda_2)\longmapsto \lambda_1,\\
&(\iota_e)_{\sharp}\colon \im(e_{\sharp})\longrightarrow \im(e_{\sharp})\oplus \im((1-e)_{\sharp}),
\qquad
\lambda^{\prime}\longmapsto(\lambda^{\prime},0).
\end{aligned}
\]
Then \(\iota_e\) is injective, and we define \(j_e\coloneqq\alpha\circ\iota_e\colon Y_e\to X\). Since \(\alpha\) is an isomorphism, \(j_e\) is injective. Moreover, one has
\[
(j_e)^{\sharp}=e^{\sharp}\colon\Lambda_X\to\im(e^{\sharp}),
\qquad
(j_e)_{\sharp}\colon\im(e_{\sharp})\hookrightarrow\Lambda_X^{\prime}.
\]
Hence \(\ker((j_e)^{\sharp})=\ker(e^{\sharp})\) and \(\im((j_e)_{\sharp})=\im(e_{\sharp})\), so \(\im(j_e)=\im(e)\).

Using the image factorization of homomorphisms of integral tori in \cite[Sec.~4.1]{RZ25}, write \(j_e=\nu_e\circ\bar j_e\), where \(\bar j_e\colon Y_e\to\im(j_e)\) is the induced homomorphism and \(\nu_e\colon\im(j_e)\to X\) is an injective homomorphism. By Definition~\ref{def:image-integral-torus}, one has \[\im(j_e)=\bigl(\Lambda_X/\ker(e^{\sharp}),\im(e_{\sharp}),[\,\cdot\,,\,\cdot\,]_{\im(j_e)}\bigr).\] Since \(e^{\sharp}\colon\Lambda_X\to\im(e^{\sharp})\) induces an isomorphism \(\Lambda_X/\ker(e^{\sharp})\cong\im(e^{\sharp})\), and since the induced pairing agrees with \([\,\cdot\,,\,\cdot\,]_{Y_e}\) under this identification, the homomorphism \(\bar j_e\colon Y_e\to\im(j_e)\) is an isomorphism of integral tori. \qedhere
\end{enumerate}
\end{proof}

\begin{lemma}\label{lem:Q-block-diagonal}
Assume that \(e\) is idempotent and tropical Rosati self-adjoint. Then
\[Q_{\Theta}\bigl(\im(e_{\sharp}),\im((1-e)_{\sharp})\bigr)=0.
\]
\end{lemma}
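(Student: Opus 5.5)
The plan is to use the self-adjointness identity recorded in Remark~\ref{rem:rosati-on-lattices} together with idempotency. Since \(e\) is tropical Rosati self-adjoint, that remark gives
\[
Q_{\Theta}(e_{\sharp}u,v)=Q_{\Theta}(u,e_{\sharp}v)
\qquad\text{for all }u,v\in(\Lambda_X^{\prime})_{\R}.
\]
We take arbitrary elements \(a\in\im(e_{\sharp})\) and \(b\in\im((1-e)_{\sharp})\). By idempotency of \(e_{\sharp}\) we have \(a=e_{\sharp}a\); write \(b=(1-e)_{\sharp}\lambda'\) for some \(\lambda'\in\Lambda_X^{\prime}\).

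Then
\[
Q_{\Theta}(a,b)=Q_{\Theta}(e_{\sharp}a,b)=Q_{\Theta}(a,e_{\sharp}b).
\]
Moreover,
\[
e_{\sharp}b=e_{\sharp}\lambda'-e_{\sharp}^{2}\lambda'=0,
\]
using \((1-e)_{\sharp}=\id-e_{\sharp}\) and \(e_{\sharp}^{2}=e_{\sharp}\), which follows from \(e^{2}=e\). Hence \(Q_{\Theta}(a,b)=0\).

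The only point needing care is that composition in \(\End(X)\) corresponds to composition of the \(\sharp\)-lower maps, so that \(e^{2}=e\) gives \(e_{\sharp}^{2}=e_{\sharp}\) and \((1-e)_{\sharp}=\id-e_{\sharp}\). This holds because \(f\mapsto f_{\sharp}\) is covariant and additive. The same bilinear identity extends by \(\R\)-linearity to the real spans if one wants the statement there as well. No real obstacle is expected.
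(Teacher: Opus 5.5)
Your proposal is correct and follows essentially the same route as the paper. Both arguments use the self-adjointness identity \(Q_{\Theta}(e_{\sharp}u,v)=Q_{\Theta}(u,e_{\sharp}v)\) from Remark~\ref{rem:rosati-on-lattices} to move \(e_{\sharp}\) onto the second argument, and then kill it via \(e_{\sharp}(1-e)_{\sharp}=e_{\sharp}-e_{\sharp}^{2}=0\). The only cosmetic difference is that you write \(a=e_{\sharp}a\), whereas the paper writes \(u=e_{\sharp}(a)\).
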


\begin{proof}
Let \(u=e_{\sharp}(a)\in\im(e_{\sharp})\) and \(v=(1-e)_{\sharp}(b)\in\im((1-e)_{\sharp})\). By Remark~\ref{rem:rosati-on-lattices},
\(
Q_{\Theta}\bigl(e_{\sharp}(x),y\bigr)=Q_{\Theta}\bigl(x,e_{\sharp}(y)\bigr)
\)
for all \(x,y\in(\Lambda_X^{\prime})_{\R}\). Hence
\[
Q_{\Theta}(u,v)
=Q_{\Theta}\bigl(a,e_{\sharp}(1-e)_{\sharp}(b)\bigr).
\]
Since \(e\) is idempotent, \(
e_{\sharp}(1-e)_{\sharp}
=e_{\sharp}-(e_{\sharp})^{2}
=0\),
and therefore \(Q_{\Theta}(u,v)=0\).
\end{proof}

\subsection{\(Q_{\Theta}\)-orthogonal decompositions and tropical Rosati self-adjoint idempotents}\label{subsec:block-diagonal-idempotents}
In this subsection, we first introduce the notion of a \(Q_{\Theta}\)-orthogonal decomposition on \((X,\Theta)\), and then show that a tropical Rosati self-adjoint idempotent \(e\in \End(X)\) gives rise to such a decomposition.

\begin{defi}\label{def:Q-orthogonal-decomposition}
Let
\(
Y=(\Lambda_Y,\Lambda_Y^{\prime},[\,\cdot\,,\,\cdot\,]_Y)
\)
and
\(
Z=(\Lambda_Z,\Lambda_Z^{\prime},[\,\cdot\,,\,\cdot\,]_Z)
\)
be integral tori. We say that \(X\cong Y\times Z\) is a \(Q_{\Theta}\)-orthogonal decomposition if there exists an isomorphism of integral tori \(\alpha\colon Y\times Z\to X\) such that
\begin{equation}\label{eq:Q-alpha-orthogonal}
Q_{\Theta}\bigl(\alpha_{\sharp}(u^{\prime},0),\alpha_{\sharp}(0,v^{\prime})\bigr)=0
\end{equation}
for all \(u^{\prime}\in(\Lambda_Y^{\prime})_{\R}\) and \(v^{\prime}\in(\Lambda_Z^{\prime})_{\R}\).
\end{defi}

\begin{prop}\label{prop:kappa-block-diagonal-general} Follow the notation in Definition~\ref{def:Q-orthogonal-decomposition}. Let \(\iota_Y\colon Y\to Y\times Z\) and \(\iota_Z\colon Z\to Y\times Z\) be the canonical inclusions.
Let \(\Theta_Y\coloneqq(\alpha\circ\iota_Y)^*\Theta\) and \(\Theta_Z\coloneqq(\alpha\circ\iota_Z)^*\Theta\). Assume that \(X\cong Y\times Z\) is a \(Q_{\Theta}\)-orthogonal decomposition. Let
\(
\Theta_Y\boxplus \Theta_Z\coloneqq \pr_Y^{*}\Theta_Y+\pr_Z^{*}\Theta_Z
\)
on \(Y\times Z\), where \(\pr_Y\colon Y\times Z\to Y\) and \(\pr_Z\colon Y\times Z\to Z\) are the canonical projections. Then, in \(H^{1,1}(Y\times Z)\), one has
\begin{equation}\label{eq:kappa-block-diagonal-general-c1}
c_1\bigl(\MO_{Y\times Z}(\alpha^{*}\Theta)\bigr)
=
c_1\bigl(\MO_{Y\times Z}(\Theta_Y\boxplus \Theta_Z)\bigr).
\end{equation}
\end{prop}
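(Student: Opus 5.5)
The plan is to compare both sides through the lattice homomorphisms \(\kappa\colon\Lambda_{Y\times Z}^{\prime}\to\Lambda_{Y\times Z}\). Under the canonical isomorphism \(H^{1,1}(Y\times Z)\cong\Hom(\Lambda_{Y\times Z}^{\prime},\Lambda_{Y\times Z})\) of Subsection~\ref{subsec:tropical-cohomology}, the class \(c_1(\MO(D))\) corresponds to \(\kappa_D\). Hence it suffices to prove \(\kappa_{\alpha^*\Theta}=\kappa_{\Theta_Y\boxplus\Theta_Z}\), and by \(\R\)-bilinearity it is equivalent to prove \(Q_{\alpha^*\Theta}=Q_{\Theta_Y\boxplus\Theta_Z}\) on \((\Lambda_Y^{\prime}\oplus\Lambda_Z^{\prime})_{\R}\). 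Indeed, \(Q_D(u,v)=[\kappa_D(u),v]\), and the pairing is nondegenerate, so \(Q_D\) determines \(\kappa_D\).

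First compute the left side using \eqref{eq:kappa-pullback-divisor}: \(\kappa_{\alpha^*\Theta}=\alpha^\sharp\circ\kappa_\Theta\circ\alpha_\sharp\). Combined with the compatibility \eqref{eq:dual-hom-pairing-compat}, this gives
\[
Q_{\alpha^*\Theta}(u,v)=[\alpha^\sharp\kappa_\Theta\alpha_\sharp u,v]_{Y\times Z}=[\kappa_\Theta\alpha_\sharp u,\alpha_\sharp v]_X=Q_\Theta(\alpha_\sharp u,\alpha_\sharp v).
\]
Now write \(u=(u_1,u_2)\) and \(v=(v_1,v_2)\). Expand bilinearly, using \(\alpha_\sharp(u_1,u_2)=\alpha_\sharp(u_1,0)+\alpha_\sharp(0,u_2)\). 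The orthogonality \eqref{eq:Q-alpha-orthogonal} together with the symmetry of \(Q_\Theta\) kills the two cross terms, leaving
\[
Q_{\alpha^*\Theta}(u,v)=Q_\Theta(\alpha_\sharp(u_1,0),\alpha_\sharp(v_1,0))+Q_\Theta(\alpha_\sharp(0,u_2),\alpha_\sharp(0,v_2)).
\]

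Next treat the right side. The same pullback computation applied to \(\alpha\circ\iota_Y\) gives \(Q_{\Theta_Y}(u_1,v_1)=Q_\Theta((\alpha\circ\iota_Y)_\sharp u_1,(\alpha\circ\iota_Y)_\sharp v_1)\). Since \((\iota_Y)_\sharp u_1=(u_1,0)\), this equals the first summand above. Similarly, \(Q_{\Theta_Z}\) equals the second summand. Finally, \(\kappa\) is additive in the divisor, because \(c_1\) and \(\delta\) are homomorphisms. Applying \eqref{eq:kappa-pullback-divisor} to \(\pr_Y,\pr_Z\) as in the computation preceding \eqref{eq:Q-product-polarization} then gives \(Q_{\Theta_Y\boxplus\Theta_Z}((u_1,u_2),(v_1,v_2))=Q_{\Theta_Y}(u_1,v_1)+Q_{\Theta_Z}(u_2,v_2)\). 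Note that this step does not need the polarizations to be principal.

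The two bilinear forms therefore agree, so the \(\kappa\)'s agree and \eqref{eq:kappa-block-diagonal-general-c1} follows. The main point requiring care is the bookkeeping with the pairing: checking that \(Q_{f^*D}(u,v)=Q_D(f_\sharp u,f_\sharp v)\) follows correctly from \eqref{eq:kappa-pullback-divisor} and \eqref{eq:dual-hom-pairing-compat}. The rest is formal bilinear expansion.
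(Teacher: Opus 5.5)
Your proof is correct and is essentially the paper's argument. Both reduce \eqref{eq:kappa-block-diagonal-general-c1} to the equality of the lattice maps \(\kappa\), using the pullback formula \eqref{eq:kappa-pullback-divisor}, the adjointness \eqref{eq:dual-hom-pairing-compat}, the orthogonality \eqref{eq:Q-alpha-orthogonal}, additivity of \(\kappa\), and nondegeneracy of the pairing. The difference is only in packaging. You compare the forms \(Q\) globally and use nondegeneracy once at the end. The paper instead computes \(\kappa_{\alpha^*\Theta}\) on each summand, using nondegeneracy to kill the off-diagonal component and pullback along \(\iota_Y\) (resp. \(\iota_Z\)) to identify the diagonal one. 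Your version also makes explicit that the symmetry of \(Q_\Theta\) is needed for the second cross term, which the paper leaves implicit in ``applying the same argument to \(Z\).''
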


\begin{proof} Let
\(
Y\times Z=\bigl(\Lambda_{Y\times Z},\Lambda_{Y\times Z}^{\prime},[\,\cdot\,,\,\cdot\,]_{Y\times Z}\bigr),
\)
where
\(
\Lambda_{Y\times Z}\coloneqq \Lambda_Y\oplus \Lambda_Z
\),
\(
\Lambda_{Y\times Z}^{\prime}\coloneqq \Lambda_Y^{\prime}\oplus \Lambda_Z^{\prime}
\),
and, for
\((\lambda_Y,\lambda_Z)\in \Lambda_{Y\times Z}\) and
\((\lambda_Y^{\prime},\lambda_Z^{\prime})\in \Lambda_{Y\times Z}^{\prime}\), define
\[
[\,\cdot\,,\,\cdot\,]_{Y\times Z}\bigl((\lambda_Y,\lambda_Z),(\lambda_Y^{\prime},\lambda_Z^{\prime})\bigr)
\coloneqq
[\lambda_Y,\lambda_Y^{\prime}]_Y+[\lambda_Z,\lambda_Z^{\prime}]_Z.
\]
The equality \eqref{eq:kappa-block-diagonal-general-c1} is equivalent to
\(\kappa_{\alpha^{*}\Theta}(u',v')=\kappa_{\Theta_Y\boxplus \Theta_Z}(u',v')\)
for all \((u',v')\in \Lambda_Y^{\prime}\oplus \Lambda_Z^{\prime}\).

\smallskip
\noindent\textbf{Step \textup{(i)}.}
Let \((u',v')\in \Lambda_Y^{\prime}\oplus \Lambda_Z^{\prime}\). Since \(\Theta_Y\boxplus \Theta_Z=\pr_Y^*\Theta_Y+\pr_Z^*\Theta_Z\), one has
\[
\kappa_{\Theta_Y\boxplus \Theta_Z}=\kappa_{\pr_Y^*\Theta_Y}+\kappa_{\pr_Z^*\Theta_Z}.
\]
Then \eqref{eq:kappa-pullback-divisor} gives
\(
\kappa_{\pr_Y^*\Theta_Y}=(\pr_Y)^{\sharp}\circ \kappa_{\Theta_Y}\circ (\pr_Y)_{\sharp}.
\)
Hence
\(
\kappa_{\pr_Y^*\Theta_Y}(u',v')=\bigl(\kappa_{\Theta_Y}(u'),0\bigr).
\)
Similarly,
\(
\kappa_{\pr_Z^*\Theta_Z}(u',v')=\bigl(0,\kappa_{\Theta_Z}(v')\bigr).
\)
Therefore
\(
\kappa_{\Theta_Y\boxplus \Theta_Z}(u',v')=\bigl(\kappa_{\Theta_Y}(u'),0\bigr)+\bigl(0,\kappa_{\Theta_Z}(v')\bigr).
\)

\smallskip
\noindent\textbf{Step \textup{(ii)}.}
Fix \(u'\in \Lambda_Y^{\prime}\), and write \(\kappa_{\alpha^{*}\Theta}(u',0)=(x,y)\in (\Lambda_Y)_{\R}\oplus (\Lambda_Z)_{\R}\). For any \(v'\in (\Lambda_Z^{\prime})_{\R}\), the definition of the product pairing and
\eqref{eq:kappa-pullback-divisor} give
\[
[y,v']_Z
=
\bigl[(x,y),(0,v')\bigr]_{Y\times Z}
=
\bigl[\kappa_{\alpha^{*}\Theta}(u',0),(0,v')\bigr]_{Y\times Z} 
=
\bigl[\alpha^{\sharp}
\kappa_{\Theta}\bigl(\alpha_{\sharp}(u',0)\bigr),(0,v')\bigr]_{Y\times Z}.
\]
By the adjointness of \(\alpha^{\sharp}\) and \(\alpha_{\sharp}\), and by
\eqref{eq:Q-alpha-orthogonal}, this is equal to
\[
\bigl[\kappa_{\Theta}\bigl(\alpha_{\sharp}(u',0)\bigr),
\alpha_{\sharp}(0,v')\bigr]_{X}
=
Q_{\Theta}\bigl(\alpha_{\sharp}(u',0),\alpha_{\sharp}(0,v')\bigr) 
=0.
\]
Because the pairing \([\,\cdot\,,\,\cdot\,]_Z\) is nondegenerate, we obtain \(y=0\). Thus
\(\kappa_{\alpha^{*}\Theta}(u',0)=(x,0)\).

\smallskip
\noindent\textbf{Step \textup{(iii)}.}
Since \((\alpha\circ \iota_Y)^*\Theta=\Theta_Y\), \eqref{eq:kappa-pullback-divisor} gives
\(
\kappa_{\Theta_Y}=(\iota_Y)^{\sharp}\circ \kappa_{\alpha^*\Theta}\circ (\iota_Y)_{\sharp}.
\) Let \(u'\in \Lambda_Y^{\prime}\). 
By Step~\textup{(ii)}, there exists \(x\in \Lambda_Y\) such that \(\kappa_{\alpha^{*}\Theta}(u',0)=(x,0)\). Using \((\iota_Y)_{\sharp}(u')=(u',0)\), we obtain \(x=\kappa_{\Theta_Y}(u')\). Hence
\(
\kappa_{\alpha^{*}\Theta}(u',0)=\bigl(\kappa_{\Theta_Y}(u'),0\bigr).
\)
Applying the same argument to \(Z\), for every \(v'\in \Lambda_Z^{\prime}\) one has
\(
\kappa_{\alpha^{*}\Theta}(0,v')=\bigl(0,\kappa_{\Theta_Z}(v')\bigr).
\)

\smallskip
\noindent\textbf{Step \textup{(iv)}.}
Since \(\kappa_{\alpha^{*}\Theta}\) is a homomorphism, Steps \textup{(i)} and \textup{(iii)} imply
\[
\kappa_{\alpha^{*}\Theta}(u',v')
=
\bigl(\kappa_{\Theta_Y}(u'),0\bigr)+\bigl(0,\kappa_{\Theta_Z}(v')\bigr),
\qquad
u'\in\Lambda_Y^{\prime},\ v'\in\Lambda_Z^{\prime}.
\]
By Step \textup{(i)}, this equals \(\kappa_{\Theta_Y\boxplus \Theta_Z}(u',v')\). Therefore \(\kappa_{\alpha^{*}\Theta}=\kappa_{\Theta_Y\boxplus \Theta_Z}\), and hence \eqref{eq:kappa-block-diagonal-general-c1} holds.
\end{proof}

\begin{corollary}\label{cor:kappa-block-diagonal-idempotent}
Let \(e=(e^{\sharp},e_{\sharp})\in \End(X)\) be an idempotent that is tropical Rosati self-adjoint. Let
\(
\alpha\colon Y_e\times Y_{1-e}\to X
\)
be the isomorphism of integral tori from Lemma~\ref{lem:idempotent-splitting-constructions}\textup{(ii)}. Let \(\iota_e\colon Y_e\to Y_e\times Y_{1-e}\) and \(\iota_{1-e}\colon Y_{1-e}\to Y_e\times Y_{1-e}\) be the canonical inclusions. Set \(\Theta_{Y_e}\coloneqq \iota_e^{*}\alpha^{*}\Theta\) and \(\Theta_{Y_{1-e}}\coloneqq \iota_{1-e}^{*}\alpha^{*}\Theta\). Then \(\alpha\) makes \(X\cong Y_e\times Y_{1-e}\) a \(Q_{\Theta}\)-orthogonal decomposition. Moreover, in \(H^{1,1}(Y_e\times Y_{1-e})\) one has
\begin{equation*}
c_1\bigl(\MO_{Y_e\times Y_{1-e}}(\alpha^{*}\Theta)\bigr)
=
c_1\bigl(\MO_{Y_e\times Y_{1-e}}(\Theta_{Y_e}\boxplus \Theta_{Y_{1-e}})\bigr).
\end{equation*}
\end{corollary}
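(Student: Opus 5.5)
The plan is to check the single condition \eqref{eq:Q-alpha-orthogonal} of Definition~\ref{def:Q-orthogonal-decomposition} for the isomorphism \(\alpha\colon Y_e\times Y_{1-e}\to X\) of Lemma~\ref{lem:idempotent-splitting-constructions}\textup{(ii)}. The second assertion will then be a direct application of Proposition~\ref{prop:kappa-block-diagonal-general}.

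First, I will unwind \(\alpha_{\sharp}\) on the two factors. By construction, \(\alpha_{\sharp}(\lambda_1',\lambda_2')=\lambda_1'+\lambda_2'\). Hence for \(u'\in\im(e_{\sharp})\) we have \(\alpha_{\sharp}(u',0)=u'\in\im(e_{\sharp})\), and for \(v'\in\im((1-e)_{\sharp})\) we have \(\alpha_{\sharp}(0,v')=v'\in\im((1-e)_{\sharp})\).

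Condition \eqref{eq:Q-alpha-orthogonal} must hold for real vectors \(u'\in(\im(e_{\sharp}))_{\R}\) and \(v'\in(\im((1-e)_{\sharp}))_{\R}\). Lemma~\ref{lem:Q-block-diagonal} gives the vanishing of \(Q_{\Theta}\) on \(\im(e_{\sharp})\times\im((1-e)_{\sharp})\) at the lattice level. Since \(Q_{\Theta}\) is \(\R\)-bilinear, this vanishing extends to the real spans. Alternatively, the proof of Lemma~\ref{lem:Q-block-diagonal} works verbatim over \(\R\): write \(u=e_{\sharp}a\) and \(v=(1-e)_{\sharp}b\) with \(a,b\in(\Lambda_X')_{\R}\), and use the self-adjointness identity of Remark~\ref{rem:rosati-on-lattices}, which is stated for real vectors. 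Either way,
\[
Q_{\Theta}\bigl(\alpha_{\sharp}(u',0),\alpha_{\sharp}(0,v')\bigr)=0
\]
for all such \(u',v'\). Therefore \(\alpha\) makes \(X\cong Y_e\times Y_{1-e}\) a \(Q_{\Theta}\)-orthogonal decomposition.

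For the equality of Chern classes, I will apply Proposition~\ref{prop:kappa-block-diagonal-general} with \(Y=Y_e\), \(Z=Y_{1-e}\) and the same \(\alpha\). Its divisors are \(\Theta_Y=(\alpha\circ\iota_Y)^*\Theta\) and \(\Theta_Z=(\alpha\circ\iota_Z)^*\Theta\). These coincide with \(\Theta_{Y_e}=\iota_e^*\alpha^*\Theta\) and \(\Theta_{Y_{1-e}}=\iota_{1-e}^*\alpha^*\Theta\), because pullback of Cartier divisors is functorial: \((\alpha\circ\iota_e)^*=\iota_e^*\circ\alpha^*\). The proposition then gives exactly the displayed identity in \(H^{1,1}(Y_e\times Y_{1-e})\).

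The only point needing a moment's care is this compatibility of the pullbacks, together with the passage from the lattice statement of Lemma~\ref{lem:Q-block-diagonal} to real vectors. Both are routine, so I expect no genuine obstacle.
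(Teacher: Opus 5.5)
Your proposal is correct and follows essentially the same route as the paper: you read off \(\alpha_{\sharp}(u',0)=u'\) and \(\alpha_{\sharp}(0,v')=v'\), invoke Lemma~\ref{lem:Q-block-diagonal} (extended to the real spans by bilinearity) to verify \eqref{eq:Q-alpha-orthogonal}, and then apply Proposition~\ref{prop:kappa-block-diagonal-general}. Your remark that \((\alpha\circ\iota_e)^*=\iota_e^*\circ\alpha^*\), which identifies the divisors in that proposition with \(\Theta_{Y_e}\) and \(\Theta_{Y_{1-e}}\), spells out a step the paper leaves implicit.
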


\begin{proof}
By Lemma~\ref{lem:idempotent-splitting-constructions}\textup{(ii)}, the homomorphism
\(
\alpha_{\sharp}\colon \im(e_{\sharp})\oplus\im((1-e)_{\sharp})\to\Lambda_X^{\prime}
\)
is given by
\(
\alpha_{\sharp}(\lambda_1^{\prime},\lambda_2^{\prime})=\lambda_1^{\prime}+\lambda_2^{\prime}.
\)
Hence, for all \(u^{\prime}\in(\im(e_{\sharp}))_{\R}\) and \(v^{\prime}\in(\im((1-e)_{\sharp}))_{\R}\), Lemma~\ref{lem:Q-block-diagonal} gives
\begin{equation*}
Q_{\Theta}\bigl(\alpha_{\sharp}(u^{\prime},0),\alpha_{\sharp}(0,v^{\prime})\bigr)
=
Q_{\Theta}(u^{\prime},v^{\prime})
=
0.
\end{equation*}
Thus \(\alpha\) makes \(X\cong Y_e\times Y_{1-e}\) a \(Q_{\Theta}\)-orthogonal decomposition in the sense of Definition~\ref{def:Q-orthogonal-decomposition}. Proposition~\ref{prop:kappa-block-diagonal-general} therefore yields the asserted equality of first Chern classes.
\end{proof}

\subsection{Decompositions of principally polarized tropical abelian varieties}\label{subsec:idempotents-splittings}
In this subsection, we prove Proposition~\ref{prop:split-off-subppav}, which serves as a key step toward proving the tropical Matsusaka criterion. Let \(Y=(\Lambda_Y,\Lambda_Y^{\prime},[\,\cdot\,,\,\cdot\,]_Y)\) be an integral torus and let
\(i=(i^{\sharp},i_{\sharp})\colon Y\to X\) be an injective homomorphism of integral tori.

\begin{lemma}\label{lem:injective-iso-to-image-iff-saturated}
There is an induced homomorphism \(\pi\colon Y\to \im(i)\). Moreover, \(\pi\) is an isomorphism of integral tori if and only if \(i^{\sharp}\colon \Lambda_X\to\Lambda_Y\) is surjective.
\end{lemma}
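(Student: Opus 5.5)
The plan is to write down \(\pi\) explicitly using the image factorization from \cite[Sec.~4.1]{RZ25}, and then read off when each of its two lattice maps is bijective. By Definition~\ref{def:image-integral-torus},
\[
\im(i)=\bigl(\Lambda_X/\ker(i^{\sharp}),\,(\im(i_{\sharp}))_{\sat},\,[\,\cdot,\cdot\,]_{\im(i)}\bigr).
\]
We define \(\pi=(\pi^{\sharp},\pi_{\sharp})\) as follows. The map \(\pi^{\sharp}\colon\Lambda_X/\ker(i^{\sharp})\to\Lambda_Y\) is the map \(\overline{\lambda}\mapsto i^{\sharp}(\lambda)\) induced by \(i^{\sharp}\), which is well defined and injective. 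The map \(\pi_{\sharp}\colon\Lambda_Y'\to(\im(i_{\sharp}))_{\sat}\) is \(i_{\sharp}\) with its target restricted. Compatibility \eqref{eq:dual-hom-pairing-compat} is immediate:
\[
[\pi^{\sharp}\overline{\lambda},\lambda']_Y=[i^{\sharp}\lambda,\lambda']_Y=[\lambda,i_{\sharp}\lambda']_X=[\overline{\lambda},\pi_{\sharp}\lambda']_{\im(i)}.
\]
Hence \(\pi\) is a homomorphism of integral tori. Moreover \(\nu\circ\pi=i\), where \(\nu\colon\im(i)\to X\) is given by the quotient map \(\Lambda_X\to\Lambda_X/\ker(i^\sharp)\) and the inclusion of \((\im(i_\sharp))_{\sat}\).

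Next we establish some rank facts. Because \(i\) is injective, \(i\) is finite, so \(i_{\sharp}\) is injective; this is part of the equivalent characterizations in \cite[Def.~4.8]{RZ25}. It follows that \(\rank\Lambda_Y=\rank\Lambda_Y'=\rank\im(i_\sharp)\). By nondegeneracy of the pairing on \(\im(i)\), \(\rank\Lambda_X/\ker(i^\sharp)\) equals \(\rank(\im(i_\sharp))_{\sat}\), which equals \(\rank\Lambda_Y\). Therefore \(\pi^{\sharp}\) is an injection between lattices of equal rank, and \(\pi_{\sharp}\) is an injection of finite index.

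We now use injectivity of \(i\) in its concrete form. We claim that \(i\) injective means \(i_{\sharp}(\Lambda_Y')\) is saturated in \(\Lambda_X'\). Indeed, if \(n\mu\in i_\sharp(\Lambda_Y')\) with \(\mu\notin i_\sharp(\Lambda_Y')\), then \(\mu\) lies in \(i_\sharp((\Lambda'_Y)_\R)\) because \(i_\sharp\) is an isomorphism onto its real span. Hence \(\mu=i_\sharp(y)\) for some real \(y\) whose class in \(Y=(\Lambda_Y)^*_\R/\Lambda_Y'\) is nonzero but maps to \(0_X\), contradicting \(i^{-1}(0_X)=\{0_Y\}\). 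So \(\pi_\sharp\) is always an isomorphism onto \((\im(i_\sharp))_{\sat}\).

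Consequently \(\pi\) is an isomorphism if and only if \(\pi^{\sharp}\) is bijective. An isomorphism of integral tori is exactly a pair of lattice isomorphisms satisfying the compatibility, since the inverse pair automatically satisfies it. Since \(\pi^\sharp\) is injective, it is bijective if and only if it is surjective, and its image equals \(\im(i^{\sharp})\). Thus \(\pi\) is an isomorphism if and only if \(i^{\sharp}\) is surjective.

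The main obstacle is the identification of \(\ker(i\colon Y\to X)\) with the failure of saturation. This requires care in passing between the real torus description and the lattice data, in particular the fact that injectivity of \(i\) forces \(i_\sharp\) to be injective with saturated image. I would first try to cite this directly from \cite[Sec.~4]{RZ25}, and otherwise argue as sketched via the real-linear extension of \(i_\sharp\).
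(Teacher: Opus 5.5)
Your proposal is correct and follows the paper's argument. You factor \(i\) through \(\im(i)\), show that \(\pi_\sharp\) is an isomorphism because \(i_\sharp\) is injective with saturated image, and reduce to bijectivity of the injective map \(\pi^\sharp\), whose image is \(\im(i^\sharp)\). The only differences are that you derive saturation directly from \(i^{-1}(0_X)=\{0_Y\}\) instead of citing \cite[Def.~4.8(3)]{RZ25}, and you make explicit the injectivity of \(i_\sharp\), which the paper uses tacitly; your rank paragraph is harmless but not needed.
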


\begin{proof}
By \cite[Sec.~4.1]{RZ25}, there exist a homomorphism \(\pi\colon Y\to\im(i)\) and an injective homomorphism \(\iota\colon\im(i)\to X\) such that \(i=\iota\circ\pi\). By Definition~\ref{def:image-integral-torus}, one has
\begin{equation*}
\im(i)=\bigl(\Lambda_X/\ker(i^{\sharp}),(\im\,i_{\sharp})_{\sat},[\,\cdot\,,\,\cdot\,]_{\im(i)}\bigr).
\end{equation*}
On lattices, \(\pi^{\sharp}\colon\Lambda_X/\ker(i^{\sharp})\to\Lambda_Y\) is induced by \(i^{\sharp}\colon\Lambda_X\to\Lambda_Y\), and \(\pi_{\sharp}\colon\Lambda_Y^{\prime}\to(\im\,i_{\sharp})_{\sat}\) is induced by \(i_{\sharp}\colon\Lambda_Y^{\prime}\to\Lambda_X^{\prime}\).
Since \(i\) is injective, \cite[Def.~4.8(3)]{RZ25} shows that \(\im(i_{\sharp})\subseteq\Lambda_X^{\prime}\) is saturated. Hence \(\pi_{\sharp}\colon\Lambda_Y^{\prime}\to(\im\,i_{\sharp})_{\sat}\) is an isomorphism. Therefore \(\pi\) is an isomorphism of integral tori if and only if \(\pi^{\sharp}\colon\Lambda_X/\ker(i^{\sharp})\to\Lambda_Y\) is an isomorphism. This is equivalent to the surjectivity of \(i^{\sharp}\colon\Lambda_X\to\Lambda_Y\).
\end{proof}

\begin{prop}\label{prop:split-off-subppav}
Set \(\Theta_Y\coloneqq i^{*}\Theta\) and assume that \(\Theta_Y\) is a principal polarization. Then there exists a principally polarized tropical abelian variety \((Z,\Theta_Z)\) such that
\[
(Y,\Theta_Y)\times(Z,\Theta_Z)\ \cong\ (X,\Theta)
\]
as principally polarized tropical abelian varieties.
\end{prop}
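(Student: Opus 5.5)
The plan is to mimic the classical construction of the norm-endomorphism: build a tropical Rosati self-adjoint idempotent \(e\in\End(X)\) whose image is \(i(Y)\), and then apply Corollary~\ref{cor:kappa-block-diagonal-idempotent}. Concretely, set
\[
e\coloneqq i\circ i^{\dagger}\in\End(X),\qquad i^{\dagger}=\varphi_{\Theta_Y}^{-1}\circ i^{\vee}\circ\varphi_{\Theta}\in\Hom(X,Y),
\]
which is defined because both \(\Theta\) and \(\Theta_Y\) are principal. By Definition~\ref{def:rosati-adjoint}, \(e^{\dagger}=(i^{\dagger})^{\dagger}\circ i^{\dagger}=e\), so \(e\) is tropical Rosati self-adjoint.

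The first key step is to show \(i^{\dagger}\circ i=\id_Y\). By Lemma~\ref{lem:phi-pullback-dual}, \(\varphi_{\Theta_Y}=\varphi_{i^*\Theta}=i^{\vee}\circ\varphi_{\Theta}\circ i\), hence \(i^{\dagger}\circ i=\varphi_{\Theta_Y}^{-1}\circ\varphi_{\Theta_Y}=\id_Y\). On lattices this is just \eqref{eq:kappa-pullback-divisor}: \(\kappa_{\Theta_Y}=i^{\sharp}\kappa_{\Theta}i_{\sharp}\), and Remark~\ref{rem:rosati-on-lattices}-type formulas give \((i^{\dagger})_{\sharp}=\kappa_{\Theta_Y}^{-1}i^{\sharp}\kappa_{\Theta}\) and \((i^{\dagger})^{\sharp}=\kappa_{\Theta}i_{\sharp}\kappa_{\Theta_Y}^{-1}\); one may verify \(e^2=e\) directly from these as a sanity check. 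It follows that \(e^2=i(i^{\dagger}i)i^{\dagger}=e\), so \(e\) is an idempotent.

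Next I will identify \(Y_e\) with \(Y\). We have \(e_{\sharp}=i_{\sharp}\circ(i^{\dagger})_{\sharp}\) with \((i^{\dagger})_{\sharp}\) surjective (since \((i^{\dagger})_{\sharp}i_{\sharp}=\id\)), so \(\im(e_{\sharp})=i_{\sharp}(\Lambda_Y^{\prime})\), with \(i_{\sharp}\) injective. Dually, \(e^{\sharp}=(i^{\dagger})^{\sharp}\circ i^{\sharp}\) and \(i^{\sharp}(i^{\dagger})^{\sharp}=\id\). Hence \(i^{\sharp}\) is surjective and \((i^{\dagger})^{\sharp}\) is injective, so \(\im(e^{\sharp})=(i^{\dagger})^{\sharp}(\Lambda_Y)\). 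The pair \(\bigl((i^{\dagger})^{\sharp},i_{\sharp}\bigr)\) gives lattice isomorphisms \(\Lambda_Y\cong\im(e^{\sharp})\) and \(\Lambda_Y^{\prime}\cong\im(e_{\sharp})\). These respect the pairings, since \([(i^{\dagger})^{\sharp}\lambda,i_{\sharp}\mu]_X=[\lambda,(i^{\dagger})_{\sharp}i_{\sharp}\mu]_Y=[\lambda,\mu]_Y\). So they define an isomorphism \(\beta\colon Y\to Y_e\) of integral tori with \(j_e\circ\beta=i\); the check is \((j_e\beta)^{\sharp}=\beta^{\sharp}e^{\sharp}\), and on \(i^{\sharp}\)-level \(\beta^{\sharp}\) is inverse to \((i^\dagger)^\sharp\), giving \(i^{\sharp}\). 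Alternatively, Lemma~\ref{lem:injective-iso-to-image-iff-saturated} together with surjectivity of \(i^{\sharp}\) yields \(Y\cong\im(i)=\im(e)\cong Y_e\) via Lemma~\ref{lem:idempotent-splitting-constructions}(iii).

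Finally, I apply Corollary~\ref{cor:kappa-block-diagonal-idempotent}. Take \(Z\coloneqq Y_{1-e}\) and \(\Theta_Z\coloneqq\iota_{1-e}^{*}\alpha^{*}\Theta\). Then \(c_1(\alpha^*\Theta)=c_1(\Theta_{Y_e}\boxplus\Theta_Z)\), and \(\Theta_{Y_e}\) pulls back under \(\beta\) to \(i^*\Theta=\Theta_Y\) because \(\alpha\iota_e\beta=j_e\beta=i\). By the block form in Step (iv) of Proposition~\ref{prop:kappa-block-diagonal-general}, \(\kappa_{\alpha^*\Theta}=\kappa_{\Theta_{Y_e}}\oplus\kappa_{\Theta_Z}\). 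Since \(\kappa_{\alpha^*\Theta}=\alpha^{\sharp}\kappa_{\Theta}\alpha_{\sharp}\) is an isomorphism, \(\kappa_{\Theta_Z}\) is an isomorphism. Moreover, \(Q_{\Theta_Z}\) is the restriction of the positive definite form \(Q_{\alpha^*\Theta}\), so \(\Theta_Z\) is a principal polarization. Composing \(\alpha\circ(\beta\times\id_Z)\) gives the isomorphism, and \eqref{eq:chern-class-homological-equivalence} translates equality of \(c_1\) into the homological equivalence required by Definition~\ref{def:isomorphism-ppav}. I expect the main obstacle to be bookkeeping rather than substance: correctly tracking signs in \(\varphi_\Theta=-\kappa_\Theta\) (which cancel in \(i^\dagger\)), and verifying that the identification \(\beta\) is compatible with pullbacks so that \(\beta^*\Theta_{Y_e}\) equals \(\Theta_Y\) at the level of \(c_1\).
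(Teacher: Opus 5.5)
Your proposal is correct and follows essentially the same route as the paper. Both arguments build the Rosati self-adjoint idempotent \(e=i\circ i^{\dagger}\) from \(\varphi_{\Theta_Y}=i^{\vee}\circ\varphi_{\Theta}\circ i\), split \(X\cong Y_e\times Y_{1-e}\) via Corollary~\ref{cor:kappa-block-diagonal-idempotent}, and read off that \(\Theta_Z\) is principal from the block form of \(\kappa_{\alpha^{*}\Theta}\). The one minor difference is how you obtain the isomorphism \(Y\cong Y_e\) with \(j_e\circ\beta=i\): you write it down directly on lattices via \(((i^{\dagger})^{\sharp},i_{\sharp})\), whereas the paper passes through the image factorizations \(\im(i)=\im(e)=\im(j_e)\) in its Steps (v)--(vi); your direct check is slightly shorter and equally valid.
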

\begin{proof}
Let \(\varphi_{\Theta}\colon X\to X^\vee\) and \(\varphi_{\Theta_Y}\colon Y\to Y^\vee\) be the homomorphisms induced by \(\Theta\) and \(\Theta_Y\), respectively, and set \(i^{\dagger}\coloneqq\varphi_{\Theta_Y}^{-1}\circ i^{\vee}\circ \varphi_{\Theta}\colon X\to Y\).

\smallskip
\noindent\textbf{Step \textup{(i)}.}
By Proposition~\ref{prop:phi-Theta-dual-kappa}, \(\varphi_{\Theta}^{\vee}=\varphi_{\Theta}\). Since \(\Theta_Y=i^{*}\Theta\), one has
\(
\varphi_{\Theta_Y}=i^{\vee}\circ\varphi_{\Theta}\circ i
\) by Lemma~\ref{lem:phi-pullback-dual}.
By the definition of the tropical Rosati adjoint of \(i\), one has
\(
i^{\dagger}=\varphi_{\Theta_Y}^{-1}\circ i^{\vee}\circ\varphi_{\Theta}.
\)
Therefore
\[
i^{\dagger}\circ i
=
\varphi_{\Theta_Y}^{-1}\circ i^{\vee}\circ\varphi_{\Theta}\circ i
=
\varphi_{\Theta_Y}^{-1}\circ\varphi_{\Theta_Y}
=
\id_Y.
\] Set \(e\coloneqq i\circ i^{\dagger}\in\End(X)\). Then \(e^{2}=e\). Since \((i^{\dagger})^{\dagger}=i\), one has
\begin{equation*}
e^{\dagger}=(i\circ i^{\dagger})^{\dagger}=(i^{\dagger})^{\dagger}\circ i^{\dagger}=i\circ i^{\dagger}=e.
\end{equation*}
Hence \(e\) is tropical Rosati self-adjoint.

\smallskip
\noindent\textbf{Step \textup{(ii)}.}
Apply Lemma~\ref{lem:idempotent-splitting-constructions}\textup{(ii), (iii)} to the idempotent \(e\). Set \(Z\coloneqq Y_{1-e}\), and keep the induced injective homomorphisms \(j_e\colon Y_e\to X\) and \(j_{1-e}\colon Z\to X\), as well as the isomorphism of integral tori \(\alpha\colon Y_e\times Z\to X\). Define \(\Theta_{Y_e}\coloneqq j_e^{*}\Theta\), \(\Theta_Z\coloneqq j_{1-e}^{*}\Theta\), and \(\Theta_{Y_e}\boxplus\Theta_Z\coloneqq \pr_{Y_e}^{*}\Theta_{Y_e}+\pr_Z^{*}\Theta_Z\) on \(Y_e\times Z\), where \(\pr_{Y_e}\) and \(\pr_Z\) denote the canonical projections.

\smallskip
\noindent\textbf{Step \textup{(iii)}.}
Since \(\alpha\) is an isomorphism of integral tori and \(\Theta\) is a principal polarization, \(\alpha^{*}\Theta\) is a principal polarization on \(Y_e\times Z\). By Corollary~\ref{cor:kappa-block-diagonal-idempotent}, one has \[c_1\bigl(\MO_{Y_e\times Z}(\alpha^{*}\Theta)\bigr)=c_1\bigl(\MO_{Y_e\times Z}(\Theta_{Y_e}\boxplus\Theta_Z)\bigr).
\]
Hence \(\Theta_{Y_e}\boxplus\Theta_Z\) is also a principal polarization, and
\begin{equation*}
(X,\Theta)\cong(Y_e\times Z,\Theta_{Y_e}\boxplus\Theta_Z)
\end{equation*}
as principally polarized tropical abelian varieties.

\smallskip
\noindent\textbf{Step \textup{(iv)}.}
Since \(\Theta\) is a principal polarization and \(\alpha\) is an isomorphism,
\(\alpha^{*}\Theta\) is a principal polarization, hence
\(\kappa_{\alpha^{*}\Theta}\) is a lattice isomorphism. By
Proposition~\ref{prop:kappa-block-diagonal-general}, one has
\[
c_1\bigl(\MO_{Y_e\times Z}(\alpha^{*}\Theta)\bigr)
=
c_1\bigl(\MO_{Y_e\times Z}(\Theta_{Y_e}\boxplus \Theta_Z)\bigr).
\]
Therefore
\(
\kappa_{\alpha^{*}\Theta}
=
\kappa_{\Theta_{Y_e}\boxplus \Theta_Z}.
\)
By Step~\textup{(i)} in the proof of Proposition~\ref{prop:kappa-block-diagonal-general},
\[
\kappa_{\Theta_{Y_e}\boxplus \Theta_Z}(u',v')
=
\bigl(\kappa_{\Theta_{Y_e}}(u'),\kappa_{\Theta_Z}(v')\bigr),
\qquad
u'\in\Lambda_{Y_e}^{\prime},\ v'\in\Lambda_Z^{\prime}.
\]
Hence \(\kappa_{\Theta_Z}\) is a lattice isomorphism. Moreover, since
\(Q_{\Theta_{Y_e}\boxplus\Theta_Z}=Q_{\alpha^{*}\Theta}\) is symmetric positive
definite and
\(
Q_{\Theta_Z}(v',v')
=
Q_{\Theta_{Y_e}\boxplus\Theta_Z}\bigl((0,v'),(0,v')\bigr)
\)
for every \(0\neq v'\in(\Lambda_Z^{\prime})_{\R}\), 
\(Q_{\Theta_Z}\) is symmetric positive definite. Therefore \(\Theta_Z\)
is a principal polarization, and \((Z,\Theta_Z)\) is a principally polarized tropical abelian variety.

\smallskip
\noindent\textbf{Step \textup{(v)}.}
We claim that \(\im(i)=\im(e)\). Since \(e=i\circ i^{\dagger}\), one has \(e^{\sharp}=(i^{\dagger})^{\sharp}\circ i^{\sharp}\) and \(e_{\sharp}=i_{\sharp}\circ(i^{\dagger})_{\sharp}\). Moreover, \(i^{\dagger}\circ i=\id_Y\) implies \(i^{\sharp}\circ(i^{\dagger})^{\sharp}=\id_{\Lambda_Y}\), hence \((i^{\dagger})^{\sharp}\) is injective, and therefore \(\ker(e^{\sharp})=\ker(i^{\sharp})\). The equality \(e_{\sharp}=i_{\sharp}\circ(i^{\dagger})_{\sharp}\) gives \(\im(e_{\sharp})\subseteq\im(i_{\sharp})\). On the other hand, \(e\circ i=i\), hence \(e_{\sharp}\circ i_{\sharp}=i_{\sharp}\), and so \(\im(i_{\sharp})\subseteq\im(e_{\sharp})\). Thus \(\im(e_{\sharp})=\im(i_{\sharp})\).

By Definition~\ref{def:image-integral-torus},
\[
\im(i)=\bigl(\Lambda_X/\ker(i^{\sharp}),(\im\,i_{\sharp})_{\sat},[\,\cdot,\cdot\,]_{\im(i)}\bigr),
\qquad
\im(e)=\bigl(\Lambda_X/\ker(e^{\sharp}),(\im\,e_{\sharp})_{\sat},[\,\cdot,\cdot\,]_{\im(e)}\bigr).
\]
Since \(\ker(e^{\sharp})=\ker(i^{\sharp})\) and \(\im(e_{\sharp})=\im(i_{\sharp})\), these triples have the same lattice components. Moreover, for \(\lambda\in\Lambda_X\) and \(\lambda'\in(\im\,i_{\sharp})_{\sat}\), Definition~\ref{def:image-integral-torus} gives
\[
[\overline{\lambda},\lambda']_{\im(i)}
=
[\lambda,\lambda']_X
=
[\overline{\lambda},\lambda']_{\im(e)},
\]
where \(\overline{\lambda}\) denotes the class of \(\lambda\) in
\(\Lambda_X/\ker(i^{\sharp})=\Lambda_X/\ker(e^{\sharp})\). Hence \(\im(i)=\im(e)\).

\smallskip
\noindent\textbf{Step \textup{(vi)}.}
We construct an isomorphism \(g\colon Y\cong Y_e\) such that \(j_e\circ g=i\). Using the image factorization of homomorphisms of integral tori in \cite[Sec.~4.1]{RZ25}, write
\(i=\nu_i\circ\bar i\) and \(j_e=\nu_e\circ\bar j_e\), where \(\bar i\colon Y\to\im(i)\) and \(\bar j_e\colon Y_e\to\im(j_e)\) are the induced homomorphisms, and where \(\nu_i\colon\im(i)\to X\) and \(\nu_e\colon\im(j_e)\to X\) are injective homomorphisms.
Since \(i^{\sharp}\circ(i^{\dagger})^{\sharp}=\id_{\Lambda_Y}\), the homomorphism \(i^{\sharp}\colon\Lambda_X\to\Lambda_Y\) is surjective. Therefore Lemma~\ref{lem:injective-iso-to-image-iff-saturated} shows that \(\bar i\colon Y\to\im(i)\) is an isomorphism of integral tori. By Lemma~\ref{lem:idempotent-splitting-constructions}\textup{(iii)}, the homomorphism \(\bar j_e\colon Y_e\to\im(j_e)\) is an isomorphism of integral tori. By Step~\textup{(v)} and Lemma~\ref{lem:idempotent-splitting-constructions}\textup{(iii)}, one has \(\im(i)=\im(e)=\im(j_e)\). By Definition~\ref{def:image-integral-torus}, one has \(\nu_i=\nu_e\), since both homomorphisms are induced by the canonical quotient \(\nu_i^{\sharp}=\nu_e^{\sharp}\colon\Lambda_X\to\Lambda_X/\ker(i^{\sharp})\) and the inclusion \((\nu_i)_{\sharp}=(\nu_e)_{\sharp}\colon(\im\,i_{\sharp})_{\sat}\hookrightarrow\Lambda_X^{\prime}\). Hence one has the following diagram:
\begin{equation*}
\begin{tikzcd}[column sep=large,row sep=large]
Y \arrow[d,dashed,"g"'] \arrow[r,"\bar i", "\cong"'] & \im(i) \arrow[d,equal] \arrow[r,"\nu_i"] & X \\
Y_e \arrow[r,"\bar j_e", "\cong"'] & \im(j_e) \arrow[ur,"\nu_e"'] &
\end{tikzcd}
\end{equation*}
Thus there exists a unique isomorphism of integral tori \(g\colon Y\to Y_e\) satisfying \(j_e\circ g=i\). Using \(\Theta_{Y_e}=j_e^{*}\Theta\) and \(\Theta_Y=i^{*}\Theta\), one has \[g^{*}\Theta_{Y_e}=g^{*}(j_e^{*}\Theta)=(j_e\circ g)^{*}\Theta=i^{*}\Theta=\Theta_Y.\] Hence
\(
(Y,\Theta_Y)\cong(Y_e,\Theta_{Y_e})
\)
is an isomorphism of principally polarized tropical abelian varieties.

\smallskip
\noindent\textbf{Step \textup{(vii)}.}
By Steps~\textup{(iii)}, \textup{(iv)}, and \textup{(vi)}, the isomorphism
\[
\beta\coloneqq\alpha\circ(g\times\id_Z)\colon
(Y,\Theta_Y)\times(Z,\Theta_Z)\cong(X,\Theta)
\]
is an isomorphism of principally polarized tropical abelian varieties, and
\(\beta\circ\iota_1=i\), where \(\iota_1\colon Y\to Y\times Z\) is the
canonical inclusion as defined in Subsection~\ref{subsec:products-integral-tori}.
\end{proof}

\begin{corollary}\label{cor:orthogonal-splitting-from-split-off}
Follow the notation of Proposition~\ref{prop:split-off-subppav}. Let \((Z,\Theta_Z)\) be as in Proposition~\ref{prop:split-off-subppav}, and let \(\beta=(\beta^{\sharp},\beta_{\sharp})\colon Y\times Z\rightarrow X\) be an isomorphism inducing \((Y,\Theta_Y)\times(Z,\Theta_Z)\cong (X,\Theta)\). Set
\(
L^{\prime}\coloneqq \beta_{\sharp}(\Lambda_Y^{\prime}\oplus 0),
\) and \(
L^{\prime\prime}\coloneqq \beta_{\sharp}(0\oplus \Lambda_Z^{\prime}).
\)
Then \(\Lambda_X^{\prime}=L^{\prime}\oplus L^{\prime\prime}\), and one has \(Q_{\Theta}(u^{\prime},v^{\prime})=0\) for all \(u^{\prime}\in L^{\prime}\) and \(v^{\prime}\in L^{\prime\prime}\).
\end{corollary}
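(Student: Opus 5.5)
The plan is to transport the product structure on \(Y\times Z\) to \(X\) via \(\beta_\sharp\). First, since \(\beta\) is an isomorphism of integral tori, \(\beta_\sharp\colon\Lambda_Y^{\prime}\oplus\Lambda_Z^{\prime}\to\Lambda_X^{\prime}\) is a lattice isomorphism; indeed, if \(h\) is the inverse homomorphism, then \(h_\sharp\circ\beta_\sharp=\id\) and \(\beta_\sharp\circ h_\sharp=\id\). Since \(\Lambda_Y^{\prime}\oplus\Lambda_Z^{\prime}=(\Lambda_Y^{\prime}\oplus0)\oplus(0\oplus\Lambda_Z^{\prime})\), applying \(\beta_\sharp\) immediately gives \(\Lambda_X^{\prime}=L^{\prime}\oplus L^{\prime\prime}\).

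For the orthogonality, I will compute \(\kappa_{\beta^*\Theta}\) in two ways. By \eqref{eq:kappa-pullback-divisor}, \(\kappa_{\beta^*\Theta}=\beta^\sharp\circ\kappa_\Theta\circ\beta_\sharp\). Since \(\beta\) is an isomorphism of principally polarized tropical abelian varieties, Definition~\ref{def:isomorphism-ppav} gives \([\beta^*\Theta]\equiv_{\hom}[\pr_Y^*\Theta_Y+\pr_Z^*\Theta_Z]\). By \eqref{eq:chern-class-homological-equivalence}, this is equivalent to equality of first Chern classes, and hence \(\kappa_{\beta^*\Theta}=\kappa_{\pr_Y^*\Theta_Y+\pr_Z^*\Theta_Z}\). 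Step~(i) of the proof of Proposition~\ref{prop:kappa-block-diagonal-general} then shows that this map sends \((u,v)\mapsto(\kappa_{\Theta_Y}(u),\kappa_{\Theta_Z}(v))\).

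Now let \(u^{\prime}=\beta_\sharp(u,0)\in L^{\prime}\) and \(v^{\prime}=\beta_\sharp(0,v)\in L^{\prime\prime}\). Using \(Q_\Theta(a,b)=[\kappa_\Theta(a),b]_X\) together with the adjointness \eqref{eq:dual-hom-pairing-compat}, we get
\[
Q_\Theta(u^{\prime},v^{\prime})=\bigl[\kappa_\Theta\beta_\sharp(u,0),\beta_\sharp(0,v)\bigr]_X=\bigl[\beta^\sharp\kappa_\Theta\beta_\sharp(u,0),(0,v)\bigr]_{Y\times Z}.
\]
The right-hand side equals \(\bigl[(\kappa_{\Theta_Y}(u),0),(0,v)\bigr]_{Y\times Z}=[\kappa_{\Theta_Y}(u),0]_Y+[0,v]_Z=0\) by the definition of the product pairing. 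Everything extends \(\R\)-linearly, although the statement only requires it for lattice elements.

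I expect no real obstacle here. The only point needing care is the passage from the homological equivalence in the definition of an isomorphism of ppav's to the equality of the \(\kappa\)'s. This is exactly \eqref{eq:chern-class-homological-equivalence} combined with the identification \(H^{1,1}\cong\Hom(\Lambda^{\prime},\Lambda)\), so it is a direct citation.
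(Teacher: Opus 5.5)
Your proposal is correct and follows essentially the same route as the paper. Both arguments get $\Lambda_X^{\prime}=L^{\prime}\oplus L^{\prime\prime}$ from $\beta_\sharp$ being a lattice isomorphism. Both then obtain orthogonality from $\beta^\sharp\circ\kappa_\Theta\circ\beta_\sharp=\kappa_{\beta^*\Theta}=\kappa_{\Theta_Y\boxplus\Theta_Z}$, using \eqref{eq:chern-class-homological-equivalence} and the adjointness \eqref{eq:dual-hom-pairing-compat}. The only cosmetic difference is how the block-diagonal form is read off: you use Step~\textup{(i)} of Proposition~\ref{prop:kappa-block-diagonal-general}, whereas the paper cites the equivalent identity \eqref{eq:Q-product-polarization}.
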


\begin{proof}
Since \(\beta_{\sharp}\colon \Lambda_Y^{\prime}\oplus \Lambda_Z^{\prime}\to\Lambda_X^{\prime}\) is an isomorphism, one has \(\Lambda_X^{\prime}=L^{\prime}\oplus L^{\prime\prime}\). Since \(\beta\) induces the isomorphism \((Y,\Theta_Y)\times(Z,\Theta_Z)\cong (X,\Theta)\), Definition~\ref{def:isomorphism-ppav} gives \([\beta^{*} \Theta]\equiv_{\hom}[\Theta_Y\boxplus\Theta_Z]\). Hence \eqref{eq:chern-class-homological-equivalence} gives \[c_1\bigl(\MO_{Y\times Z}(\beta^{*}\Theta)\bigr)=c_1\bigl(\MO_{Y\times Z}(\Theta_Y\boxplus\Theta_Z)\bigr).\] Therefore, one has
\begin{equation}\label{eq:kappa-beta-product-decomposition}
\beta^{\sharp}\circ\kappa_{\Theta}\circ\beta_{\sharp}
=
\kappa_{\beta^{*}\Theta}
=
\kappa_{\Theta_Y\boxplus\Theta_Z},
\end{equation}
where the first equality follows from
\eqref{eq:kappa-pullback-divisor}, while the second follows from
\eqref{eq:chern-class-matrix} and \eqref{eq:kappaD-matrix}.
Hence, for \(u_Y^{\prime}\in\Lambda_Y^{\prime}\) and \(v_Z^{\prime}\in\Lambda_Z^{\prime}\), \eqref{eq:Q-product-polarization} and \eqref{eq:kappa-beta-product-decomposition}  give
\[
0
=
Q_{\beta^{*}\Theta}\bigl((u_Y^{\prime},0),(0,v_Z^{\prime})\bigr)
=
\bigl[(\beta^{\sharp}\circ\kappa_{\Theta}\circ\beta_{\sharp})(u_Y^{\prime},0),(0,v_Z^{\prime})\bigr]_{Y\times Z}
=
Q_{\Theta}\bigl(\beta_{\sharp}(u_Y^{\prime},0),\beta_{\sharp}(0,v_Z^{\prime})\bigr).
\]
Since \(L^{\prime}=\beta_{\sharp}(\Lambda_Y^{\prime}\oplus 0)\) and \(L^{\prime\prime}=\beta_{\sharp}(0\oplus\Lambda_Z^{\prime})\), it follows that \(Q_{\Theta}(u^{\prime},v^{\prime})=0\) for all \(u^{\prime}\in L^{\prime}\) and \(v^{\prime}\in L^{\prime\prime}\).
\end{proof}

\subsection{\(Q_{\Theta_{\Gamma}}\)-orthogonal decompositions and simple simplicial cycles}\label{subsec:Q-orthogonal-splittings}

Let \(\Gamma\) be a smooth tropical curve of genus \(g\), and let \(\Theta_{\Gamma}\) be the canonical principal polarization on \(\Jac(\Gamma)\). Fix a global face structure on \(\Gamma\), and let \(E(\Gamma)\) denote the set of edges of \(\Gamma\) with the fixed global face structure. In this subsection, we consider \(Q_{\Theta_{\Gamma}}\)-orthogonal decompositions of the lattice \(H_1(\Gamma,\Z)\), where \(Q_{\Theta_{\Gamma}}\) is the bilinear form in \eqref{eq:Q-Theta-Gamma-edge-expansion}. We follow the notation in Definition~\ref{def:tropical-curve} on the simplicial structure of \(\Gamma\). Write \(\ell(\tau)\) for the length of each edge \(\tau\in E(\Gamma)\).

\begin{notation}\label{notation}
Throughout this subsection and the following subsection, we regard \(H_1(\Gamma,\Z)\) as a sublattice of \(C_1(\Gamma,\Z)\). For \(c\in H_1(\Gamma,\Z)\), write
\[
c=\sum_i c(s_i)s_i,
\]
where the \(s_i\) are pairwise distinct oriented \(1\)-simplices and the supports of all such \(s_i\) are pairwise distinct. If \(\bar{s}_1\) denotes the oriented \(1\)-simplex with the opposite orientation to \(s_1\), then \(s_1=-\bar{s}_1\). Hence \(c=-c(s_1)\bar{s}_1+\sum_{i\ge 2}c(s_i)s_i\). Therefore the coefficient of \(\bar{s}_1\) in \(c\) is \(-c(s_1)\), so \(c(\bar{s}_1)=-c(s_1)\).

For \(c^{\prime}\in H_1(\Gamma,\Z)\), \eqref{eq:Q-Theta-Gamma-edge-expansion} gives
\begin{equation}\label{eq:Q-coefficients-oriented-simplices}
Q_{\Theta_{\Gamma}}(c,c^{\prime})
=
\sum_i c(s_i)c^{\prime}(s_i)\ell(\tau_i),
\end{equation} where \(\tau_i \in E(\Gamma)\) is the support of \(s_i\).
\end{notation}

\begin{defi}\label{def:QGamma-orthogonal-splitting}
A \emph{\(Q_{\Theta_{\Gamma}}\)-orthogonal direct sum decomposition} of \(H_1(\Gamma,\Z)\) is an expression
\(
H_1(\Gamma,\Z)=L^{\prime}\oplus L^{\prime\prime}
\)
with nonzero sublattices \(L^{\prime},L^{\prime\prime}\subseteq H_1(\Gamma,\Z)\) such that \(Q_{\Theta_{\Gamma}}(L^{\prime},L^{\prime\prime})=0\), i.e., \(Q_{\Theta_{\Gamma}}(\alpha,\beta)=0\) for all \(\alpha\in L^{\prime}\) and \(\beta\in L^{\prime\prime}\).
\end{defi}

\begin{remark}\label{rem:J1-J2}
Follow the notation of Definition~\ref{def:QGamma-orthogonal-splitting}. Recall that \(\kappa_{\Theta_{\Gamma}}\) denotes the homomorphism associated to \(\Theta_{\Gamma}\) as in Subsection~\ref{subsec:integral-tori-polarizations}. The restrictions of \(\kappa_{\Theta_{\Gamma}}\) to \(L^{\prime}\) and \(L^{\prime\prime}\) define integral tori
\[
J_1\coloneqq \bigl(\kappa_{\Theta_{\Gamma}}(L^{\prime}),\,L^{\prime},\,[\,\cdot\,,\,\cdot\,]_{\Gamma}\big|_{\kappa_{\Theta_{\Gamma}}(L^{\prime})\times L^{\prime}}\bigr),
\qquad
J_2\coloneqq \bigl(\kappa_{\Theta_{\Gamma}}(L^{\prime\prime}),\,L^{\prime\prime},\,[\,\cdot\,,\,\cdot\,]_{\Gamma}\big|_{\kappa_{\Theta_{\Gamma}}(L^{\prime\prime})\times L^{\prime\prime}}\bigr).
\]
The corresponding \(Q_{\Theta_{\Gamma}}\)-forms are symmetric positive definite on \(L^{\prime}\) and \(L^{\prime\prime}\). Since \(H_1(\Gamma,\Z)=L^{\prime}\oplus L^{\prime\prime}\), one has
\(
\Jac(\Gamma)\cong J_1\times J_2.
\)
In particular, this gives a \(Q_{\Theta_{\Gamma}}\)-orthogonal decomposition of the tropical Jacobian variety \(\Jac(\Gamma)\) in the sense of Definition~\ref{def:Q-orthogonal-decomposition}.
\end{remark}
\begin{defi}
Follow Notation~\ref{notation}. For a simplicial \(1\)-chain
\(
b=\sum_s b(s)s\in C_1(\Gamma,\Z),
\)
set
\[
|b|\coloneqq
\bigcup_{\substack{s\\ b(s)\ne 0}}\tau_s,
\]
where \(\tau_s\in E(\Gamma)\) is the support of \(s\).
\end{defi}
\begin{defi}\label{def:positive-cone-one-cycles}
Let \(Z_1(\Gamma,\Z)\) denote the group of simplicial \(1\)-cycles on \(\Gamma\). Let \(S\) be a finite set of oriented \(1\)-simplices in \(\Gamma\), no two of which have the same support. Define
\[
Z_1(\Gamma,\Z)_{\ge 0}^{S}
\coloneqq
\left\{
z=\sum_{s\in S}z(s)s\in Z_1(\Gamma,\Z)
\ \middle|\
z(s)\in\Z_{\ge 0}\text{ for every }s\in S
\right\}.
\]
\end{defi}
\begin{defi}\label{def:oriented-simple-cycle}
A \emph{simple simplicial cycle} in \(\Gamma\) is a simplicial \(1\)-cycle
\(
c=c_1+\cdots+c_m,
\)
where \(m\ge 3\), the \(c_1,\dots,c_m\) are oriented \(1\)-simplices of \(\Gamma\) such that \(c_i\) is directed from \(v_{i-1}\) to \(v_i\), one has \(v_m=v_0\), and the vertices \(v_0,\dots,v_{m-1}\) are pairwise distinct. The length of \(c\) is defined by
\(
\ell(c)\coloneqq \sum_{i=1}^{m}\ell(\tau_i),
\)
where \(\tau_i\in E(\Gamma)\) is the support of \(c_i\).
\end{defi}

\begin{lemma}\label{lem:harmonic-generated-by-simple-cycles-and-one-summand}
The following hold.
\begin{enumerate}[label=\textup{(\roman*)}]
\item The lattice \(H_1(\Gamma,\Z)\) is generated by the simple simplicial cycles in \(\Gamma\).
\item Assume that \(H_1(\Gamma,\Z)=L^{\prime}\oplus L^{\prime\prime}\) is a \(Q_{\Theta_{\Gamma}}\)-orthogonal direct sum decomposition. Then for every simple simplicial cycle \(c\) in \(\Gamma\), precisely one of \(c\in L^{\prime}\) and \(c\in L^{\prime\prime}\) holds.
\end{enumerate}
\end{lemma}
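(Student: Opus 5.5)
For \textup{(i)} I would use the standard graph-theoretic decomposition of cycles. Let \(z\in H_1(\Gamma,\Z)\subseteq C_1(\Gamma,\Z)\) be nonzero. Choose orientations so that every coefficient of \(z\) is positive, which is possible by replacing \(s\) with \(\bar s\) where needed (Notation~\ref{notation}). Then \(z\in Z_1(\Gamma,\Z)^{S}_{\ge0}\) for a suitable set \(S\). Since \(\partial z=0\), at each vertex the total incoming coefficient equals the total outgoing coefficient. Starting at any vertex of \(|z|\), I would walk along oriented simplices of \(S\) with positive coefficient. Because \(\Gamma\) is finite, some vertex eventually repeats, and the portion of the walk between the two visits gives a simple directed cycle \(c\). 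This cycle has length at least \(3\), because the convention in Definition~\ref{def:tropical-curve}(ii) rules out loops and multiple edges, so \(c\) is a simple simplicial cycle. Then \(z-c\) still has nonnegative coefficients with respect to \(S\) and has strictly smaller total coefficient sum, so induction on \(\sum_{s}z(s)\) writes \(z\) as a sum of simple simplicial cycles.

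For \textup{(ii)}, let \(c\) be simple and write \(c=\alpha+\beta\) with \(\alpha\in L'\) and \(\beta\in L''\). Both cannot be zero, since \(c\neq0\). The claim is that one of \(\alpha,\beta\) vanishes. By orthogonality and positive definiteness of \(Q_{\Theta_\Gamma}\), together with \eqref{eq:Q-coefficients-oriented-simplices}, we have
\[
\ell(c)=Q_{\Theta_\Gamma}(c,c)=Q_{\Theta_\Gamma}(\alpha,\alpha)+Q_{\Theta_\Gamma}(\beta,\beta).
\]

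The key input is \(Q_{\Theta_\Gamma}(\alpha,\beta)=0\), which in coordinates reads \(\sum_s\alpha(s)\beta(s)\ell(\tau_s)=0\), with \(\alpha(s)+\beta(s)=c(s)\in\{0,1\}\) on the simplices of \(c\) (oriented along \(c\)) and \(\alpha(s)+\beta(s)=0\) off \(|c|\). Substituting \(\beta=c-\alpha\) into the orthogonality gives
\[
Q(\alpha,c)=Q(\alpha,\alpha).
\]
Since \(Q(\alpha,c)=\sum_{s\in c}\alpha(s)\ell(\tau_s)\), this becomes \(\sum_{s\in c}\alpha(s)\ell(\tau_s)=\sum_{s}\alpha(s)^2\ell(\tau_s)\).

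This is where I expect the main obstacle to lie. Orthogonality alone does not force \(\alpha\) to be \(0\) or \(c\); in general an orthogonal lattice splitting need not be compatible with edges. So I would exploit integrality together with the full direct sum structure, as follows.

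Take any other simple cycle \(d\) and decompose it likewise, then look at the edge supports. My plan is to show that every edge \(\tau\) is \emph{pure}: every element of \(L''\) vanishes on \(\tau\), or every element of \(L'\) does. Given purity, the supports of \(\alpha\) and \(\beta\) lie in disjoint edge sets. On any edge \(s\) of \(c\) one then has \(\alpha(s)\beta(s)=0\) with \(\alpha(s)+\beta(s)=1\). Off \(|c|\) one has \(\alpha=-\beta\), so purity forces both to vanish there. Hence \(\alpha\) is the indicator of the edges of \(c\) lying in the \(L'\)-part. But \(\alpha\) is a cycle, and a proper nonempty subpath of a simple cycle has nonzero boundary. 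Therefore \(\alpha\in\{0,c\}\).

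To prove purity, I would use the \(Q\)-orthogonal projection \(p'\colon H_1(\Gamma,\R)\to L'_\R\); it is integral because \(L'\oplus L''=H_1(\Gamma,\Z)\). One route is to show that the edge coordinate functionals \(\gamma\mapsto\gamma(s)\ell(\tau_s)\), which are represented by the \(Q\)-dual vectors of the edges (the projections of \(s\) to harmonic cycles), lie in \(L'_\R\) or in \(L''_\R\). This should follow from an integrality/minimality argument: the sum of \(\ell(\tau)\) times squared coefficients is integral-positive and additive. An alternative, more elementary route is to induct on the number of edges by contracting an edge and checking that the decomposition descends. I would attempt the projection argument first.
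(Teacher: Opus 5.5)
Your part (i) is fine. It is the standard greedy decomposition of a nonnegative cycle into simple directed cycles; the paper cites \cite[Sec.~9A]{GS23} for it and runs the same argument later as Lemma~\ref{lem:positive-cycle-decomposition}.

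The gap is in (ii). You correctly reach the identity $Q_{\Theta_\Gamma}(\alpha,c)=Q_{\Theta_\Gamma}(\alpha,\alpha)$. You then assert that orthogonality cannot force $\alpha\in\{0,c\}$, which is a misdiagnosis, and you replace the conclusion with a ``purity'' claim that you never prove; the projection and minimality route is only a hope. Purity is also essentially a strengthening of (ii) itself. In the paper, the edge-disjointness of $\Gamma'$ and $\Gamma''$ (Lemma~\ref{lem:Gamma-prime-Gamma-double-prime-disjoint}) and the vanishing in Proposition~\ref{prop:vanishing-on-other-Gamma} are \emph{deduced from} (ii). So your plan postpones the real difficulty rather than resolving it.

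The missing idea is that the coefficients $\alpha(s)$ are integers. Orient each edge so that $c(s_\tau)=1$ on $|c|$, and rewrite your identity as
\[
\sum_{\tau\subseteq|c|}\ell(\tau)\,\alpha(s_\tau)\bigl(1-\alpha(s_\tau)\bigr)\;-\;\sum_{\tau\not\subseteq|c|}\ell(\tau)\,\alpha(s_\tau)^2=0 .
\]
For $x\in\Z$ one has $x(1-x)\le 0$, with equality if and only if $x\in\{0,1\}$. Since $\ell(\tau)>0$, every term is $\le 0$, so every term vanishes. Hence $\alpha(s_\tau)\in\{0,1\}$ on $|c|$ and $\alpha(s_\tau)=0$ off $|c|$.

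Your own final observation then finishes the proof. A $\{0,1\}$-valued cycle supported on a simple cycle is $0$ or $c$, and $L'\cap L''=0$ gives ``precisely one.'' This is exactly the paper's argument; no purity, orthogonal projections, or edge contractions are needed.
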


\begin{proof}
Assertion~\textup{(i)} follows from \cite[Sec.~9A]{GS23}.

For assertion~\textup{(ii)}, let \(c\) be a simple simplicial cycle in
\(\Gamma\). For each edge \(\tau\in E(\Gamma)\), choose an oriented \(1\)-simplex \(s_{\tau}\) supported on \(\tau\), with the convention that \(c(s_{\tau})=1\) whenever \(\tau\subseteq |c|\). Set
\[
S\coloneqq \{s_{\tau}\mid \tau\in E(\Gamma)\}.
\]
Then \(c\in Z_1(\Gamma,\Z)_{\ge 0}^{S}\), and \(c(s_{\tau})=1\) if \(\tau\subseteq |c|\), while \(c(s_{\tau})=0\) if \(\tau\not\subseteq |c|\).
Write \(c=\alpha^{\prime}+\alpha^{\prime\prime}\) with
\(\alpha^{\prime}\in L^{\prime}\) and
\(\alpha^{\prime\prime}\in L^{\prime\prime}\). Then
\begin{equation}\label{eq:Q-alpha-double-prime-alpha-prime}
0
=
Q_{\Theta_{\Gamma}}(\alpha^{\prime\prime},\alpha^{\prime})
=
Q_{\Theta_{\Gamma}}(c,\alpha^{\prime})
-
Q_{\Theta_{\Gamma}}(\alpha^{\prime},\alpha^{\prime}).
\end{equation}

By Notation~\ref{notation}, write \(\alpha^{\prime}=\sum_{\tau\in E(\Gamma)}\alpha^{\prime}(s_{\tau})s_{\tau}\). By \eqref{eq:Q-alpha-double-prime-alpha-prime} and \eqref{eq:Q-coefficients-oriented-simplices}, one has
\begin{equation}\label{eq:q-alpha-prime-sum}
0
=
\sum_{\substack{\tau\in E(\Gamma)\\ \tau\subseteq |c|}}
\ell(\tau)\alpha^{\prime}(s_{\tau})
\bigl(1-\alpha^{\prime}(s_{\tau})\bigr)
-
\sum_{\substack{\tau\in E(\Gamma)\\ \tau\not\subseteq |c|}}
\ell(\tau)\alpha^{\prime}(s_{\tau})^2.
\end{equation}
For every \(\tau\subseteq |c|\), one has
\(\alpha^{\prime}(s_{\tau})(1-\alpha^{\prime}(s_{\tau}))\le 0\), and for
every \(\tau\not\subseteq |c|\), one has
\(-\alpha^{\prime}(s_{\tau})^2\le 0\). By
\eqref{eq:q-alpha-prime-sum}, it follows that
\(\alpha^{\prime}(s_{\tau})\in\{0,1\}\) for every
\(\tau\subseteq |c|\), and \(\alpha^{\prime}(s_{\tau})=0\) for every
\(\tau\not\subseteq |c|\). Therefore
\[
\alpha^{\prime}
=
\sum_{\substack{\tau\in E(\Gamma)\\ \tau\subseteq |c|}}
\alpha^{\prime}(s_{\tau})s_{\tau}
\in Z_1(\Gamma,\Z)_{\ge 0}^{S}.
\]
Since \(\partial\alpha^{\prime}=0\), all \(\alpha^{\prime}(s_{\tau})\) with
\(\tau\subseteq |c|\) are equal. Hence \(\alpha^{\prime}=0\) or
\(\alpha^{\prime}=c\). Therefore \(c\in L^{\prime\prime}\) or
\(c\in L^{\prime}\). Since \(c\neq 0\) and
\(L^{\prime}\cap L^{\prime\prime}=\{0\}\), precisely one of
\(c\in L^{\prime}\) and \(c\in L^{\prime\prime}\) holds.
\end{proof}

\subsection{Vanishing along complementary edges in \(Q_{\Theta_\Gamma}\)-orthogonal decompositions}\label{subsec:edge-separation}

Follow the notation from Subsection~\ref{subsec:Q-orthogonal-splittings}. Let
\(
H_1(\Gamma,\Z)=L^{\prime}\oplus L^{\prime\prime}
\)
be a \(Q_{\Theta_{\Gamma}}\)-orthogonal direct sum decomposition. The goal of this subsection is to prove Proposition~\ref{prop:vanishing-on-other-Gamma}, which will be used later in Section~\ref{sec:tropical-matsusaka}. The proof is adapted, with slight modifications, from the proof of \cite[Lem.~3]{BHN97}.

\begin{defi}\label{def:Gamma-prime-Gamma-double-prime}
Let \(\Gamma^{\prime}\) be the union of the supports \(|c|\), where \(c\) ranges over all simple simplicial cycles that lie in \(L^\prime\). Define
\[
E(\Gamma^{\prime})\coloneqq\{\,\tau\in E(\Gamma)\mid \tau\subseteq \Gamma^{\prime}\,\}.
\]
Define \(\Gamma^{\prime\prime}\) and \(E(\Gamma^{\prime\prime})\) in the same way, replacing \(L^{\prime}\) by \(L^{\prime\prime}\).
\end{defi}

\begin{lemma}\label{lem:positive-cycle-decomposition}
Let \(S\) be a finite set of oriented \(1\)-simplices in \(\Gamma\), no two of which have the same support. Let
\(0\ne z\in Z_1(\Gamma,\Z)_{\ge 0}^{S}\). Then there exist simple simplicial cycles
\(d_1,\ldots,d_N\in Z_1(\Gamma,\Z)_{\ge 0}^{S}\) such that
\[
z=d_1+\cdots+d_N.
\]
Moreover, if \(s_0\in S\) and \(z(s_0)>0\), then \(d_i(s_0)>0\) for some \(i\in\{1,\ldots,N\}\).
\end{lemma}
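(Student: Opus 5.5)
I would prove this by induction on the total weight \(\|z\|\coloneqq\sum_{s\in S}z(s)\in\Z_{>0}\), using the standard ``walk along a directed flow'' argument. Write each oriented simplex \(s\in S\) as going from a tail \(t(s)\) to a head \(h(s)\), and regard the nonnegative cycle \(z\) as a directed multigraph in which \(s\) carries multiplicity \(z(s)\). The condition \(\partial z=0\) says exactly that, at every vertex \(v\), the total \(z\)-weight of simplices of \(S\) with head \(v\) equals the total \(z\)-weight of those with tail \(v\).

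\textbf{Key step: extracting one simple cycle.} Given \(z\ne0\), start at a simplex \(s_1\in S\) with \(z(s_1)>0\); when the second assertion is wanted, take \(s_1=s_0\). Set \(v_0=t(s_1)\) and \(v_1=h(s_1)\). Since \(z\) is balanced at \(v_1\) and the incoming weight there is positive, some \(s_2\in S\) with \(z(s_2)>0\) has \(t(s_2)=v_1\). Iterating produces a walk \(v_0,v_1,v_2,\dots\) along simplices of positive \(z\)-weight. Because \(V(\Gamma)\) is finite, some vertex repeats. Choose the first index \(k\) with \(v_k=v_j\) for some \(j<k\). Then the segment \(s_{j+1},\dots,s_k\) has pairwise distinct vertices \(v_j,\dots,v_{k-1}\) and closes up.

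This segment has length \(m=k-j\ge3\). The case \(m=1\) cannot occur, since no edge has the same endpoint twice. The case \(m=2\) would require two distinct oriented simplices joining the same pair of vertices. By the convention in Definition~\ref{def:tropical-curve}, they would share a support, so they would be opposite orientations of one edge. Both cannot lie in \(S\), because no two elements of \(S\) have the same support. So \(d\coloneqq s_{j+1}+\cdots+s_k\) is a simple simplicial cycle with \(d\in Z_1(\Gamma,\Z)^S_{\ge0}\), and \(d(s)\le z(s)\) for all \(s\in S\).

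\textbf{Induction and the tracking of \(s_0\).} The difference \(z-d\) is again a cycle with nonnegative coefficients on \(S\) and has strictly smaller total weight. If \(z-d=0\) we stop; otherwise the induction hypothesis decomposes \(z-d\), and adding \(d\) gives the decomposition of \(z\).

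For the second assertion, there is one remaining case to check: \(s_0\) may lie in the initial ``tail'' \(s_1,\dots,s_j\) rather than in the extracted cycle. The simplest fix is to note that \(s_0\) is contained in \emph{some} \(d_i\) in the final decomposition. Indeed, \(z(s_0)=\sum_i d_i(s_0)\) because the coefficients add, and \(z(s_0)>0\) forces some \(d_i(s_0)>0\). So no special tracking is needed.

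The only genuinely delicate point is excluding cycles of length \(1\) or \(2\) so that each extracted piece meets the \(m\ge3\) requirement of Definition~\ref{def:oriented-simple-cycle}. This uses both the simplicial refinement convention on global face structures and the hypothesis that no two elements of \(S\) share a support.
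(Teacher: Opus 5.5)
Your proof is correct and follows essentially the same route as the paper: walk along simplices of positive weight until a vertex first repeats, subtract the resulting simple cycle, and induct on the total weight, reading off the $s_0$ assertion from $z(s_0)=\sum_i d_i(s_0)$. Your explicit exclusion of closed walks of length $1$ and $2$ is a detail the paper leaves implicit, and you handle it correctly via the simplicial refinement convention and the hypothesis that no two elements of $S$ share a support.
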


\begin{proof}
In this proof, for an oriented \(1\)-simplex \(s\), if \(\partial s=v-w\),
then we call \(w\) the initial point of \(s\), and \(v\) the terminal point of \(s\).

We first show that, for every nonzero 
\(y\in Z_1(\Gamma,\Z)_{\ge 0}^{S}\), there exist a simple simplicial cycle
\(d\in Z_1(\Gamma,\Z)_{\ge 0}^{S}\) and 
\(y_1\in Z_1(\Gamma,\Z)_{\ge 0}^{S}\) such that
\[
y=d+y_1.
\]
Choose \(s_1\in S\) with \(y(s_1)>0\). Since \(\partial y=0\), the terminal point
of \(s_1\) is the initial point of some \(s_2\in S\) with \(y(s_2)>0\).
Repeating this construction and using the finiteness of the vertices of
\(\Gamma\), one obtains oriented \(1\)-simplices
\(s_{a+1},\ldots,s_b\in S\) such that
\[
d\coloneqq s_{a+1}+\cdots+s_b
\]
is a simple simplicial cycle. Then \(d\in Z_1(\Gamma,\Z)_{\ge 0}^{S}\). Moreover, for every \(s\in S\),
one has \(0\le d(s)\le y(s)\). Hence
\(y_1\coloneqq y-d\) belongs to \(Z_1(\Gamma,\Z)_{\ge 0}^{S}\), and
\(y=d+y_1\).

Starting with \(z^{(0)}\coloneqq z\), apply the preceding paragraph
successively. If \(z^{(r)}\neq 0\), write
\[
z^{(r)}=d_{r+1}+z^{(r+1)}
\]
with \(d_{r+1}\) a simple simplicial cycle in
\(Z_1(\Gamma,\Z)_{\ge 0}^{S}\) and
\(z^{(r+1)}\in Z_1(\Gamma,\Z)_{\ge 0}^{S}\). At each step,
\[
\sum_{s\in S}z^{(r+1)}(s)<\sum_{s\in S}z^{(r)}(s).
\]
Since the left-hand side is a nonnegative integer, the construction stops
after finitely many steps. Therefore
\[
z=d_1+\cdots+d_N.
\]
If \(s_0\in S\), then taking the coefficient of \(s_0\) gives
\(z(s_0)=\sum_{i=1}^{N}d_i(s_0)\). Since \(z(s_0)>0\), there exists \(i\)
such that \(d_i(s_0)>0\).
\end{proof}

\begin{lemma}\label{lem:Gamma-prime-Gamma-double-prime-disjoint}
One has \(E(\Gamma^{\prime})\cap E(\Gamma^{\prime\prime})=\varnothing\).
\end{lemma}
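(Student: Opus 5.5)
We argue by contradiction. Suppose an edge \(\tau_0\in E(\Gamma)\) lies in both \(E(\Gamma^{\prime})\) and \(E(\Gamma^{\prime\prime})\). By Definition~\ref{def:Gamma-prime-Gamma-double-prime}, there is a simple simplicial cycle \(c^{\prime}\in L^{\prime}\) with \(\tau_0\subseteq|c^{\prime}|\), and a simple simplicial cycle \(c^{\prime\prime}\in L^{\prime\prime}\) with \(\tau_0\subseteq|c^{\prime\prime}|\). After replacing \(c^{\prime\prime}\) by \(-c^{\prime\prime}\) if necessary (still in \(L^{\prime\prime}\), still simple after reversing orientation), we may assume that \(c^{\prime}\) and \(c^{\prime\prime}\) traverse \(\tau_0\) in the same direction. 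Thus, with \(s_0\) the oriented simplex on \(\tau_0\) used by \(c^{\prime}\), we have \(c^{\prime}(s_0)=c^{\prime\prime}(s_0)=1\).

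The idea, following \cite[Lem.~3]{BHN97}, is to find a simple cycle that cannot lie in either summand, contradicting Lemma~\ref{lem:harmonic-generated-by-simple-cycles-and-one-summand}\textup{(ii)}. The natural candidate is built from \(z\coloneqq c^{\prime}+c^{\prime\prime}\) or \(c^{\prime}-c^{\prime\prime}\). A cleaner route uses orthogonality directly. By \eqref{eq:Q-coefficients-oriented-simplices},
\[
0=Q_{\Theta_\Gamma}(c^{\prime},c^{\prime\prime})=\sum_{\tau}\ell(\tau)\,c^{\prime}(s_\tau)c^{\prime\prime}(s_\tau).
\]
Each term is \(\ell(\tau)\) times \(\pm1\) or \(0\), and the \(\tau_0\) term is \(+\ell(\tau_0)>0\). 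Hence there must be an edge \(\tau_1\) that is shared by \(c^{\prime}\) and \(c^{\prime\prime}\) and traversed in opposite directions. This alone is not a contradiction, so we decompose instead.

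Choose a set \(S\) of oriented simplices, one per edge, oriented as in \(c^{\prime}\) on \(|c^{\prime}|\) and as in \(c^{\prime\prime}\) on \(|c^{\prime\prime}|\setminus|c^{\prime}|\). We then consider \(z=c^{\prime}+c^{\prime\prime}\), which has coefficients in \(\{0,1,2\}\) on edges where the orientations agree and \(0\) where they are opposite. Hence \(z\in Z_1(\Gamma,\Z)^{S}_{\ge0}\) and \(z(s_0)=2>0\). By Lemma~\ref{lem:positive-cycle-decomposition}, write \(z=d_1+\cdots+d_N\) with simple \(d_i\in Z_1(\Gamma,\Z)^S_{\ge0}\). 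Each \(d_i\) lies in \(L^{\prime}\) or in \(L^{\prime\prime}\) by Lemma~\ref{lem:harmonic-generated-by-simple-cycles-and-one-summand}\textup{(ii)}. Let \(A\) be the sum of those \(d_i\) lying in \(L^{\prime}\), and \(B\) the sum of the rest, so \(B\in L^{\prime\prime}\). Since \(z=c^{\prime}+c^{\prime\prime}\) and the decomposition \(L^{\prime}\oplus L^{\prime\prime}\) is direct, \(A=c^{\prime}\) and \(B=c^{\prime\prime}\). Now \(A\) and \(B\) are nonnegative on \(S\). However, on the edge \(\tau_1\), where \(c^{\prime}\) and \(c^{\prime\prime}\) have opposite signs, the oriented simplex in \(S\) agrees with \(c^{\prime}\), so \(c^{\prime\prime}(s_{\tau_1})=-1<0\). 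This contradicts \(B=c^{\prime\prime}\) being a nonnegative combination. Hence no such \(\tau_0\) exists.

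The main obstacle is ensuring \(\tau_1\) exists, which is what the orthogonality computation provides. One must also check that \(z\) is nonnegative on \(S\): on \(\tau_1\), \(z\) has coefficient \(0\), and on edges only in \(c^{\prime\prime}\), \(S\) follows \(c^{\prime\prime}\). Both checks are immediate from the choice of \(S\).
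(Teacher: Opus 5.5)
Your proof is correct. It shares the paper's setup: contradiction, aligning \(c^{\prime}\) and \(c^{\prime\prime}\) on \(\tau_0\), taking \(z=c^{\prime}+c^{\prime\prime}\), decomposing \(z\) by Lemma~\ref{lem:positive-cycle-decomposition}, and sorting the simple pieces with Lemma~\ref{lem:harmonic-generated-by-simple-cycles-and-one-summand}\textup{(ii)}. The endgame is different, though.

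The paper keeps only one piece \(d=d_{i_0}\) through \(s_0\) and assumes by symmetry that \(d\in L^{\prime}\). It uses \(z-d\ge 0\) on \(S\) and \(c^{\prime}(s_\tau)\le 1\) to show that \(c^{\prime\prime}\) is \(+1\) on every edge of \(|d|\cap|c^{\prime\prime}|\). Hence \(Q_{\Theta_\Gamma}(d,c^{\prime\prime})\ge\ell(\tau_0)>0\), which contradicts orthogonality.

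You instead sort all the pieces and use \(L^{\prime}\cap L^{\prime\prime}=0\) to identify \(c^{\prime}=A\) and \(c^{\prime\prime}=B\) as sums of \(S\)-nonnegative simple cycles. The contradiction then comes from the sign of \(c^{\prime\prime}\) on an oppositely traversed edge \(\tau_1\), whose existence you extracted beforehand from \(Q_{\Theta_\Gamma}(c^{\prime},c^{\prime\prime})=0\).

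Your route trades the WLOG case split and the edge-by-edge check on \(|d|\cap|c^{\prime\prime}|\) for the directness of the sum. The paper's route is more local: it only needs one piece and shows that the particular simple cycle through \(\tau_0\) can lie in neither summand.

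Your special choice of \(S\) (following \(c^{\prime}\) on \(|c^{\prime}|\)) and the preliminary search for \(\tau_1\) are unnecessary. For any \(S\) making \(z\) nonnegative, \(A=c^{\prime}\ge0\) and \(B=c^{\prime\prime}\ge0\) on \(S\) force both cycles to have coefficients in \(\{0,1\}\). So they traverse every shared edge in the same direction, and \(Q_{\Theta_\Gamma}(c^{\prime},c^{\prime\prime})\ge\ell(\tau_0)>0\) directly. That is a cleaner way to state your contradiction.
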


\begin{proof}
Assume \(E(\Gamma^{\prime})\cap E(\Gamma^{\prime\prime})\neq\varnothing\), and choose
\(\tau_0\in E(\Gamma^{\prime})\cap E(\Gamma^{\prime\prime})\). By the definitions of
\(E(\Gamma^{\prime})\) and \(E(\Gamma^{\prime\prime})\), choose simple simplicial cycles
\(c^{\prime}\in L^{\prime}\) and \(c^{\prime\prime}\in L^{\prime\prime}\) such that
\(\tau_0\subseteq |c^{\prime}|\cap |c^{\prime\prime}|\). Choose an oriented \(1\)-simplex \(s_0\) whose support is \(\tau_0\). We may assume that
\begin{equation}\label{eq:c-prime-c-double-prime-s0}
c^{\prime}(s_0)=c^{\prime\prime}(s_0)=1.
\end{equation}
Indeed, if \(c^{\prime}(s_0)<0\), replace \(c^{\prime}\) by
\(-c^{\prime}\), and if \(c^{\prime\prime}(s_0)<0\), replace
\(c^{\prime\prime}\) by \(-c^{\prime\prime}\). These replacements do not
change the fact that \(c^{\prime}\) and \(c^{\prime\prime}\) are simple simplicial cycles.
We then set
\begin{equation}\label{eq:z-c-prime-c-double-prime}
z\coloneqq c^{\prime}+c^{\prime\prime}\in Z_1(\Gamma,\Z).
\end{equation}
Choose \(S\) so that
\(z=\sum_{s\in S}z(s)s\in Z_1(\Gamma,\Z)_{\ge 0}^{S}\), with \(s_0\in S\). By
\eqref{eq:c-prime-c-double-prime-s0}, one has
\[
z(s_0)=2.
\]
By Lemma~\ref{lem:positive-cycle-decomposition}, there exist simple simplicial cycles
\(d_1,\ldots,d_N\in Z_1(\Gamma,\Z)_{\ge 0}^{S}\) such that
\[
z=d_1+\cdots+d_N.
\]
Moreover, there exists \(i_0\) such that \(d_{i_0}(s_0)>0\). Set
\(d\coloneqq d_{i_0}\). Then
\[
z-d\in Z_1(\Gamma,\Z)_{\ge 0}^{S}.
\]
By Lemma~\ref{lem:harmonic-generated-by-simple-cycles-and-one-summand}\textup{(ii)},
one has \(d\in L^{\prime}\cup L^{\prime\prime}\). By symmetry between
\((L^{\prime},c^{\prime})\) and \((L^{\prime\prime},c^{\prime\prime})\), we may
assume that \(d\in L^{\prime}\). Since \(c^{\prime\prime}\in L^{\prime\prime}\)
and \(Q_{\Theta_{\Gamma}}(L^{\prime},L^{\prime\prime})=0\), one has
\begin{equation}\label{eq:Q-d-c-double-prime-zero}
Q_{\Theta_{\Gamma}}(d,c^{\prime\prime})=0.
\end{equation}
We show that this is impossible. For each edge \(\tau\subseteq |d|\), let
\(s_{\tau}\in S\) be the oriented \(1\)-simplex on \(\tau\) such that
\(d(s_{\tau})=1\). Let \(\tau\subseteq |d|\cap |c^{\prime\prime}|\). Then
\(c^{\prime\prime}(s_{\tau})\in\{-1,1\}\). Since
\(z-d\in Z_1(\Gamma,\Z)_{\ge 0}^{S}\), the equality \(d(s_{\tau})=1\) gives
\(
z(s_{\tau})>0.
\)
By \eqref{eq:z-c-prime-c-double-prime}, one has
\[
c^{\prime}(s_{\tau})+c^{\prime\prime}(s_{\tau})>0.
\]
Since \(c^{\prime}(s_{\tau})\in \Z_{\le1}\), this forces
\(
c^{\prime\prime}(s_{\tau})=1.
\)
By \eqref{eq:Q-coefficients-oriented-simplices},
\[
Q_{\Theta_{\Gamma}}(d,c^{\prime\prime})
=
\sum_{\tau\subseteq |d|}d(s_{\tau})c^{\prime\prime}(s_{\tau})\ell(\tau)
=
\sum_{\tau\subseteq |d|\cap |c^{\prime\prime}|}\ell(\tau)
\ge
\ell(\tau_0)>0.
\]
This contradicts \eqref{eq:Q-d-c-double-prime-zero}. Hence the lemma follows.
\end{proof}

\smallskip
In Proposition~\ref{prop:vanishing-on-other-Gamma} and its proof, for an edge \(\tau\in E(\Gamma)\) and an endpoint \(v\) of \(\tau\), let \(c_{\tau,v}\in C_1(\Gamma,\Z)\) be the oriented \(1\)-simplex supported on \(\tau\) and directed out of \(v\).

\begin{prop}\label{prop:vanishing-on-other-Gamma}
For any \(\tau\in E(\Gamma^{\prime\prime})\) and any endpoint \(v\) of \(\tau\), one has
\begin{equation}\label{eq:vanishing-on-other-Gamma}
\int_{c_{\tau,v}}\kappa_{\Theta_{\Gamma}}(\alpha)=0
\quad\text{for all }\alpha\in L^{\prime}.
\end{equation}
\end{prop}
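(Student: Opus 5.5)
The plan is to reduce \eqref{eq:vanishing-on-other-Gamma} to a statement about the simplicial coefficients of \(\alpha\). Fix \(\tau\in E(\Gamma^{\prime\prime})\), an endpoint \(v\) of \(\tau\), and \(\alpha\in L^{\prime}\). Choose the reference endpoint \(v(\tau)\coloneqq v\) in Definition~\ref{def:tropical-jacobian-pair} and write \(\alpha=\sum_{\sigma\in E(\Gamma)}a_\sigma c_{\sigma,v(\sigma)}\). Then \eqref{eq:integral-along-edge} and \eqref{eq:kappa-theta-gamma-characterization} give
\[
\int_{c_{\tau,v}}\kappa_{\Theta_\Gamma}(\alpha)=\ell(\tau)\,\widetilde{\boldsymbol{u}}_{\tau,v}\bigl((\kappa_{\Theta_\Gamma}(\alpha))_{v}\bigr)=\ell(\tau)\,a_\tau .
\]
This is the same computation as in \eqref{eq:Q-Theta-Gamma-edge-expansion}. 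Choosing the other endpoint only changes the sign. So it suffices to show that \(\alpha(c_{\tau,v})=0\), i.e.\ \(\alpha\) has zero coefficient on every edge of \(\Gamma^{\prime\prime}\).

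The key step is to show that \(L^{\prime}\) is generated by the simple simplicial cycles lying in \(L^{\prime}\). By Lemma~\ref{lem:harmonic-generated-by-simple-cycles-and-one-summand}\textup{(i)}, we can write \(\alpha=\sum_k n_k c_k\) with \(n_k\in\Z\) and each \(c_k\) a simple simplicial cycle. By Lemma~\ref{lem:harmonic-generated-by-simple-cycles-and-one-summand}\textup{(ii)}, each \(c_k\) lies in exactly one of \(L^{\prime}\) and \(L^{\prime\prime}\). Split the sum as \(\alpha=\beta^{\prime}+\beta^{\prime\prime}\), where \(\beta^{\prime}\) collects the terms with \(c_k\in L^{\prime}\) and \(\beta^{\prime\prime}\) the terms with \(c_k\in L^{\prime\prime}\). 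Then \(\alpha-\beta^{\prime}=\beta^{\prime\prime}\in L^{\prime}\cap L^{\prime\prime}=\{0\}\), so \(\alpha=\beta^{\prime}\) is an integer combination of simple simplicial cycles in \(L^{\prime}\).

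Each simple cycle \(c\in L^{\prime}\) has \(|c|\subseteq\Gamma^{\prime}\) by Definition~\ref{def:Gamma-prime-Gamma-double-prime}. Every edge on which \(c\) has a nonzero coefficient is therefore contained in \(\Gamma^{\prime}\), i.e.\ it belongs to \(E(\Gamma^{\prime})\). Lemma~\ref{lem:Gamma-prime-Gamma-double-prime-disjoint} gives \(\tau\notin E(\Gamma^{\prime})\), so \(c(c_{\tau,v})=0\) for each such \(c\). By linearity \(\alpha(c_{\tau,v})=0\), and \eqref{eq:vanishing-on-other-Gamma} follows from the first paragraph.

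I expect the main point needing care to be the generation step. It only works because of the dichotomy in Lemma~\ref{lem:harmonic-generated-by-simple-cycles-and-one-summand}\textup{(ii)} combined with directness of the sum \(L^{\prime}\oplus L^{\prime\prime}\). The remaining risk is bookkeeping: the sign and orientation conventions relating \(\widetilde{\boldsymbol{u}}_{\tau,v}\), \(c_{\tau,v}\) and the coefficient \(a_\tau\). These are harmless, since we only need vanishing.
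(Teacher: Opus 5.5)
Your proposal is correct and follows essentially the same route as the paper. You show that \(L^{\prime}\) is generated by the simple simplicial cycles lying in \(L^{\prime}\), compute the integral as \(\ell(\tau)a_\tau\) with \(v(\tau)=v\), and conclude from Lemma~\ref{lem:Gamma-prime-Gamma-double-prime-disjoint}; your explicit splitting argument via Lemma~\ref{lem:harmonic-generated-by-simple-cycles-and-one-summand} and \(L^{\prime}\cap L^{\prime\prime}=\{0\}\) just fills in the generation step that the paper only asserts.
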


\begin{proof}
By Lemma~\ref{lem:harmonic-generated-by-simple-cycles-and-one-summand}\textup{(i), (ii)} and the \(Q_{\Theta_{\Gamma}}\)-orthogonal decomposition \(H_1(\Gamma,\Z)=L^{\prime}\oplus L^{\prime\prime}\), the lattice \(L^{\prime}\) is generated by the simple simplicial cycles \(c\in L^{\prime}\). It is enough to prove \eqref{eq:vanishing-on-other-Gamma} for \(\alpha=c\), where \(c\in L^{\prime}\) is a simple simplicial cycle. Fix such a cycle \(c\). Fix \(\tau\in E(\Gamma^{\prime\prime})\) and an endpoint \(v\) of \(\tau\).

Choose an endpoint \(v(\tau')\) of each edge \(\tau' \in E(\Gamma)\), with \(v(\tau)=v\) for the particular edge \(\tau\). Write
\[
c=\sum_{\tau'\in E(\Gamma)}a_{\tau'}c_{\tau',v(\tau')},
\qquad
a_{\tau'}\in\{-1,0,1\}.
\]
Then \(a_{\tau'}\ne 0\) if and only if \(\tau'\subseteq |c|\). Using the computation in \eqref{eq:Q-Theta-Gamma-edge-expansion} with this choice of endpoints,
\[
\int_{c_{\tau,v}}\kappa_{\Theta_{\Gamma}}(c)
=
\ell(\tau)a_{\tau}.
\]
If this integral is nonzero, then \(a_{\tau}\ne 0\). Hence \(\tau\subseteq |c|\), and so \(\tau\in E(\Gamma^{\prime})\), contradicting Lemma~\ref{lem:Gamma-prime-Gamma-double-prime-disjoint}. Therefore \eqref{eq:vanishing-on-other-Gamma} holds for \(\alpha=c\), and hence for all \(\alpha\in L^{\prime}\).
\end{proof}

\section{Tropical Matsusaka criterion}\label{sec:tropical-matsusaka}
In this section, we prove the tropical Matsusaka criterion, Theorem~\ref{thm:tropical-matsusaka-criterion}. Therefore we answer the question posed in \cite[Sec.~7]{BMV11}.

\subsection{Cycle classes and spanning curves}\label{subsec:cycle-classes-spanning-curves}

Let \((X,\Theta)\) be a \(g\)-dimensional principally polarized tropical abelian variety, where we write
\(
X=(\Lambda_X,\Lambda_X^{\prime},[\,\cdot\,,\,\cdot\,]_X).
\)
Since \(\Theta\) is a principal polarization, the associated lattice homomorphism
\(\kappa_{\Theta}\colon \Lambda_X^{\prime}\to \Lambda_X\) is an isomorphism.
Choose a \(\Z\)-basis \(\{\lambda_{X,i}^{\prime}\}_{i=1}^{g}\) of
\(\Lambda_X^{\prime}\), and set
\(\eta_{X,i}\coloneqq\kappa_{\Theta}(\lambda_{X,i}^{\prime})\).
Then \(\{\eta_{X,i}\}_{i=1}^{g}\) is a \(\Z\)-basis of \(\Lambda_X\).
Let \(\{(\lambda_{X,i}^{\prime})^{*}\}_{i=1}^{g}\) and
\(\{\eta_{X,i}^{*}\}_{i=1}^{g}\) be the dual bases.

\begin{lemma}\label{lem:GS25-normal-form}
Follow the notation above. Then the following hold.
\begin{enumerate}[label=\textup{(\roman*)}]
\item One has
\(
c_1(\MO_X(\Theta))
=
\sum_{i=1}^{g}(\lambda_{X,i}^{\prime})^{*}\otimes \eta_{X,i}.
\)
\item One also has
\(
\cyc_X\bigl(([\Theta])^{g-1}\bigr)
=
(g-1)!\sum_{i=1}^{g}\lambda_{X,i}^{\prime}\otimes \eta_{X,i}^{*}.
\)
\end{enumerate}
\end{lemma}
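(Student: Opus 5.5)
Part (i) is a direct translation of the isomorphism \(H^{1,1}(X)\cong\Hom(\Lambda_X^{\prime},\Lambda_X)\) of Subsection~\ref{subsec:tropical-cohomology}. By \eqref{eq:chern-class-matrix} and \eqref{eq:kappaD-matrix}, \(c_1(\MO_X(\Theta))=\sum_{i,j}E^{\Theta}_{i,j}(\lambda_{X,i}^{\prime})^{*}\otimes\eta_{X,j}\), where \(E^{\Theta}\) is the matrix of \(\kappa_{\Theta}\) with respect to the bases \(\{\lambda_{X,i}^{\prime}\}\) and \(\{\eta_{X,j}\}\). Since \(\eta_{X,i}=\kappa_{\Theta}(\lambda_{X,i}^{\prime})\) by construction, we have \(E^{\Theta}=I_g\), and (i) follows immediately.

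For (ii), I will use Proposition~\ref{prop:cyc-compat} inductively, applied \(g-1\) times. Setting \(c\coloneqq c_1(\MO_X(\Theta))\), this gives \(\cyc_X(([\Theta])^{g-1})=c\cap(c\cap\cdots(c\cap\cyc_X([X])))=c^{\cup(g-1)}\cap\cyc_X([X])\); the second equality uses the associativity of cup and cap products, which is visible from \eqref{eq:cup-product-integral-tori} and \eqref{eq:cap-product-integral-tori}. By \eqref{eq:cup-product-integral-tori}, \(c^{\cup(g-1)}=\sum (\lambda_{i_1}^{\prime *}\wedge\cdots\wedge\lambda_{i_{g-1}}^{\prime *})\otimes(\eta_{i_1}\wedge\cdots\wedge\eta_{i_{g-1}})\), with the sum over ordered tuples. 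Collecting terms, this equals \((g-1)!\sum_{k}(\widehat{\lambda^{\prime *}_k})\otimes(\widehat{\eta_k})\), where the hat denotes the wedge of all indices except \(k\) in increasing order. The key point is that the same permutation sign appears in both tensor factors, so the signs cancel.

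Next I cap with \eqref{eq:fundamental-cycle-class}, \(\cyc_X([X])=\varepsilon(\lambda_1^{\prime}\wedge\cdots\wedge\lambda_g^{\prime})\otimes(\eta_1^{*}\wedge\cdots\wedge\eta_g^{*})\), which is legitimate because these are bases compatible with Subsection~\ref{subsec:tropical-cohomology}. Using \eqref{eq:cap-product-integral-tori}, the interior product of \(\widehat{\lambda^{\prime *}_k}\) into \(\lambda_1^{\prime}\wedge\cdots\wedge\lambda_g^{\prime}\) is \(\pm\lambda_k^{\prime}\), with sign \((-1)^{g-k}\) or a similar fixed convention. The interior product of \(\widehat{\eta_k}\) into \(\eta^{*}_1\wedge\cdots\wedge\eta^{*}_g\) produces the identical sign, so the two signs multiply to \(+1\). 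The result is \(\varepsilon(g-1)!\sum_k\lambda_k^{\prime}\otimes\eta_k^{*}\).

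The remaining obstacle is to show \(\varepsilon=1\). Here I apply Lemma~\ref{lem:top-self-intersection-det}\textup{(ii)}: the divisor \(\Theta\) is ample and \(\det(E^{\Theta})=\det I_g=1\), so that lemma forces \(\varepsilon=1\). The one thing to verify is that the sign \(\varepsilon\) appearing there is tied to the same bases used in the present computation. To make sure of this, I will fix the bases of Subsection~\ref{subsec:tropical-cohomology} to be \(\{\lambda_{X,i}^{\prime}\}\) and \(\{\eta_{X,i}\}\) from the start. Care is also needed to use a single consistent sign convention for the interior product across both tensor factors, since that consistency is what makes the sign cancellation in the capping step valid.
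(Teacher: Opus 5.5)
Your proposal is correct and follows the paper's proof essentially step for step. For (i) you read off \(E^{\Theta}=I_g\). For (ii) you expand \(c_1(\MO_X(\Theta))^{g-1}\) via \eqref{eq:cup-product-integral-tori}, cap with \(\cyc_X([X])\) using \eqref{eq:cap-product-integral-tori}, and invoke Lemma~\ref{lem:top-self-intersection-det}\textup{(ii)} with \(\det(E^{\Theta})=1\) to get \(\varepsilon=1\) for the chosen bases. The paper does the same, except that it fixes \(\varepsilon=1\) before the cup/cap computation rather than after.
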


\begin{proof}
Comparing \eqref{eq:kappaD-matrix} with
\[
\eta_{X,i}
=
\kappa_{\Theta}(\lambda_{X,i}^{\prime}),
\qquad
1\le i\le g,
\]
one obtains \(E^{\Theta}=I_g\), where \(I_g\) denotes the \(g\times g\) identity matrix. Hence \eqref{eq:chern-class-matrix} gives assertion \textup{(i)}. We prove \textup{(ii)}. By \eqref{eq:fundamental-cycle-class} and Lemma~\ref{lem:top-self-intersection-det}\textup{(ii)}, one has
\[
\cyc_X([X])
=
(\lambda_{X,1}^{\prime}\wedge\cdots\wedge \lambda_{X,g}^{\prime})
\otimes
(\eta_{X,1}^{*}\wedge\cdots\wedge \eta_{X,g}^{*}),
\]
for the chosen \(\Z\)-bases. By \eqref{eq:cup-product-integral-tori}, one has
\[
c_1(\MO_X(\Theta))^{g-1}
=
(g-1)!\sum_{i=1}^{g}
(\lambda_{X,1}^{\prime})^{*}\wedge\cdots\wedge\widehat{(\lambda_{X,i}^{\prime})^{*}}\wedge\cdots\wedge(\lambda_{X,g}^{\prime})^{*}
\otimes
\eta_{X,1}\wedge\cdots\wedge\widehat{\eta_{X,i}}\wedge\cdots\wedge \eta_{X,g},
\]
where the hat means that the indicated factor is omitted from the wedge product. Hence, by \eqref{eq:cap-product-integral-tori},
\begin{align*}
\cyc_X\bigl(([\Theta])^{g-1}\bigr)
&=
c_1(\MO_X(\Theta))^{g-1}\cap \cyc_X([X]) \\
&=
(g-1)!\sum_{i=1}^{g}
\Bigl(
((\lambda_{X,1}^{\prime})^{*}\wedge\cdots\wedge\widehat{(\lambda_{X,i}^{\prime})^{*}}\wedge\cdots\wedge(\lambda_{X,g}^{\prime})^{*})
\mathbin{\lrcorner}
(\lambda_{X,1}^{\prime}\wedge\cdots\wedge \lambda_{X,g}^{\prime})
\Bigr) \\
&\qquad\qquad\qquad\qquad\otimes
\Bigl(
(\eta_{X,1}\wedge\cdots\wedge\widehat{\eta_{X,i}}\wedge\cdots\wedge \eta_{X,g})
\mathbin{\lrcorner}
(\eta_{X,1}^{*}\wedge\cdots\wedge \eta_{X,g}^{*})
\Bigr) \\
&=
(g-1)!\sum_{i=1}^{g}\lambda_{X,i}^{\prime}\otimes \eta_{X,i}^{*}. \qedhere
\end{align*}
\end{proof}

\begin{lemma}\label{lem:tensor-cycle-decomposition}
Let \(A\) and \(B\) be free abelian groups, let \(M\) be a free abelian group, and let \(\partial\colon A\to B\) be a homomorphism. Let \(c_1,\ldots,c_r\in A\) and \(u_1,\ldots,u_r\in M\). If
\[
\sum_{i=1}^{r}\partial(c_i)\otimes u_i=0
\]
in \(B\otimes_{\Z}M\), then there exist \(d_1,\ldots,d_r\in\ker(\partial)\) such that
\[
\sum_{i=1}^{r}c_i\otimes u_i
=
\sum_{i=1}^{r}d_i\otimes u_i
\]
in \(A\otimes_{\Z}M\).
\end{lemma}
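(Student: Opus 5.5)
Choose a \(\Z\)-basis \(m_1,\dots,m_k\) of the free abelian group \(M\). This is possible because \(M\) is free; if \(M\) has infinite rank, we restrict to the finitely generated, hence free, subgroup spanned by the \(u_i\). We may take this subgroup to be saturated, or simply work inside a free direct summand containing all \(u_i\), so that tensoring with it is compatible with \(B\otimes_{\Z}M\). Expand each \(u_i=\sum_{j} a_{ij}m_j\) with \(a_{ij}\in\Z\). Then
\[
\sum_{i}c_i\otimes u_i=\sum_j\Bigl(\sum_i a_{ij}c_i\Bigr)\otimes m_j,
\qquad
\sum_i\partial(c_i)\otimes u_i=\sum_j\partial\Bigl(\sum_i a_{ij}c_i\Bigr)\otimes m_j .
\]
Since \(M\) is free with basis \(\{m_j\}\), we have \(B\otimes_{\Z}M\cong\bigoplus_j B\otimes m_j\). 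The hypothesis therefore says \(\partial(e_j)=0\) for each \(j\), where \(e_j\coloneqq\sum_i a_{ij}c_i\). In other words, \(e_j\in\ker(\partial)\), and \(\sum_i c_i\otimes u_i=\sum_j e_j\otimes m_j\).

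The remaining step is to rewrite \(\sum_j e_j\otimes m_j\) in the form \(\sum_i d_i\otimes u_i\) with \(d_i\in\ker(\partial)\), indexed by the original \(u_i\). This is the only real obstacle, since the \(m_j\) need not be integral combinations of the \(u_i\). The remedy is to choose the basis adapted to the \(u_i\), so that no such rewriting is needed.

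To do this, let \(N\subseteq M\) be the subgroup generated by \(u_1,\dots,u_r\). Choose a subset of the \(u_i\) forming a \(\Z\)-basis of \(N\). This fails in general, since generators of a free group need not contain a basis. Instead, argue via the presentation: let \(\pi\colon\Z^r\to M\), \(\epsilon_i\mapsto u_i\), and let \(c\coloneqq\sum_i c_i\otimes\epsilon_i\in A\otimes\Z^r\). The hypothesis states \((\partial\otimes\pi)(c)=0\).

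Use the diagram \(A\otimes\Z^r\to A\otimes M\) and the maps induced by \(\partial\). Consider \(K=\ker\pi\). Since \(\Z^r\) and \(\operatorname{im}\pi\) are free, \(0\to K\to\Z^r\to\operatorname{im}\pi\to0\) splits. By flatness of free \(B\) (respectively \(A\)), \((\partial\otimes 1)(c)\in B\otimes\Z^r\) lies in \(B\otimes K\). Splitting \(\Z^r=K\oplus P\) with \(P\cong\operatorname{im}\pi\), decompose \(c=c_K+c_P\). Then \((\partial\otimes1)(c_P)\) maps injectively into \(B\otimes M\); here we use that \(\operatorname{im}\pi\otimes B\to M\otimes B\) is injective, which holds after replacing \(M\) by a free module in which \(\operatorname{im}\pi\) is a direct summand up to finite index, and \(B\) is torsion-free. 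Hence \(c_P\in\ker(\partial)\otimes P\), and \(c_K\) maps to \(0\) in \(A\otimes M\). Writing \(c_P=\sum_i d_i\otimes\epsilon_i\) with \(d_i\in\ker\partial\) (possible since \(P\) is a free summand of \(\Z^r\), with coordinates taken along the \(\epsilon_i\)-projection onto \(P\)) and applying \(1\otimes\pi\) gives \(\sum_i c_i\otimes u_i=\sum_i d_i\otimes u_i\).

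The delicate points are the injectivity of \(\ker\partial\otimes P\to A\otimes P\) and of \(B\otimes\operatorname{im}\pi\to B\otimes M\). Both follow from torsion-freeness and flatness of free abelian groups.
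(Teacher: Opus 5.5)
Your final argument via the splitting \(\Z^r=K\oplus P\) is correct, but the ``only real obstacle'' you identify in your first approach does not actually arise, and the paper simply completes that first approach. Take \(m_1,\dots,m_s\) to be a \(\Z\)-basis of \(N=\langle u_1,\dots,u_r\rangle\) itself (the paper's \(M_0\)), not of \(M\) or of a saturation of \(N\). Each \(m_j\) then lies in \(N\), so \(m_j=\sum_i b_{i,j}u_i\) with \(b_{i,j}\in\Z\). You only need some basis of \(N\) to be an integral combination of the generators, not a subset of the generators to be a basis. Setting \(d_i\coloneqq\sum_j b_{i,j}e_j\in\ker(\partial)\) gives \(\sum_i d_i\otimes u_i=\sum_j e_j\otimes m_j=\sum_i c_i\otimes u_i\).

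What pushed you toward saturation was the injectivity of \(B\otimes N\to B\otimes M\). That is just flatness of the free group \(B\), the same fact you invoke later, so no direct-summand condition and no ``replacing \(M\)'' is needed.

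In fact the two proofs are one construction. The \(b_{i,j}\) define a section \(\sigma\colon N\to\Z^r\) of \(\pi\colon\Z^r\to N\). The composite \(\sigma\circ\pi\) is the projection of \(\Z^r\) onto \(P\coloneqq\sigma(N)\) along \(K\). The paper's element \(\sum_i d_i\otimes\epsilon_i\) is exactly \((1\otimes\sigma\pi)(c)=c_P\). Your formulation is coordinate-free and makes the splitting explicit. The paper's matrix version gets there in a few lines without the tensor-exactness bookkeeping.
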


\begin{proof}
Let \(M_0\subseteq M\) be the subgroup generated by \(u_1,\ldots,u_r\), and fix a \(\Z\)-basis \(\{m_j\}_{j=1}^{s}\) of \(M_0\). Write
\[
u_i=\sum_{j=1}^{s}a_{j,i}m_j,
\qquad
a_{j,i}\in\Z.
\]
Since \(B\) is free and \(m_1,\ldots,m_s\) are \(\Z\)-linearly independent in \(M\), the equality
\[
0
=
\sum_{i=1}^{r}\partial(c_i)\otimes u_i
=
\sum_{j=1}^{s}
\partial\left(
\sum_{i=1}^{r}a_{j,i}c_i
\right)
\otimes m_j
\]
implies
\[
\partial\left(
\sum_{i=1}^{r}a_{j,i}c_i
\right)
=
0
\]
for every \(1\le j\le s\).
Hence
\(
\sum_{i=1}^{r}a_{j,i}c_i\in\ker(\partial)
\)
for every \(1\le j\le s\). Since \(u_1,\ldots,u_r\) generate \(M_0\), write
\[
m_j=\sum_{i=1}^{r}b_{i,j}u_i,
\qquad
b_{i,j}\in\Z.
\]
For \(1\le i\le r\), set
\[
d_i
=
\sum_{j=1}^{s}
b_{i,j}
\left(
\sum_{k=1}^{r}a_{j,k}c_k
\right)
\in\ker(\partial).
\]
Then
\[
\sum_{i=1}^{r}c_i\otimes u_i
=
\sum_{j=1}^{s}
\left(
\sum_{i=1}^{r}a_{j,i}c_i
\right)
\otimes m_j
=
\sum_{i=1}^{r}d_i\otimes u_i.
\]
\end{proof}

\begin{lemma}\label{lem:C-must-be-spanning-in-proof}
Let \(C\) be an effective tropical \(1\)-cycle on \(X\) such that \(|C|\) is connected. If
\((g-1)!C\equiv_{\hom}([\Theta])^{g-1}\),
then \(C\) is a spanning curve on \(X\).
\end{lemma}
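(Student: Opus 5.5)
By Lemma~\ref{lem:cycle-from-smooth-curve} and Proposition~\ref{prop:universal-property}, I would pass from cycle classes to the tangent directions of $|C|$. Since $(g-1)!C\equiv_{\hom}([\Theta])^{g-1}$ and $H_{1,1}(X)$ is torsion-free (it is $\Lambda_X'\otimes(\Lambda_X)^*$), Lemma~\ref{lem:GS25-normal-form}\textup{(ii)} gives
\[
\cyc_X(C)=\sum_{i=1}^g\lambda_{X,i}'\otimes\eta_{X,i}^*.
\]
In the notation of Definition~\ref{def:T-cycle-class}, this says that $T_C\colon(\Lambda_X)_{\R}\to(\Lambda_X')_{\R}$ equals $\kappa_\Theta^{-1}$, and is therefore an isomorphism. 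So it suffices to show that the image of the $(\Lambda_X)^*$-component of $T_C$ lies in the span of the tangent directions of $|C|$. I would do this either through the "dual" description of $T_C$, or by computing $\cyc_X(C)$ via a pushforward.

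\textbf{Pushforward computation.} Apply Lemma~\ref{lem:cycle-from-smooth-curve} to get a smooth tropical curve $\Gamma$ and a morphism $\chi\colon\Gamma\to X$ with $\chi_*[\Gamma]=C$. Let $f\colon\Jac(\Gamma)\to X$ be the homomorphism from Proposition~\ref{prop:universal-property}; by Remark~\ref{rem:universal-property-sharp}, $f^\sharp=\chi^*$ and $f_\sharp=\chi_*$. By the Abel--Jacobi factorization, $C=\chi_*[\Gamma]$ is a translate of $f_*(\phi_q)_*[\Gamma]$, and translation does not change $\cyc_X$ (translations act trivially on $H_{1,1}$ of a torus). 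Lemma~\ref{lem:T-pushforward} then gives
\[
T_C=f_\sharp\circ T_{(\phi_q)_*[\Gamma]}\circ f^\sharp .
\]
Hence $\im(T_C)\subseteq\im(f_\sharp)_{\R}$. Since $T_C$ is surjective, $f_\sharp\otimes\R$ is surjective onto $(\Lambda_X')_{\R}$. This means the real subtorus $f(\Jac(\Gamma))$ equals all of $X$.

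\textbf{From surjectivity to spanning.} It remains to show that the tangent directions of $|C|$ span $\R^g$. Let $V\subseteq(\Lambda_X)^*_{\R}$ be the span of the directions $\boldsymbol u_{\tau,v}$. I claim that every vector $\chi_*(\gamma)$, for $\gamma\in H_1(\Gamma,\Z)$, lies in $V$. Indeed, lifting a cycle $\gamma$ to the universal cover $(\Lambda_X)^*_{\R}$, its image is a sum of edge displacements $\ell(\tau)\boldsymbol u_{\tau,v}$ (with the weights absorbed by the construction), each of which lies in $V$. The lattice $\im(f_\sharp)$ spans $(\Lambda_X')_{\R}$, which is identified with $(\Lambda_X)^*_{\R}$, so $V$ is everything. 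This gives spanning in the sense of Definition~\ref{def:spanning-curve}.

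\textbf{Main obstacle.} The delicate step is the identification used in the last paragraph: that $f_\sharp(\gamma)$, viewed inside $(\Lambda_X)^*_{\R}$, really equals the sum of edge displacement vectors along $\chi(\gamma)$. This should follow from the construction of the Abel--Jacobi map, where integrating $\chi^*\omega$ along $\gamma$ is the sum of $\ell(\tau)\,\omega(w_\tau\boldsymbol u_{\tau,v})/w_\tau$ over the edges. Alternatively, one can bypass it with Lemma~\ref{lem:spanning-implies-surjective}-type reasoning. Suppose $V\neq$ everything. Then the lifted curve stays in $V$ plus the lattice. Connectedness of $|C|$ then forces $|C|$ to lie in a translate of the closure of the image of $V$, which would contain $f(\Jac(\Gamma))=X$ only if $V$ is full. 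I would try the direct displacement computation first.
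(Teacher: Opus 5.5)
Your argument is correct, but it takes a different route from the paper's. The paper never parametrizes \(C\). It writes the tropical \((1,1)\)-chain \(z_C=\sum_\tau c_{\tau,v(\tau)}\otimes\omega_C(\tau)\boldsymbol{u}_{\tau,v(\tau)}\), which is closed by balancing. It then uses Lemma~\ref{lem:tensor-cycle-decomposition} to replace the chains by cycles, obtaining \(\cyc_X(C)=\sum_\tau\omega_C(\tau)\gamma_\tau\otimes\boldsymbol{u}_{\tau,v(\tau)}\). Thus the \((\Lambda_X)^*\)-factor of \(\cyc_X(C)\) lies in the span \(V\) of the tangent directions. The associated map therefore kills every \(v_0\neq0\) annihilated by \(V\), contradicting \(N^C=I_g\); so the paper uses injectivity of \(T_C\).

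You control the other tensor factor instead. Lemma~\ref{lem:cycle-from-smooth-curve} applies, since \(\cyc_X(C)\neq0\) forces \(C\neq0\). Together with Proposition~\ref{prop:universal-property} and Lemma~\ref{lem:T-pushforward}, it gives \(\im(T_C)\subseteq\im(f_\sharp)_{\R}\). Surjectivity of \(T_C=\kappa_\Theta^{-1}\) then makes \(\im(f_\sharp)\) span \((\Lambda_X^{\prime})_{\R}\), and it remains to see that the periods lie in \(V\).

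The step you flag as the obstacle is a one-line computation with the paper's own definitions. Take \(\gamma=\sum_\tau a_\tau c_{\widetilde\tau,v(\tau)}\in H_1(\Gamma,\Z)\) and \(\omega\in\Lambda_X\). Combining \eqref{eq:dual-hom-pairing-compat}, Remark~\ref{rem:universal-property-sharp}, \eqref{eq:integral-along-edge}, \(\ell(\widetilde\tau)=\ell(\tau)/w_\tau\) and \eqref{eq:chi-v-local-map} gives
\[
\bigl[\omega,f_\sharp\gamma\bigr]_X=\int_\gamma\chi^*\omega=\sum_\tau a_\tau\,\frac{\ell(\tau)}{w_\tau}\,w_\tau\,\boldsymbol{u}_{\tau,v(\tau)}(\omega).
\]
So \(f_\sharp\gamma=\sum_\tau a_\tau\ell(\tau)\boldsymbol{u}_{\tau,v(\tau)}\in V\) inside \((\Lambda_X)^*_{\R}\supseteq\Lambda_X^{\prime}\), and hence \(V\) is everything.

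Your route trades the chain-level description of the cycle class map and the tensor lemma for the smooth parametrization and the Jacobian. There is no circularity, since none of those results uses this lemma. It also gives surjectivity of \(f\) as a by-product. The paper's argument is intrinsic to \(C\) and needs no auxiliary curve.

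Drop your fallback sketch, though. The subspace \(V\) is rational with respect to \((\Lambda_X)^*\), but it need not be rational with respect to \(\Lambda_X^{\prime}\). So the closure of its image in \(X\) can be all of \(X\) even when \(V\) is a proper subspace, and the ``closure'' step proves nothing. The period computation above is what actually closes the argument.
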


\begin{proof}
By Lemma~\ref{lem:GS25-normal-form}\textup{(ii)}, one has
\begin{equation}\label{eq:theta-power-normal-form}
\cyc_X\bigl(([\Theta])^{g-1}\bigr)
=
(g-1)!\sum_{i=1}^{g}\lambda_{X,i}^{\prime}\otimes\eta_{X,i}^{*}.
\end{equation}
Follow the convention in Definition~\ref{def:translation-chart} and give a suitable global face structure on \(|C|\). By \cite[Sec.~3E]{GS23}, the cycle class \(\cyc_X(C)\) lies in
\(H_{1,1}(X)\cong \Lambda_X^{\prime}\otimes_{\Z}(\Lambda_X)^*\). Since
\(\{\lambda_{X,i}^{\prime}\otimes\eta_{X,j}^{*}\}_{1\le i,j\le g}\)
is a \(\Z\)-basis of \(\Lambda_X^{\prime}\otimes_{\Z}(\Lambda_X)^*\), there are uniquely determined integers
\(N^C_{ij}\in\Z\), \(1\le i,j\le g\), such that
\begin{equation}\label{eq:cycle-class-C-matrix}
\cyc_X(C)
=
\sum_{i=1}^{g}\sum_{j=1}^{g}
N^C_{ij}\lambda_{X,i}^{\prime}\otimes\eta_{X,j}^{*}.
\end{equation}
Set \(N^C=(N^C_{ij})_{1\le i,j\le g}\in M_g(\Z)\).

Since \((g-1)!C\equiv_{\hom}([\Theta])^{g-1}\), \eqref{eq:theta-power-normal-form} and~\eqref{eq:cycle-class-C-matrix} give
\[
(g-1)!N^C=(g-1)!I_g.
\]
Hence
\(N^C=I_g\).

Suppose to the contrary that \(C\) is not a spanning curve. We show that
\(\rank_{\R}N^C<g\).  For every edge
\(\tau\) of \(|C|\), fix a vertex \(v(\tau)\) of \(\tau\). Let
\(\boldsymbol{u}_{\tau,v(\tau)}\in(\Lambda_X)^*\) be the tangent direction of
\(\tau\) out of \(v(\tau)\), and let \(\omega_C(\tau)\in\Z_{>0}\) be the weight
of \(\tau\). By Definition~\ref{def:spanning-curve}, there exists a nonzero element \(v_0\in(\Lambda_X)_{\R}\) such that
\[
\boldsymbol{u}_{\tau,v(\tau)}(v_0)=0,
\]
for every edge \(\tau\) of \(|C|\).
For each edge \(\tau\), let \(c_{\tau,v(\tau)}\) be the oriented \(1\)-simplex directed out of \(v(\tau)\), as in Definition~\ref{def:tropical-curve}, and set
\[
z_C
\coloneqq
\sum_{\tau}
c_{\tau,v(\tau)}
\otimes
\omega_C(\tau)\boldsymbol{u}_{\tau,v(\tau)}
\in
C_1(|C|,\Z)\otimes_{\Z}(\Lambda_X)^*.
\]
By \cite[Sec.~2]{IKMZ19} and \cite[Sec.~6]{GS23}, in this description the boundary map on tropical \((1,1)\)-chains is
\[
\partial\otimes\id_{(\Lambda_X)^*}
\colon
C_1(|C|,\Z)\otimes_{\Z}(\Lambda_X)^*
\longrightarrow
C_0(|C|,\Z)\otimes_{\Z}(\Lambda_X)^*,
\]
and the balancing condition at all vertices of $|C|$ gives
\[
(\partial\otimes\id_{(\Lambda_X)^*})(z_C)=0.
\]
By the construction of the tropical cycle class map in \cite[Sec.~3E]{GS23}, \(\cyc_X(C)\) is the image of the homology class of \(z_C\). Applying Lemma~\ref{lem:tensor-cycle-decomposition} with
\[
A=C_1(|C|,\Z),
\qquad
B=C_0(|C|,\Z),
\qquad
M=(\Lambda_X)^*,
\]
there exists \(d_\tau\in\ker(\partial)\) for every edge \(\tau\) such that
\[
z_C
=
\sum_{\tau}
d_\tau\otimes
\omega_C(\tau)\boldsymbol{u}_{\tau,v(\tau)}.
\]
Since \(|C|\) is one-dimensional, \(\ker(\partial)=H_1(|C|,\Z)\). Let \(\gamma_\tau\in H_1(X,\Z)=\Lambda_X^{\prime}\) be the homology class of \(d_\tau\). Then
\[
\cyc_X(C)
=
\sum_{\tau}\omega_C(\tau)\gamma_{\tau}\otimes\boldsymbol{u}_{\tau,v(\tau)}.
\]
Then \(\cyc_X(C)\) defines a $\R$-linear map

\[
\Phi_C\colon(\Lambda_X)_{\R}\to(\Lambda_X^{\prime})_{\R},
\qquad
x\longmapsto
\sum_{\tau}\omega_C(\tau)\boldsymbol{u}_{\tau,v(\tau)}(x)\gamma_{\tau}.
\]

Since \(\boldsymbol{u}_{\tau,v(\tau)}(v_0)=0\) for every \(\tau\), one has \(\Phi_C(v_0)=0\). Hence \(\Phi_C\) is not injective. On the other hand, \eqref{eq:cycle-class-C-matrix} gives

\[
\Phi_C(\eta_{X,j})
=
\sum_{i=1}^{g}N^C_{ij}\lambda_{X,i}^{\prime},
\qquad
1\le j\le g.
\]

Therefore, with respect to the bases \(\{\eta_{X,j}\}_{j=1}^{g}\) and \(\{\lambda_{X,i}^{\prime}\}_{i=1}^{g}\), the matrix of \(\Phi_C\) is \(N^C\). Since \(\Phi_C\) is not injective, one has \(\rank_{\R}N^C<g\). This contradicts \(N^C=I_g\). Therefore \(C\) is a spanning curve on \(X\).
\end{proof}

\subsection{Splitting maps}\label{subsec:tropical-splitting-map}

We follow the notation of Subsection~\ref{subsec:cycle-classes-spanning-curves}.
Let \(\Gamma\) be a smooth tropical curve of genus \(r\ge2\). Set
\(J\coloneqq\Jac(\Gamma)\), and let \(\Theta_{\Gamma}\) be the canonical principal polarization on \(J\).
Fix \(q\in\Gamma\), let \(\phi_q\colon\Gamma\to J\) be the tropical Abel--Jacobi map with basepoint \(q\), and let
\(\chi\colon\Gamma\to X\) be a morphism of rational polyhedral spaces.
Let \(f=(f^\sharp,f_\sharp)\colon J\to X\) be the homomorphism induced by \(\chi\) as in Proposition~\ref{prop:universal-property}, so that the following diagram commutes:
\begin{equation}\label{eq:abel-jacobi-factorization}
\begin{tikzcd}[ampersand replacement=\&, column sep=large]
\Gamma
\arrow[r,"\phi_q"]
\arrow[dr,"t_{-\chi(q)}\circ\chi"']
\&
J
\arrow[d,"f"]
\\
\&
X.
\end{tikzcd}
\end{equation}

Choose a \(\Z\)-basis \(\{\alpha_k\}_{k=1}^{r}\) of \(H_1(\Gamma,\Z)\), and set
\(\omega_k\coloneqq\kappa_{\Theta_{\Gamma}}(\alpha_k)\) for \(1\le k\le r\).
Since \(\Theta_{\Gamma}\) is a principal polarization,
\(\{\omega_k\}_{k=1}^{r}\) is a \(\Z\)-basis of \(H^0(J,\Omega_J)\).
Let \(\{\omega_k^{*}\}_{k=1}^{r}\) be the dual basis of
\(\{\omega_k\}_{k=1}^{r}\).
Then there exist unique integers \(a_{i,k}\in\Z\) and \(b_{k,j}\in\Z\) such that
\begin{equation}\label{eq:fsharp-fsharp-expansions}
f_\sharp(\alpha_k)=\sum_{i=1}^{g}a_{i,k}\lambda_{X,i}^{\prime},
\qquad 1\le k\le r,
\qquad
f^\sharp(\eta_{X,j})=\sum_{k=1}^{r}b_{k,j}\omega_k,
\qquad 1\le j\le g.
\end{equation}

Set
\begin{equation}\label{eq:def-i}
i\coloneqq\varphi_{\Theta_{\Gamma}}^{-1}\circ f^\vee\circ\varphi_{\Theta}\colon X\to J.
\end{equation}

\medskip
We recall the following special case of the tropical Poincar\'e formula
\cite[Thm.~A]{GS23}.

\begin{thm}\label{Thm:Poincare}
Set \([\widetilde W_1]\coloneqq(\phi_q)_*[\Gamma]\). Then
\[
(r-1)![\widetilde W_1]\equiv_{\hom}([\Theta_\Gamma])^{r-1}.
\]
\end{thm}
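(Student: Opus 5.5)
The plan is to verify the homological equivalence directly on cycle classes. The target is Lemma~\ref{lem:GS25-normal-form}\textup{(ii)}, applied to \((J,\Theta_\Gamma)\) in place of \((X,\Theta)\) with the bases \(\{\alpha_k\}\) of \(\Lambda_J'=H_1(\Gamma,\Z)\) and \(\{\omega_k=\kappa_{\Theta_\Gamma}(\alpha_k)\}\) of \(\Lambda_J=H^0(\Gamma,\Omega_\Gamma)\). That lemma gives
\[
\cyc_J\bigl(([\Theta_\Gamma])^{r-1}\bigr)=(r-1)!\sum_{k=1}^{r}\alpha_k\otimes\omega_k^{*},
\]
so it suffices to show
\[
\cyc_J\bigl((\phi_q)_*[\Gamma]\bigr)=\sum_{k=1}^{r}\alpha_k\otimes\omega_k^{*}\quad\text{in }H_{1,1}(J)\cong\Lambda_J'\otimes(\Lambda_J)^{*}.
\]
Equivalently, the associated \(\R\)-linear map \(T_{(\phi_q)_*[\Gamma]}\colon(\Lambda_J)_\R\to(\Lambda_J')_\R\) of Definition~\ref{def:T-cycle-class} should be \(\kappa_{\Theta_\Gamma}^{-1}\).

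To compute the left side I will use the chain-level description of the cycle class from \cite[Sec.~3E]{GS23}, as in the proof of Lemma~\ref{lem:C-must-be-spanning-in-proof}. Fix a global face structure on \(\Gamma\) and an endpoint \(v(\tau)\) of each edge. On each edge \(\tau\), \(\phi_q\) is affine, and its differential sends the tangent functional \(\widetilde{\boldsymbol u}_{\tau,v(\tau)}\) to the element \(\omega\mapsto\widetilde{\boldsymbol u}_{\tau,v(\tau)}(\omega_{v(\tau)})\) of \((\Lambda_J)^{*}\); this is read off from \eqref{eq:integral-along-edge}. Edges on which this functional vanishes (bridges) are contracted and contribute nothing. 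On the remaining edges, the weight \(1\) of \([\Gamma]\) times the local index equals the weight of the pushforward times the primitive direction. Hence \(\cyc_J((\phi_q)_*[\Gamma])\) is represented by the \((1,1)\)-chain
\[
z=\sum_{\tau}c_{\tau,v(\tau)}\otimes\bigl(\omega\mapsto\widetilde{\boldsymbol u}_{\tau,v(\tau)}(\omega_{v(\tau)})\bigr)\in C_1(\Gamma,\Z)\otimes(\Lambda_J)^{*},
\]
pushed forward along \(\phi_q\). I will work with this chain on \(\Gamma\) itself, using Proposition~\ref{prop:GS19-toolkit}\textup{(iii)}-type functoriality. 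The map \(H_1(\Gamma,\Z)\to H_1(J,\Z)\) induced by \(\phi_q\) is the identity on \(\Lambda_J'\), by Remark~\ref{rem:universal-property-sharp} applied with \(X=J\) and \(\chi=\phi_q\), in which case \(f=\id\).

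Next I reinterpret \(z\) as a linear map. For each \(\omega\in\Lambda_J\), the chain \(\sum_\tau\widetilde{\boldsymbol u}_{\tau,v(\tau)}(\omega_{v(\tau)})\,c_{\tau,v(\tau)}\) is a cycle, since the outgoing slopes of a global \(1\)-form sum to zero at each vertex; this is the same balancing that gives \((\partial\otimes\id)z=0\). Because \((\Lambda_J)^{*}\) is free, \(H_1\) of \(C_\bullet(\Gamma)\otimes(\Lambda_J)^{*}\) is \(H_1(\Gamma,\Z)\otimes(\Lambda_J)^{*}\). The class of \(z\) therefore corresponds to the map \(\omega\mapsto\sum_\tau\widetilde{\boldsymbol u}_{\tau,v(\tau)}(\omega_{v(\tau)})c_{\tau,v(\tau)}\). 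By the characterization \eqref{eq:kappa-theta-gamma-characterization}, this cycle is exactly \(\kappa_{\Theta_\Gamma}^{-1}(\omega)\). Applying it to \(\omega=\omega_k\) gives \(\alpha_k\), so the class is \(\sum_k\alpha_k\otimes\omega_k^{*}\). Multiplying by \((r-1)!\) and comparing with Lemma~\ref{lem:GS25-normal-form}\textup{(ii)} yields the claim.

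The main obstacle is the first step: justifying that \(\cyc_J((\phi_q)_*[\Gamma])\) is represented by \(z\) with exactly these coefficients. This requires tracking the local indices \(m(x)\) in Definition~\ref{def:proper-pushforward} against the primitive directions. It also requires handling edges that \(\phi_q\) contracts, and edges that two different edges of \(\Gamma\) map onto, where the weights add. My approach is to note that the product (weight)\(\times\)(primitive direction) is always \(d\phi_q(\widetilde{\boldsymbol u}_\tau)\) by Lemma~\ref{lem:tangent-direction-local-index}, so overlapping images simply add chains. A zero differential contributes zero, consistent with \(\omega\) vanishing on bridges. Failing a clean chain-level argument, I would fall back on citing \cite[Thm.~A]{GS23} directly, since the statement is its special case.
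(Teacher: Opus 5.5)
Your argument is correct, but it is not the paper's route, because the paper gives no argument at all. Theorem~\ref{Thm:Poincare} is simply quoted as a special case of the Gross--Shokrieh Poincar\'e formula \cite[Thm.~A]{GS23}. The paper then \emph{deduces} from it, via Lemma~\ref{lem:GS25-normal-form}(ii), the identity $\cyc_J([\widetilde W_1])=\sum_k\alpha_k\otimes\omega_k^{*}$, i.e.\ $T_{[\widetilde W_1]}=\kappa_{\Theta_\Gamma}^{-1}$.

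You run this in the opposite direction. You compute $T_{[\widetilde W_1]}=\kappa_{\Theta_\Gamma}^{-1}$ directly and then invoke Lemma~\ref{lem:GS25-normal-form}(ii). This is not circular, since that lemma only uses positivity (Lemma~\ref{lem:top-self-intersection-det}(ii)) to fix $\varepsilon=1$. The computation itself is sound:
\begin{itemize}
\item For each $\omega$, the coefficient chain $\sum_\tau\widetilde{\boldsymbol u}_{\tau,v(\tau)}(\omega_{v(\tau)})\,c_{\tau,v(\tau)}$ is a cycle by smoothness of $\Gamma$. It has the same edge slopes as $\omega$, so it equals $\kappa_{\Theta_\Gamma}^{-1}(\omega)$ by \eqref{eq:kappa-theta-gamma-characterization}.
\item Uniqueness in Proposition~\ref{prop:universal-property} gives $f=\id$ for $\chi=\phi_q$, so $(\phi_q)_*=\id$ on $H_1$ by Remark~\ref{rem:universal-property-sharp}.
\item Your weight bookkeeping shows that $\phi_q$ applied to your chain represents $\cyc_J((\phi_q)_*[\Gamma])$. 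The reasons are that weight times primitive direction equals $d\phi_q(\widetilde{\boldsymbol u}_\tau)$, contracted edges contribute $0$, and $c\otimes u=(-c)\otimes(-u)$, so overlapping images add.
\end{itemize}
After this you only need ordinary functoriality of $H_1(-;\Z)\otimes(\Lambda_J)^{*}$ under $\phi_q$. You do not need a tropical pushforward compatibility for $\phi_q$, which Proposition~\ref{prop:GS19-toolkit}(iii) would not supply anyway, since it covers only homomorphisms of tori.

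What the citation buys is the full formula for all $[\widetilde W_d]$. More to the point, it spares you the normalization of the chain-level cycle class. In the paper's own use of that description (Lemma~\ref{lem:C-must-be-spanning-in-proof}) only injectivity matters, whereas in your proof the overall sign does. You are therefore relying on the representative from \cite[Sec.~3E]{GS23} being compatible with the identifications \eqref{eq:fundamental-cycle-class} and \eqref{eq:cap-product-integral-tori}. A cheap consistency check is to cap your answer with $c_1(\MO_J(\Theta_\Gamma))=\sum_k\alpha_k^{*}\otimes\omega_k$. This gives $(\Theta_\Gamma\cdot[\widetilde W_1])_J=r>0$, as one expects for an ample divisor against a nonzero effective curve.

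What your route buys is a self-contained proof of exactly the case the paper uses downstream.
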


\begin{lemma}\label{lem:W1-pushforward-chi}
One has \(f_*[\widetilde W_1]\equiv_{\hom}\chi_*[\Gamma]\).
\end{lemma}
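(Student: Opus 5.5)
The plan is to prove the stronger cycle-level equality \(f_*[\widetilde W_1]=\chi_*[\Gamma]\) up to a translation, and then check that translation does not change homology classes. By the commutative diagram \eqref{eq:abel-jacobi-factorization}, \(f\circ\phi_q=t_{-\chi(q)}\circ\chi\) as morphisms of rational polyhedral spaces \(\Gamma\to X\). All maps are proper, since \(\Gamma\) is compact and \(f\), \(\phi_q\), \(\chi\) are continuous. Functoriality of proper pushforward of tropical cycles, \((h\circ k)_*=h_*\circ k_*\), as in \cite[Sec.~3]{GS19}, then gives
\[
f_*[\widetilde W_1]=f_*(\phi_q)_*[\Gamma]=(f\circ\phi_q)_*[\Gamma]=(t_{-\chi(q)})_*\chi_*[\Gamma].
\]
If one does not want to quote functoriality, I would verify it directly from Definition~\ref{def:proper-pushforward}. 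At a generic point the local indices \(m\) are multiplicative under composition, being indices of lattice maps on integral tangent spaces. The fibre sums then compose, so the two cycles agree outside a locally polyhedral set of dimension \(0\), hence agree as \(1\)-cycles.

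It remains to show \((t_x)_*A\equiv_{\hom}A\) for a tropical \(1\)-cycle \(A\) and \(x\in X\). By Proposition~\ref{prop:GS19-toolkit}\textup{(iii)} (or its analogue for morphisms of rational polyhedral spaces), this is \(\cyc_X((t_x)_*A)=(t_x)_*\cyc_X(A)\). Under \(H_{1,1}(X)\cong\Lambda_X'\otimes(\Lambda_X)^*\), translation acts trivially: \(t_x\) is homotopic to the identity through the translations \(t_{sx}\), and it induces the identity on \(\Omega_X\) and on \(H_1(X,\Z)\).

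A more concrete route uses the description in the proof of Lemma~\ref{lem:C-must-be-spanning-in-proof}. There \(\cyc_X(A)=\sum_\tau \omega_A(\tau)\gamma_\tau\otimes\boldsymbol u_{\tau}\), built from the edge tangent directions, the weights, and the homology classes of the cycles \(d_\tau\). Translation preserves tangent directions and weights. It also preserves the classes \(\gamma_\tau\) in \(H_1(X,\Z)\), because the translate of a loop is homotopic to the loop. So \(\cyc_X\) is unchanged.

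The main obstacle is justifying the two functoriality facts rigorously: pushforward of cycles under a composite, and invariance of \(\cyc_X\) under translation. The rest is formal from the commutative diagram. I would handle the first by the local-index computation above, and the second by the explicit chain-level description of \(\cyc_X\) from \cite[Sec.~3E]{GS23}.
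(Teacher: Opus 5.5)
Your proposal is correct and follows the paper's proof. The paper likewise reads off \(f_*[\widetilde W_1]=(t_{-\chi(q)})_*\chi_*[\Gamma]\) from the Abel--Jacobi factorization and then uses that translation acts as the identity on \(H_{1,1}(X)\); your functoriality and translation-invariance arguments just spell out what the paper takes for granted, noting only that Proposition~\ref{prop:GS19-toolkit}\textup{(iii)} as stated covers homomorphisms of integral tori, so for \(t_x\) you do need the general pushforward compatibility you mention.
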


\begin{proof}
By \eqref{eq:abel-jacobi-factorization}, one has
\(f_*[\widetilde W_1]=(t_{-\chi(q)})_*\chi_*[\Gamma]\).
Since \(t_{-\chi(q)}\) induces the identity on \(H_{1,1}(X)\), the assertion follows.
\end{proof}

By Theorem~\ref{Thm:Poincare} and Lemma~\ref{lem:GS25-normal-form}\textup{(ii)}, one has
\begin{equation}\label{eq:cycle-W1-normal-form}
\cyc_J([\widetilde W_1])
=
\sum_{k=1}^{r}\alpha_k\otimes\omega_k^{*}.
\end{equation}
By Proposition~\ref{prop:GS19-toolkit}\textup{(iii)} and Lemma~\ref{lem:push-PPTAV-en}, one obtains
\begin{equation}\label{eq:pushforward-W1-cycle-class}
\cyc_X\bigl(f_*[\widetilde W_1]\bigr)
=
\sum_{i,j=1}^{g}
\Bigl(\sum_{k=1}^{r}a_{i,k}b_{k,j}\Bigr)
\lambda_{X,i}^{\prime}\otimes\eta_{X,j}^{*}.
\end{equation}

\begin{lemma}\label{lem:splitting-map-identity}
If
\(
(g-1)!\chi_*[\Gamma]\equiv_{\hom}([\Theta])^{g-1},
\)
then \(f\circ i=\id_X\).
\end{lemma}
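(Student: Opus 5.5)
The plan is to check the identity $f\circ i=\id_X$ on the lattice $\Lambda_X^{\prime}$, i.e.\ to show $(f\circ i)_\sharp=\id_{\Lambda_X^{\prime}}$, and then obtain $(f\circ i)^\sharp=\id_{\Lambda_X}$ from the nondegenerate pairing. The hypothesis enters only through cycle classes: the Poincar\'e formula and the pushforward formula \eqref{eq:pushforward-W1-cycle-class} express $\cyc_X(\chi_*[\Gamma])$ through the matrices $A=(a_{i,k})$ and $B=(b_{k,j})$ of \eqref{eq:fsharp-fsharp-expansions}. The same product $AB$ turns out to be the matrix of $(f\circ i)_\sharp$.

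\emph{Step 1: $AB=I_g$.} By Lemma~\ref{lem:W1-pushforward-chi}, $\cyc_X(\chi_*[\Gamma])=\cyc_X(f_*[\widetilde W_1])$. By \eqref{eq:pushforward-W1-cycle-class} this equals $\sum_{i,j}(AB)_{ij}\lambda_{X,i}^{\prime}\otimes\eta_{X,j}^{*}$. By Lemma~\ref{lem:GS25-normal-form}\textup{(ii)}, the hypothesis $(g-1)!\chi_*[\Gamma]\equiv_{\hom}([\Theta])^{g-1}$ reads
\[
(g-1)!\sum_{i,j}(AB)_{ij}\lambda_{X,i}^{\prime}\otimes\eta_{X,j}^{*}=(g-1)!\sum_i\lambda_{X,i}^{\prime}\otimes\eta_{X,i}^{*}.
\]
The elements $\lambda_{X,i}^{\prime}\otimes\eta_{X,j}^{*}$ form a $\Z$-basis of the free module $H_{1,1}(X)$, so we may compare coefficients and divide by $(g-1)!$, which gives $AB=I_g$.

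\emph{Step 2: the matrix of $(f\circ i)_\sharp$.} We unwind \eqref{eq:def-i} on lattices. Since $f^\vee=(f_\sharp,f^\sharp)$, we have $(f^\vee)_\sharp=f^\sharp\colon\Lambda_X\to\Lambda_J$. By Proposition~\ref{prop:phi-Theta-dual-kappa}, $(\varphi_\Theta)_\sharp=-\kappa_\Theta$ and $(\varphi_{\Theta_\Gamma})_\sharp=-\kappa_{\Theta_\Gamma}$. Both are isomorphisms because the polarizations are principal, so $(\varphi_{\Theta_\Gamma}^{-1})_\sharp=-\kappa_{\Theta_\Gamma}^{-1}$. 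The two signs cancel, and therefore
\[
(f\circ i)_\sharp=f_\sharp\circ\kappa_{\Theta_\Gamma}^{-1}\circ f^\sharp\circ\kappa_\Theta .
\]
We now evaluate this on the basis vector $\lambda_{X,j}^{\prime}$, using the chosen bases in turn:
\begin{itemize}
\item $\kappa_\Theta(\lambda_{X,j}^{\prime})=\eta_{X,j}$;
\item $f^\sharp(\eta_{X,j})=\sum_k b_{k,j}\omega_k$;
\item $\kappa_{\Theta_\Gamma}^{-1}(\omega_k)=\alpha_k$;
\item $f_\sharp(\alpha_k)=\sum_i a_{i,k}\lambda_{X,i}^{\prime}$.
\end{itemize}
Composing these gives $(f\circ i)_\sharp(\lambda_{X,j}^{\prime})=\sum_i(AB)_{ij}\lambda_{X,i}^{\prime}$. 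By Step~1 this is $\lambda_{X,j}^{\prime}$, so $(f\circ i)_\sharp=\id$.

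\emph{Step 3: $(f\circ i)^\sharp=\id$.} Since $f\circ i$ is a homomorphism of integral tori, \eqref{eq:dual-hom-pairing-compat} gives
\[
[(f\circ i)^\sharp\lambda,\lambda^{\prime}]_X=[\lambda,(f\circ i)_\sharp\lambda^{\prime}]_X=[\lambda,\lambda^{\prime}]_X
\]
for all $\lambda\in\Lambda_X$ and $\lambda^{\prime}\in\Lambda_X^{\prime}$. Nondegeneracy of $[\,\cdot,\cdot\,]_X$ then forces $(f\circ i)^\sharp=\id_{\Lambda_X}$. Hence $f\circ i=\id_X$.

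The main obstacle is the bookkeeping in Step~2. One has to track the sharp/flat components of the dual map $f^\vee$ and of the inverse $\varphi_{\Theta_\Gamma}^{-1}$, check that the two minus signs from Proposition~\ref{prop:phi-Theta-dual-kappa} cancel, and verify that $(f\circ i)_\sharp$ comes out as the product $AB$ in the same order as in \eqref{eq:pushforward-W1-cycle-class}, rather than as $BA$ or a transpose.
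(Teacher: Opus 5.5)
Your proof is correct and is essentially the paper's argument: both obtain $\sum_k a_{i,k}b_{k,j}=\delta_{i,j}$ from \eqref{eq:pushforward-W1-cycle-class} and Lemma~\ref{lem:GS25-normal-form}\textup{(ii)}, then unwind $i$ on lattices. The only difference is cosmetic: the paper checks $(f\circ i)^\sharp=\id_{\Lambda_X}$ on the basis $\{\eta_{X,j}\}$ using $i^\sharp=\kappa_\Theta\circ f_\sharp\circ\kappa_{\Theta_\Gamma}^{-1}$ and obtains the other component from \eqref{eq:dual-hom-pairing-compat}, whereas you do the dual computation with $i_\sharp=\kappa_{\Theta_\Gamma}^{-1}\circ f^\sharp\circ\kappa_\Theta$ on $\{\lambda_{X,j}^{\prime}\}$.
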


\begin{proof}
By Lemma~\ref{lem:W1-pushforward-chi}, one has
\((g-1)!f_*[\widetilde W_1]\equiv_{\hom}([\Theta])^{g-1}\).
By \eqref{eq:pushforward-W1-cycle-class}, \eqref{eq:theta-power-normal-form} and Definition~\ref{def:homological-equivalence}, the preceding homological equivalence gives
\begin{equation}\label{eq:splitting-coeff-condition}
\sum_{k=1}^{r}a_{i,k}b_{k,j}=\delta_{i,j},
\qquad 1\le i,j\le g,
\end{equation}
where \(\delta_{i,j}\) is the Kronecker delta.

By \eqref{eq:def-i}, the definition of \(f^\vee\), and
\eqref{eq:phi-Theta-sharp-kappa}, one has
\[
i^\sharp
=
\varphi_\Theta^\sharp\circ(f^\vee)^\sharp
\circ(\varphi_{\Theta_\Gamma}^{-1})^\sharp
=
\kappa_\Theta\circ f_\sharp\circ\kappa_{\Theta_\Gamma}^{-1}.
\]
Thus
\[
i^\sharp(\omega_k)=\sum_{i=1}^{g}a_{i,k}\eta_{X,i},
\qquad 1\le k\le r.
\]
For \(1\le j\le g\), by \eqref{eq:fsharp-fsharp-expansions} and \eqref{eq:splitting-coeff-condition},
\[
(f\circ i)^\sharp(\eta_{X,j})
=
i^\sharp\Bigl(\sum_{k=1}^{r}b_{k,j}\omega_k\Bigr)
=
\sum_{i=1}^{g}\Bigl(\sum_{k=1}^{r}a_{i,k}b_{k,j}\Bigr)\eta_{X,i}
=
\eta_{X,j}.
\]
Hence \((f\circ i)^\sharp=\id_{\Lambda_X}\). By \eqref{eq:dual-hom-pairing-compat},
\((f\circ i)_\sharp=\id_{\Lambda_X^{\prime}}\). Therefore \(f\circ i=\id_X\).
\end{proof}

\subsection{Pullback of polarizations}\label{subsec:pullback-polarization}

We follow the notation of Subsections~\ref{subsec:cycle-classes-spanning-curves}
and~\ref{subsec:tropical-splitting-map}.

\begin{defi}\label{def:dual-polarization}
Let \((Y,\Theta_Y)\) be a principally polarized tropical abelian variety, and let
\(\varphi_{\Theta_Y}\colon Y\to Y^\vee\) be the isomorphism defined in
Definition~\ref{def:phi-Theta}. Set
\(
\Theta_Y^\vee\coloneqq(\varphi_{\Theta_Y}^{-1})^{*}\Theta_Y.
\)
We call \(\Theta_Y^\vee\) the dual principal polarization on \(Y^\vee\).
\end{defi}

\begin{lemma}\label{lem:dual-principal-polarization-kappa}
Follow the notation in Definition~\ref{def:dual-polarization}. Recall from Proposition~\ref{prop:phi-Theta-dual-kappa} the identity
\(
\varphi_{\Theta_Y}^{\sharp}=(\varphi_{\Theta_Y})_{\sharp}=-\kappa_{\Theta_Y}.
\)
Then
\(
\kappa_{\Theta_Y^\vee}=\kappa_{\Theta_Y}^{-1}\colon
\Lambda_Y\to \Lambda_Y^{\prime}.
\)
\end{lemma}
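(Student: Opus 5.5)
The plan is to compute \(\kappa_{\Theta_Y^\vee}\) with the pullback formula \eqref{eq:kappa-pullback-divisor}, applied to the homomorphism \(\varphi_{\Theta_Y}^{-1}\colon Y^\vee\to Y\) and the divisor \(\Theta_Y\) on \(Y\). Write \(Y^\vee=(\Lambda_Y^{\prime},\Lambda_Y,[\,\cdot,\cdot\,]_{Y^\vee})\). Then the lattice playing the role of \(\Lambda_{Y^\vee}^{\prime}\) is \(\Lambda_Y\), and the one playing the role of \(\Lambda_{Y^\vee}\) is \(\Lambda_Y^{\prime}\). So \(\kappa_{\Theta_Y^\vee}\) is indeed a map \(\Lambda_Y\to\Lambda_Y^{\prime}\), matching the statement. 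Formula \eqref{eq:kappa-pullback-divisor} gives
\[
\kappa_{\Theta_Y^\vee}
=
(\varphi_{\Theta_Y}^{-1})^{\sharp}\circ\kappa_{\Theta_Y}\circ(\varphi_{\Theta_Y}^{-1})_{\sharp}.
\]

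The first step is to identify the two lattice components of \(\varphi_{\Theta_Y}^{-1}\). By Notation~\ref{not:theta-kappa-Q}, \(\varphi_{\Theta_Y}\) is an isomorphism of integral tori, since \(\Theta_Y\) is principal. Its components are \(\varphi_{\Theta_Y}^{\sharp}\colon\Lambda_{Y^\vee}=\Lambda_Y^{\prime}\to\Lambda_Y\) and \((\varphi_{\Theta_Y})_{\sharp}\colon\Lambda_Y^{\prime}\to\Lambda_{Y^\vee}^{\prime}=\Lambda_Y\). By Proposition~\ref{prop:phi-Theta-dual-kappa}, both equal \(-\kappa_{\Theta_Y}\).

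Composition of homomorphisms of integral tori is contravariant on the \(\sharp\)-superscript component and covariant on the \(\sharp\)-subscript component, as already used in Step~(v) of Proposition~\ref{prop:split-off-subppav}. Applying this to \(\varphi_{\Theta_Y}^{-1}\circ\varphi_{\Theta_Y}=\id\) and \(\varphi_{\Theta_Y}\circ\varphi_{\Theta_Y}^{-1}=\id\) gives
\[
(\varphi_{\Theta_Y}^{-1})^{\sharp}=(\varphi_{\Theta_Y}^{\sharp})^{-1}=-\kappa_{\Theta_Y}^{-1}\colon\Lambda_Y\to\Lambda_Y^{\prime}
\]
and
\[
(\varphi_{\Theta_Y}^{-1})_{\sharp}=((\varphi_{\Theta_Y})_{\sharp})^{-1}=-\kappa_{\Theta_Y}^{-1}\colon\Lambda_Y\to\Lambda_Y^{\prime}.
\]
These inverses make sense because \(\kappa_{\Theta_Y}\) is a lattice isomorphism.

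Substituting into the displayed formula yields
\[
\kappa_{\Theta_Y^\vee}=(-\kappa_{\Theta_Y}^{-1})\circ\kappa_{\Theta_Y}\circ(-\kappa_{\Theta_Y}^{-1})=\kappa_{\Theta_Y}^{-1}.
\]
The two signs cancel, which is the point of the lemma. The formula is linear, so no extra sign comes from the divisor.

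The only delicate step is the bookkeeping. One must check that \eqref{eq:kappa-pullback-divisor} applies with the roles of \(\Lambda\) and \(\Lambda^{\prime}\) swapped for the dual torus, and that the domains and codomains compose correctly. I expect no genuine obstacle beyond verifying these identifications against the definition of \(Y^\vee\) in Subsection~\ref{subsec:integral-tori-polarizations}. As an optional check, \(Q_{\Theta_Y^\vee}(u,v)=[u,\kappa_{\Theta_Y}^{-1}(v)]_Y\) is symmetric and positive definite. This confirms that \(\Theta_Y^\vee\) is a principal polarization, though that is not needed for the asserted identity.
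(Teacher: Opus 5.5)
Your proof is correct and takes the same approach as the paper: apply \eqref{eq:kappa-pullback-divisor} to \(\varphi_{\Theta_Y}^{-1}\colon Y^\vee\to Y\), then substitute \((\varphi_{\Theta_Y}^{-1})^{\sharp}=(\varphi_{\Theta_Y}^{-1})_{\sharp}=-\kappa_{\Theta_Y}^{-1}\) so that the two signs cancel. Your explicit check of the swapped lattices for \(Y^\vee\) and of the inverse components is bookkeeping that the paper's one-line computation leaves implicit.
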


\begin{proof}
Since \(\Theta_Y^\vee=(\varphi_{\Theta_Y}^{-1})^{*}\Theta_Y\), by
\eqref{eq:kappa-pullback-divisor} and Proposition~\ref{prop:phi-Theta-dual-kappa},
\[
\kappa_{\Theta_Y^\vee}
=
(\varphi_{\Theta_Y}^{-1})^{\sharp}\circ
\kappa_{\Theta_Y}\circ
(\varphi_{\Theta_Y}^{-1})_{\sharp}
=
(-\kappa_{\Theta_Y}^{-1})\circ
\kappa_{\Theta_Y}\circ
(-\kappa_{\Theta_Y}^{-1})
=
\kappa_{\Theta_Y}^{-1}. \qedhere
\]
\end{proof}

\begin{lemma}\label{lem:welters-cycle-polarization}
For \(p\in \Z_{>0}\),
\begin{equation*}
(g-1)!\chi_*[\Gamma]\equiv_{\hom}p\,([\Theta])^{g-1}
\quad\text{if and only if}\quad
c_1(i^{*}\MO_J(\Theta_{\Gamma}))=p\,c_1(\MO_X(\Theta)).
\end{equation*}
\end{lemma}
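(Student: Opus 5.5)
Both conditions in Lemma~\ref{lem:welters-cycle-polarization} reduce to identities between \(g\times g\) integer matrices, built from the coefficients \(a_{i,k},b_{k,j}\) of \eqref{eq:fsharp-fsharp-expansions}. The plan is to show that both reduce to the same matrix identity, namely \(AB=p\,I_g\), where \(A=(a_{i,k})\in M_{g\times r}(\Z)\) and \(B=(b_{k,j})\in M_{r\times g}(\Z)\).

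\emph{Cycle side.} By Lemma~\ref{lem:W1-pushforward-chi} we have \(\cyc_X(\chi_*[\Gamma])=\cyc_X(f_*[\widetilde W_1])\). By \eqref{eq:pushforward-W1-cycle-class} this class has coefficient matrix \(AB\) in the basis \(\{\lambda'_{X,i}\otimes\eta_{X,j}^*\}\). By \eqref{eq:theta-power-normal-form}, \(\cyc_X(([\Theta])^{g-1})=(g-1)!\,I_g\) in the same basis. So the left-hand condition says \((g-1)!AB=(g-1)!\,p\,I_g\). Since \(H_{1,1}(X)\) is free, this is equivalent to \(AB=pI_g\).

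\emph{Polarization side.} By Lemma~\ref{lem:GS25-normal-form}(i), \(\kappa_\Theta\) has matrix \(I_g\) in the bases \(\{\lambda'_{X,i}\},\{\eta_{X,i}\}\). Likewise \(\kappa_{\Theta_\Gamma}\) has matrix \(I_r\) in the bases \(\{\alpha_k\},\{\omega_k\}\). By \eqref{eq:kappa-pullback-divisor}, \(\kappa_{i^*\Theta_\Gamma}=i^\sharp\circ\kappa_{\Theta_\Gamma}\circ i_\sharp\). The proof of Lemma~\ref{lem:splitting-map-identity} already gives \(i^\sharp=\kappa_\Theta\circ f_\sharp\circ\kappa_{\Theta_\Gamma}^{-1}\). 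In the same way, from \eqref{eq:def-i}, \(i_\sharp=(\varphi_{\Theta_\Gamma}^{-1})_\sharp\circ(f^\vee)_\sharp\circ(\varphi_\Theta)_\sharp=\kappa_{\Theta_\Gamma}^{-1}\circ f^\sharp\circ\kappa_\Theta\), where the two minus signs from \eqref{eq:phi-Theta-sharp-kappa} cancel. Composing gives
\[
\kappa_{i^*\Theta_\Gamma}=\kappa_\Theta\circ f_\sharp\circ\kappa_{\Theta_\Gamma}^{-1}\circ f^\sharp\circ\kappa_\Theta .
\]
In coordinates, \(\kappa_{\Theta_\Gamma}^{-1}\) is the identity matrix, \(f^\sharp\) has matrix \(B\) (sending \(\eta_{X,j}\mapsto\sum_k b_{k,j}\omega_k\), hence \(\omega_k^{\text{dual}}\) to \(\alpha_k\) via \(\kappa^{-1}\)), \(f_\sharp\) has matrix \(A\), and \(\kappa_\Theta\) is the identity matrix. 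So \(\kappa_{i^*\Theta_\Gamma}\) has matrix \(AB\) in the bases \(\{\lambda'_{X,i}\},\{\eta_{X,i}\}\), and \(p\,\kappa_\Theta\) has matrix \(pI_g\). Under the identification \(H^{1,1}(X)\cong\Hom(\Lambda'_X,\Lambda_X)\) of \eqref{eq:chern-class-matrix}–\eqref{eq:kappaD-matrix}, the right-hand condition is exactly \(\kappa_{i^*\Theta_\Gamma}=p\,\kappa_\Theta\), that is, \(AB=pI_g\). This proves the equivalence.

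The main obstacle is bookkeeping. We must track the signs in \(\varphi^\sharp=\varphi_\sharp=-\kappa\) and make sure the matrix of \(\kappa_{i^*\Theta_\Gamma}\) comes out as \(AB\) rather than its transpose or \(BA\). The transpose question is harmless: \(\kappa_{i^*\Theta_\Gamma}\) is symmetric (by Appell–Humbert) in these dual-compatible bases, and in any case both sides are compared through the same identification. I would verify the orientation by evaluating the composite directly on \(\lambda'_{X,j}\): \(\kappa_\Theta(\lambda'_{X,j})=\eta_{X,j}\), then \(f^\sharp(\eta_{X,j})=\sum_k b_{k,j}\omega_k\), then \(\kappa_{\Theta_\Gamma}^{-1}\) gives \(\sum_k b_{k,j}\alpha_k\), then \(f_\sharp\) gives \(\sum_{i,k}a_{i,k}b_{k,j}\lambda'_{X,i}\), and finally \(\kappa_\Theta\) gives \(\sum_i(AB)_{ij}\eta_{X,i}\). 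This matches the \(\eta^*_{X,j}\)-coefficient convention of \eqref{eq:pushforward-W1-cycle-class}.
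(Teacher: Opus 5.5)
Your proposal is correct and is essentially the paper's argument: both conditions reduce to $\sum_{k}a_{i,k}b_{k,j}=p\,\delta_{i,j}$, and you handle the cycle side exactly as in Step (i) of the paper. The only difference is on the polarization side: you compute $\kappa_{i^*\Theta_\Gamma}=\kappa_\Theta\circ f_\sharp\circ\kappa_{\Theta_\Gamma}^{-1}\circ f^\sharp\circ\kappa_\Theta$ directly on $X$, whereas the paper first computes $\kappa_{(f^\vee)^*\Theta_\Gamma^\vee}=f_\sharp\circ\kappa_{\Theta_\Gamma}^{-1}\circ f^\sharp$ on $X^\vee$ via the dual polarizations and then pulls back along $\varphi_\Theta$. (Your side remark that the matrix of $\kappa_{i^*\Theta_\Gamma}$ is symmetric is not justified, since only $Q_{i^*\Theta_\Gamma}$ is symmetric; it is also unnecessary, because your direct evaluation on $\lambda'_{X,j}$ already settles the orientation.)
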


\begin{proof}
\smallskip
\noindent\textbf{Step \textup{(i)}.}
By Lemma~\ref{lem:W1-pushforward-chi} and
\eqref{eq:pushforward-W1-cycle-class}, one has
\[
\cyc_X(\chi_*[\Gamma])
=
\sum_{i,j=1}^{g}
\Bigl(\sum_{k=1}^{r}a_{i,k}b_{k,j}\Bigr)
\lambda_{X,i}^{\prime}\otimes\eta_{X,j}^{*}.
\]
On the other hand, by Lemma~\ref{lem:GS25-normal-form}\textup{(ii)}, one has
\[
\cyc_X\bigl(p([\Theta])^{g-1}\bigr)
=
(g-1)!\sum_{i,j=1}^{g}
p\,\delta_{i,j}\lambda_{X,i}^{\prime}\otimes\eta_{X,j}^{*},
\]
where \(\delta_{i,j}\) is the Kronecker delta. Hence, by
Definition~\ref{def:homological-equivalence},
\[
(g-1)!\chi_*[\Gamma]\equiv_{\hom}p([\Theta])^{g-1}
\]
if and only if
\begin{equation}\label{eq:welters-coeff-condition}
\sum_{k=1}^{r}a_{i,k}b_{k,j}=p\,\delta_{i,j},
\qquad 1\le i,j\le g.
\end{equation}

\smallskip
\noindent\textbf{Step \textup{(ii)}.}
We next prove that \eqref{eq:welters-coeff-condition} is equivalent to
\begin{equation}\label{eq:c1-dual-p}
c_1\bigl((f^\vee)^{*}\MO_{J^\vee}(\Theta_{\Gamma}^{\vee})\bigr)
=
p\,c_1(\MO_{X^\vee}(\Theta^\vee)).
\end{equation}
By Lemma~\ref{lem:dual-principal-polarization-kappa}, one has
\(\omega_k=\kappa_{\Theta_{\Gamma}}(\alpha_k)\) and hence
\(\kappa_{\Theta_{\Gamma}^{\vee}}(\omega_k)
=\kappa_{\Theta_{\Gamma}}^{-1}(\omega_k)=\alpha_k\)
for \(1\le k\le r\). Since
\(
f^\vee=(f_{\sharp},f^{\sharp}),
\)
by \eqref{eq:kappa-pullback-divisor}, one obtains
\[
\kappa_{(f^\vee)^{*}\Theta_{\Gamma}^{\vee}}
=
(f^\vee)^{\sharp}\circ \kappa_{\Theta_{\Gamma}^{\vee}}\circ (f^\vee)_{\sharp}
=
f_{\sharp}\circ \kappa_{\Theta_{\Gamma}^{\vee}}\circ f^{\sharp}.
\]
Using \eqref{eq:fsharp-fsharp-expansions}, for \(1\le j\le g\), one has
\[
\kappa_{(f^\vee)^{*}\Theta_{\Gamma}^{\vee}}(\eta_{X,j})
=
\sum_{i=1}^{g}\Bigl(\sum_{k=1}^{r}a_{i,k}b_{k,j}\Bigr)\lambda_{X,i}^{\prime}.
\]
Then by \eqref{eq:chern-class-matrix} and \eqref{eq:kappaD-matrix}, one has
\begin{equation*}
c_1\bigl((f^\vee)^{*}\MO_{J^\vee}(\Theta_{\Gamma}^{\vee})\bigr)
=
\sum_{i,j=1}^{g}\Bigl(\sum_{k=1}^{r}a_{i,k}b_{k,j}\Bigr)(\eta_{X,j})^{*}\otimes \lambda_{X,i}^{\prime}.
\end{equation*}
Since \(\eta_{X,j}=\kappa_{\Theta}(\lambda_{X,j}^{\prime})\) for
\(1\le j\le g\), by Lemma~\ref{lem:dual-principal-polarization-kappa}, one has
\(\kappa_{\Theta^\vee}(\eta_{X,j})
=\kappa_{\Theta}^{-1}(\eta_{X,j})
=\lambda_{X,j}^{\prime}\)
for \(1\le j\le g\). By \eqref{eq:chern-class-matrix} and \eqref{eq:kappaD-matrix}, one has
\begin{equation*}
c_1(\MO_{X^\vee}(\Theta^\vee))
=
\sum_{i,j=1}^{g}\delta_{i,j}\,(\eta_{X,j})^{*}\otimes \lambda_{X,i}^{\prime}.
\end{equation*}
Therefore \eqref{eq:c1-dual-p} holds if and only if
\eqref{eq:welters-coeff-condition} holds.

\smallskip
\noindent\textbf{Step \textup{(iii)}.}
Combining the two equivalences above, we obtain that
\begin{equation}\label{eq:welters-hom-condition}
(g-1)!\chi_*[\Gamma]\equiv_{\hom}p\,([\Theta])^{g-1}
\end{equation}
holds if and only if \eqref{eq:c1-dual-p} holds. Since
\[
(\varphi_{\Theta_{\Gamma}}^{-1})^{*}\MO_J(\Theta_{\Gamma})
=
\MO_{J^\vee}(\Theta_{\Gamma}^\vee),
\]
by \eqref{eq:def-i}, one has
\begin{equation}\label{eq:i-pullback-Theta-Gamma}
i^{*}\MO_J(\Theta_{\Gamma})
=
\varphi_{\Theta}^{*}(f^\vee)^{*}\MO_{J^\vee}(\Theta_{\Gamma}^\vee).
\end{equation}
Moreover,
\begin{equation}\label{eq:phi-Theta-pullback-Theta-dual}
\varphi_{\Theta}^{*}\MO_{X^\vee}(\Theta^\vee)=\MO_X(\Theta).
\end{equation}

Assume first that \eqref{eq:c1-dual-p} holds. Then
\[
\varphi_{\Theta}^{*}c_1\bigl((f^\vee)^{*}\MO_{J^\vee}(\Theta_{\Gamma}^{\vee})\bigr)
=
p\,\varphi_{\Theta}^{*}c_1(\MO_{X^\vee}(\Theta^\vee)).
\]
By applying \(c_1\) to \eqref{eq:i-pullback-Theta-Gamma} and
\eqref{eq:phi-Theta-pullback-Theta-dual}, one obtains
\begin{equation}\label{eq:welters-polarization-condition}
c_1(i^{*}\MO_J(\Theta_{\Gamma}))=p\,c_1(\MO_X(\Theta)).
\end{equation}

Conversely, assume that \eqref{eq:welters-polarization-condition} holds. By
applying \(c_1\) to \eqref{eq:i-pullback-Theta-Gamma} and
\eqref{eq:phi-Theta-pullback-Theta-dual}, one has
\[
\varphi_{\Theta}^{*}c_1\bigl((f^\vee)^{*}\MO_{J^\vee}(\Theta_{\Gamma}^{\vee})\bigr)
=
p\,\varphi_{\Theta}^{*}c_1(\MO_{X^\vee}(\Theta^\vee)).
\]
Since \(\varphi_{\Theta}\) is an isomorphism,
\eqref{eq:c1-dual-p} holds. Hence \eqref{eq:c1-dual-p} is equivalent to
\eqref{eq:welters-polarization-condition}. Consequently,
\eqref{eq:welters-hom-condition} holds if and only if
\eqref{eq:welters-polarization-condition} holds.
\end{proof}

\subsection{Proof of the tropical Matsusaka criterion}\label{subsec:proof-tropical-matsusaka}
In this subsection, we give a complete proof of the tropical Matsusaka criterion.
\begin{thm}[Tropical Matsusaka criterion]\label{thm:tropical-matsusaka-criterion}
Let \((X,\Theta)\) be a \(g\)-dimensional principally polarized tropical abelian variety. Let \(C\) be an effective tropical \(1\)-cycle on \(X\) such that \(|C|\) is connected, and suppose that \((g-1)!C\equiv_{\hom}([\Theta])^{g-1}\). Then there exist a smooth tropical curve \(\Gamma\) of genus \(g\) and a morphism of rational polyhedral spaces \(\chi\colon \Gamma\to X\) such that \(\chi_*[\Gamma]=C\), and the homomorphism \(f\colon\Jac(\Gamma)\to X\) induced by \(\chi\) is an isomorphism
\begin{equation}\label{eq:matsusaka-ppav-isomorphism}
\bigl(\Jac(\Gamma),\Theta_{\Gamma}\bigr)\cong(X,\Theta)
\end{equation}
of principally polarized tropical abelian varieties, where \(\Theta_{\Gamma}\) denotes the canonical principal polarization on \(\Jac(\Gamma)\).
\end{thm}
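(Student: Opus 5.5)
The plan is to realize \(C\) as the image of a smooth tropical curve, show that the induced map from its Jacobian splits, and then use the curve's edge combinatorics to prove that the complementary factor vanishes. First apply Lemma~\ref{lem:cycle-from-smooth-curve} to \(C\). This gives a smooth tropical curve \(\Gamma\), say of genus \(r\), and a morphism \(\chi\colon\Gamma\to X\) with \(\chi_*[\Gamma]=C\). By the construction in that lemma, \(\chi\) maps each edge \(\widetilde\tau\) homeomorphically onto an edge \(\tau\) of \(|C|\), with nonzero tangent direction \(w_\tau\boldsymbol u_{\tau,v}\). Let \(f\colon J=\Jac(\Gamma)\to X\) be the homomorphism of Proposition~\ref{prop:universal-property}.

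By Lemma~\ref{lem:C-must-be-spanning-in-proof}, \(C\) is spanning. Since \(|C|\) is a translate of \(f(\phi_q(\Gamma))\subseteq f(J)\), a translated form of Lemma~\ref{lem:spanning-implies-surjective} shows that \(f\) is surjective, so \(r\ge g\). When \(r\ge2\) we are in the setting of Subsection~\ref{subsec:tropical-splitting-map}. The degenerate cases \(r\le1\) force \(g=r=1\) and will be checked directly, since a genus-one Jacobian is a circle. Lemma~\ref{lem:splitting-map-identity} then gives \(f\circ i=\id_X\), so \(i\colon X\to J\) is injective. Lemma~\ref{lem:welters-cycle-polarization} with \(p=1\) gives \(c_1(i^*\MO_J(\Theta_\Gamma))=c_1(\MO_X(\Theta))\), so \(i^*\Theta_\Gamma\) is a principal polarization homologous to \(\Theta\).

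Now apply Proposition~\ref{prop:split-off-subppav} to \(i\colon X\to J\). This produces \((Z,\Theta_Z)\) and an isomorphism \(\beta\colon(X,\Theta)\times(Z,\Theta_Z)\cong(J,\Theta_\Gamma)\) with \(\beta\circ\iota_1=i\). By Corollary~\ref{cor:orthogonal-splitting-from-split-off} this gives a \(Q_{\Theta_\Gamma}\)-orthogonal splitting \(H_1(\Gamma,\Z)=L'\oplus L''\), where \(L'=i_\sharp(\Lambda_X')\). The goal is \(Z=0\), i.e.\ \(L''=0\). Suppose instead that \(L''\neq0\). By Lemma~\ref{lem:harmonic-generated-by-simple-cycles-and-one-summand}, \(L''\) contains a simple simplicial cycle, so \(E(\Gamma'')\neq\varnothing\). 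Fix \(\tau\in E(\Gamma'')\) with endpoint \(v\).

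The key claim is \(\im(f^\sharp)\subseteq\kappa_{\Theta_\Gamma}(L')\). For this, note that \(i^\dagger\) with respect to \(\Theta\) and \(\Theta_\Gamma\) equals \(f\), because \(i=\varphi_{\Theta_\Gamma}^{-1}f^\vee\varphi_\Theta\) and \(\dagger\) is an involution. By Remark~\ref{rem:rosati-on-lattices}-type formulas, \(f^\sharp=\kappa_{\Theta_\Gamma}\circ i_\sharp\circ\kappa_\Theta^{-1}\), whose image is \(\kappa_{\Theta_\Gamma}(L')\). Then Proposition~\ref{prop:vanishing-on-other-Gamma} and Remark~\ref{rem:universal-property-sharp} (\(f^\sharp=\chi^*\)) give
\[
\int_{c_{\tau,v}}\chi^*\omega=0\qquad\text{for every }\omega\in\Lambda_X.
\]
This integral equals \(\ell(\tau)\) times the pairing of \(\omega\) with \(d_v\chi(\widetilde{\boldsymbol u}_{\tau,v})=w_\tau\boldsymbol u_{\tau,v}\). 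Since \(\omega\) ranges over all of \(\Lambda_X\), this forces \(\boldsymbol u_{\tau,v}=0\), a contradiction. Hence \(Z=0\).

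With \(Z=0\), \(i\) is an isomorphism and \(f=i^{-1}\), so \(r=g\). Moreover \([f^*\Theta]\equiv_{\hom}[\Theta_\Gamma]\), because \(i^*\Theta_\Gamma\) and \(\Theta\) have the same \(c_1\), and (\ref{eq:chern-class-homological-equivalence}) transfers this along the inverse isomorphism. So \(f\) is an isomorphism of principally polarized tropical abelian varieties.

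I expect the main obstacle to be the identification \(\im(f^\sharp)=\kappa_{\Theta_\Gamma}(L')\). It requires tracking signs (\(\varphi^\sharp=-\kappa\)) and checking that \(\beta\) from Proposition~\ref{prop:split-off-subppav} indeed restricts to \(i\) on the \(X\)-factor, which Step~(vii) of its proof provides. A secondary obstacle is justifying that the edge-wise vanishing really contradicts the construction of \(\chi\) in Lemma~\ref{lem:cycle-from-smooth-curve}.
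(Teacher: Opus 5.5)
Your proposal is correct and follows the paper's proof essentially step for step. You parametrize \(C\) by \(\chi\colon\Gamma\to X\), obtain \(f\circ i=\id_X\) and \(c_1(i^*\MO_J(\Theta_\Gamma))=c_1(\MO_X(\Theta))\) from Lemma~\ref{lem:welters-cycle-polarization} with \(p=1\), split \((J,\Theta_\Gamma)\cong(X,\Theta)\times(Z,\Theta_Z)\) via Proposition~\ref{prop:split-off-subppav}, and rule out \(Z\neq0\) by combining \(\im(\chi^*)=\im(f^\sharp)=\kappa_{\Theta_\Gamma}(L')\) with Proposition~\ref{prop:vanishing-on-other-Gamma} and the nonvanishing of \(d_v\chi(\widetilde{\boldsymbol u}_{\tau,v})\).

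Your derivation of \(f^\sharp=\kappa_{\Theta_\Gamma}\circ i_\sharp\circ\kappa_\Theta^{-1}\) through \(i=f^\dagger\) is the same lattice computation the paper does directly from \eqref{eq:def-i-map-to-J1}. Your extra check that \(r\ge g\), with the low-genus case set aside, makes explicit the standing assumption \(r\ge2\) of Subsection~\ref{subsec:tropical-splitting-map}, which the paper uses without verifying.
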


\begin{proof}
We divide the proof into the following steps.

\smallskip
\noindent\textbf{Step \textup{(i)}.}
We compute the intersection number \((\Theta\cdot C)_X\). Since \(\Theta\) is a principal polarization, Lemma~\ref{lem:top-self-intersection-det}\textup{(iii)} gives \((\Theta^g)_X=g!\). Since \((g-1)!C\equiv_{\hom}([\Theta])^{g-1}\), one has
\begin{equation*}
(g-1)!(\Theta\cdot C)_X
=
(\Theta^g)_X
=
g!.
\end{equation*}
Hence \((\Theta\cdot C)_X=g\).

\smallskip
\noindent\textbf{Step \textup{(ii)}.}
We construct a smooth tropical curve mapping to \(X\).
By Lemma~\ref{lem:C-must-be-spanning-in-proof}, \(C\) is a spanning curve.
Hence Lemma~\ref{lem:cycle-from-smooth-curve} gives a smooth tropical curve \(\Gamma\) and a morphism of rational polyhedral spaces
\(\chi\colon\Gamma\to X\) such that \(\chi_*[\Gamma]=C\).
Set \(J\coloneqq\Jac(\Gamma)\), and let
\(f=(f^{\sharp},f_{\sharp})\colon J\to X\)
be the homomorphism appearing in Proposition~\ref{prop:universal-property}.

\smallskip
\noindent\textbf{Step \textup{(iii)}.}
Recall the homomorphism
\begin{equation}\label{eq:def-i-map-to-J1}
i\coloneqq
\varphi_{\Theta_{\Gamma}}^{-1}\circ f^\vee\circ\varphi_{\Theta}\colon X\to J,
\end{equation}
where \(\varphi_{\Theta}\colon X\to X^\vee\) and
\(\varphi_{\Theta_{\Gamma}}\colon J\to J^\vee\) are the homomorphisms associated to
the tropical line bundles \(\MO_X(\Theta)\) and \(\MO_J(\Theta_{\Gamma})\),
respectively, and \(f^\vee\colon X^\vee\to J^\vee\) is the dual homomorphism of
\(f\colon J\to X\).
By Lemma~\ref{lem:splitting-map-identity}, one has
\(
f\circ i=\id_X.
\)

\smallskip
\noindent\textbf{Step \textup{(iv)}.}
We verify the polarization condition. Since \(f\circ i=\id_X\), the homomorphism \(i\) is injective. Moreover, \((g-1)!\chi_*[\Gamma]=(g-1)!C\equiv_{\hom}([\Theta])^{g-1}\), so Lemma~\ref{lem:welters-cycle-polarization} gives
\(
c_1\bigl(i^*\MO_J(\Theta_{\Gamma})\bigr)
=
c_1\bigl(\MO_X(\Theta)\bigr).
\)
Hence \(i^{*}\Theta_{\Gamma}\) is a principal polarization on \(X\) and \begin{equation}\label{eq:pullback-polarization-ppav-isomorphism}
(X,\Theta)\cong(X,i^{*}\Theta_{\Gamma}).
\end{equation}

\smallskip
\noindent\textbf{Step \textup{(v)}.}
We give a \(Q_{\Theta_{\Gamma}}\)-orthogonal decomposition of
\((J,\Theta_{\Gamma})\) as principally polarized tropical abelian varieties.
Applying Proposition~\ref{prop:split-off-subppav} to \(i\colon X\to J\)
and using \eqref{eq:pullback-polarization-ppav-isomorphism}, we obtain a principally
polarized tropical abelian variety \((Y,\Theta_Y)\) and an isomorphism of
principally polarized tropical abelian varieties
\begin{equation}\label{eq:ppav-splitting-beta}
\beta\colon (X,\Theta)\times (Y,\Theta_Y)
\cong (J,\Theta_{\Gamma}).
\end{equation}
By the construction in Step~\textup{(vii)} of the proof of
Proposition~\ref{prop:split-off-subppav}, one has
\(\beta\circ\iota_1=i\), where \(\iota_1\colon X\to X\times Y\) is the
canonical inclusion. Hence
\(
\beta_{\sharp}(\Lambda_X^{\prime}\oplus 0)=\im(i_{\sharp})\subseteq H_1(\Gamma,\Z).
\)
Set
\[
L^{\prime}\coloneqq \beta_{\sharp}(\Lambda_X^{\prime}\oplus 0)
=
\im(i_{\sharp})
\subseteq H_1(\Gamma,\Z),\qquad
L^{\prime\prime}\coloneqq
\beta_{\sharp}(0\oplus \Lambda_Y^{\prime})
\subseteq H_1(\Gamma,\Z).
\]
Then
\(
H_1(\Gamma,\Z)=L^{\prime}\oplus L^{\prime\prime},
\)
and Corollary~\ref{cor:orthogonal-splitting-from-split-off} gives
\(
Q_{\Theta_{\Gamma}}(L^{\prime},L^{\prime\prime})=0.
\)

\smallskip
\noindent\textbf{Step \textup{(vi)}.}
We prove that \(Y\) is trivial. Suppose, to the contrary, that \(\dim Y>0\). Then \(L^{\prime\prime}\neq 0\). By Remark~\ref{rem:universal-property-sharp}, one has
\[
f^{\sharp}=\chi^{*}\colon H^0(X,\Omega_X)\longrightarrow H^0(\Gamma,\Omega_{\Gamma}).
\]
Moreover, \eqref{eq:def-i-map-to-J1} and Proposition~\ref{prop:phi-Theta-dual-kappa} give
\[
\kappa_{\Theta_{\Gamma}}\circ i_{\sharp}
=
f^{\sharp}\circ\kappa_{\Theta}.
\]
Since \(\kappa_{\Theta}\colon\Lambda_X^{\prime}\to\Lambda_X\) is an isomorphism and \(L^{\prime}=\im(i_{\sharp})\), it follows that
\begin{equation}\label{eq:image-chi-star-kappa-L-prime}
\im\bigl(H^0(X,\Omega_X)\xrightarrow{\ \chi^{*}\ }H^0(\Gamma,\Omega_{\Gamma})\bigr)
=
\im(f^{\sharp})
=
\kappa_{\Theta_{\Gamma}}(L^{\prime}).
\end{equation}
Let \(\Gamma^{\prime\prime}\) and \(E(\Gamma^{\prime\prime})\) be defined from \(L^{\prime\prime}\) as in Definition~\ref{def:Gamma-prime-Gamma-double-prime}. Lemma~\ref{lem:harmonic-generated-by-simple-cycles-and-one-summand}\textup{(i), (ii)} implies that \(L^{\prime\prime}\) is generated by the simple simplicial cycles contained in \(L^{\prime\prime}\). Hence \(E(\Gamma^{\prime\prime})\neq\varnothing\).

Choose \(\tau\in E(\Gamma^{\prime\prime})\) and an endpoint \(v\) of \(\tau\), and let \(c_{\tau,v}\in C_1(\Gamma,\Z)\) be the oriented \(1\)-simplex supported on \(\tau\) and directed out of \(v\). Let
\(
\widetilde{\boldsymbol{u}}_{\tau,v}\in T_v^{\Z}\Gamma
\)
denote the tangent functional of \(\tau\) at \(v\) defined in Definition~\ref{def:edge-tangent-length}\textup{(i)}. By Proposition~\ref{prop:vanishing-on-other-Gamma}, \eqref{eq:image-chi-star-kappa-L-prime}, and \eqref{eq:integral-along-edge}, for every \(\omega\in H^0(X,\Omega_X)\),
\[
0
=
\int_{c_{\tau,v}}\chi^{*}\omega
=
\ell(\tau)\,
\widetilde{\boldsymbol{u}}_{\tau,v}
\bigl((\chi^{*}\omega)_v\bigr).
\]
Since the length \(\ell(\tau)>0\), one has
\begin{equation}\label{eq:vanishing-tangent-functional-pullback}
\widetilde{\boldsymbol{u}}_{\tau,v}
\bigl((\chi^{*}\omega)_v\bigr)
=
0
\end{equation}
for every \(\omega\in H^0(X,\Omega_X)\).

We use the following claim, whose proof is given immediately after the proof of the theorem.

\begin{claim}\label{claim:nonvanishing-pullback-tangent}
There exists \(\omega_0\in H^0(X,\Omega_X)\) such that \(\widetilde{\boldsymbol{u}}_{\tau,v}\bigl((\chi^{*}\omega_0)_v\bigr)\neq 0\).
\end{claim}

Claim~\ref{claim:nonvanishing-pullback-tangent} contradicts \eqref{eq:vanishing-tangent-functional-pullback}. Hence \(\dim Y=0\), so \(Y\) is trivial and \(\iota_1\colon X\to X\times Y\) is an isomorphism. Since \(\beta\circ\iota_1=i\), one has
\(
i\colon (X,\Theta)\cong(J,\Theta_{\Gamma}).
\)
Since \(f\circ i=\id_X\) and \(i\) is an isomorphism, one has \(f=i^{-1}\). Hence
\[
f\colon(J,\Theta_{\Gamma})\cong(X,\Theta)
\]
is an isomorphism of principally polarized tropical abelian varieties.
\end{proof}

\begin{proof}[Proof of Claim~\ref{claim:nonvanishing-pullback-tangent}]
Keep the notation from Step~\textup{(vi)}. By \eqref{eq:chi-v-local-map} and the construction of \(\chi\), one has
\[
d_v\chi\bigl(\widetilde{\boldsymbol{u}}_{\tau,v}\bigr)
\neq
0
\]
in \(T_{\chi(v)}^{\Z}X\). Under the canonical identification
\(
T_{\chi(v)}^{\Z}X\cong(\Lambda_X)^*,
\)
there exists \(\eta\in\Lambda_X\) such that
\[
d_v\chi\bigl(\widetilde{\boldsymbol{u}}_{\tau,v}\bigr)(\eta)
\neq
0.
\]
Let \(\omega_0\in H^0(X,\Omega_X)\) correspond to \(\eta\) under the canonical identification \(H^0(X,\Omega_X)\cong\Lambda_X\). Since \(\Omega_X\) is a constant sheaf, the germ \((\omega_0)_{\chi(v)}\) corresponds to \(\eta\). Hence \eqref{eq:differential-local-representative} gives
\[
\widetilde{\boldsymbol{u}}_{\tau,v}
\bigl((\chi^*\omega_0)_v\bigr)
=
d_v\chi\bigl(\widetilde{\boldsymbol{u}}_{\tau,v}\bigr)
\bigl((\omega_0)_{\chi(v)}\bigr)
\neq
0.
\]
This proves the claim.
\end{proof}

\section{Tropical Matsusaka--Ran criterion and the main theorem}
\label{sec:relation-two-criteria}
This section proves the tropical Matsusaka--Ran criterion, Theorem~\ref{thm:tropical-matsusaka-ran-criterion}, and the main theorem, Theorem~\ref{thm:main-equivalence}. Subsection~\ref{subsec:pontryagin-products-integral-tori} recalls tropical Pontryagin products on integral tori. Subsection~\ref{subsec:sumi-theta-functions} records the facts on tropical theta functions and regular sections needed below, and Subsection~\ref{subsec:preparations-two-criteria} prepares for the proof in Subsection~\ref{subsec:proof-tropical-matsusaka-ran}. Throughout this section, let \((X,\Theta)\) be a \(g\)-dimensional polarized tropical abelian variety with \(g\ge 2\).
\subsection{Pontryagin products on integral tori}
\label{subsec:pontryagin-products-integral-tori}

This subsection follows \cite[Sec.~6]{GS23} and recalls some basic properties of tropical Pontryagin products on integral tori. Fix \(p,q\in\Z_{\ge0}\).

\begin{defi}\label{def:pontryagin-product-integral-tori}
Let
\(
\mu_X\colon X\times X\to X
\)
be the group law. The tropical Pontryagin product on \(X\) is defined by
\[
\alpha\star\beta\coloneqq \mu_{X,*}(\alpha\times\beta),
\]
where \(\alpha\) and \(\beta\) are either tropical cycles on \(X\) or tropical homology classes on \(X\). On tropical cycles, this gives a homomorphism
\[
\star\colon Z_p(X)\otimes_{\Z}Z_q(X)\longrightarrow Z_{p+q}(X).
\]
Let \(p^{\prime},q^{\prime}\in\Z_{\ge0}\). On tropical homology classes, this gives a homomorphism
\[
\star\colon H_{p,q}(X)\otimes_{\Z}H_{p',q'}(X)
\longrightarrow H_{p+p',q+q'}(X).
\]
\end{defi}

\begin{prop}[{\cite[Prop.~6.2]{GS23}}]\label{prop:cycle-class-pontryagin-product}
The tropical cycle class map is compatible with tropical Pontryagin products. Equivalently, the diagram
\begin{equation}\label{diag:cycle-class-pontryagin-product}
\begin{tikzcd}
Z_p(X)\otimes_{\Z}Z_q(X)
\arrow[r,"\star"]
\arrow[d,"\cyc_X\otimes\cyc_X"']
&
Z_{p+q}(X)
\arrow[d,"\cyc_X"]
\\
H_{p,p}(X)\otimes_{\Z}H_{q,q}(X)
\arrow[r,"\star"]
&
H_{p+q,p+q}(X)
\end{tikzcd}
\end{equation}
is commutative.
\end{prop}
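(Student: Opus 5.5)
The plan is to factor the Pontryagin product into an exterior (cross) product followed by a pushforward along the group law, and to check compatibility of \(\cyc\) with each piece separately. By Definition~\ref{def:pontryagin-product-integral-tori}, for \(A\in Z_p(X)\) and \(B\in Z_q(X)\) one has \(A\star B=\mu_{X,*}(A\times B)\). The same formula \(\alpha\star\beta=\mu_{X,*}(\alpha\times\beta)\) holds on \(H_{\bullet,\bullet}(X)\). Here \(\mu_X\colon X\times X\to X\) is the homomorphism of integral tori with \(\mu_X^{\sharp}(\lambda)=(\lambda,\lambda)\) and \((\mu_X)_{\sharp}(\lambda_1',\lambda_2')=\lambda_1'+\lambda_2'\). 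It therefore suffices to prove two statements:
\[
\cyc_{X\times X}(A\times B)=\cyc_X(A)\times\cyc_X(B)\quad\text{in }H_{p+q,p+q}(X\times X),
\]
and
\[
\cyc_X(\mu_{X,*}W)=\mu_{X,*}\cyc_{X\times X}(W)\quad\text{for } W\in Z_{p+q}(X\times X).
\]

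The second statement is exactly Proposition~\ref{prop:GS19-toolkit}\textup{(iii)} applied to \(f=\mu_X\), so no new work is needed there. The only point to record is that the homological pushforward \(\mu_{X,*}\) used to define \(\star\) on \(H_{\bullet,\bullet}(X)\) is the same map as in Proposition~\ref{prop:GS19-toolkit}. It is explicitly \(\bigwedge^{q}(\mu_X)_{\sharp}\otimes\bigwedge^{p}(\mu_X^{\sharp})^{*}\) under the identifications of Subsection~\ref{subsec:tropical-cohomology}, which one sees as in the proof of Lemma~\ref{lem:push-PPTAV-en}.

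The first statement is the main obstacle. I would fix the cross product on tropical homology at the chain level, using the Eilenberg--Zilber map on tropical cellular chains. Under \(H_{p,q}(X)\cong\bigwedge^{q}\Lambda_X'\otimes\bigwedge^{p}(\Lambda_X)^{*}\), this cross product becomes \((\lambda\otimes\eta)\times(\lambda'\otimes\eta')=(\lambda\wedge\lambda')\otimes(\eta\wedge\eta')\), where both \(\Lambda_X'\) and \((\Lambda_X)^{*}\) are embedded into the product lattices. Next, I would recall the construction of \(\cyc_X\) from \cite[Sec.~5.2]{GS19}. Choose a polyhedral structure on \(|A|\) and \(|B|\). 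The class \(\cyc_X(A)\) is represented by \(\sum_\sigma A(\sigma)\,\sigma\otimes\omega_\sigma\), where \(\omega_\sigma\in\bigwedge^{p}T^{\Z}\sigma\) is the orientation-compatible generator, and the balancing condition makes this chain a cycle. The products \(\sigma\times\tau\) give a polyhedral structure on \(|A\times B|\). The weight of \(A\times B\) on \(\sigma\times\tau\) is \(A(\sigma)B(\tau)\), and \(\bigwedge^{p+q}T^{\Z}(\sigma\times\tau)\) is generated by \(\omega_\sigma\wedge\omega_\tau\). The representative of \(\cyc_{X\times X}(A\times B)\) is then the Eilenberg--Zilber product of the two representing chains, after subdividing the cells \(\sigma\times\tau\) simplicially.

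The hard part will be to check that the orientation and framing conventions match, so that no stray signs appear in degrees \(p,q\), and that the subdivision does not change the homology class. If the chain-level comparison proves too delicate, I would use a fallback. I would pair both sides against the classes \(\pr_1^{*}a\cup\pr_2^{*}b\) with \(a\in H^{p,p}(X)\) and \(b\in H^{q,q}(X)\), and verify the resulting identity using Proposition~\ref{prop:GS19-toolkit}\textup{(ii)} together with \(\pr_{1,*}\). Since such classes span \(H^{p+q,p+q}(X\times X)\) in the relevant Künneth component, and the pairing is nondegenerate as in the proof of Lemma~\ref{lem:push-PPTAV-en}, this would also establish the first statement.
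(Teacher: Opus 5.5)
The paper gives no argument of its own here; it imports the statement from \cite[Prop.~6.2]{GS23}. Your reduction is the standard argument and is correct as a reduction: you write $A\star B=\mu_{X,*}(A\times B)$, settle the push-forward half by Proposition~\ref{prop:GS19-toolkit}\textup{(iii)} for the homomorphism $\mu_X\colon X\times X\to X$, and are left with $\cyc_{X\times X}(A\times B)=\cyc_X(A)\times\cyc_X(B)$. But all the content sits in that cross-product identity, and you have not proved it.

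In your primary route, the points you postpone are exactly what needs checking:
\begin{enumerate}
\item the sheaf-theoretic $\cyc$ of \cite{GS19} is represented by the cellular fundamental chain $\sum_\sigma A(\sigma)\,\sigma\otimes\omega_\sigma$;
\item the cross product entering the definition of $\star$ on $H_{\bullet,\bullet}(X)$ is the cellular one, $(\sigma\otimes a)\times(\tau\otimes b)=(\sigma\times\tau)\otimes(a\wedge b)$ with the product orientation;
\item no Koszul sign such as $(-1)^{pq}$ enters that convention, since otherwise the square commutes only up to sign.
\end{enumerate}
With these identifications in place, the identity already holds on chains. With polyhedral cells, $\sigma\times\tau$ is itself a cell, so no Eilenberg--Zilber subdivision is needed. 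Alternatively, cite the compatibility of $\cyc$ with cross products from Gross--Shokrieh's framework, and your two-step argument is then complete.

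Your fallback does not work, for two reasons. First, $H^{p+q,p+q}(X\times X)\cong\bigoplus H^{p_1,q_1}(X)\otimes H^{p_2,q_2}(X)$ over $p_1+p_2=q_1+q_2=p+q$. The classes $\pr_1^*a\cup\pr_2^*b$ with $a\in H^{p,p}(X)$ and $b\in H^{q,q}(X)$ span only the summand with $(p_1,q_1)=(p,p)$. That $\cyc_{X\times X}(A\times B)$ pairs to zero with all the other summands is part of what must be shown; it is automatic only for $\cyc_X(A)\times\cyc_X(B)$. So restricting to ``the relevant K\"unneth component'' assumes the conclusion.

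Second, even on that summand, Proposition~\ref{prop:GS19-toolkit}\textup{(ii)} with $f=\pr_1$ only rewrites the pairing as the degree of $a\cap\pr_{1,*}\bigl(\pr_2^*b\cap\cyc_{X\times X}(A\times B)\bigr)$. To finish you need the formula $\pr_{1,*}\bigl(\pr_2^*b\cap\cyc_{X\times X}(A\times B)\bigr)=\langle b,\cyc_X(B)\rangle\,\cyc_X(A)$. This is a slant-product statement of the same strength as the claim itself. The only result in the paper that computes caps with cycle classes is Proposition~\ref{prop:cyc-compat}, which covers $b$ built from classes $c_1(\MO_X(D))$. Such classes do not span $H^{q,q}(X)$ in general: for $q=1$ they lie among the symmetric elements of $\Hom(\Lambda_X',\Lambda_X)$, a proper sublattice when $g\ge 2$.
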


Let \(g=\dim X\). Retain the \(\Z\)-bases
\(\{\lambda_i^{\prime}\}_{i=1}^{g}\) of \(\Lambda_X^{\prime}\) and
\(\{\eta_i\}_{i=1}^{g}\) of \(\Lambda_X\) fixed in
Subsection~\ref{subsec:tropical-cohomology}, together with their dual bases
\(\{(\lambda_i^{\prime})^{*}\}_{i=1}^{g}\) and
\(\{\eta_i^{*}\}_{i=1}^{g}\). Under
\[
H_{p,p}(X)
\cong
\bigwedge^p\Lambda_X'\otimes\bigwedge^p(\Lambda_X)^*,
\]
the tropical Pontryagin product is computed by exterior products. Let
\[
\alpha
=
\sum_{i,j=1}^{g}M_{i,j}\lambda_i'\otimes\eta_j^{*}
\in H_{1,1}(X),
\qquad
M=(M_{i,j})\in M_g(\Z).
\]
Then
\begin{equation}\label{eq:pontryagin-product-g-minus-one-minors}
\underbrace{\alpha\star\cdots \star\alpha}_{g-1\ \mathrm{factors}}
=
(g-1)! \sum_{i,j=1}^{g}
\det(M_{\widehat{i},\widehat{j}})\,
\lambda'_{\widehat{i}}\otimes\eta^*_{\widehat{j}},
\end{equation} where \(M_{\widehat{i},\widehat{j}}\) is the matrix obtained from \(M\) by deleting the \(i\)-th row and the \(j\)-th column, and
\[
\lambda'_{\widehat{i}}
\coloneqq
\lambda_1'\wedge\cdots\wedge
\widehat{\lambda_i'}\wedge\cdots\wedge\lambda_g',
\qquad
\eta^*_{\widehat{j}}
\coloneqq
\eta_1^{*}\wedge\cdots\wedge
\widehat{\eta_j^{*}}\wedge\cdots\wedge\eta_g^{*}.
\]

\begin{lemma}\label{lem:pushforward-pontryagin-product}
Let \(X_1\) and \(X_2\) be integral tori, let \(f\colon X_1\to X_2\) be a homomorphism of integral tori, and let \(\alpha\in Z_p(X_1)\) and \(\beta\in Z_q(X_1)\). Then the following hold.
\begin{enumerate}[label=\textup{(\roman*)}]
\item One has \(f_*(\alpha\star\beta)=f_*\alpha\star f_*\beta\) in \(Z_{p+q}(X_2)\).
\item One has \(f_*\cyc_{X_1}(\alpha)\star f_*\cyc_{X_1}(\beta)=\cyc_{X_2}\bigl(f_*(\alpha\star\beta)\bigr)\) in \(H_{p+q,p+q}(X_2)\).
\end{enumerate}
\end{lemma}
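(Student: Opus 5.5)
Part (i) will be reduced to a naturality statement about pushforward along group homomorphisms, and part (ii) will follow from (i) via the cycle class map.

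For (i), the key observation is that \(f\circ\mu_{X_1}=\mu_{X_2}\circ(f\times f)\) as morphisms \(X_1\times X_1\to X_2\). This holds because \(f\) is a homomorphism of integral tori: on the level of the covering vector spaces it is the linear map induced by \(f_\sharp\), which is additive. The proof then proceeds in three steps.
\begin{enumerate}
\item Use functoriality of proper pushforward of tropical cycles (\(f_*\circ g_*=(f\circ g)_*\) for proper morphisms of rational polyhedral spaces; all maps here are proper since the tori are compact). This gives
\[
f_*(\alpha\star\beta)=f_*\mu_{X_1,*}(\alpha\times\beta)=\mu_{X_2,*}(f\times f)_*(\alpha\times\beta).
\]
\item Show that \((f\times f)_*(\alpha\times\beta)=f_*\alpha\times f_*\beta\), i.e. that cross products are compatible with pushforward along products of morphisms. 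I would verify this locally at generic points of \(|\alpha|\times|\beta|\). The support of \((f\times f)_*(\alpha\times\beta)\) is \(f(|\alpha|)\times f(|\beta|\)), and for the dimension-dropping cases both sides vanish. Otherwise, at a regular point \((y,z)\) the local index of Definition~\ref{def:proper-pushforward} is multiplicative, because the tangent lattice of a product is the direct sum:
\[
[T^{\Z}_yf(|\alpha|)\oplus T^{\Z}_zf(|\beta|):d f(T^{\Z}|\alpha|)\oplus df(T^{\Z}|\beta|)]=m(x)\,m(x').
\]
The fibre over \((y,z)\) is the product of the fibres, so the sum in \eqref{eq:proper-pushforward-value} factors as a product of the two sums. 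Since both sides are tropical cycles agreeing off a lower-dimensional locally polyhedral set, they coincide.
\item Substitute the result of step 2 into step 1 to obtain \(f_*(\alpha\star\beta)=\mu_{X_2,*}(f_*\alpha\times f_*\beta)=f_*\alpha\star f_*\beta\).
\end{enumerate}

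For (ii), apply \(\cyc_{X_2}\) to (i). Proposition~\ref{prop:cycle-class-pontryagin-product} on \(X_2\) gives
\[
\cyc_{X_2}(f_*\alpha\star f_*\beta)=\cyc_{X_2}(f_*\alpha)\star\cyc_{X_2}(f_*\beta).
\]
Proposition~\ref{prop:GS19-toolkit}\textup{(iii)} rewrites each factor as \(f_*\cyc_{X_1}(\cdot)\), which yields the claimed identity. Alternatively, (ii) can be checked purely on homology using the exterior-product description: \(f_*\) on \(H_{p,p}\cong\bigwedge^p\Lambda'\otimes\bigwedge^p\Lambda^*\) acts by \(\bigwedge f_\sharp\otimes\bigwedge (f^\sharp)^*\), extending Lemma~\ref{lem:push-PPTAV-en}, and this is visibly multiplicative for wedge products. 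This would serve as a consistency check.

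The main obstacle is step 2 of (i), the compatibility of cross product with pushforward. The paper has not formally recalled cross products of tropical cycles, so this step must rest on the local description of pushforward weights. Care is needed with the index convention (index \(0\) when infinite) and with the case \(\dim f(|\alpha|)<p\), where both sides must be shown to vanish. If that local argument gets messy, I would fall back on citing \cite[Sec.~4]{GS19} for functoriality and products of cycles.
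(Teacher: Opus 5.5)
Your proposal is correct and follows the paper's proof: (i) via $f\circ\mu_{X_1}=\mu_{X_2}\circ(f\times f)$, functoriality of pushforward, and $(f\times f)_*(\alpha\times\beta)=f_*\alpha\times f_*\beta$, and (ii) by combining (i) with Proposition~\ref{prop:GS19-toolkit}\textup{(iii)} and Proposition~\ref{prop:cycle-class-pontryagin-product}. The paper simply asserts $(f\times f)_*(\alpha\times\beta)=f_*\alpha\times f_*\beta$, which you sketch locally; one small correction there is that the support of $(f\times f)_*(\alpha\times\beta)$ is only contained in $f(|\alpha|)\times f(|\beta|)$, since weights can cancel for non-effective cycles, but your generic-point comparison does not need equality.
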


\begin{proof}
For \textup{(i)}, let \(\mu_i\colon X_i\times X_i\to X_i\) be the group law for \(i=1,2\). Since \(f\) is a homomorphism of integral tori, one has
\[
f\circ\mu_1=\mu_2\circ(f\times f).
\]
Hence
\[
\begin{split}
f_*(\alpha\star\beta)
&=
f_*\bigl((\mu_1)_*(\alpha\times\beta)\bigr)
=
(\mu_2)_*\bigl((f\times f)_*(\alpha\times\beta)\bigr)\\
&=
(\mu_2)_*(f_*\alpha\times f_*\beta)
=
f_*\alpha\star f_*\beta.
\end{split}
\]
For \textup{(ii)}, Proposition~\ref{prop:GS19-toolkit}\textup{(iii)} gives
\[
f_*\cyc_{X_1}(\alpha)=\cyc_{X_2}(f_*\alpha),
\qquad
f_*\cyc_{X_1}(\beta)=\cyc_{X_2}(f_*\beta).
\]
By Proposition~\ref{prop:cycle-class-pontryagin-product}, one has
\[
\cyc_{X_2}(f_*\alpha)\star\cyc_{X_2}(f_*\beta)
=
\cyc_{X_2}(f_*\alpha\star f_*\beta).
\]
Together with \textup{(i)}, this gives  \textup{(ii)}.
\end{proof}

\subsection{Regular sections and symmetric bilinear forms}\label{subsec:sumi-theta-functions}
This subsection records the facts on regular sections and symmetric bilinear forms needed below. Let \(X=(\Lambda_X,\Lambda_X^{\prime},[\,\cdot,\cdot\,]_X)\) be a \(g\)-dimensional integral torus. For \(D\in\Div(X)\), let \(Q_D\) denote the symmetric bilinear form associated with \(D\), as defined in Subsection~\ref{subsec:integral-tori-polarizations}. 
\begin{defi}\label{def:effective-tropical-Cartier-divisor}
A tropical Cartier divisor \(D\in\Div(X)\) is called effective if it is locally represented by the minimum of finitely many integral affine functions.
\end{defi}
\begin{remark}\label{rem:min-convention-theta}
The definition of effectiveness above follows from the min-convention in \cite[Rem.~3.4]{GS23}. In particular, a tropical Cartier divisor \(D\) on a tropical manifold is effective if and only if its associated tropical cycle \([D]\) is effective; see \cite[Def.~3.1]{GS23} and the discussion following it.
\end{remark}
\begin{defi}\label{def:regular-section-theta-function}
Let \(D\in\Div(X)\), and let \(s\) be a global section of \(\MO_X(D)\).
Let \(D_s\) be the tropical Cartier divisor associated with \(s\).
We call \(s\) a global regular section of \(\MO_X(D)\) if \(D_s\) is effective.
\end{defi}

\begin{prop}\label{prop:sumi-global-sections}
Let \(D\in\Div(X)\), and let \(Q_D\) be the symmetric bilinear form associated with \(D\). Then the following hold.
\begin{enumerate}[label=\textup{(\roman*)}, itemsep=2pt]
\item If \(Q_D\) has a negative eigenvalue, then \(\MO_X(D)\) has no global regular section.
\item If \(Q_D\) is positive definite, then \(\MO_X(D)\) has at least one global regular section.
\end{enumerate}
\end{prop}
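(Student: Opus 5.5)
The plan is to translate both assertions into statements about quasi-periodic piecewise integral affine functions on the universal cover \(\pi_X\colon(\Lambda_X)_{\R}^{*}\to X\), using the tropical Appell--Humbert theorem \cite[Prop.~7.1, Thm.~7.2]{GS23}. By that theorem, \(\MO_X(D)\) is isomorphic to a line bundle \(L(Q_D,l)\) for some \(l\in(\Lambda_X^{\prime})_{\R}^{*}\). Its factor of automorphy is, up to the sign conventions of \cite{GS23}, the affine function
\[
x\mapsto Q_D(\lambda',x)+\tfrac12 Q_D(\lambda',\lambda')+l(\lambda'),\qquad \lambda'\in\Lambda_X^{\prime}.
\]
Its linear part \(Q_D(\lambda',\cdot)=\kappa_D(\lambda')\) lies in \(\Lambda_X\), so it is integral affine. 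A global section \(s\) of \(\MO_X(D)\) then corresponds to a tropical rational function \(\widetilde s\) on \((\Lambda_X)_{\R}^{*}\) with
\[
\widetilde s(x+\lambda')=\widetilde s(x)+Q_D(\lambda',x)+\tfrac12 Q_D(\lambda',\lambda')+l(\lambda').
\]
Since the transition functions are integral affine, \(D_s\) is the divisor of \(\widetilde s\) descended to \(X\). By Definition~\ref{def:effective-tropical-Cartier-divisor} (min-convention), \(s\) is regular exactly when \(\widetilde s\) is locally a minimum of finitely many integral affine functions. Globally on \((\Lambda_X)_{\R}^{*}\), this means \(\widetilde s\) is concave.

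For \textup{(i)}, suppose \(Q_D\) has a negative eigenvalue. Since \(Q_D\) is a real quadratic form and \(\Lambda_X^{\prime}\otimes\Q\) is dense in \((\Lambda_X^{\prime})_{\R}\), there is \(\lambda'\in\Lambda_X^{\prime}\) with \(Q_D(\lambda',\lambda')<0\) (clear denominators). Iterating the quasi-periodicity gives
\[
\widetilde s(x+n\lambda')=\widetilde s(x)+nQ_D(\lambda',x)+\tfrac{n^2}{2}Q_D(\lambda',\lambda')+n\,l(\lambda')\cdot(\text{sign conv.}),
\]
so along the line \(x+t\lambda'\) the function differs from a strictly convex quadratic by a bounded periodic error. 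With the correct sign from \cite{GS23}, this forces superlinear growth of the wrong sign for a concave function. A concave function restricted to a line lies below a supporting affine function and cannot grow quadratically in \(|t|\). Hence no regular section exists.

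For \textup{(ii)}, with \(Q_D\) positive definite, I will write down the tropical theta function
\[
\widetilde s(x)\coloneqq\min_{\lambda'\in\Lambda_X^{\prime}}\Bigl(\tfrac12 Q_D(\lambda',\lambda')+Q_D(\lambda',x)+l(\lambda')\Bigr),
\]
with signs adjusted to the cocycle. Each term is integral affine in \(x\) because \(Q_D(\lambda',\cdot)=\kappa_D(\lambda')\in\Lambda_X\). Positive definiteness makes the terms grow quadratically in \(\lambda'\) uniformly for \(x\) in a compact set. Hence on each bounded open set only finitely many \(\lambda'\) can attain the minimum, so \(\widetilde s\) is locally a minimum of finitely many integral affine functions. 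The quasi-periodicity follows by the reindexing \(\lambda'\mapsto\lambda'+\mu'\) together with the expansion of \(\tfrac12Q_D(\lambda'+\mu',\lambda'+\mu')\). Therefore \(\widetilde s\) descends to a global section whose divisor is effective, i.e.\ a global regular section.

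The main obstacle is bookkeeping rather than ideas. One must match exactly the sign and normalization conventions of the factor of automorphy in \cite{GS23} (max versus min, and \(\pm Q\)) with the min-convention of Remark~\ref{rem:min-convention-theta}. The half-integrality \(\tfrac12 Q_D(\lambda',\lambda')\) is harmless because constants need only be real. Once the conventions are fixed, \textup{(i)} is the concavity-versus-quadratic-growth argument and \textup{(ii)} is the standard theta-function construction, as in \cite{MZ08} and \cite{Sum21}.
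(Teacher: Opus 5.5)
Your route is the standard one, and it is what the paper's proof imports. The paper cites \cite[Rem.~29]{Sum21} for \textup{(i)} and the theta-function construction of \cite[Sec.~3.4]{Sum21} for \textup{(ii)}, adding only that both survive the change to the min-convention. Your analytic steps are sound:
\begin{itemize}
\item local concavity on \((\Lambda_X)^*_{\R}\) is global concavity;
\item a concave function is bounded above by an affine function along any line;
\item positive definiteness makes the minimum over \(\Lambda_X'\) locally finite.
\end{itemize}

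The gap is the sign you call bookkeeping. It is the whole content of the statement, and with the cocycle you actually display, both arguments fail.

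\emph{Part \textup{(i)}.} Suppose \(\widetilde s(x+\lambda')=\widetilde s(x)+Q_D(\lambda',x)+\tfrac12Q_D(\lambda',\lambda')+l(\lambda')\). Then \(t\mapsto\widetilde s(x+t\lambda')\) equals \(\tfrac{t^2}{2}Q_D(\lambda',\lambda')\) plus an affine function of \(t\) plus a bounded \(1\)-periodic function. For \(Q_D(\lambda',\lambda')<0\) this is a concave quadratic up to bounded error, which a concave function may well be, so there is no contradiction. Indeed, in dimension one with \(Q_D=-1\), your min-theta function for the form \(+1\) is a concave section of exactly this cocycle.

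\emph{Part \textup{(ii)}.} Reindexing shows that your theta function satisfies \(\vartheta(x+\mu')=\vartheta(x)-Q_D(\mu',x)-\tfrac12Q_D(\mu',\mu')-l(\mu')\). So it is a section of the bundle with form \(-Q_D\), not \(+Q_D\). No sign adjustment inside the minimum can repair this. Under a \(+Q_D\) cocycle with \(Q_D\) positive definite, every section grows like \(+\tfrac{t^2}{2}Q_D(\lambda',\lambda')\) along every lattice line, so it is never concave.

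Under that normalization the proposition would hold with positive and negative interchanged. What you must establish from the definitions is that a section of \(\MO_X(D)\) lifts to \(\widetilde s\) with \(\widetilde s(x+\lambda')=\widetilde s(x)-Q_D(\lambda',x)+c(\lambda')\). The relevant definitions are \(\MO_X(D)=\delta_X(D)\), the identification \(H^1(X,\Omega_X)\cong\Hom(\Lambda_X',\Lambda_X)\) of \cite[Sec.~5.1]{MZ08}, and the min-convention of \cite[Rem.~3.4]{GS23}.

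A one-dimensional check shows this is the normalization the statement forces. On \(\R/\Z\), take a concave \(\widetilde s\) whose divisor is one point of weight \(+1\) per period. Its slope drops by \(1\) across each period, so its factor of automorphy has linear part \(-x\). Yet the divisor is effective of positive degree, so \(Q_D\) must come out as \(+1\).

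This compatibility is exactly what the paper's parenthetical remark asserts when transporting \cite{Sum21} to the min-convention. Once you verify it (or take it from the cited sources), your two arguments are complete as written.
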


\begin{proof}
Assertion~\textup{(i)} follows from \cite[Rem.~29]{Sum21}, and
assertion~\textup{(ii)} follows from \cite[Sec.~3.4]{Sum21}.
(Both assertions remain valid under the min-convention used here.)
\end{proof}

\begin{lemma}\label{lem:effective-divisor-semipositive}
Let \(D\in\Div(X)\) be effective. Then \(Q_D\) is symmetric positive semidefinite.
\end{lemma}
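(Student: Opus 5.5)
The plan is to argue by contradiction, using Proposition~\ref{prop:sumi-global-sections}\textup{(i)} after a reduction to the case where \(D\) comes from a global regular section. Since \(D\) is effective, it is locally represented by minima of finitely many integral affine functions. Choose an open covering of \(X\) by charts on which \(D\) has such local representatives \(\psi_i\). The differences \(\psi_j-\psi_i\) on overlaps lie in \(\Aff_X\), so they form a Čech \(1\)-cocycle representing \(\MO_X(D)=\delta_X(D)\). The family \(s=\{\psi_i\}\) is then a global section of \(\MO_X(D)\) in the sense recalled in Subsection~\ref{subsec:tropical-cartier-line-bundles}. Its associated divisor is \(D_s=D\), which is effective, so \(s\) is a global regular section of \(\MO_X(D)\) in the sense of Definition~\ref{def:regular-section-theta-function}.

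Now \(Q_D\) is a symmetric real bilinear form on \((\Lambda_X^{\prime})_{\R}\), symmetric by the tropical Appell--Humbert theorem recalled in Subsection~\ref{subsec:integral-tori-polarizations}. It is therefore diagonalizable with real eigenvalues. If \(Q_D\) were not positive semidefinite, it would have a negative eigenvalue. Proposition~\ref{prop:sumi-global-sections}\textup{(i)} would then say that \(\MO_X(D)\) has no global regular section, contradicting the existence of \(s\). Hence all eigenvalues are nonnegative and \(Q_D\) is symmetric positive semidefinite.

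The main obstacle is the first paragraph: matching the section \(s\) built from the local representatives with the notion of ``global regular section'' used in \cite{Sum21}. One has to check that the line bundle in question is exactly \(\MO_X(D)\), and not \(\MO_X(-D)\) under some sign convention. One also has to check that effectiveness in the min-convention of Remark~\ref{rem:min-convention-theta} agrees with the convention under which Proposition~\ref{prop:sumi-global-sections} is stated; that agreement is what the parenthetical remark in its proof asserts. I would handle this by tracing the connecting homomorphism \(\delta_X\) through \eqref{eq:aff-M-div-short-exact} and confirming that the cocycle \(\{\psi_j-\psi_i\}\) represents \(\delta_X(D)\) with the paper's sign.
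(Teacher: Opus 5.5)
Your proposal is correct and follows essentially the same route as the paper: the paper asserts in one line that effectiveness of \(D\) gives a global regular section of \(\MO_X(D)\) whose associated divisor is \(D\), and then gets the contradiction from Proposition~\ref{prop:sumi-global-sections}\textup{(i)} together with the symmetry of \(Q_D\), just as you do. Your first paragraph spells out that one-line step, using the local representatives \(\psi_i\) and the cocycle \(\{\psi_j-\psi_i\}\) representing \(\delta_X(D)\).
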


\begin{proof}
Since \(D\) is effective, \(\MO_X(D)\) has a global regular section whose associated tropical Cartier divisor is \(D\).
If \(Q_D\) had a negative eigenvalue, then Proposition~\ref{prop:sumi-global-sections}\textup{(i)} would imply that \(\MO_X(D)\) has no global regular section, a contradiction.
Hence \(Q_D\) has no negative eigenvalue.
Since \(Q_D\) is symmetric, it is positive semidefinite.
\end{proof}

\subsection{Preparations for the Matsusaka--Ran criterion}
\label{subsec:preparations-two-criteria}

This subsection prepares for Subsection~\ref{subsec:proof-tropical-matsusaka-ran}. The notation follows Subsections~\ref{subsec:tropical-cohomology} and~\ref{subsec:pontryagin-products-integral-tori}.
Recall from Subsection~\ref{subsec:tropical-cohomology} that \(\Z\)-bases
\(\{\lambda_i^{\prime}\}_{i=1}^{g}\) of \(\Lambda_X^{\prime}\) and
\(\{\eta_i\}_{i=1}^{g}\) of \(\Lambda_X\) have been fixed, with dual bases
\(\{(\lambda_i^{\prime})^{*}\}_{i=1}^{g}\) and
\(\{\eta_i^{*}\}_{i=1}^{g}\), respectively.  One has
\begin{equation}\label{eq:fundamental-cycle-class2}
\cyc_X([X])
=
\varepsilon
(\lambda_1'\wedge\cdots\wedge\lambda_g')
\otimes
(\eta_1^*\wedge\cdots\wedge\eta_g^*)
\end{equation}
for some \(\varepsilon\in\{\pm1\}\), where \([X]\) denotes the fundamental tropical cycle. As in \eqref{eq:chern-class-matrix}, we write
\[
c_1(\MO_X(\Theta))
=
\sum_{i,j=1}^{g}E^\Theta_{i,j}(\lambda_i')^*\otimes\eta_j,
\qquad
E^\Theta=(E^\Theta_{i,j})\in M_g(\Z).
\]

Let \(\Gamma\) be a smooth tropical curve of genus $g(\Gamma)$, and let
\(\chi\colon\Gamma\to X\) be a morphism of rational polyhedral spaces. Set
\(C\coloneqq \chi_*[\Gamma]\). Fix a basepoint \(q\in\Gamma\), let
\(\Theta_{\Gamma}\) be the canonical principal polarization on
\(\Jac(\Gamma)\), and let
\(\phi_q\colon\Gamma\to\Jac(\Gamma)\) be the tropical Abel--Jacobi map with
basepoint \(q\). The map \(\phi_q\) is a morphism of rational polyhedral
spaces; see \cite[Lem.~6.3]{MZ08}. Hence
\((\phi_q)_*[\Gamma]\) is a tropical \(1\)-cycle on \(\Jac(\Gamma)\).

For \(1\le p\le g(\Gamma)\), let
\[
\Phi_q^p\colon \Gamma^p\to\Jac(\Gamma),
\qquad
(x_1,\ldots,x_p)\longmapsto
\phi_q(x_1)+\cdots+\phi_q(x_p),
\]
and set
\[
\widetilde W_p\coloneqq \Phi_q^p(\Gamma^p).
\]
By \cite[Prop.~8.3]{GS23}, \(\widetilde W_p\) has a fundamental cycle \([\widetilde W_p]\), and
\begin{equation}\label{eq:pushforward-Gamma-p-Wp}
(\Phi_q^p)_*[\Gamma^p]=p![\widetilde W_p].
\end{equation}
Moreover, \cite[Cor.~8.4]{GS23} gives the following description of \([\widetilde W_p]\) in terms of the tropical Pontryagin product:
\begin{equation}\label{eq:Wp-pontryagin-product-W1}
p! [\widetilde W_p]
=
\underbrace{[\widetilde W_1]\star\cdots \star[\widetilde W_1]}_{p\ \mathrm{factors}},
\end{equation}
where \(\star\) denotes the tropical Pontryagin product on \(\Jac(\Gamma)\).

\medskip
We recall Theorem~\ref{Thm:Poincare}.
Set \(J\coloneqq\Jac(\Gamma)\). Choose a \(\Z\)-basis
\(\{\alpha_k\}_{k=1}^{g(\Gamma)}\) of \(H_1(\Gamma,\Z)\), and set
\[
\omega_k\coloneqq\kappa_{\Theta_\Gamma}(\alpha_k),
\qquad
1\le k\le g(\Gamma).
\]
Since \(\Theta_\Gamma\) is a principal polarization,
\(\{\omega_k\}_{k=1}^{g(\Gamma)}\) is a \(\Z\)-basis of
\(H^0(\Gamma,\Omega_\Gamma)\). Let
\(\{\omega_k^*\}_{k=1}^{g(\Gamma)}\) be the dual basis.

Let \(t_{-\chi(q)}\colon X\to X\) be the translation on \(X\) by
\(-\chi(q)\). Let \(f\colon J\to X\) be the homomorphism of integral tori satisfying
\(f\circ\phi_q=t_{-\chi(q)}\circ\chi\) as in
Proposition~\ref{prop:universal-property}. Set
\[
C^{\prime}\coloneqq(t_{-\chi(q)})_*C.
\]
Since \((\phi_q)_*[\Gamma]=[\widetilde W_1]\), one has
\begin{equation}\label{eq:f-pushforward-W1}
f_*[\widetilde W_1]=C^{\prime}.
\end{equation}

By Theorem~\ref{Thm:Poincare} and Lemma~\ref{lem:GS25-normal-form}\textup{(ii)}, one has
\[
\cyc_J([\widetilde W_1])
=
\sum_{k=1}^{g(\Gamma)}
\alpha_k\otimes\omega_k^{*}.
\]
Hence, by Definition~\ref{def:T-cycle-class}, with respect to the \(\Z\)-bases fixed above, one has
\[
T_{[\widetilde W_1]}
=
\kappa_{\Theta_\Gamma}^{-1}.
\]
Hence \eqref{eq:f-pushforward-W1} and
Lemma~\ref{lem:T-pushforward} give
\begin{equation}\label{eq:T-Cprime-via-f}
\begin{aligned}
T_{C^{\prime}}
=
f_\sharp\circ\kappa_{\Theta_\Gamma}^{-1}\circ f^\sharp
\colon(\Lambda_X)_{\R}
&\longrightarrow
(\Lambda_X^{\prime})_{\R},\\
v
&\longmapsto
\sum_{i,j=1}^{g}
M^{\prime}_{i,j}\eta_j^*(v)\lambda_i'.
\end{aligned}
\end{equation}

With the notation of \eqref{eq:pontryagin-product-g-minus-one-minors}, let
\(\varepsilon\in\{\pm1\}\) be the sign fixed in
\eqref{eq:fundamental-cycle-class2}, and define
\[
\operatorname{adj}(M^{\prime})_{i,j}
\coloneqq
(-1)^{i+j}\det(M^{\prime}_{\widehat{j},\widehat{i}}),
\qquad
m_{C^{\prime}}\coloneqq\varepsilon\det(M^{\prime}).
\]

\begin{lemma}\label{lem:C-p-effective-cycle}
Assume that \(C^{\prime}\) is a spanning curve on \(X\). Then \(g\le g(\Gamma)\). For an integer \(p\) with \(1\le p\le g\), define
\begin{equation}\label{eq:C-p-definition}
C^{\prime[p]}\coloneqq f_*[\widetilde W_p].
\end{equation}
Then the following hold.
\begin{enumerate}[label=\textup{(\roman*)}]
\item The tropical cycle \(C^{\prime[p]}\) is a nonzero effective tropical \(p\)-cycle on \(X\).
\item One has
\begin{equation}\label{eq:cycle-class-C-p-pontryagin}
p! \cyc_X(C^{\prime[p]})
=
\underbrace{\cyc_X(C^{\prime})\star\cdots\star\cyc_X(C^{\prime})}_{p\ \mathrm{factors}}.
\end{equation}
\item Assume that \(p=g-1\). There exists an effective tropical Cartier divisor
\(D_{C^{\prime}}\) on \(X\) such that the associated tropical cycle
\([D_{C^{\prime}}]=C^{\prime[g-1]}\), and every such tropical Cartier divisor is ample. Moreover,
\begin{equation}\label{eq:c1-DC-adjM-main}
c_1(\MO_X(D_{C^{\prime}}))
=
\sum_{i,j=1}^{g}
\varepsilon(-1)^{i+j}
\det(M^{\prime}_{\widehat{i},\widehat{j}})
(\lambda_i^{\prime})^*\otimes\eta_j.
\end{equation}
With the notation in \eqref{eq:chern-class-matrix}, one has
\begin{equation}\label{eq:EDC-adjM}
E^{D_{C^{\prime}}}
=
\varepsilon\operatorname{adj}(M^{\prime})^{\mathsf T}.
\end{equation}
\item Assume that \(p=g-1\), and let \(D_{C^{\prime}}\) be as in \textup{(iii)}. Then
\begin{equation}\label{eq:DC-top-self-intersection}
(D_{C^{\prime}}^g)_X
=
g!m_{C^{\prime}}^{g-1},
\end{equation}
and
\begin{equation}\label{eq:DC-power-cycle-class}
([D_{C^{\prime}}])^{g-1}
\equiv_{\hom}
(g-1)! m_{C^{\prime}}^{g-2}C^{\prime}.
\end{equation}
\end{enumerate}
\end{lemma}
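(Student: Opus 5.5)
The plan is to establish everything at the level of lattice data, using the explicit cycle-class computations of Subsection~\ref{subsec:tropical-cohomology} and the matrix $M'$ of $T_{C'}$. Two geometric inputs are needed: effectiveness of pushforwards, and the fact that codimension-one cycles on $X$ come from Cartier divisors.

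\emph{Preliminary bound $g\le g(\Gamma)$.} By \eqref{eq:f-pushforward-W1}, the support $|C'|$ lies in $f(\widetilde W_1)\subseteq f(J)$. Since $C'$ is spanning, Lemma~\ref{lem:spanning-implies-surjective} makes $f$ surjective. Lemma~\ref{lem:homomorphism-basic}\textup{(i)} then makes $f^\sharp\colon\Lambda_X\to H^0(\Gamma,\Omega_\Gamma)$ injective, so $g=\rank\Lambda_X\le g(\Gamma)$.

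The same injectivity gives the key nondegeneracy fact. By \eqref{eq:T-Cprime-via-f}, for $v\in(\Lambda_X)_\R$,
\[
[v,T_{C'}v]_X=[f^\sharp v,\kappa_{\Theta_\Gamma}^{-1}f^\sharp v]_\Gamma=Q_{\Theta_\Gamma}\bigl(\kappa_{\Theta_\Gamma}^{-1}f^\sharp v,\kappa_{\Theta_\Gamma}^{-1}f^\sharp v\bigr),
\]
which is $>0$ for $v\neq 0$. Hence $M'$ is symmetric positive definite (up to the basis identification), and all its minors used below are nonzero. In particular $\det M'\neq0$.

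\emph{Parts \textup{(ii)} and \textup{(i)}.} For \textup{(ii)}, combine \eqref{eq:Wp-pontryagin-product-W1} with Lemma~\ref{lem:pushforward-pontryagin-product}, which applies $f_*$ and passes to cycle classes, and with \eqref{eq:f-pushforward-W1}. For \textup{(i)}, $[\widetilde W_p]$ is a fundamental cycle and hence effective. The proper pushforward of Definition~\ref{def:proper-pushforward} has nonnegative local indices, so $C'^{[p]}$ is effective. For nonvanishing, use \textup{(ii)} together with the minor formula \eqref{eq:pontryagin-product-g-minus-one-minors}, or its analogue for $p$ factors. The coefficient of $\lambda'_{I}\otimes\eta^*_{I}$ in $\cyc_X(C')^{\star p}$ is $p!$ times a principal $p\times p$ minor of $M'$, which is positive. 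So $\cyc_X(C'^{[p]})\ne0$, and therefore $C'^{[p]}\neq0$.

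\emph{Part \textup{(iii)}.} The step I expect to be the main obstacle is producing the Cartier divisor. Since $X$ is a tropical manifold (locally $\R^g$), every tropical $(g-1)$-cycle on $X$ is of the form $[D]$ for some $D\in\Div(X)$; this is the codimension-one cycle/divisor correspondence on smooth tropical spaces, as in \cite{GS19,GS23}. I would quote that correspondence and take $D_{C'}$ with $[D_{C'}]=C'^{[g-1]}$. Effectiveness of $D_{C'}$ then follows from Remark~\ref{rem:min-convention-theta}.

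For \eqref{eq:c1-DC-adjM-main}, use Proposition~\ref{prop:cyc-compat}: $\cyc_X(C'^{[g-1]})=c_1(\MO_X(D_{C'}))\cap\cyc_X([X])$. By \textup{(ii)} and \eqref{eq:pontryagin-product-g-minus-one-minors}, the left side equals $\sum_{i,j}\det(M'_{\widehat i,\widehat j})\,\lambda'_{\widehat i}\otimes\eta^*_{\widehat j}$. Capping $(\lambda_i')^*\otimes\eta_j$ against \eqref{eq:fundamental-cycle-class2} gives $\varepsilon(-1)^{i+j}\lambda'_{\widehat i}\otimes\eta^*_{\widehat j}$ via the interior product. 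Since the cap map is an isomorphism, we can read off the coefficients, which yields \eqref{eq:c1-DC-adjM-main} and then \eqref{eq:EDC-adjM}.

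Ampleness follows from two facts. First, Lemma~\ref{lem:effective-divisor-semipositive} makes $Q_{D_{C'}}$ positive semidefinite. Second, $E^{D_{C'}}=S^\top G^{D_{C'}}$ by Proposition~\ref{prop:matrix-relations-SGD}, and $\det E^{D_{C'}}=\pm\det(M')^{g-1}\ne0$. Together these make $Q_{D_{C'}}$ nondegenerate and hence positive definite. This argument applies to every effective $D$ with $[D]=C'^{[g-1]}$, since \eqref{eq:chern-class-homological-equivalence} forces the same Chern class.

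\emph{Part \textup{(iv)}.} By Lemma~\ref{lem:top-self-intersection-det}\textup{(i)},
\[
(D_{C'}^g)_X=\varepsilon\,g!\det(\varepsilon\operatorname{adj}(M')^{\mathsf T})=\varepsilon^{g+1}g!\det(M')^{g-1}=g!\,m_{C'}^{g-1}.
\]
For \eqref{eq:DC-power-cycle-class}, compute $c_1(\MO_X(D_{C'}))^{g-1}\cap\cyc_X([X])$ using \eqref{eq:cup-product-integral-tori} and \eqref{eq:cap-product-integral-tori}. As in the proof of Lemma~\ref{lem:GS25-normal-form}\textup{(ii)}, this yields $(g-1)!$ times the signed cofactor matrix of $E^{D_{C'}}$. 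Now use $\operatorname{adj}(\operatorname{adj}M')=\det(M')^{g-2}M'$. Tracking the signs $\varepsilon$ (the main bookkeeping), the result is $(g-1)!\,\varepsilon^{g-2}\det(M')^{g-2}\cyc_X(C')$. Finally, translations act trivially on $H_{1,1}$, and $\cyc_X(C')$ is the class with matrix $M'$. This gives \eqref{eq:DC-power-cycle-class}.
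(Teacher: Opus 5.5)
Your preliminary bound and parts (ii)--(iv) follow the paper's proof essentially step by step. This includes the positivity $[v,T_{C'}v]_X=Q_{\Theta_\Gamma}(\kappa_{\Theta_\Gamma}^{-1}f^\sharp v,\kappa_{\Theta_\Gamma}^{-1}f^\sharp v)>0$ giving $\det M'\neq0$, the codimension-one cycle/Cartier divisor correspondence (the paper quotes Gross--Shokrieh for it), and reading off $E^{D_{C'}}$ through the cap isomorphism. It also includes ``semidefinite plus nondegenerate'' for ampleness and the adjugate identity for (iv).

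\textbf{The gap is the nonvanishing in (i).} What you proved is positivity of the symmetric form $(v,w)\mapsto[v,T_{C'}w]_X$. Its Gram matrix in the basis $\{\eta_j\}$ is $PM'$, where $P=([\eta_j,\lambda_i']_X)_{j,i}$ is a real matrix. This says nothing about the integer matrix $M'$ itself, which depends on the arbitrary integral bases fixed in Subsection~\ref{subsec:tropical-cohomology}.

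Here is a counterexample. Take $X=\Jac(\Gamma)$ and $\chi=\phi_q$, so $f=\id$ and $\cyc_X(C')=\sum_k\alpha_k\otimes\omega_k^{*}$. Choose $\lambda_i'=\alpha_i$ and $\eta_j=\omega_{\sigma(j)}$ for a $g$-cycle $\sigma$. Then $M'$ is the permutation matrix of $\sigma$. Every principal $p\times p$ minor with $1\le p\le g-1$ vanishes, because $\sigma$ has no proper nonempty invariant subset. So your claims that $M'$ is positive definite and that its principal minors are positive are false, and the coefficients of $\lambda'_I\otimes\eta^*_I$ you inspect can all be zero.

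\textbf{The repair uses only $\det M'\neq0$.}
\begin{enumerate}
\item The generalized Laplace expansion along any $p$ rows $I$ gives $\det M'=\sum_J\pm\det(M'_{I,J})\det(M'_{I^c,J^c})$, so some $\det(M'_{I,J})\neq0$.
\item The $p$-fold analogue of \eqref{eq:pontryagin-product-g-minus-one-minors} gives $p!\det(M'_{I,J})$ as the coefficient of $\lambda'_I\otimes\eta^*_J$ in $\cyc_X(C')^{\star p}$. You must derive this from the exterior-product description, since the paper only states the case $p=g-1$.
\item Hence $\cyc_X(C'^{[p]})\neq0$, and so $C'^{[p]}\neq0$.
\end{enumerate}

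\textbf{Comparison with the paper.} With this fix, your route to (i) differs from the paper's. You detect nonvanishing homologically. This forces you to establish $\det M'\neq0$ before (i), but it yields the stronger fact that $C'^{[p]}$ is not homologically trivial.

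The paper argues geometrically. It writes $p!\,C'^{[p]}=(\mu_p)_*(C'\times\cdots\times C')$. The spanning hypothesis supplies edges $\tau_1,\dots,\tau_p$ of $|C'|$ with independent tangent directions. Hence $\mu_p$ has rank $p$ on $\relint(\tau_1)\times\cdots\times\relint(\tau_p)$, and the pushforward of the effective cycle is nonzero there. That argument uses spanning directly and needs no minor formula.

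Your effectiveness argument, pushing forward the fundamental cycle $[\widetilde W_p]$ directly along $f$, is fine.
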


\begin{proof}
By \eqref{eq:f-pushforward-W1}, one has
\(
|C^{\prime}|
\subseteq
f(\widetilde W_1)
\subseteq
f(J).
\)
Since \(C^{\prime}\) is a spanning curve, Lemma~\ref{lem:spanning-implies-surjective} shows that \(f\colon J\to X\) is surjective. Therefore \(g\le g(\Gamma)\).

\begin{enumerate}[label=\textup{(\roman*)}]
\item
Applying \(f_*\) to \eqref{eq:Wp-pontryagin-product-W1} and using
Lemma~\ref{lem:pushforward-pontryagin-product}\textup{(i)} repeatedly gives
\begin{equation}\label{eq:fpush-Wp-Cprime-pontryagin}
p! C^{\prime[p]}
=
\underbrace{C^{\prime}\star\cdots\star C^{\prime}}_{p\ \mathrm{factors}}.
\end{equation}
Let
\[
\mu_p\colon X^p\to X,\qquad
(y_1,\ldots,y_p)\longmapsto y_1+\cdots+y_p,
\]
be the addition map. By the definition of the Pontryagin product,
\[
p! C^{\prime[p]}
=
(\mu_p)_*(C^{\prime}\times\cdots\times C^{\prime}).
\]
Since \(C^{\prime}\times\cdots\times C^{\prime}\) is an effective tropical \(p\)-cycle and
\(\mu_p\) is locally integral affine,
\[
\dim\mu_p\bigl(|C^{\prime}\times\cdots\times C^{\prime}|\bigr)\le p.
\]
Since \(C^{\prime}\) is a spanning curve on \(X\),
Definition~\ref{def:spanning-curve} gives edge-endpoint pairs
\(
(\tau_1,v_1),\ldots,(\tau_p,v_p)
\)
such that tangent directions
\(
\boldsymbol{u}_{\tau_1,v_1},\ldots,\boldsymbol{u}_{\tau_p,v_p}
\)
are \(\R\)-linearly independent. Then
\[
\tau_1\times\cdots\times\tau_p
\subseteq
|C^{\prime}|^p
=
|C^{\prime}\times\cdots\times C^{\prime}|.
\]
On
\(\relint(\tau_1)\times\cdots\times\relint(\tau_p)\),
the differential of \(\mu_p\) has rank \(p\), so
\(
\dim\mu_p(\tau_1\times\cdots\times\tau_p)=p.
\)
Therefore
\[
\dim\mu_p\bigl(|C^{\prime}\times\cdots\times C^{\prime}|\bigr)=p.
\]
Since \(C^{\prime}\times\cdots\times C^{\prime}\) is effective,
Definition~\ref{def:proper-pushforward} shows that
\[
(\mu_p)_*(C^{\prime}\times\cdots\times C^{\prime})
\]
is a nonzero effective tropical \(p\)-cycle. Hence
\(C^{\prime[p]}\) is a nonzero effective tropical \(p\)-cycle.

\item
Apply \(\cyc_X\) to \eqref{eq:fpush-Wp-Cprime-pontryagin} and use
Proposition~\ref{prop:cycle-class-pontryagin-product} repeatedly.

\item
Assume that \(p=g-1\). By \textup{(i)},
\(C^{\prime[g-1]}\) is an effective tropical cycle of codimension one on \(X\).
By \cite[Sec.~4.3 and Thm.~4.5]{GS21}, there exists a tropical Cartier divisor
\(D_{C^{\prime}}\) such that
\([D_{C^{\prime}}]=C^{\prime[g-1]}\).
Since \(C^{\prime[g-1]}\) is effective,
Remark~\ref{rem:min-convention-theta} shows that \(D_{C^{\prime}}\) is effective.
Hence Lemma~\ref{lem:effective-divisor-semipositive} shows that
\(Q_{D_{C^{\prime}}}\) is symmetric positive semidefinite.
Since \(f\colon J\to X\) is surjective,
Lemma~\ref{lem:homomorphism-basic}\textup{(i)} implies that
\(f^\sharp\colon\Lambda_X\to H^0(\Gamma,\Omega_\Gamma)\) is injective.
By \eqref{eq:T-Cprime-via-f}, for every nonzero \(v\in(\Lambda_X)_{\R}\),
\[
[v,T_{C^{\prime}}(v)]_X
=
[f^\sharp(v),\kappa_{\Theta_\Gamma}^{-1}(f^\sharp(v))]_J
=
Q_{\Theta_\Gamma}(\kappa_{\Theta_\Gamma}^{-1}(f^\sharp(v)),
\kappa_{\Theta_\Gamma}^{-1}(f^\sharp(v)))
>0.
\]
Thus \(T_{C^{\prime}}\) is an isomorphism over \(\R\). By \eqref{eq:T-Cprime-via-f}, it follows that
\begin{equation}\label{eq:det-Mprime-nonzero}
\det(M^{\prime})\neq0.
\end{equation}
Since \(H_{g-1,g-1}(X)\) is torsion-free, \textup{(ii)} and
\eqref{eq:pontryagin-product-g-minus-one-minors} give
\[
\cyc_X([D_{C^{\prime}}])
=
\sum_{i,j=1}^{g}
\det(M^{\prime}_{\widehat{i},\widehat{j}})
\lambda_{\widehat{i}}'\otimes\eta_{\widehat{j}}^*.
\]
By Proposition~\ref{prop:cyc-compat}, one has
\[
\cyc_X([D_{C^{\prime}}])
=
c_1(\MO_X(D_{C^{\prime}}))
\cap
\cyc_X([X]).
\]
Using \eqref{eq:fundamental-cycle-class} and \eqref{eq:cap-product-integral-tori}, this becomes \eqref{eq:c1-DC-adjM-main}, and hence
\eqref{eq:EDC-adjM}.
Together with \eqref{eq:det-Mprime-nonzero}, this gives
\[
\det(E^{D_{C^{\prime}}})
=
\varepsilon^g\det(M^{\prime})^{g-1}
\neq0.
\]
Thus \(Q_{D_{C^{\prime}}}\) is nondegenerate. Since it is positive semidefinite, it is positive definite, so \(D_{C^{\prime}}\) is ample.

\item
By \eqref{eq:EDC-adjM},
\[
\varepsilon\det(E^{D_{C^{\prime}}})
=
\varepsilon^{g+1}\det(\operatorname{adj}(M^{\prime}))
=
\varepsilon^{g+1}\det(M^{\prime})^{g-1}
=
m_{C^{\prime}}^{g-1}.
\]
Hence Lemma~\ref{lem:top-self-intersection-det}\textup{(i)} gives
\eqref{eq:DC-top-self-intersection}.
By \eqref{eq:c1-DC-adjM-main} and
\eqref{eq:cup-product-integral-tori},
\[
\begin{aligned}
c_1(\MO_X(D_{C^{\prime}}))^{g-1}
&=(g-1)!\sum_{i,j=1}^{g}\det\bigl(E^{D_{C^{\prime}}}_{\widehat{i},\widehat{j}}\bigr)
(\lambda_1^{\prime})^*\wedge\cdots\wedge\widehat{(\lambda_i^{\prime})^*}\wedge\cdots\wedge(\lambda_g^{\prime})^*\\
&\qquad\otimes
\eta_1\wedge\cdots\wedge\widehat{\eta_j}\wedge\cdots\wedge\eta_g.
\end{aligned}
\]
By Proposition~\ref{prop:cyc-compat}, one has
\[
\cyc_X\!\left(([D_{C^{\prime}}])^{g-1}\right)
=
\bigl(c_1(\MO_X(D_{C^{\prime}}))\bigr)^{g-1}
\cap
\cyc_X([X]).
\]
By the preceding equality and \eqref{eq:fundamental-cycle-class2},
\[
\cyc_X\!\left(([D_{C^{\prime}}])^{g-1}\right)
=
(g-1)! \sum_{i,j=1}^{g}
\varepsilon(-1)^{i+j}
\det\bigl(E^{D_{C^{\prime}}}_{\widehat{i},\widehat{j}}\bigr)
\lambda_i^{\prime}\otimes\eta_j^*.
\]
Since
\(
E^{D_{C^{\prime}}}
=
\varepsilon\operatorname{adj}(M^{\prime})^{\mathsf T}
\),
the identity
\(
\operatorname{adj}\bigl(\operatorname{adj}(M^{\prime})^{\mathsf T}\bigr)
=
\det(M^{\prime})^{g-2}(M^{\prime})^{\mathsf T}
\)
gives
\[
\varepsilon(-1)^{i+j}
\det\bigl(E^{D_{C^{\prime}}}_{\widehat{i},\widehat{j}}\bigr)
=
m_{C^{\prime}}^{g-2}M^{\prime}_{i,j}.
\]
This proves
\eqref{eq:DC-power-cycle-class}.\qedhere
\end{enumerate}
\end{proof}

\subsection{Proof of the tropical Matsusaka--Ran criterion and the main theorem}
\label{subsec:proof-tropical-matsusaka-ran}
The key step in the proof of the tropical Matsusaka--Ran criterion and Theorem~\ref{thm:main-equivalent-criteria} is the following proposition.

\begin{prop}
\label{prop:equivalence-conditions-two-criteria}
Let \(C\) be an effective tropical \(1\)-cycle on \(X\) such that \(|C|\) is connected. Then the following are equivalent.
\begin{enumerate}[label=\textup{(\roman*)}, itemsep=2pt]
\item The tropical \(1\)-cycle \(C\) is a spanning curve on \(X\), and \((\Theta\cdot C)_X=g\).
\item The polarization \(\Theta\) is principal, and \((g-1)! C\equiv_{\hom}([\Theta])^{g-1}\).
\end{enumerate}
\end{prop}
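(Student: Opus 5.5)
The implication (ii)\(\Rightarrow\)(i) is immediate from earlier results. If \(\Theta\) is principal and \((g-1)!C\equiv_{\hom}([\Theta])^{g-1}\), then Lemma~\ref{lem:C-must-be-spanning-in-proof} shows that \(C\) is a spanning curve. Step~(i) of the proof of Theorem~\ref{thm:tropical-matsusaka-criterion} gives \((g-1)!(\Theta\cdot C)_X=(\Theta^g)_X=g!\), hence \((\Theta\cdot C)_X=g\). This computation uses only \(\cyc_X\) compatibility (Proposition~\ref{prop:cyc-compat}) and Lemma~\ref{lem:top-self-intersection-det}(iii).

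For (i)\(\Rightarrow\)(ii), we first parametrize \(C\). Lemma~\ref{lem:cycle-from-smooth-curve} gives a smooth tropical curve \(\Gamma\) and a morphism \(\chi\colon\Gamma\to X\) with \(\chi_*[\Gamma]=C\). With a basepoint \(q\), we are then in the setting of Subsection~\ref{subsec:preparations-two-criteria}. The translate \(C'=(t_{-\chi(q)})_*C\) is still a spanning curve, since translation does not change tangent directions. It is homologous to \(C\), and \((\Theta\cdot C')_X=(\Theta\cdot C)_X=g\). Lemma~\ref{lem:C-p-effective-cycle}(iii) then produces an ample effective divisor \(D\coloneqq D_{C'}\) with \([D]=C'^{[g-1]}\). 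By (iv) of that lemma, \((D^g)_X=g!\,m^{g-1}\) with \(m=m_{C'}\), and \(([D])^{g-1}\equiv_{\hom}(g-1)!\,m^{g-2}C'\). Since \(D\) is ample, Lemma~\ref{lem:top-self-intersection-det}(ii) gives \(m^{g-1}\ge1\). Comparing with \(\varepsilon\det(E^{D})>0\) shows \(m\) is a positive integer.

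The key computation is the mixed intersection number \((D^{g-1}\cdot\Theta)_X\). By the cycle-class relation above,
\[
(D^{g-1}\cdot\Theta)_X=(\Theta\cdot([D])^{g-1})_X=(g-1)!\,m^{g-2}(\Theta\cdot C')_X=g!\,m^{g-2}.
\]
Now apply Proposition~\ref{prop:tropical-KT} to the ample divisors \(D,\dots,D,\Theta\):
\[
g!\,m^{g-2}\ge (D^g)_X^{(g-1)/g}(\Theta^g)_X^{1/g}\ge (g!\,m^{g-1})^{(g-1)/g}(g!)^{1/g}=g!\,m^{(g-1)^2/g}.
\]
This yields \(m^{g-2}\ge m^{(g^2-2g+1)/g}\), i.e. \(m^{-1/g}\ge1\), so \(m=1\). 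It also forces equality throughout. In particular \((\Theta^g)_X=g!\), so \(\Theta\) is principal by Lemma~\ref{lem:top-self-intersection-det}(iii). Moreover, the equality case of Proposition~\ref{prop:tropical-KT} gives \(a[D]\equiv_{\hom}b[\Theta]\) with \(a,b>0\). Since \((D^g)_X=(\Theta^g)_X=g!\), comparing top self-intersections gives \(a=b\), and torsion-freeness then gives \([D]\equiv_{\hom}[\Theta]\). (One could instead invoke Corollary~\ref{cor:principal-intersection-equality} with \(n=g-1\) once \((D^{g-1}\cdot\Theta)_X=g!\) is known, but that requires \(m=1\) first, which the estimate above supplies.) Finally, using \(m=1\) in the relation from Lemma~\ref{lem:C-p-effective-cycle}(iv),
\[
(g-1)!\,C\equiv_{\hom}(g-1)!\,C'\equiv_{\hom}([D])^{g-1}\equiv_{\hom}([\Theta])^{g-1},
\]
where the last step holds because \(c_1\) determines the powers (Proposition~\ref{prop:cyc-compat} together with \eqref{eq:chern-class-homological-equivalence}).

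The main obstacle is the bookkeeping of signs and positivity in \(m=\varepsilon\det M'\): one must check that \(m>0\) rather than merely \(m^{g-1}>0\). For \(g-1\) even, \(m^{g-1}>0\) alone does not exclude \(m<0\). In that case I would argue directly: \(M'\) is the matrix of \(T_{C'}\), which is positive definite by the computation in Lemma~\ref{lem:C-p-effective-cycle}(iii). Hence \(\det M'\) has a definite sign relative to the chosen bases, and comparing with \eqref{eq:fundamental-cycle-class2} and the positivity of \((D^g)_X\) should show that \(\varepsilon\det M'>0\). A second point to verify is that \((\Theta\cdot C')_X=(\Theta\cdot C)_X\); this holds because \(t_{-\chi(q)}\) acts trivially on \(H_{1,1}(X)\), as in Lemma~\ref{lem:W1-pushforward-chi}.
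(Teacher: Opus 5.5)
Your proof is the paper's proof of this proposition, step for step. For (ii)\(\Rightarrow\)(i) you use Lemma~\ref{lem:C-must-be-spanning-in-proof} and the intersection count. For (i)\(\Rightarrow\)(ii) you pass to \(C'\), take the ample divisor \(D=D_{C'}\) from Lemma~\ref{lem:C-p-effective-cycle}, compute \((D^{g-1}\cdot\Theta)_X=g!\,m^{g-2}\), and use the inequality of Proposition~\ref{prop:tropical-KT} to force \(m=1\).

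The one step you leave open is \(m>0\). Your sentence ``comparing with \(\varepsilon\det(E^{D})>0\)'' adds nothing. By Lemma~\ref{lem:C-p-effective-cycle}(iv), \(\varepsilon\det(E^{D})=m^{g-1}\), and this does not detect the sign of \(m\) when \(g\) is odd. Your fallback argument ends with ``should show''. Also, rewriting \((g!\,m^{g-1})^{(g-1)/g}\) as \(g!\,m^{(g-1)^2/g}\) already assumes \(m>0\).

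Your own estimate closes the gap, and this is how the paper does it. The first inequality gives \(g!\,m^{g-2}=(D^{g-1}\cdot\Theta)_X\ge(D^g)_X^{(g-1)/g}(\Theta^g)_X^{1/g}>0\), so \(m^{g-2}>0\). Ampleness gives \((D^g)_X=g!\,m^{g-1}>0\), so \(m^{g-1}>0\). One of \(g-2\) and \(g-1\) is odd, so \(m>0\).

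Your determinant route can also be completed, though it is longer. Take \(S\) as in Proposition~\ref{prop:matrix-relations-SGD}, so \(S_{ki}=[\eta_k,\lambda'_i]_X\). The Gram matrix of \(v\mapsto[v,T_{C'}(v)]_X\) in the basis \(\{\eta_j\}\) is \(SM'\). It is positive definite by the computation in Lemma~\ref{lem:C-p-effective-cycle}(iii), so \(\det S\,\det M'>0\). Step (i) of the proof of Proposition~\ref{prop:tropical-KT} shows \(\varepsilon\det S>0\). Together these give \(\varepsilon\det M'>0\).

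Once \(m>0\) is established, the rest of your argument is correct. Equality gives \((\Theta^g)_X=g!\), then \(a=b\), then torsion-freeness, then the final chain of homological equivalences. At that point the paper simply cites Corollary~\ref{cor:principal-intersection-equality}.
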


\begin{proof}
Assume \textup{(ii)}. By Lemma~\ref{lem:C-must-be-spanning-in-proof}, \(C\) is a spanning curve on \(X\). Since \(\Theta\) is a principal polarization, Lemma~\ref{lem:top-self-intersection-det}\textup{(iii)} gives \((\Theta^g)_X=g!\). Since \((g-1)! C\equiv_{\hom}([\Theta])^{g-1}\), one has
\begin{equation*}
(g-1)!(\Theta\cdot C)_X
=
\bigl(\Theta\cdot([\Theta])^{g-1}\bigr)_X
=
(\Theta^g)_X
=
g!.
\end{equation*}
Thus \((\Theta\cdot C)_X=g\), and \textup{(i)} holds.

Conversely, assume \textup{(i)}. By Lemma~\ref{lem:cycle-from-smooth-curve}, there exist a smooth tropical curve \(\Gamma\) and a morphism of rational polyhedral spaces \(\chi\colon\Gamma\to X\) such that \(\chi_*[\Gamma]=C\). We use the notation of Subsection~\ref{subsec:preparations-two-criteria} for this choice of \(\Gamma\) and \(\chi\). Since \(C^{\prime}\) is a translate of \(C\), it is also a spanning curve on \(X\). Moreover, translation acts trivially on tropical homology, so
\[
C^{\prime}\equiv_{\hom}C.
\]
Hence Proposition~\ref{prop:cyc-compat} gives
\[
(\Theta\cdot C^{\prime})_X=(\Theta\cdot C)_X=g.
\]
Let \(D_{C^{\prime}}\) be the ample tropical Cartier divisor constructed in Lemma~\ref{lem:C-p-effective-cycle}, so that \([D_{C^{\prime}}]=C^{\prime[g-1]}\),
\[
(D_{C^{\prime}}^g)_X=g!m_{C^{\prime}}^{g-1},
\quad \text{and} \quad
([D_{C^{\prime}}])^{g-1}
\equiv_{\hom}
(g-1)! m_{C^{\prime}}^{g-2}C^{\prime}.
\]
Using \((\Theta\cdot C^{\prime})_X=g\) and Proposition~\ref{prop:cyc-compat}, this gives
\[
(\Theta\cdot D_{C^{\prime}}^{g-1})_X
=
\left(\Theta\cdot ([D_{C^{\prime}}])^{g-1}\right)_X
=
(g-1)!m_{C^{\prime}}^{g-2}(\Theta\cdot C^{\prime})_X
=
g!m_{C^{\prime}}^{g-2}.
\]
Since \(\Theta\) and \(D_{C^{\prime}}\) are ample, Proposition~\ref{prop:tropical-KT} gives
\(
(\Theta\cdot D_{C^{\prime}}^{g-1})_X>0.
\)
Moreover, \((D_{C^{\prime}}^g)_X>0\). Hence \(m_{C^{\prime}}>0\).

By Lemma~\ref{lem:top-self-intersection-det}\textup{(ii)} and \eqref{eq:DC-top-self-intersection}, one has
\[
(\Theta^g)_X\ge g!,
\qquad
(D_{C^{\prime}}^g)_X=g!m_{C^{\prime}}^{g-1}.
\]
Therefore Proposition~\ref{prop:tropical-KT} gives
\[
g!m_{C^{\prime}}^{g-2}
=
(\Theta\cdot D_{C^{\prime}}^{g-1})_X
\ge
(\Theta^g)_X^{1/g}(D_{C^{\prime}}^g)_X^{(g-1)/g}
\ge
g!m_{C^{\prime}}^{(g-1)^2/g}.
\]
Hence \(m_{C^{\prime}}\le1\). Since \(m_{C^{\prime}}\in\Z_{>0}\), one has
\(
m_{C^{\prime}}=1.
\)
Therefore
\[
(D_{C^{\prime}}^{g-1}\cdot\Theta)_X=g!.
\]
By Corollary~\ref{cor:principal-intersection-equality}, \(\Theta\) is a principal polarization and
\begin{equation}\label{eq:DC-homologous-Theta}
[D_{C^{\prime}}]\equiv_{\hom}[\Theta].
\end{equation}
By \eqref{eq:DC-power-cycle-class}, \eqref{eq:DC-homologous-Theta}, and \(m_{C^{\prime}}=1\), one has
\[
(g-1)! C^{\prime}
\equiv_{\hom}
([D_{C^{\prime}}])^{g-1}
\equiv_{\hom}
([\Theta])^{g-1}.
\]
Since \(C^{\prime}\equiv_{\hom}C\), it follows that
\[
(g-1)! C\equiv_{\hom}([\Theta])^{g-1}.
\]
Thus \textup{(ii)} holds.
\end{proof}

We first state the tropical Matsusaka--Ran criterion.

\begin{thm}[Tropical Matsusaka--Ran criterion]\label{thm:tropical-matsusaka-ran-criterion}
Let \(C\) be an effective tropical \(1\)-cycle on \(X\) such that \(|C|\) is connected. Assume that \(C\) is a spanning curve on \(X\) and
\(
(\Theta\cdot C)_X=g.
\)
Then there exist a smooth tropical curve \(\Gamma\) of genus \(g\) and a morphism of rational polyhedral spaces \(\chi\colon\Gamma\to X\) such that \(\chi_*[\Gamma]=C\), and the homomorphism
\(
f\colon\Jac(\Gamma)\to X
\)
induced by \(\chi\) is an isomorphism
\[
\bigl(\Jac(\Gamma),\Theta_\Gamma\bigr)\cong(X,\Theta)
\]
of principally polarized tropical abelian varieties.
\end{thm}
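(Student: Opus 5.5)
The plan is to reduce the statement to the tropical Matsusaka criterion, Theorem~\ref{thm:tropical-matsusaka-criterion}, by way of the equivalence of hypotheses in Proposition~\ref{prop:equivalence-conditions-two-criteria}. Here \((X,\Theta)\) is only assumed to be a polarized tropical abelian variety of dimension \(g\ge 2\), and \(C\) is an effective tropical \(1\)-cycle with connected support. The hypotheses of the theorem are exactly condition \textup{(i)} of Proposition~\ref{prop:equivalence-conditions-two-criteria}: \(C\) is a spanning curve and \((\Theta\cdot C)_X=g\).

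\textbf{Step 1.} Apply the implication \textup{(i)}\(\Rightarrow\)\textup{(ii)} of Proposition~\ref{prop:equivalence-conditions-two-criteria}. This gives that \(\Theta\) is a principal polarization and that \((g-1)!C\equiv_{\hom}([\Theta])^{g-1}\). Recall what this implication costs. It parametrizes \(C\) by a smooth curve via Lemma~\ref{lem:cycle-from-smooth-curve}, and builds the ample divisor \(D_{C'}\) of Lemma~\ref{lem:C-p-effective-cycle} from the image of the \((g-1)\)-fold Pontryagin power. The Hodge-type inequality of Proposition~\ref{prop:tropical-KT} then forces \(m_{C'}=1\), and Corollary~\ref{cor:principal-intersection-equality} supplies principality together with \([D_{C'}]\equiv_{\hom}[\Theta]\). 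All of this is already established, so nothing new is needed here.

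\textbf{Step 2.} Now \((X,\Theta)\) is a principally polarized tropical abelian variety and \(C\) satisfies every hypothesis of Theorem~\ref{thm:tropical-matsusaka-criterion}. That theorem produces a smooth tropical curve \(\Gamma\) of genus \(g\) and a morphism \(\chi\colon\Gamma\to X\) with \(\chi_*[\Gamma]=C\). It also shows that the induced homomorphism \(f\colon\Jac(\Gamma)\to X\) from Proposition~\ref{prop:universal-property} is an isomorphism \((\Jac(\Gamma),\Theta_\Gamma)\cong(X,\Theta)\) of principally polarized tropical abelian varieties. This is precisely the conclusion required.

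The only point needing care is bookkeeping. The section-wide standing assumption makes \(\Theta\) merely a polarization, while Theorem~\ref{thm:tropical-matsusaka-criterion} requires a principal one. Step~1 provides exactly this upgrade, so the two results chain without gaps. There is no substantive obstacle beyond what Proposition~\ref{prop:equivalence-conditions-two-criteria} already handles.
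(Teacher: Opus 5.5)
Your proposal is correct and matches the paper's proof exactly. The paper likewise uses the implication \textup{(i)}\(\Rightarrow\)\textup{(ii)} of Proposition~\ref{prop:equivalence-conditions-two-criteria} to show that \(\Theta\) is principal and \((g-1)!C\equiv_{\hom}([\Theta])^{g-1}\), and then concludes by Theorem~\ref{thm:tropical-matsusaka-criterion}.
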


\begin{proof}
By Proposition~\ref{prop:equivalence-conditions-two-criteria}, the polarization \(\Theta\) is principal and
\[
(g-1)!C\equiv_{\hom}([\Theta])^{g-1}.
\]
The assertion now follows from Theorem~\ref{thm:tropical-matsusaka-criterion}. 
\end{proof}

We now state the main theorem of this paper.

\begin{thm}\label{thm:main-equivalence}
Let \(C\) be an effective tropical \(1\)-cycle on \(X\) such that \(|C|\) is connected. Then the following are equivalent.

\begin{enumerate}[label=\textup{(\roman*)}]
\item The tropical \(1\)-cycle \(C\) is a spanning curve on \(X\), and \((\Theta\cdot C)_X=g\).

\item The polarization \(\Theta\) is principal, and \((g-1)!C\equiv_{\hom}([\Theta])^{g-1}\).
 
\item There exist a smooth tropical curve \(\Gamma\) of genus \(g\) and a morphism of rational polyhedral spaces \(\chi\colon\Gamma\to X\) such that \(\chi_*[\Gamma]=C\), and the homomorphism
\(
f\colon\Jac(\Gamma)\to X
\)
induced by \(\chi\) is an isomorphism
\[
\bigl(\Jac(\Gamma),\Theta_\Gamma\bigr)\cong(X,\Theta)
\]
of principally polarized tropical abelian varieties, where \(\Theta_\Gamma\) denotes the canonical principal polarization on \(\Jac(\Gamma)\).
\end{enumerate}
\end{thm}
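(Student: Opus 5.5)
The equivalence will be obtained by assembling results already established, arguing cyclically in the form (i)\(\Leftrightarrow\)(ii), (ii)\(\Rightarrow\)(iii), and (iii)\(\Rightarrow\)(ii).

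\textbf{(i)\(\Leftrightarrow\)(ii) and (ii)\(\Rightarrow\)(iii).} The first is exactly Proposition~\ref{prop:equivalence-conditions-two-criteria}, applied to the given \(C\) with \(|C|\) connected. For the second, (ii) gives that \((X,\Theta)\) is principally polarized with \((g-1)!C\equiv_{\hom}([\Theta])^{g-1}\), which is precisely the hypothesis of Theorem~\ref{thm:tropical-matsusaka-criterion}. That theorem yields \(\Gamma\), \(\chi\) and the isomorphism \(f\), which is (iii). Equivalently, (i)\(\Rightarrow\)(iii) is Theorem~\ref{thm:tropical-matsusaka-ran-criterion}.

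\textbf{(iii)\(\Rightarrow\)(ii).} This is the only step needing new input. Assume (iii). Since \(f\colon(\Jac(\Gamma),\Theta_\Gamma)\to(X,\Theta)\) is an isomorphism of principally polarized tropical abelian varieties, \(\Theta\) is principal. I would verify this concretely from \eqref{eq:kappa-pullback-divisor}: the condition \([f^{*}\Theta]\equiv_{\hom}[\Theta_\Gamma]\) gives \(\kappa_{\Theta_\Gamma}=f^\sharp\circ\kappa_\Theta\circ f_\sharp\), and since \(f^\sharp,f_\sharp,\kappa_{\Theta_\Gamma}\) are lattice isomorphisms, so is \(\kappa_\Theta\). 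Positive definiteness is part of the hypothesis that \(\Theta\) is a polarization. Set \(r=g(\Gamma)=g\). By Theorem~\ref{Thm:Poincare}, \((g-1)![\widetilde W_1]\equiv_{\hom}([\Theta_\Gamma])^{g-1}\) on \(J=\Jac(\Gamma)\). Lemma~\ref{lem:W1-pushforward-chi} gives \(f_*[\widetilde W_1]\equiv_{\hom}\chi_*[\Gamma]=C\). It then remains to show
\[
f_*\cyc_J\bigl(([\Theta_\Gamma])^{g-1}\bigr)=\cyc_X\bigl(([\Theta])^{g-1}\bigr).
\]

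\textbf{Main obstacle: this pushforward identity.} I would prove it with the explicit normal forms. Choose a basis \(\{\alpha_k\}\) of \(H_1(\Gamma,\Z)\) and put \(\lambda'_{X,k}\coloneqq f_\sharp(\alpha_k)\), which is a basis of \(\Lambda'_X\). The identity \(\kappa_{\Theta_\Gamma}=f^\sharp\kappa_\Theta f_\sharp\) shows that \(\omega_k=\kappa_{\Theta_\Gamma}(\alpha_k)=f^\sharp(\eta_{X,k})\) with \(\eta_{X,k}=\kappa_\Theta(\lambda'_{X,k})\). Dualizing, \((f^\sharp)^*\omega_k^*=\eta_{X,k}^*\). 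By Lemma~\ref{lem:GS25-normal-form}\textup{(ii)} and Lemma~\ref{lem:push-PPTAV-en}, the element \((g-1)!\sum_k\alpha_k\otimes\omega_k^*\) then maps to \((g-1)!\sum_k\lambda'_{X,k}\otimes\eta_{X,k}^*\), which equals \(\cyc_X(([\Theta])^{g-1})\). Combining this with Proposition~\ref{prop:GS19-toolkit}\textup{(iii)} gives \((g-1)!C\equiv_{\hom}([\Theta])^{g-1}\), which is (ii). This closes the cycle of implications.
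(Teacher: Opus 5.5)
Your proposal is correct and follows the paper's proof. The equivalence (i)\(\Leftrightarrow\)(ii) comes from Proposition~\ref{prop:equivalence-conditions-two-criteria}, the forward direction from the Matsusaka (equivalently Matsusaka--Ran) criterion, and (iii)\(\Rightarrow\)(ii) from pushing the tropical Poincar\'e formula forward along \(f\); the only difference is that you verify explicitly that \(\Theta\) is principal and, via Lemma~\ref{lem:GS25-normal-form} and Lemma~\ref{lem:push-PPTAV-en}, that \(f_*\) carries \(\cyc_J\bigl(([\Theta_\Gamma])^{g-1}\bigr)\) to \(\cyc_X\bigl(([\Theta])^{g-1}\bigr)\), whereas the paper simply asserts both from \(f\) being an isomorphism of principally polarized tropical abelian varieties.
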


\begin{proof}
The equivalence of \textup{(i)} and \textup{(ii)} is Proposition~\ref{prop:equivalence-conditions-two-criteria}, and the implication \textup{(i)}\(\Longrightarrow\)\textup{(iii)} is Theorem~\ref{thm:tropical-matsusaka-ran-criterion}.

Assume \textup{(iii)}. Fix \(q\in\Gamma\), and let \(\phi_q\colon\Gamma\to\Jac(\Gamma)\) be the tropical Abel--Jacobi map. Since \(f\) is induced by \(\chi\), one has
\begin{equation}\label{eq:f-phi-q-chi}
f\circ\phi_q=t_{-\chi(q)}\circ\chi.
\end{equation}
Since \(f\) is an isomorphism of principally polarized tropical abelian varieties, one has
\[
f_*\bigl(([\Theta_\Gamma])^{g-1}\bigr)
\equiv_{\hom}
([\Theta])^{g-1}.
\]
Applying \(f_*\) to Theorem~\ref{Thm:Poincare} and using \eqref{eq:f-pushforward-W1}, one obtains
\[
(g-1)!(t_{-\chi(q)})_*C
=
(g-1)!f_*[\widetilde W_1]
\equiv_{\hom}
([\Theta])^{g-1}.
\]
Since translation acts trivially on tropical homology, it follows that
\[
(g-1)!C\equiv_{\hom}([\Theta])^{g-1}.
\]
Thus \textup{(ii)} holds.
\end{proof}

\end{document}